 \newcommand{\res}{\operatorname{res}}
 \newcommand{\Alt}{\mathcal{A}}
 \newcommand{\sym}{\mathfrak{S}}
 \newcommand{\IBr}{\operatorname{IBr}}
 \newcommand{\Ind}{\operatorname{Ind}}
 \newcommand{\Res}{\operatorname{Res}}
 \newcommand{\Cen}{\operatorname{C}}
 \newcommand{\cyc}[1]{\langle\,#1\,\rangle}
 \newcommand{\C}{\mathbb{C}}
 \newcommand{\Z}{\mathbb{Z}}
 \newcommand{\Sym}{\mathfrak{S}}
 \newcommand{\pd}{\cdot}
 \newcommand{\indicatrice}{1}
 \newcommand{\Irr}{\operatorname{Irr}}
\newcommand{\cal}[1]{\mathcal{#1}}
\newcommand{\carwr}{\theta}
\newcommand{\carsym}{\chi}
\newcommand{\caralt}{\rho}
\newcommand{\mm}{\boldsymbol{\mu}}
\newcommand{\pp}{\boldsymbol{\pi}}
\newcommand{\fleche}{\rightsquigarrow}
 \newcommand{\dis}{\displaystyle}
 \newcommand{\la}{\lambda}
 \newcommand{\da}{\delta}
  \newcommand{\sa}{\sigma}
 \newcommand{\ga}{\gamma}
 \newcommand{\e}{\varepsilon}
  \newcommand{\p}{\bar{p}}
  \newcommand{\q}{\bar{q}}
\newcommand{\cla}{\xi_{\la}}
\newcommand{\pla}{\zeta_{\la}}
\newcommand{\Dla}{\Delta_{\la}}
\newcommand{\cmu}{\xi_{\mu}}
\newcommand{\pmu}{\zeta_{\mu}}
\newcommand{\Dmu}{\Delta_{\mu}}
\newcommand{\tSym}{\widetilde{\Sym}}
\newcommand{\tAlt}{\widetilde{\Alt}}
\newtheorem{theorem}{Theorem}[section] % 1st argument is your name for it
\newtheorem{lemma}[theorem]{Lemma}     % 2nd argument is what is printed
\newtheorem{corollary}[theorem]{Corollary}
\newtheorem{proposition}[theorem]{Proposition}
\newtheorem{convention}[theorem]{Convention}
\newtheorem{definition}[theorem]{Definition}
\newtheorem{example}[theorem]{Example}
\theoremstyle{definition}
\newtheorem{remark}[theorem]{Remark}
\title[]% end with percent
{Perfect isometries and Murnaghan-Nakayama rules}
\author{Olivier Brunat}
\address{Universit\'e Paris-Diderot Paris 7\\ Institut de math\'ematiques de
         Jussieu -- Paris Rive Gauche\\ UFR de math\'e\-matiques\\ Case
7012\\ 75205 Paris Cedex 13\\
         France.}
\email{brunat@math.univ-paris-diderot.fr}
\author{Jean-Baptiste Gramain}
\address{Institute of Mathematics, 
University of Aberdeen, King's College \\
Fraser Noble Building, Aberdeen AB24 3UE, UK
}
\email{jbgramain@abdn.ac.uk}
\subjclass[2010]{Primary 20C30,\, 20C15; Secondary 20C20}
\begin{document}
\begin{abstract}
This article is concerned with perfect isometries between blocks of
finite groups. Generalizing a method of Enguehard to show that any two $p$-blocks of (possibly different)
symmetric groups with the same weight are perfectly isometric, we prove analogues of this result for $p$-blocks of alternating groups 
(where the blocks must also have the same sign when $p$ is odd), of double covers of 
alternating and symmetric groups (for $p$ odd, and where we obtain crossover isometries when the blocks have opposite signs), 
of complex reflection groups $G(d,1,n)$ (for $d$ prime to $p$), of Weyl
groups of type $B$ and $D$ (for $p$ odd), and of certain wreath products.
In order to do this, we need to generalize the theory of blocks, in a way which should be of
independent interest.
\end{abstract}
\maketitle

\section{Introduction}

Perfect isometries, introduced by M. Brou\'e in \cite{Broue}, are the
shadow, at the level of characters, of very deep structural
correspondences between blocks of finite groups (such as derived
equivalences, or splendid equivalences).  The existence of such
equivalences is at the heart of Brou\'e's Abelian Defect Conjecture,
which predicts that any $p$-block of a finite group $G$ with abelian defect
group $P$ and its Brauer correspondent in $\operatorname{N}_G(P)$ are
derived equivalent.  

Recently, there has been considerable progress in the construction of
equivalences between blocks, especially using a method, introduced and
developed by J. Chuang and R. Rouquier in~\cite{Chuang-Rouquier}, and
based on $\mathfrak{sl}_2$-categorification. As a consequence of their
work, they show that two $p$-blocks of (possibly different) symmetric
groups with isomorphic defect groups are splendidly equivalent;
see~\cite[Theorem 7.2]{Chuang-Rouquier}. This explains the result~\cite[Theorem 11]{Enguehard} of M.
Enguehard, which is an analogue of
\cite[Theorem 7.2]{Chuang-Rouquier}, but at the level of characters,
that is, the existence of Brou\'e perfect isometries between such
blocks.

For $p$-blocks of (possibly different) double covers of the symmetric
and alternating groups, it has been conjectured  by M. Schaps and R.
Kessar that, with some additional assumptions, a similar result holds.
There are partial results in this direction, for example~\cite{Kessar1},
\cite{Kessar-Schaps} and~\cite{Leabovich-Schaps}. However, even at the
level of characters, the existence of perfect isometries between these
$p$-blocks was yet unproved.

This article dicusses perfect isometries. Besides suggesting the
existence of a derived equivalence between blocks, any perfect isometry
between two $p$-blocks of finite groups provides an isomorphism between
their centres, and an isomorphism between the Grothendieck groups of
their module categories. In particular, perfectly isometric $p$-blocks
have the same numbers of ordinary and of modular characters, and their
Cartan matrices and decomposition matrices have the same invariant
factors.

Furthermore, the weaker version of Brou\'e's Abelian Defect Conjecture
(that is, Brou\'e's conjecture at the level of characters) gives, in the
abelian defect case, deep insight into more numerical conjectures, such
as the Alperin, Kn\"{o}rr-Robinson, Alperin-McKay and Dade conjectures
(see for example \cite{Eaton}).  

In this paper, we generalize Enguehard's method (see \cite{Enguehard})
based on the Murnaghan-Nakayama rule in the symmetric group (which gives
a way to compute iteratively the values of irreducible complex
characters).  We will prove that similar results hold for many classes
of groups where some analogues of the Murnaghan-Nakayama rule are
available.

For this, we extract the properties of the Murnaghan-Nakayama rule
needed in Enguehard's method, which we axiomatize in the concept of
an {\emph{MN-structure}} for a finite group. In some cases, for example
when the analogue of the Murnaghan-Nakayama rule for the considered
groups do not give information on the whole group, but only on certain
conjugacy classes (this happens for the double covers of the symmetric
and alternating groups), we need to replace the set of $p$-regular
elements of the group by an arbitrary union of conjugacy classes.  We
then develop a generalized modular theory, and define generalized blocks
and generalized perfect isometries. Note that the notion of generalized
blocks and generalized perfect isometries introduced by B.
K\"{u}lshammer, J. B. Olsson and G. R. Robinson in \cite{KOR} is not
exactly the same as ours. In some way, our notion is more general,
because any K\"{u}lshammer-Olsson-Robinson isometry or Brou\'e isometry
is a generalized perfect isometry in our sense.

The article is organized as follows. In Section~\ref{section:generalite}
we generalize the theory of blocks of characters. Note that \S\ref{pargen} is
of independent interest, because it in particular gives
a natural framework to use the techniques of the usual
modular $p$-block theory for the theory
of K\"{u}shammer, Olsson and Robinson.
% to use the techniques of the usual
% modular $p$-blocks theory.  
The main result of this section (Theorem
\ref{th:iso}) provides the bridge necessary to compare blocks and spaces
of class functions of (possibly distinct) groups which have similar
MN-structures. This {\emph{combinatorification}} of the ideas in
\cite{Enguehard} can in turn be used to exhibit perfect isometries
between blocks of these groups (see Corollary \ref{cor:isoparfaitegene}
and Theorem \ref{th:broue}).

The remaining sections are devoted to describing MN-structures in
several families of finite groups, and using our methods to build
explicitly perfect isometries between their blocks. 

More precisely, we
prove in Section~\ref{section:An} that two $p$-blocks of (possibly
different) alternating groups with  same weight, (and the same
{\emph{signature type}} when $p$ is odd) are perfectly isometric (see
Theorems \ref{theo:mainAn}, \ref{theo:mainAn2} and
\ref{theo:mainAnpegal2}).  

Then, in Section~\ref{section:SnTilde}, we study the case of spin blocks
of the double covers of the symmetric and alternating groups, and  we
prove the perfect isometry version of the Kessar-Schaps conjecture.  We
show that, when $p$ is odd, any two spin $p$-blocks with the same weight
and sign are perfectly isometric (see Theorem \ref{theo:mainTilde} and
Corollary~\ref{cor:brouetilde}).  As
is to be expected in these groups (see~\cite{Kessar-Schaps}), we also
obtain {\emph{crossover}} isometries, relating a $p$-block in ``the
symmetric case'' to a $p$-block in ``the alternating case''. Note that,
in the proof of these results, even though the isometries we obtain are
Brou\'e isometries, we crucially need the generalized theory introduced
in Section~\ref{section:generalite}.

In the last section, we examine the case of certain wreath products.
Applying our method, we give in~\S\ref{subsec:sym}
and~\S\ref{subsec:osima} a new and more uniform construction of the
isometries appearing in Brou\'e's Abelian Defect Conjecture for
symmetric groups, isometries introduced by M. Osima, and the generalized
perfect isometry considered in~\cite{BrGr} in order to show the
existence of $p$-basic sets for the alternating group (see
Theorem~\ref{theo:BrGr}, Theorem~\ref{theo:osima} and
Corollary~\ref{theo:BroueSn}).  Even though these results are not new,
they give explicit isometries, and considerably simplify the
calculations (for example, note that the initial proof of
Rouquier~\cite{Rouquier} of Brou\'e's perfect isometries Conjecture for
symmetric groups (see~\cite{Rouquier}) is not constructive, and is based
on a strong result of Fong and Harris in~\cite{Fong-Harris2} on perfect
isometries in wreath products).

In \S\ref{subsec:wreath}, we apply our method to $p$-blocks of complex
reflection groups $G(d,1,n)$ with $d$ prime to $p$, and obtain in
Theorem~\ref{theo:couronne} an analogue of Enguehard's result for these
groups. In particular, this gives the result for $p$-blocks (with $p$
odd) of (possibly different) Weyl groups of type $B$ (see Corollary
\ref{cor:broueBn}). In~\S\ref{subsec:typeD}, we also prove the result
for $p$-blocks (with $p$ odd) of (possibly different) Weyl groups of
type $D$ (Theorems \ref{theo:broueDnconj} and
\ref{theo:broueDnpasconj}). All of these are new results.

Finally, in \S\ref{subsec:fh}, we give an analogue of the generalized
perfect isometry of~\cite[Thoerem 3.6]{BrGr} for $p$-blocks (with $p$ odd) of
alternating groups (see Therorem~\ref{theo:fh}). In a certain sense (see
Example~\ref{ex:osima}), this is a natural analogue of Osima's
isometry for alternating groups. When the $p$-block of the
alternating group has abelian defect, our result gives an alternative
proof of Brou\'e's perfect isometries Conjecture first obtained by Fong
and Harris in~\cite{fongharris} (see Corollary~\ref{cor:fh}).

% Brou\'e' perfect isometries constructed here between two $p$-blocks 
% suggest that the $p$-blocks are in fact derived equivalent.
We hope that our results, and in particular the fact that the Brou\'e 
perfect isometries constructed here are explicit, 
will help to prove that the
corresponding $p$-blocks are in fact derived equivalent.
%  can be the first step towards the
% generalization to other groups, in particular to the Schur extensions of
% the symmetric and alternating groups, of the concept of
% categorification, studied by J. Chuang and R. Rouquier in
% \cite{Chuang-Rouquier}.

\section{Generalities}
\label{section:generalite}

In this section, $G$ denotes a finite group and $\mathcal C$ a set of
conjugacy classes of $G$. We set
\begin{equation}
\label{eq:ele}
C=\bigcup_{c\in\mathcal C}c.
\end{equation}
We write $\Irr(G)$ for the set of irreducible
characters of $G$ over the complex field $\C$, and $\cyc{\,,}_G$ for the
usual hermitian product on $\C\Irr(G)$. For $x\in G$, we
denote by $x^G$ the conjugacy class of $x$ in $G$. Define
$\res_C:\C\Irr(G)\rightarrow \C\Irr(G)$ by setting, for any class function
$\varphi\in\C\Irr(G)$,
$$\res_C(\varphi)(g)=
\left\{
\begin{array}{ccl}
\varphi(g) &  & \textrm{if }g\in C,\\
0 & & \textrm{otherwise}.
\end{array}\right.$$
For $B\subseteq\C\Irr(G)$, we set
$B^{\mathcal C}=\{\res_C(\chi)\ |\ \chi\in B\}$.

\subsection{Generalized modular theory}\label{pargen}
Let $b$ be a $\Z$-basis of the $\Z$-module $\Z\Irr(G)^{\mathcal C}$.
For every $\chi\in\Irr(G)$,  there are uniquely determined integers
$d_{\chi\varphi}$ such that
\begin{equation}\label{eq:decnumber}
\res_C(\chi) = \sum_{\varphi\in
b}d_{\chi\varphi}\varphi.
\end{equation}
We denote by $b^{\vee}$
the dual basis of $b$ with respect to $\cyc{\,,}_G$, i.e. the unique
$\C$-basis $b^{\vee}=\{\Phi_{\varphi}\, |\,\varphi\in b\}$ of
$\C\Irr(G)^{\mathcal C}$ such that
% . In particular, if
% $\varphi\in b$, then we write $\Phi_{\varphi}\in\C\Irr(G)$ for the
% unique complex valued class functin such that
$\cyc{\Phi_{\varphi},\vartheta}=\delta_{\vartheta\varphi}$ for all
$\vartheta\in b$.

% In the following, we will say that $b$ is modular if, for every
% $\varphi\in b$, there is some generalized character
% $\chi_{\varphi}\in\Z\Irr(G)$ such that $\chi_{\varphi}^{\mathcal
% C}=\varphi$.

\begin{proposition}Let $\mathcal C$ be a set of conjugacy classes of
$G$. Suppose that $b$ is a $\Z$-basis of $\Z\Irr(G)^{\mathcal
C}$,
and denote by $b^{\vee}=\{\Phi_{\varphi}\, |\,\varphi\in b\}$ the dual basis of $b$ with respect to
$\cyc{\,,}_G$ (as above). Then:
\begin{enumerate}
\item[(i)] For every $\varphi\in b$, we have 
\begin{equation}
\label{eq:pim}
\Phi_{\varphi}=\sum_{\chi\in\Irr(G)}d_{\chi\varphi}\chi =\sum_{\chi\in\Irr(G)}d_{\chi\varphi}\res_C(\chi),
\end{equation}
where the $d_{\chi\varphi}$'s are the integers defined in
Equation~(\ref{eq:decnumber}).
%Moreover, if $\Phi_{\varphi}$ is defined as in Equation~(\ref{eq:pim}), then $b^{\vee}=\{\Phi_{\varphi}\,|\,\varphi\in b\}$.
\item[(ii)] We have $$\Z\Irr(G)\cap \Z\Irr(G)^{\mathcal
C}=\Z b^{\vee}.$$
\end{enumerate}
\label{prop:mod}
\end{proposition}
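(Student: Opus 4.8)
The plan is to prove the two statements in turn, using the defining property of the dual basis $b^\vee$ together with the explicit "decomposition numbers" $d_{\chi\varphi}$ of Equation~\eqref{eq:decnumber}.

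For part (i), I would argue as follows. By definition, $b^\vee=\{\Phi_\varphi\mid\varphi\in b\}$ is the $\C$-basis of $\C\Irr(G)^{\mathcal C}$ dual to $b$; since $\{\res_C(\chi)\mid\chi\in\Irr(G)\}$ spans $\C\Irr(G)^{\mathcal C}$, we may write $\Phi_\varphi=\sum_{\chi\in\Irr(G)}a_{\chi}\res_C(\chi)$ for suitable complex coefficients $a_\chi$ (not uniquely determined, since the $\res_C(\chi)$ are linearly dependent, but that will not matter). Now pair with an element $\vartheta\in b$: using $\cyc{\res_C(\chi),\vartheta}_G=\cyc{\chi,\vartheta}_G$ (because $\vartheta\in\C\Irr(G)^{\mathcal C}$ is supported on $C$, so only the values of $\chi$ on $C$ enter the hermitian product) and expanding $\vartheta$ itself — no, better: expand $\res_C(\chi)=\sum_{\varphi'\in b}d_{\chi\varphi'}\varphi'$ and use the defining relation $\cyc{\Phi_\varphi,\varphi'}=\delta_{\varphi\varphi'}$. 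A cleaner route: for any $\chi\in\Irr(G)$ we have $\cyc{\Phi_\varphi,\res_C(\chi)}_G=\cyc{\Phi_\varphi,\sum_{\varphi'\in b}d_{\chi\varphi'}\varphi'}_G=\sum_{\varphi'}\overline{d_{\chi\varphi'}}\,\delta_{\varphi\varphi'}=\overline{d_{\chi\varphi}}$, and the $d_{\chi\varphi}$ are integers so this is $d_{\chi\varphi}$. On the other hand, since $\{\res_C(\chi)\}$ spans the space containing $\Phi_\varphi$, and in fact one checks that $\Phi_\varphi=\sum_\chi\cyc{\Phi_\varphi,\res_C(\chi)}_G\,\chi$ is forced — this last step needs the observation that $\res_C(\chi)$ and $\chi$ have the same pairing against anything supported on $C$, so that for $\Psi\in\C\Irr(G)^{\mathcal C}$ one has $\Psi=\res_C(\Psi)=\sum_\chi\cyc{\Psi,\chi}_G\,\res_C(\chi)$, and hence $\Psi=\res_C(\sum_\chi\cyc{\Psi,\chi}_G\chi)$ gives $\Phi_\varphi=\sum_\chi d_{\chi\varphi}\chi$ after applying $\res_C$, which also yields the second equality $\Phi_\varphi=\sum_\chi d_{\chi\varphi}\res_C(\chi)$.

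For part (ii), I would show both inclusions. The inclusion $\Z b^\vee\subseteq\Z\Irr(G)\cap\Z\Irr(G)^{\mathcal C}$ is immediate from part (i): $\Phi_\varphi=\sum_\chi d_{\chi\varphi}\chi\in\Z\Irr(G)$ with integer coefficients, and $\Phi_\varphi=\res_C(\sum_\chi d_{\chi\varphi}\chi)\in\Z\Irr(G)^{\mathcal C}$; linearity over $\Z$ finishes it. For the reverse inclusion, take $\Psi\in\Z\Irr(G)\cap\Z\Irr(G)^{\mathcal C}$; writing $\Psi=\sum_\chi m_\chi\chi$ with $m_\chi\in\Z$, and using $\Psi\in\Z\Irr(G)^{\mathcal C}$ so that $\Psi=\res_C(\Psi)$, expand $\Psi=\res_C(\Psi)=\sum_\chi m_\chi\res_C(\chi)=\sum_\chi m_\chi\sum_{\varphi\in b}d_{\chi\varphi}\varphi=\sum_{\varphi\in b}\big(\sum_\chi m_\chi d_{\chi\varphi}\big)\varphi$; the inner coefficients are integers, so $\Psi$ lies in $\Z b\subseteq\Z\Irr(G)^{\mathcal C}$ — but that is not quite what we want. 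Instead I pair $\Psi$ against $b^\vee$: since $\Psi\in\C\Irr(G)^{\mathcal C}=\C b$ we may write $\Psi=\sum_{\varphi\in b}c_\varphi\Phi_\varphi$ with $c_\varphi=\cyc{\Psi,\varphi}_G$, and then $c_\varphi=\cyc{\sum_\chi m_\chi\chi,\varphi}_G=\sum_\chi m_\chi\cyc{\chi,\varphi}_G$; here $\cyc{\chi,\varphi}_G$ need not be an integer in general, so I must instead use $\cyc{\chi,\varphi}_G=\cyc{\res_C(\chi),\varphi}_G=\cyc{\sum_{\varphi'}d_{\chi\varphi'}\varphi',\varphi}_G$, which still involves the Gram matrix of $b$. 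The correct pairing is with the $\varphi\in b$ viewed the other way: $c_\varphi=\cyc{\Psi,\varphi}_G$ and since $\Psi=\sum_\chi m_\chi\res_C(\chi)$ we get $c_\varphi=\sum_\chi m_\chi\cyc{\res_C(\chi),\varphi}_G$; now $\cyc{\res_C(\chi),\varphi}_G$ — hmm.

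The main obstacle, then, is exactly this bookkeeping: identifying the right pairing that makes the coefficients manifestly integral. I expect the clean statement to be that for $\Psi\in\C\Irr(G)^{\mathcal C}$, the coefficient of $\Phi_\varphi$ when $\Psi$ is expanded in $b^\vee$ equals $\cyc{\Psi,\varphi}_G$, while the coefficient of $\chi$ when $\res_C^{-1}$(in the appropriate sense) of $\Psi$ is expanded in $\Irr(G)$ is $\cyc{\Psi,\chi}_G$; so the key identity to isolate and prove first is $\cyc{\res_C(\chi),\varphi}_G=\cyc{\chi,\varphi}_G$ for $\varphi$ supported on $C$, and then to observe that if moreover $\Psi\in\Z\Irr(G)$ then $\cyc{\Psi,\chi}_G\in\Z$ for all $\chi\in\Irr(G)$, and finally to run the computation $\Psi=\res_C(\Psi)=\res_C\big(\sum_\chi\cyc{\Psi,\chi}_G\,\chi\big)=\sum_\chi\cyc{\Psi,\chi}_G\,\res_C(\chi)=\sum_{\varphi\in b}\big(\sum_\chi\cyc{\Psi,\chi}_G\,d_{\chi\varphi}\big)\varphi$, and compare with the expansion of $\Psi$ in the basis $b$ to read off that $\Psi$ has integer coordinates in $b$; combined with part (i) this forces $\Psi\in\Z b^\vee$ after matching the two dual bases. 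Once the pairing identity is pinned down, everything else is routine linear algebra over $\Z$.
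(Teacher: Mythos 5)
Your part (i) is correct and follows essentially the paper's own route: the whole content is the identity $\cyc{\Phi_{\varphi},\chi}_G=\cyc{\Phi_{\varphi},\res_C(\chi)}_G=\sum_{\vartheta\in b}d_{\chi\vartheta}\cyc{\Phi_{\varphi},\vartheta}_G=d_{\chi\varphi}$ (valid because $\Phi_{\varphi}$ is supported on $C$ and the $d_{\chi\vartheta}$ are integers), after which the orthonormal expansion of the class function $\Phi_{\varphi}$ in $\Irr(G)$ gives $\Phi_{\varphi}=\sum_{\chi}d_{\chi\varphi}\chi$, and applying $\res_C$ (using $\Phi_{\varphi}=\res_C(\Phi_{\varphi})$) gives the second equality. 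The easy inclusion $\Z b^{\vee}\subseteq\Z\Irr(G)\cap\Z\Irr(G)^{\mathcal C}$ in (ii) is also fine.

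The reverse inclusion in (ii), however, is not proved, and the gap is genuine rather than bookkeeping. Your computation $\Psi=\res_C(\Psi)=\sum_{\chi}m_{\chi}\res_C(\chi)=\sum_{\varphi\in b}\bigl(\sum_{\chi}m_{\chi}d_{\chi\varphi}\bigr)\varphi$ only shows $\Psi\in\Z b$, which is vacuous: $b$ is by definition a $\Z$-basis of $\Z\Irr(G)^{\mathcal C}$, so this restates the hypothesis. The concluding step ``combined with part (i) this forces $\Psi\in\Z b^{\vee}$ after matching the two dual bases'' is false as a deduction: by (i), $\Phi_{\varphi}=\sum_{\vartheta\in b}\bigl(\sum_{\chi}d_{\chi\vartheta}d_{\chi\varphi}\bigr)\vartheta$, so $\Z b^{\vee}$ is in general a \emph{proper} sublattice of $\Z b$ (in the classical case where $C$ is the set of $p$-regular elements and $b=\IBr_p(G)$, the index is the determinant of the Cartan matrix, a nontrivial power of $p$), and integrality of the coordinates in $b$ says nothing about the coordinates in $b^{\vee}$. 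What you actually need is that the coefficients $c_{\varphi}=\cyc{\Psi,\varphi}_G$ in the expansion $\Psi=\sum_{\varphi\in b}c_{\varphi}\Phi_{\varphi}$ are integers; you correctly noticed that the termwise products $\cyc{\res_C(\chi),\varphi}_G$ (or $\cyc{\chi,\varphi}_G$) need not be integral, but then stopped without a fix. The missing idea is the paper's lifting trick: since $\varphi\in\Z\Irr(G)^{\mathcal C}$, choose integers $a_{\varphi\chi}$ with $\varphi=\sum_{\chi}a_{\varphi\chi}\res_C(\chi)$ and set $\psi_{\varphi}=\sum_{\chi}a_{\varphi\chi}\chi\in\Z\Irr(G)$, so that $\res_C(\psi_{\varphi})=\varphi$; then, because $\Psi$ vanishes off $C$, $\cyc{\Psi,\varphi}_G=\cyc{\Psi,\res_C(\psi_{\varphi})}_G=\cyc{\Psi,\psi_{\varphi}}_G\in\Z$, since both $\Psi$ and $\psi_{\varphi}$ are generalized characters. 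Integrality only appears after pairing $\Psi$ against the lifted $\psi_{\varphi}$, not against $\varphi$ or the individual $\res_C(\chi)$, which is why your attempts kept stalling.
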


\begin{proof}Let $\varphi\in b$. We have
$\Phi_{\varphi}\in\C\Irr(G)^{\mathcal C}$. It follows that
$\cyc{\Phi_{\varphi},\chi}_G=\cyc{\Phi_{\varphi},\res_C(\chi)}_G$
for all $\chi\in\Irr(G)$. Using Equation~(\ref{eq:decnumber}), we deduce
that 
\begin{eqnarray*}\cyc{\Phi_{\varphi},\chi}_G&=&\sum\limits_{\vartheta\in
b}d_{\chi\vartheta}\cyc{\Phi_{\varphi},\vartheta}_G\\
&=&d_{\chi\varphi}.
\end{eqnarray*}
%Conversely, suppose that $\Phi_{\varphi}$ is defined as in
%Equation~(\ref{eq:pim}) for every $\varphi\in b$. Then, the second
%orthogonality relation implies that, for every $x\in G$ and $y$ in some
%$c\in \mathcal C$, we have
%\begin{equation}
%\sum_{\varphi\in b}\overline{\Phi_{\varphi}(x)}\varphi(y)=\delta_{x^G
%y^G}|\Cen_G(x)|.
%\label{eq:sdnrel}
%\end{equation}
%Choose, for every $c\in \mathcal C$, an element $x_{c}\in c$, and denote
%by $T$ the diagonal matrix whose entries are $|C_G(x_c)|$ for
%$c\in\mathcal C$. Write $X=(\Phi_{\varphi}(x_c))_{\varphi\in b,\,c\in\mathcal
%C}$ and $Y=(\varphi(x_c))_{\varphi\in b,\,c\in\mathcal C}$. Then
%Equation~(\ref{eq:sdnrel}) gives
%$^t\overline{X}YT^{-1}=I$, implying that $YT^{-1}{}^t\overline{X}=I$. In
%particular, we have
%$$\cyc{\Phi_{\varphi},\vartheta}_G=\delta_{\varphi\vartheta}.$$
%Moreover,
%for all $x$ such that $x^G\notin \mathcal C$, we have
%$$\sum_{\varphi\in b}\overline{\Phi_{\varphi}(x)}\varphi=0.$$ Since $b$
%is free, we deduce that $\Phi_{\varphi}(x)=0$, and
%$\Phi_{\varphi}\in\Z\Irr(G)^{\mathcal C}$.
This proves (i).

By (i), we clearly have $\Z b^{\vee}\subseteq \Z\Irr(G)\cap\Z
\Irr(G)^{\mathcal C}$. Conversely, suppose that $\psi$ is a generalized
character vanishing on the elements $x$ such that $x^G\notin\mathcal C$.
Then
$$\psi=\sum_{\varphi\in b}\cyc{\psi,\varphi}_G\,\Phi_{\varphi}.$$
Since $b\subset\Z\Irr(G)^{\mathcal C}$, for every $\varphi\in b$, there
are integers $a_{\varphi\chi}$ (not necessarily unique) such that 
$$\varphi=\sum_{\chi\in\Irr(G)}a_{\varphi\chi}\res_C(\chi).$$
Define 
$$\psi_{\varphi}=\sum_{\chi\in\Irr(G)}a_{\varphi\chi}\chi\in\Z\Irr(G).$$
Then $\res_C(\psi_{\varphi})=\varphi$. Moreover,
$\psi\in\Z\Irr(G)^{\mathcal C}$. It follows that
$$\cyc{\psi,\varphi}_G=\cyc{\psi,\psi_{\varphi}}_G,$$
which is an integer 
because $\psi\in\Z\Irr(G)$ and (ii) holds.
\end{proof}

Now, we introduce a graph as follows. The vertex set is $\Irr(G)$ and
two vertices $\chi$ and $\chi'$ are linked by an edge, if there is
$\varphi\in b$ such that
$d_{\chi\varphi}\neq 0$ and $d_{\chi'\varphi}\neq 0$. The connected components of this graph are called the $\mathcal C$-blocks of
$G$. 

\begin{remark}
\label{rk:uniciteblock}
Note that the $\mathcal C$-blocks of $G$ depend on the choice of the
$\Z$-basis $b$ of $\Z\Irr(G)^{\mathcal C}$.
\end{remark}

If $B$ is a union of $\mathcal C$-blocks of $G$, we write $\Irr(B)$
for the subset of $\Irr(G)$ corresponding to the vertices
of $B$, and $b_B$ for the set of elements of $b$ which give edges
in $B$. We set $b_B^{\vee}=\{\Phi_{\varphi}\,|\,\varphi\in b_B\}$. Note that
$b_B^{\vee}$ is the dual basis of $b_B$ (when $b_B$ is viewed as a basis
of the $\C$-vector space $\C b_B$) with respect to $\cyc{\,,}_G$.

We may (and do) order the elements of $\Irr(G)$ and $b$ in such a way that, if the rows and columns of $D=(d_{\chi\varphi})_{\chi\in\Irr(G),\varphi\in b}$ are ordered correspondingly, then $D$ is a block-diagonal matrix, and
each (diagonal) block $D_B$ of $D$ corresponds to a $\mathcal C$-block $B$ of $G$.

\begin{corollary}\label{blocs}
With the notation as above, for every $\mathcal C$-block $B$ of $G$, we
have $\Phi_{\varphi}=\sum\limits_{\chi\in \Irr(B)}d_{\chi\varphi}\chi$
for all $\varphi\in b_B$, and
$$\Z\Irr(B)\cap\Z\Irr(G)^{\mathcal C}=\Z b_B^{\vee}.$$
\end{corollary}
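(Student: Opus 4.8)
The plan is to deduce the corollary from Proposition~\ref{prop:mod} by exploiting the block-diagonal shape of the matrix $D$, the point being simply that all the data in sight splits along $\mathcal C$-blocks. As a preliminary remark, note that no column of $D$ is identically zero (otherwise the $\res_C(\chi)$ would span a proper $\Z$-submodule of $\Z\Irr(G)^{\mathcal C}=\Z b$), so every $\varphi\in b$ satisfies $d_{\chi\varphi}\neq 0$ for some $\chi$, and if $d_{\chi\varphi}\neq 0\neq d_{\chi'\varphi}$ then $\chi,\chi'$ are joined by an edge; hence each $\varphi\in b$ is attached to a unique $\mathcal C$-block, and $b$ is the disjoint union of the $b_{B'}$.

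First I would establish $\Phi_{\varphi}=\sum_{\chi\in\Irr(B)}d_{\chi\varphi}\chi$ for $\varphi\in b_B$. By Proposition~\ref{prop:mod}(i), $\Phi_{\varphi}=\sum_{\chi\in\Irr(G)}d_{\chi\varphi}\chi$; and for $\varphi\in b_B$ the column of $D$ indexed by $\varphi$ lies inside the diagonal block $D_B$, i.e.\ $d_{\chi\varphi}=0$ whenever $\chi\notin\Irr(B)$, so the sum truncates to $\Irr(B)$. Applying the same observation to the second expression in~(\ref{eq:pim}) gives $\Phi_{\varphi}=\sum_{\chi\in\Irr(B)}d_{\chi\varphi}\res_C(\chi)$. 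This already yields one inclusion: each $\Phi_{\varphi}$ with $\varphi\in b_B$ lies in $\Z\Irr(B)$ by the first formula and in $\Z\Irr(G)^{\mathcal C}=\res_C(\Z\Irr(G))$ by the second, and since $\Z\Irr(B)\cap\Z\Irr(G)^{\mathcal C}$ is a $\Z$-submodule of $\C\Irr(G)$ we get $\Z b_B^{\vee}\subseteq\Z\Irr(B)\cap\Z\Irr(G)^{\mathcal C}$.

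For the reverse inclusion, take $\psi\in\Z\Irr(B)\cap\Z\Irr(G)^{\mathcal C}$. Proposition~\ref{prop:mod}(ii) gives $\psi\in\Z b^{\vee}$, say $\psi=\sum_{\varphi\in b}n_{\varphi}\Phi_{\varphi}$ with $n_{\varphi}\in\Z$. Grouping this sum according to the $\mathcal C$-block of each $\varphi$ and using the formula just proved, the partial sum $\sum_{\varphi\in b_{B'}}n_{\varphi}\Phi_{\varphi}$ lies in $\C\Irr(B')$ for every block $B'$; since $\psi\in\C\Irr(B)$ and $\C\Irr(G)=\bigoplus_{B'}\C\Irr(B')$, the partial sums over $b_{B'}$ with $B'\neq B$ all vanish. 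As $b^{\vee}$ is a $\C$-basis of $\C\Irr(G)^{\mathcal C}$, its elements are linearly independent, so $n_{\varphi}=0$ for all $\varphi\notin b_B$, and therefore $\psi=\sum_{\varphi\in b_B}n_{\varphi}\Phi_{\varphi}\in\Z b_B^{\vee}$.

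The only point requiring care is this last step, where one uses that every element of $b$ belongs to exactly one $b_{B'}$ (so the grouping is legitimate) and that the $\Phi_{\varphi}$ are linearly independent; both are guaranteed, the former by the preliminary remark and the block-diagonal ordering of $D$, the latter because $b^{\vee}$ is by construction a basis of $\C\Irr(G)^{\mathcal C}$. Beyond that the corollary is a routine transcription of Proposition~\ref{prop:mod} to the sub-data indexed by a single block $B$, so I do not expect any genuine obstacle.
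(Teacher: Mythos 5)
Your proof is correct, and it fills in exactly the argument the paper leaves implicit: the corollary is stated without proof as an immediate consequence of Proposition~\ref{prop:mod} together with the block-diagonal structure of $D$, which is precisely what you exploit (column supported on a single block for the first formula, then Proposition~\ref{prop:mod}(ii) plus the decomposition $\C\Irr(G)=\bigoplus_{B'}\C\Irr(B')$ and linear independence of $b^{\vee}$ for the reverse inclusion). Your preliminary observation that no column of $D$ vanishes, so that $b=\bigsqcup_{B'}b_{B'}$, is a worthwhile detail the paper does not spell out.
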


\begin{corollary}\label{critere}
With the above notation, let $\chi,\,\psi\in \Irr(G)$ and
$\varphi,\,\vartheta \in b$ be such that $\cyc{\varphi,\vartheta}_G\neq 0$ and
$d_{\chi\varphi}\neq 0\neq d_{\psi\vartheta}$. Then $\chi$ and $\psi$
lie in the same $\mathcal{C}$-block.
\end{corollary}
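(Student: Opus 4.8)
The plan is to show that the block-diagonal shape of $D=(d_{\chi\varphi})$ forces the Gram matrix of $b$ with respect to $\cyc{\,,}_G$ to be block-diagonal in the same way, after which the statement is immediate from the definition of the $\mathcal C$-blocks.

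First I would recall the set-up described just before Corollary~\ref{blocs}: one may order $\Irr(G)$ and $b$ so that $D$ becomes block-diagonal, which simultaneously partitions the rows as $\Irr(G)=\bigsqcup_B\Irr(B)$ and the columns as $b=\bigsqcup_B b_B$, indexed by the $\mathcal C$-blocks $B$ of $G$, with $d_{\chi\varphi}=0$ unless $\chi\in\Irr(B)$ and $\varphi\in b_B$ for one and the same $B$. Thus the hypothesis $d_{\chi\varphi}\neq 0$ forces $\chi\in\Irr(B)$ and $\varphi\in b_B$ for some $\mathcal C$-block $B$, and similarly $d_{\psi\vartheta}\neq 0$ forces $\psi\in\Irr(B')$ and $\vartheta\in b_{B'}$ for some $\mathcal C$-block $B'$. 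It therefore suffices to prove that $B=B'$, since then $\chi,\psi\in\Irr(B)$ lie in the same $\mathcal C$-block, as desired.

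The key point is that, for every $\mathcal C$-block $B$, each element of $b_B$ is a $\C$-linear combination of characters in $\Irr(B)$. Indeed, by Corollary~\ref{blocs} we have $\Phi_{\varphi}=\sum_{\chi\in\Irr(B)}d_{\chi\varphi}\chi\in\C\Irr(B)$ for every $\varphi\in b_B$; and since $b_B^{\vee}=\{\Phi_{\varphi}\mid\varphi\in b_B\}$ is, by construction, the dual basis of $b_B$ viewed as a basis of the $\C$-vector space $\C b_B$, it is in particular a basis of $\C b_B$, so $\C b_B=\C b_B^{\vee}\subseteq\C\Irr(B)$. Now the subsets $\Irr(B)$, as $B$ ranges over the $\mathcal C$-blocks of $G$, are pairwise disjoint subsets of the orthonormal basis $\Irr(G)$ of $\C\Irr(G)$, so $\C\Irr(B)$ and $\C\Irr(B')$ are orthogonal with respect to $\cyc{\,,}_G$ whenever $B\neq B'$. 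Consequently, if $B\neq B'$ then $\cyc{\varphi,\vartheta}_G=0$ for all $\varphi\in b_B$ and $\vartheta\in b_{B'}$; equivalently, $\cyc{\varphi,\vartheta}_G\neq 0$ forces $B=B'$, which completes the argument.

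I do not expect a genuine obstacle here; the only mild care needed is in the linear-algebra step identifying $\C b_B$ with $\C b_B^{\vee}$ (a basis and its dual basis span the same subspace), together with tracking that the block-diagonal ordering of $D$ partitions the rows $\Irr(G)$ and the columns $b$ compatibly. Everything else is a direct unwinding of the definition of the $\mathcal C$-block graph and of Corollary~\ref{blocs}.
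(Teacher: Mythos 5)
Your argument is correct, but it reaches the key intermediate fact --- that $\cyc{\varphi,\vartheta}_G=0$ whenever $\varphi\in b_B$ and $\vartheta\in b_{B'}$ with $B\neq B'$ --- by a genuinely different route than the paper. The paper's proof is a self-contained matrix computation: expanding $\delta_{\varphi\theta}=\cyc{\Phi_{\varphi},\theta}_G$ yields $I={}^t\!DDK$ for the Gram matrix $K=(\cyc{\varphi,\theta}_G)_{\varphi,\theta\in b}$, so $K^{-1}={}^t\!DD$ is block-diagonal along the $\mathcal C$-blocks, hence so is $K$. You instead argue through subspaces: Corollary~\ref{blocs} places $b_B^{\vee}$ inside $\C\Irr(B)$, the identification $\C b_B=\C b_B^{\vee}$ then gives $\C b_B\subseteq\C\Irr(B)$, and cross-block orthogonality of the spaces $\C\Irr(B)$ finishes the job; the first and last steps (that $d_{\chi\varphi}\neq0$ forces $\chi\in\Irr(B)$ and $\varphi\in b_B$ for one and the same $B$, and the concluding deduction) coincide with the paper's. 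The only point to watch is the step $\C b_B=\C b_B^{\vee}$: this is not ``by construction'' (the $\Phi_{\varphi}$ are defined as the dual basis of $b$ in all of $\C\Irr(G)^{\mathcal C}$), but is exactly the content of the paper's unproved ``Note'' preceding Corollary~\ref{blocs}, so relying on it is legitimate; if you want it independently, it follows from Proposition~\ref{prop:mod}(i) together with the block-diagonality of $D$, since for $\varphi\in b_B$ one has $\Phi_{\varphi}=\sum_{\chi\in\Irr(B)}d_{\chi\varphi}\res_C(\chi)$ and $\res_C(\chi)=\sum_{\vartheta\in b_B}d_{\chi\vartheta}\vartheta\in\C b_B$ for $\chi\in\Irr(B)$, whence $b_B^{\vee}\subseteq\C b_B$ and a dimension count gives equality. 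What each approach buys: the paper's Gram-matrix identity is completely self-contained and also records $K^{-1}={}^t\!DD$ explicitly, while your subspace argument is conceptually cleaner and makes visible the stronger structural fact $\C b_B\subseteq\C\Irr(B)$, i.e.\ that each block's part of the basis $b$ lives inside the span of that block's ordinary characters.
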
  

\begin{proof}
Let $\varphi,\,\theta\in b$. By Proposition~\ref{prop:mod}(i), we have
\begin{eqnarray*}\delta_{\varphi\,\theta}=\cyc{\Phi_{\varphi},\theta}_G & =& 
\sum_{\chi\in\Irr(G)}d_{\chi\varphi}\cyc{\res_C(\chi), \theta }_G \\  & = &
\sum_{\chi\in\Irr(G)}\left (\sum_{\eta\in
b}d_{\chi\varphi}d_{\chi\eta}\right)\cyc{\eta,\theta}_G \\ & = &
\sum_{\eta\in
b}\left (\sum_{\chi\in\Irr(G)}d_{\chi\varphi}d_{\chi\eta}\right)\cyc{\eta,
\theta}_G.\end{eqnarray*}
Now, if we write $K=(\cyc{\varphi,\theta}_G)_{\varphi,\theta\in b}$, then the
preceding equation gives $I=^t\!\!DDK$. Thus, $K$ is invertible and
$K^{-1}=^t\!\!DD$.  Furthermore, $D$ is a block-diagonal matrix. Hence,
$K^{-1}$ also has a block-diagonal structure. More precisely, the blocks
of $K^{-1}$ are the $^t\!D_BD_B$'s for all $\mathcal C$-blocks $B$ of $G$. It
follows that $K$ has the same block-diagonal structure as $K^{-1}$. In
particular, if $\cyc{\varphi,\theta}_G\neq 0$, then $\varphi$ and $\theta$ lie
in the same $\mathcal C$-block of $G$.

Our assumption that $\cyc{\varphi,\vartheta}_G\neq 0$ therefore implies that $\vartheta$ and
$\varphi$ lie in a common $\mathcal C$-block $B$ of $G$. By the definition of $\mathcal C$-blocks, this means that $\varphi$ and
$\vartheta$ correspond to some subsets $c_{\varphi}$ and $c_{\vartheta}$ of edges in a
connected component $B$ of the graph previously introduced. Moreover,
$\chi$ (respectively $\psi$) is a vertex of some edge in $c_{\varphi}$ (respectively in
$c_{\vartheta}$), because $d_{\chi\varphi}\neq 0$ (respectively
$d_{\psi\vartheta}\neq 0$). Therefore $\chi,\,\psi\in B$.
\end{proof}

\subsection{MN-Restriction}
We fix a set of $G$-conjugacy classes $\mathcal C$ and a union of
$\mathcal C$-blocks $B$ of $G$, and denote by $C$ the corresponding set
of elements as in Equation~(\ref{eq:ele}). 

\begin{definition}\label{defMN}
We say that $G$ has an MN-structure with respect to $\mathcal C$ and $B$, 
if the following properties hold.
\begin{enumerate}
\item[1.] There is a subset $S\subseteq G$ containing $1$ and stable under $G$-conjugation.
% conjugacy 
% classes $\Lambda$ containing the trivial class. 
% and whose union is $S$, such that
% \item We have $1\in S$ and 
% $$S\bigcap\bigcup_{c\in\mathcal C}c=\{1\}.$$
\item[2.] There is a bijection between a subset $A\subseteq S\times C$
and $G$ (the image of $(x_S,x_{C})\in A$ will be denoted by
$x_S\pd x_{C}$), such that for $(x_S,x_{C})\in A$ 
$$^g(x_S\pd x_{C})=({}^gx_S)\pd ({}^gx_{C})\quad\textrm{
and}\quad
x_{S}\pd x_C=x_Sx_{
C}=x_{C}x_S.$$
Moreover, for all $x_S\in S$ and $x_C\in C$, we have $(x_S,1)\in A$ and
$(1,x_C)\in A$.
% \item[3.] For $x,\,x'\in G$, we say that $x$ and $x'$ have same
% type if $x_S$ and $x'_{S}$ are conjugate. 
% Then, for every $x\in G$, $x$ and $x^{-1}$ have same type.
\item[3.] 
% Let $\lambda\in\Lambda$ and $x_{\lambda}\in\lambda$. Then there is
% a subgroup $G_{\lambda}\leq \Cen_G(x_{\lambda})$ containing
% $\Cen_G(x_\lambda)\cap \mathcal C$, and a homomorphism 
% $r^{\lambda}:\Z B\rightarrow \Z\Irr(G_{\lambda})$ such that
% $$r^{\lambda}(\chi)(y)=\chi(x_{\lambda}y)\quad\textrm{for all }\chi\in\Z
% B\ \textrm{and }y\in
% G_{\lambda}.$$
For $x_S\in S$, there is a subgroup $G_{x_S}\leq \Cen_G(x_S)$ such that
$$G_{x_S}\cap C=\{x_C\in C\ |\ (x_S,x_C)\in A\}.$$
%To simplify the notation, 
For $x_S\in S$, we denote by $\mathcal C_{x_S}$ the set of $G_{x_S}$-conjugacy classes of $G_{x_S}\cap C$. 
\item[4.]
For $x_S\in S$, there is a union of $\mathcal C_{x_S}$-blocks $B_{x_S}$ of
$G_{x_S}$ and a 
homomorphism
$r^{x_S}:\C\Irr(B)\rightarrow \C\Irr(B_{x_S})$ satisfying
$$r^{x_S}(\chi)(x_C)=\chi(x_S\pd x_C)\quad\textrm{for all }\chi\in\C
\Irr(B)\ \textrm{and }(x_S,x_C)\in A.$$
Moreover, we assume that $G_1=G$, $B_1=B$ and $r^1=\operatorname{id}$.
\end{enumerate}
\end{definition}

% \begin{remark}
% \label{rk:choixbloc}
% \end{remark}
% \begin{remark}\label{rk:not}
% Note that, by Def~\ref{defMN}(2), we can choose the groups $G_{x_S}$
% such that ${}^gG_{x_S}=G_{ {}^gx_S}$. In the following, we always make
% this choice. In particular, for every $y\in G_{x_S}$, we have
% $$r^{ {}^gx_S}(\chi)({}^g y)=r^{x_S}(\chi)(y).$$
% Moreover,
% to simplify the notation, for $x_S\in S$, we again will denote the
% $G_{x_S}$-classes of $G_{x_S}\cap \mathcal C$ by $\mathcal C$. Finally,
% we denote by $B_{x_S}$ a union of $\mathcal C$-blocks of $G_{x_S}$ such
% that $r^{x_S}(\Z\Irr(B))\subseteq \Z\Irr(B_{x_S})$.
% \end{remark}
% 
% \begin{remark}
% \label{rk:triv}
% We can assume that $G_1=G$ and $r^1=\operatorname{id}$. 
% \end{remark}

In the rest of this subsection, we suppose that $G$ has an MN-structure.
For $x_S\in S$, we define a homomorphism 
$d_{x_S}:\C \Irr(B)\rightarrow \C\Irr(B_{x_S})^{\mathcal C_{x_S}}$ 
by setting
\begin{equation}
d_{x_S}(\chi)=\res_C \circ r^{x_S}(\chi)\quad\textrm{for }
\chi\in\Irr(B).
\label{eq:dlambda}
\end{equation}

The $\C$-basis of $\C\Irr(B_{x_S})^{\mathcal C_{x_S}}$ used to define
the union of
$\mathcal C_÷{x_S}$-blocks $B_{x_S}$ of $G_{x_S}$ (see
Remark~\ref{rk:uniciteblock}) is denoted by $b_{x_S}$,
%Moreover, we fix some $\C$-basis $b_{x_S}$ of
%$\C\Irr(B_{x_S})^{\mathcal C}$ 
and we write
$$e_{x_S}:\C
b_{x_S}^{\vee}\rightarrow\C\Irr(B)$$ for the adjoint map of
$d_{x_S}$ with respect to $\cyc{\,,}_G$.
% , where
% $\Irr(\overline{B})=\{\overline{\chi}\,|\,\chi\in\Irr(B)\}$.

Take any $y=y_S\pd y_C\in G$ with $y_S\in x_S^G$. For any $t\in G$ such that
${}^ty_S=x_S$, one has ${}^ty_C\in G_{x_S}$ by
Definition~\ref{defMN}(3), and  
the set $X$ of elements
${}^ty_C$ with $t\in G$ such that ${}^ty_S=x_S$ is stable under
$G_{x_S}$-conjugation (because $G_{x_S}\subseteq \Cen_G(x_S)$). 
We denote by $\mathcal E_{y_S,y_C}^{x_S}$ a set of representatives of 
the $G_{x_S}$-classes of $X$.

\begin{lemma}
With the notation above, for any $\phi\in\C b_{x_S}^{\vee}$ and
$(y_S,y_C)\in A$, we have
$$e_{x_S}(\phi)(y_{S}\pd y_{C})=\left\{
\begin{array}{ccl}
|\Cen_G(y_S\pd y_C)|\displaystyle\sum\limits_{x_C\in \mathcal E_{y_S,y_C}^{x_S}
}\dfrac{\phi(x_C)}{|\Cen_{G_{x_S}}(x_C)|} & & \textrm{if } \; y_S^G=x_S^G, \\
0 & & \textrm{otherwise}.
\end{array}\right.$$
% where $\mathcal [y_C]$ denotes a set of representatives of the
% $G_{x_S}$-classes of $G_{x_S}\cap y_C^G$.
\label{valeurE}
\end{lemma}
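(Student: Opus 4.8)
The statement computes the value of $e_{x_S}(\phi)$ on an arbitrary element $y_S\pd y_C$ of $G$ (written via the bijection in Definition~\ref{defMN}(2)). The natural strategy is to expand $\phi$ in the dual basis $b_{x_S}^{\vee}$, reduce to the case $\phi=\Phi_{\varphi}$ for $\varphi\in b_{x_S}$, and then unwind the definition of the adjoint map $e_{x_S}$ of $d_{x_S}=\res_C\circ r^{x_S}$ against $\langle\,,\,\rangle_G$ by evaluating its inner product with the "delta function" supported on the class of $y_S\pd y_C$.

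First I would recall that for a class function $\eta\in\C\Irr(G)$ and $g\in G$, one has $\eta(g)=\langle\eta,\mathbf 1_{g^G}\rangle_G\cdot|\Cen_G(g)|$, where $\mathbf 1_{g^G}$ is the indicator of the class $g^G$. Since $y_S\pd y_C=y_Sy_C$ lies in $G$, this lets me write
\[
e_{x_S}(\phi)(y_S\pd y_C)=|\Cen_G(y_S\pd y_C)|\,\langle e_{x_S}(\phi),\mathbf 1_{(y_Sy_C)^G}\rangle_G .
\]
By definition of the adjoint, $\langle e_{x_S}(\phi),\mathbf 1_{(y_Sy_C)^G}\rangle_G=\langle\phi,d_{x_S}(\mathbf 1_{(y_Sy_C)^G})\rangle_{G_{x_S}}$ — but here I must be a little careful, since $e_{x_S}$ is adjoint to $d_{x_S}:\C\Irr(B)\to\C\Irr(B_{x_S})^{\mathcal C}$ with respect to $\langle\,,\,\rangle_G$ on both sides, so the computation should be phrased as: for any $\chi\in\C\Irr(B)$, $\langle e_{x_S}(\phi),\chi\rangle_G=\langle\phi,d_{x_S}(\chi)\rangle_G=\langle\phi,\res_C(r^{x_S}(\chi))\rangle_G$. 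Taking $\chi=\mathbf 1_{(y_Sy_C)^G}$ (extended to all of $\C\Irr(B)$ via projection, which does not affect the pairing since $e_{x_S}(\phi)$ already lies in $\C\Irr(B)$), I get that $e_{x_S}(\phi)(y_Sy_C)$ is $|\Cen_G(y_Sy_C)|$ times a sum over the class $(y_Sy_C)^G\cap C$ of values of $\phi$ weighted by $\langle\,,\,\rangle_G$-normalization, against $r^{x_S}(\mathbf 1_{(y_Sy_C)^G})$ evaluated at those points.

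The crux of the calculation is then to identify which elements $x_C\in C$ satisfy $r^{x_S}(\mathbf 1_{(y_Sy_C)^G})(x_C)\neq 0$, and to count them. By Definition~\ref{defMN}(4), $r^{x_S}(\mathbf 1_{(y_Sy_C)^G})(x_C)=\mathbf 1_{(y_Sy_C)^G}(x_S\pd x_C)$, which is nonzero exactly when $x_S x_C$ is $G$-conjugate to $y_S y_C$; using the commuting Jordan-type decomposition $x_Sx_C=x_Sx_C$ of Definition~\ref{defMN}(2) together with $x_S\in S$, $x_C\in C$ (and the analogous properties for $y$), this forces $x_S^G=y_S^G$ — this is where the case distinction in the statement comes from, and it is the step I expect to require the most care, since one needs that the $S$-part and $C$-part of an element $x_S\pd x_C$ are determined up to conjugacy by the element itself (this should follow from the bijectivity of the map $A\to G$ and its $G$-equivariance). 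In the case $y_S^G=x_S^G$, I may assume $y_S=x_S$ after conjugating, and then the relevant $x_C$ range over $\{z\in C: z\in (x_Sy_C)^G,\ (x_S,z)\in A\}$; by Definition~\ref{defMN}(3) this is a subset of $G_{x_S}\cap C$, and one checks it is a single $G_{x_S}$-conjugacy class, namely $y_C^{G_{x_S}}$, with $\phi$ constant equal to $\phi(x_C)=\phi(y_C)$ on it (here I am using $\phi\in\C\Irr(B_{x_S})^{\mathcal C}$ is a class function). The final step is the counting/normalization bookkeeping: summing the constant $\phi(x_C)$ over the $|y_C^{G_{x_S}}|=|G_{x_S}|/|\Cen_{G_{x_S}}(y_C)|$ relevant points, each weighted by $1/|G|$ from $\langle\,,\,\rangle_G$, and multiplying by $|\Cen_G(y_Sy_C)|$, must be arranged to collapse to $\dfrac{|\Cen_G(y_S\pd y_C)|}{|\Cen_{G_{x_S}}(y_C)|}\phi(x_C)$. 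I would double-check this ratio by a direct orbit-counting argument — comparing the $G$-orbit of $x_Sy_C$ with its intersection with $\{x_S\}\times C$, which is acted on by $\Cen_G(x_S)$, and noting $G_{x_S}\leq\Cen_G(x_S)$ — rather than trusting the formula blindly; this is really the only place where an error is likely.
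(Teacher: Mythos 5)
Your proposal is correct and follows essentially the same route as the paper: the paper writes $e_{x_S}(\phi)(y_S\pd y_C)=|\Cen_G(y_S\pd y_C)|\,\cyc{e_{x_S}(\phi),\indicatrice_{G,y_S\pd y_C}}_G$, passes by adjointness to $\cyc{\phi,d_{x_S}(\indicatrice_{G,y_S\pd y_C})}_{G_{x_S}}$, and identifies $d_{x_S}(\indicatrice_{G,y_S\pd y_C})$ with $\indicatrice_{G_{x_S},y_C}$ when $y_S^G=x_S^G$ (and with $0$ otherwise) using Definition~\ref{defMN}, which is exactly the computation you outline. The counting step you flag as delicate is settled in the paper by the same standard identity $\cyc{\phi,\indicatrice_{G_{x_S},y_C}}_{G_{x_S}}=\phi(y_C)/|\Cen_{G_{x_S}}(y_C)|$, so no genuinely different ingredient is involved.
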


\begin{proof}
We denote by $\indicatrice_{G,x}$ the indicator function of the conjugacy class of $x$ in $G$. We have
\begin{eqnarray*}
e_{x_S}(\phi)(y_S\pd y_C)&=&|\Cen_{G}(y_S\pd
y_C)|\cyc{e_{x_S}(\phi),\indicatrice_{G,y_S\pd y_C}}_G\\
&=&|\Cen_G(y_S\pd
y_C)|\cyc{\phi,d_{x_S}(1_{G,y_S\pd y_C})}_{G_{x_S}},
\end{eqnarray*}
because $d_{x_S}$ and $e_{x_S}$ are adjoint.
Moreover, by Definition~\ref{defMN}(3),% and the fact that $y_C\in C$, 
we deduce that
$$d_{x_S}(\indicatrice_{G,y_S\pd y_C})=\sum_{x_C\in\mathcal
E_{y_S,y_C}^{x_S}}\indicatrice_{G_{x_S},
x_C}$$ if $y_S^G=x_S^G$ and $0$ otherwise. This implies in particular
that $e_{x_S}(\phi)(y_S\pd y_C)=0$ whenever $y_S^G\neq x_S^G$.
Now, suppose that $y_S^G=x_S^G$. Then
\begin{eqnarray*}
e_{x_S}(\phi)(y_S\cdot y_C)&=&|\Cen_G(y_S\pd
y_C)|\sum_{x_C\in \mathcal E_{y_S,y_C}^{x_S}}\cyc{\phi,\indicatrice_{G_{x_S},x_C}}_{G_{x_S}}\\
&=&|\Cen_G(y_S\pd y_C)|\sum_{x_C\in \mathcal E_{y_S,y_C}^{x_S}}\frac{\phi(x_C)}{|\Cen_{G_{x_S}}(x_C)|},
\end{eqnarray*}
as required.
\end{proof}

By Definition~\ref{defMN}(1), $S= \bigcup_{ \lambda \in \Lambda}
\lambda$, where each $\lambda \in \Lambda$ is a conjugacy class of $G$.
For each $\lambda\in\Lambda$, we choose a representative
$x_{\lambda}\in\lambda$, and we let $G_{\lambda}=G_{x_{\lambda}}$,
$B_{\lambda}=B_{x_{\lambda}}$, $r^{\lambda}=r^{x_{\lambda}}$, $\mathcal
C_{\lambda}=\mathcal
C_{x_S}$ and $d_{\lambda}=d_{x_{\lambda}}$. 

Let $g=g_S\pd g_C\in G$. In the following, we say that $g$ is of type 
$\lambda$ if $g_S\in\lambda$. Furthermore, we set $\mathcal
E_{g_S,g_C}^{\lambda}=\mathcal E_{g_S,g_C}^{x_{\lambda}}$.

% Note that, by Def~\ref{defMN}(2), for
% every $g\in G$, we can choose $G_{ {}^gx_{\lambda}}={}^gG_{\lambda}$.
% For $x_S\in S$, 
% we fix a $\Z$-basis $b_{\lambda}$ of
% $\Z\Irr(G_{\lambda})$.
Now, we set $$d_G:\C \Irr(B)\rightarrow \bigoplus_{\lambda\in\Lambda}
\C\Irr(B_{\lambda})^{\mathcal C_{\lambda}},\quad\chi\mapsto
\sum_{\lambda\in\Lambda}d_{\lambda}(\chi).$$

For $\lambda\in\Lambda$,
% For $g\in G$, there is a unique $(g_S,g_C)\in
% A$ with $g=g_S\cdot g_C$. Assume $g_S\in\lambda$, and let $[g_C]$ be a
% set of representatives of the $G_{\lambda}$-classes of $G_{\lambda}\cap
% g_C^G$. By Definition~\ref{defMN}(3), the set $[g_C]$ is non-empty. 
we define
$l_{\lambda}:\C\Irr(G_{\lambda})^{\mathcal C_{\lambda}}\rightarrow \C\Irr(G)$ by
setting
\begin{equation}
l_{\lambda}(\psi)(g)=\left\{
\begin{array}{l}\dfrac{1}{|\mathcal E_{g_S,g_C}^{\lambda}|}\sum\limits_{x_C\in
\mathcal E_{g_S,g_C}^{\lambda}}\psi(x_C)\quad\textrm{if }g_S^G=\lambda\\
0\quad\textrm{otherwise}
\end{array}\right.,
\label{eq:llambda}
\end{equation}
and put 
$$l_G:\bigoplus_{\lambda\in\Lambda}\C\Irr(G_{\lambda})^{\mathcal
C_{\lambda}}\rightarrow\C\Irr(G),\quad
\sum_{\lambda\in\Lambda}\psi_{\lambda}\mapsto\sum_{\lambda\in\Lambda}l_{\lambda}(\psi_{\lambda}).$$

\begin{remark}
\label{rk:fonctionCstable}
% In the following, if $\mathcal F$ is a subset of class functions on
% $G_{\lambda}$, then we denote by $\mathcal F(\mathcal C)$ the subset of
% $f\in \mathcal F$ such that $f$ is constant on $c\cap G_{\lambda}$ for
% all $c\in\mathcal C$.
Let $\psi\in\C\Irr(G_{\lambda})^{\mathcal
C_{\lambda}}$, and suppose that $\psi$ is constant on $\mathcal
E_{x_{\lambda},y}^{\lambda}$ for every
$y\in C\cap G_{\lambda}$. % is constant on $c\cap G_{\lambda}$ for
%all $c\in\mathcal C$.
Then $l_{\lambda}(\psi)(g)=0$ except when
$g_S\in\lambda$. In this case, we have $l_{\lambda}(\psi)(g)=\psi(x_C)$,
where $x_C$ is any element of $\mathcal E_{g_S,g_C}^{\lambda}$.%$G_{\lambda}$ which is $G$-conjugate to $g_C$.
\end{remark}

\begin{lemma}
The homomorphism $d_{G}$ is injective, and the map $l_G\circ
d_G$ is the identity on $\C\Irr(B)$.
\label{applicationD}
\end{lemma}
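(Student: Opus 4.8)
The plan is to establish the identity $l_G\circ d_G=\operatorname{id}$ on $\C\Irr(B)$ by a direct evaluation on elements of $G$; the injectivity of $d_G$ then comes for free, since a linear map admitting a left inverse has trivial kernel. As $d_G$ and $l_G$ are $\C$-linear and $\Irr(B)$ spans $\C\Irr(B)$, it suffices to prove $(l_G\circ d_G)(\chi)=\chi$ for each $\chi\in\Irr(B)$.

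Fix such a $\chi$ and an element $x\in G$. Using the bijection $A\to G$ of Definition~\ref{defMN}(2), write $x=x_S\pd x_C$ with $(x_S,x_C)\in A$, $x_S\in S$ and $x_C\in C$, and set $\lambda=x_S^G\in\Lambda$. By the definitions of $l_G$ and of $l_\lambda$ in~(\ref{eq:llambda}), only the $\lambda$-component of $d_G(\chi)$ contributes, so that $(l_G\circ d_G)(\chi)(x)=l_\lambda(d_\lambda(\chi))(x)=d_\lambda(\chi)(x_{\mathcal C})$, where $x_{\mathcal C}$ denotes the $C$-part of a $G$-conjugate of $x$ whose $S$-part equals the fixed representative $x_\lambda$.

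The heart of the argument is then short. Choose $g\in G$ with ${}^gx_S=x_\lambda$ (possible as $x_S,x_\lambda\in\lambda$); by Definition~\ref{defMN}(2) we get ${}^gx=x_\lambda\pd({}^gx_C)$, and by Definition~\ref{defMN}(3), ${}^gx_C\in G_{x_\lambda}\cap C=G_\lambda\cap C$, so $x_{\mathcal C}={}^gx_C$ is a legitimate argument for the class function $d_\lambda(\chi)$ on $G_\lambda$. Since ${}^gx_C\in C$, combining~(\ref{eq:dlambda}) with Definition~\ref{defMN}(4) yields
$$d_\lambda(\chi)({}^gx_C)=\bigl(\res_C\circ r^{x_\lambda}(\chi)\bigr)({}^gx_C)=r^{x_\lambda}(\chi)({}^gx_C)=\chi(x_\lambda\pd {}^gx_C)=\chi({}^gx)=\chi(x),$$
the last two equalities because $x_\lambda\pd {}^gx_C={}^gx$ is $G$-conjugate to $x$ and $\chi$ is a class function. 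Hence $(l_G\circ d_G)(\chi)=\chi$, and the statement follows.

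The one point genuinely needing verification — and the only real (if mild) obstacle — is that $x_{\mathcal C}$, hence the value $l_\lambda(d_\lambda(\chi))(x)$, is independent of the choice of $g$, so that $l_\lambda(d_\lambda(\chi))$ really is a well-defined class function on $G$. This is exactly where the $G$-equivariance of Definition~\ref{defMN}(2) enters: if $h\in\Cen_G(x_\lambda)$ and $(x_\lambda,y_C)\in A$, then ${}^h(x_\lambda\pd y_C)=x_\lambda\pd({}^hy_C)$, whence (using that $\chi$ is a class function) $y_C\mapsto\chi(x_\lambda\pd y_C)=r^{x_\lambda}(\chi)(y_C)$ is constant on $\Cen_G(x_\lambda)$-orbits within $G_\lambda\cap C$; two choices of $g$ differ by left multiplication by an element of $\Cen_G(x_\lambda)$, so the corresponding values of $d_\lambda(\chi)$ coincide. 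Once this is in place, the computation of the third paragraph is unambiguous, $l_G\circ d_G=\operatorname{id}$, and consequently $\Ker d_G=0$, i.e. $d_G$ is injective.
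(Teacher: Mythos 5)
Your proof is correct and follows essentially the same route as the paper: both rest on the key computation $\chi(x)=d_{\lambda}(\chi)(x_C)$ for $x$ conjugate to $x_{\lambda}\pd x_C$, together with the pointwise evaluation of $l_G\circ d_G$ in which only the $\lambda$-component survives. The only cosmetic differences are that you deduce injectivity of $d_G$ from the existence of the left inverse (the paper checks it directly first) and that you spell out the independence of the value of $l_{\lambda}(d_{\lambda}(\chi))$ from the choice of conjugating element, a point the paper leaves implicit.
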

\begin{proof}
% Let $\chi\in\Z\Irr(B)$ be such that $d_G(\chi)=0$. Then for
% every $\lambda'\in\Lambda$, $d_{\lambda'}(\chi)=0$. Fix
% $x\in G$ and write $x=x_S\pd x_{C}$ as in
% Definition~\ref{defMN}(2) for some $x_S\in S$ and $x_C\in C$. For 
% $\lambda=x_S^G$, we in particular have $d_{\lambda}(\chi)({}^gx)=0$,
% where $g$ is such that ${}^gx_S=x_{\lambda}$. By
% Definition~\ref{defMN}(2), we have ${}^g x=x_{\lambda}\pd {}^gx_C$.
% Furthermore, we have $$d_{\lambda}(\chi)({}^gx)=(r^{\lambda})^{\mathcal
% C}({}^gx_{ C})=r^{\lambda}({}^gx_{\mathcal
% C})=\chi(x_{\lambda}{}^gx_{
% C})=\chi(^gx)=\chi(x).$$
% It follows that $\chi(x)=0$ for all $x\in G$, and $d_G$ is injective.
% 
% Note that $$l_G\circ d_G=\sum_{\lambda'\in\Lambda}l_{\lambda'}\circ
% d_{\lambda'}.$$
% Hence, we have
% $$l_G\circ d_G(\chi)(x)=\sum_{\lambda'\in\Lambda}l_{\lambda'}\circ
% d_{\lambda'}(\chi)(x).$$
% As above, set $\lambda=x_S^G$ and let $g\in G$ be such that
% ${}^gx_S=x_{\lambda}$. Then by Eq.~(\ref{eq:llambda}), if
% $\lambda'\neq\lambda$, then $l_{\lambda'}(d_{\lambda'}(\chi))(x)=0$.
% Moreover, we have $$l_{\lambda}\circ
% d_{\lambda}(\chi)({}^gx)=r^{\lambda}(\chi)^{\mathcal C}({}^gx_{
% C})=r^{\lambda}(\chi)({}^gx_{C})=\chi({}^gx)=\chi(x),$$
% as required.
Let $x\in G$. Then by Definition~\ref{defMN}(2), $x$ is $G$-conjugate
to $x_{\lambda}\pd x_C$ for some $\lambda\in \Lambda$ and $x_C\in
C$, and for any $\chi\in\C\Irr(B)$, we then have
\begin{eqnarray*} \chi(x)=\chi(x_{\lambda}x_C) & = &
r^{\lambda}(\chi)(x_C)  \qquad \mbox{(by Definition~\ref{defMN}(4))} \\ & = &
\res_C(r^{\lambda}(\chi))(x_C) \\ & = & d_{\lambda}(\chi)(x_C) \qquad \mbox{(by definition of} \; d_{\lambda}) . \end{eqnarray*}
Now, fix
$\chi\in\C\Irr(B)$ such that $d_G(\chi)=0$. Then, for every
$\lambda'\in\Lambda$, we have $d_{\lambda'}(\chi)=0$. In particular,
$d_{\lambda}(\chi)(x_C)=0$, and it follows that $\chi(x)=0$. Thus $d_G$ is
injective.

Note that $$l_G\circ d_G=\sum_{\lambda'\in\Lambda}l_{\lambda'}\circ
d_{\lambda'}.$$
Hence, for every $\chi\in\C\Irr(B)$, we have
$$l_G\circ d_G(\chi)(x)=\sum_{\lambda'\in\Lambda}l_{\lambda'}\circ
d_{\lambda'}(\chi)(x).$$
By Equation~(\ref{eq:llambda}), if
$\lambda'\neq\lambda$, then $l_{\lambda'}(d_{\lambda'}(\chi))(x)=0$.
On the other hand, since $d_{\lambda}(\chi)$ is constant on 
$\mathcal E_{x_{\lambda},y}^{\lambda}$ for any
$y\in C\cap G_{\lambda}$, %on $c\cap
%G_{\lambda}$ for all $c\in \mathcal C$, 
Remark~\ref{rk:fonctionCstable}
implies $$l_{\lambda}\circ
d_{\lambda}(\chi)(x)=l_{\lambda}\circ d_{\lambda}(\chi)(x_{\lambda}x_C)=d_{\lambda}(\chi)(x_C)=
\res_C(r^{\lambda}(\chi))(x_C)=\chi(x),$$
as required.
\end{proof}

For $\lambda\in\Lambda$, we set $b_{\lambda}=b_{x_{\lambda}}$ and
$e_{\lambda}=e_{x_{\lambda}}$.
The dual of $\bigoplus_{\lambda\in\Lambda}\C\Irr(B_{\lambda})^{\mathcal
C_{x_S}}$
is $\bigoplus_{\lambda\in\Lambda}\C b_{\lambda}^{\vee}$ and the
homomorphism
$$e_{G}:\bigoplus_{\lambda\in\Lambda}\C b_{\lambda}^{\vee}\rightarrow
\C\Irr(B),\quad \sum_{\lambda\in\Lambda}\phi_{\lambda}\mapsto
\sum_{\lambda\in\Lambda}e_{\lambda}(\phi_{\lambda})$$
is the adjoint of $d_G$. 
% \begin{lemma}
% \label{applicationE}
% The homomorphism $e_G$ is injective.
% \end{lemma}
% \begin{proof}
% Let
% $\sum\limits_{\lambda'\in\Lambda}\phi_{\lambda'}\in\bigoplus\limits_{\lambda'\in\Lambda}\C
% b_{\lambda'}^{\vee}$ be such that
% $e_G(\sum\limits_{\lambda'\in\Lambda}\phi_{\lambda'})=0$.
% Take any $\lambda \in \Lambda$, and $x_C\in G_{\lambda} \cap {\mathcal
% C}$. By Definition~\ref{defMN}(3), we have
% $(x_{\lambda},x_C)\in A$, and Lemma~\ref{valeurE} implies that
% $$e_G(\sum_{\lambda'\in\Lambda}\phi_{\lambda'})(x_{\lambda}x_C)=
% \frac{|\Cen_G(x_{\lambda}x_C)|}{|\Cen_{G_{\lambda}}(x_C)|}\phi_{\lambda}(x_C).$$
% This implies that $\phi_{\lambda}(x_C)=0$ for all $x_C\in G_{\lambda} \cap {\mathcal C}$, so that $\phi_{\lambda}=0$. This proves the
% result.
% \end{proof}

\begin{remark}
\label{rk:baseadaptee}
Write
$\Z b_{\lambda}(\mathcal C)$ for the submodule of $\Z b_{\lambda}$
consisting of class functions constant on $\mathcal E_{x_{\lambda},y}^{\lambda}$ for any
$y\in C\cap G_{\lambda}$. % $c\cap
%G_{\lambda}$ for all $c\in \mathcal C$.
Let  $K=\operatorname{rk}_{\Z}(\Z
b_{\lambda}(\mathcal C))$. By the invariant factor decomposition theorem, there
are a $\Z$-basis $\mathfrak b_{\lambda}=\{b_1,\ldots,b_N\}$ of $\Z
b_{\lambda}$ and positive integers $m_1,\ldots,m_K$ 
such that
$m_1|m_2|\cdots|m_K$ and 
$\{m_1 b_1,\ldots,m_K b_K\}$ is a $\Z$-basis of
$\Z b_{\lambda}(\mathcal C)$. Let $1\leq i\leq K$ and $y\in C\cap
G_{\lambda}$. Then for any $t\in \mathcal E_{x_{\lambda},y}^{\lambda}$,
one has $m_ib_i(t)=m_ib_i(y)$ because $m_ib_i\in\Z
b_{\lambda}(\mathcal C)$. 
Since $m_i\neq 0$, we deduce that $b_i(t)=b_i(y)$. Thus, $b_i\in
\Z b_{\lambda}(\mathcal C)$ and $m_i=1$.
In the following, we will write
$\mathfrak b_{\lambda}^{\mathcal C}=\{b_1,\ldots,b_K\}$ for the
$\Z$-basis of $\Z b_{\lambda}(\mathcal C)$ coming from such a construction.
% for the and define the
% map
% $m^{\lambda}:\mathfrak b_{\lambda}^{\mathcal C}\rightarrow \N$ by setting
% \begin{equation}
% \label{eq:invariant}
% m^{\lambda}(m_i^{\lambda}b_i)=m_i^{\lambda}\quad\quad 
% \textrm{for all }1\leq i\leq K.
% \end{equation}
% In particular, the set $\{m^{\lambda}\,(\phi)\phi\,|\,\phi\in\mathfrak
% b_{\lambda}^{\mathcal C}\}$ is a $\Z$-basis of $\Z b_{\lambda}(\mathcal
% C)$ with dual basis
% $\left\{\frac{1}{m^{\lambda}(\phi)}\Phi_{\phi}\,|\,\phi\in \mathfrak
% b_{\lambda}^{\mathcal C}\right\}$, where $\mathfrak
% b_{\lambda}^{\vee}=\{\Phi_{\phi}\,|\,\phi\in \mathfrak b_{\lambda}\}$ is the
% dual basis of $\mathfrak b_{\lambda}$ as in \S\ref{pargen}.
\end{remark}

\subsection{Isometries}\label{pariso}
Let $G$ and $G'$ be two finite groups. We fix $\mathcal C$ (respectively
$\mathcal C'$) a set of conjugacy classes of $G$ (respectively $G'$), and $B$ (respectively
$B'$) a union of $\mathcal C$-blocks of $G$ (respectively $\mathcal
C'$-blocks of $G'$). As above, we write
$$C=\bigcup_{c\in\mathcal C}c\quad\textrm{and}\quad C'=\bigcup_{c'\in \mathcal
C'}c'.$$
We consider the isomorphism
$$\Theta: \left\{ \begin{array}{ccc} \C\Irr(B)\otimes\C\Irr(B')  & \longrightarrow
& \operatorname{End}(\C\Irr(B),\C\Irr(B')) \\ \sum_{\chi, \chi'} \chi\otimes\chi' & \longmapsto
 & \left(\varphi\mapsto\sum_{\chi, \chi'}\cyc{\varphi,\overline{\chi}}_G 
\chi'\right) \end{array} \right. $$
%is a bijective sesquilinear map. Recall that, if

%$$\Theta:W\rightarrow
%\operatorname{End}(\C\Irr(B),\C\Irr(B')),\,\sum\chi\otimes\chi'\mapsto
%\left(\varphi\mapsto\sum\cyc{\varphi,\chi}_G \chi'\right)$$
Note that, if we write $\widehat{f}=\Theta^{-1}(f)$ for any 
$f\in\operatorname{End}(\C\Irr(B),\C\Irr(B'))$, then 
\begin{equation}
\label{eq:chapeau}
\widehat{f}=\sum_{i=1}^r\overline{e_i^{\vee}}\otimes f(e_i),
\end{equation} where
% can
% be obtained as follows. Fix a $\Q$-basis 
$e=(e_1,\ldots,e_r)$ is any  $\C$-basis of
$\C\Irr(B)$ with dual basis $e^{\vee}=(e_1^{\vee},\ldots,e_r^{\vee})$ 
with respect to
$\cyc{\,,}_G$. 
% Then we have
% $\Theta^{-1}(f)=\sum_{i=1}^re_i^*\otimes f(e_i)$. 	
% From now on, we will write 
% \begin{equation}
% \widehat{f}=\Theta^{-1}(f).
% \label{eq:chapeau}
% \end{equation}

\begin{theorem}
Let $G$ and $G'$ be two finite groups. Suppose that
\begin{enumerate}
\item[(1)] The group $G$ (respectively $G'$)
has an MN-structure with respect to $\mathcal C$ and $B$ (respectively
$\mathcal C'$ and $B'$). We keep the same notation
as above, and the object relative to $G'$ are denoted with a `prime'.
\item[(2)] Assume there are subsets $\Lambda_0\subseteq \Lambda$ and 
$\Lambda_0'\subseteq\Lambda'$ such that :
\begin{enumerate}
\item For every $\lambda\in\Lambda$ with $\lambda\not\in\Lambda_0$
(respectively $\lambda'\in\Lambda'$ with $\lambda'\notin\Lambda'_0$), we have
$r^{\lambda}=r'^{\lambda'}=0$.
\item  There is a bijection
$\sigma:\Lambda_0\rightarrow\Lambda_0'$ with $\sigma(\{1\})=\{1\}$ and
for $\lambda\in\Lambda_0$, an
isometry 
$I_{\lambda}:\C\Irr(B_{\lambda})\rightarrow\C\Irr(B'_{\sigma(\lambda)})$
such that 
$$I_{\lambda}\circ r^{\lambda}=r'^{\sigma(\lambda)}\circ I_{\{1\}}.$$
\end{enumerate}
\item[(3)] For $\lambda\in\Lambda_0$, we have
$I_{\lambda}(\C b_{\lambda}^{\vee}) = \C
{b'}_{\sigma(\lambda)}^{\vee}$. We write
$J_{\lambda}=I_{\lambda}|_{\C b_{\lambda}^{\vee}}$.
\end{enumerate}
% We define $$\Theta=\sum_{\chi\in\Irr(B)}\chi\otimes
% I_{\{1\}}(\chi).$$
Then for all $x\in G,\,x'\in G'$, we have
\begin{equation}
\label{eq:th}
\widehat{I}_{\{1\}}(x,x')=\sum_{\lambda\in\Lambda_0}\sum_{\phi\in
\mathfrak b_{\lambda}^{\mathcal C}}
\overline{e_{\lambda}(\Phi_{\phi})(x)}l'_{\sigma(\lambda)}(J_{\lambda}^{*-1}(\phi))(x'),
\end{equation}
where $\mathfrak b_{\lambda}$ and $\mathfrak b_{\lambda}^{\mathcal C}$
are as in Remark~\ref{rk:baseadaptee}, and 
$\mathfrak b_{\lambda}^{\vee}=\{\Phi_{\phi}\, |\, \phi\in \mathfrak b_{\lambda}\}$
is the dual basis of $\mathfrak b_{\lambda}$ as in \S\ref{pargen}.
\label{th:iso}
\end{theorem}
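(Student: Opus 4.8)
The plan is to compute $\widehat{I}_{\{1\}}$ directly from the formula \eqref{eq:chapeau} for $\Theta^{-1}$, using a well-chosen $\C$-basis of $\C\Irr(B)$, and then to rewrite the resulting expression in terms of the maps $d_\lambda$, $e_\lambda$, $l_\lambda$ attached to the MN-structure. Concretely, by Lemma~\ref{applicationD} the map $d_G$ is injective and $l_G\circ d_G=\operatorname{id}$ on $\C\Irr(B)$; dually, by Lemma~\ref{applicationE} the adjoint map $e_G:\bigoplus_\lambda\C b_\lambda^\vee\to\C\Irr(B)$ is injective, and a short adjunction computation shows $d_G\circ e_G$ and $e_G\circ d_G$ interact with $l_G$ in a controlled way. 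The first step is therefore to fix, for each $\lambda\in\Lambda_0$, the basis $b_\lambda^\vee=\{\Phi_\phi\mid\phi\in b_\lambda\}$ of $\C\Irr(B_\lambda)^{\mathcal C}$, pull it back through $e_\lambda$, and assemble these into a spanning family $\{e_\lambda(\Phi_\phi)\}$ of $\C\Irr(B)$; by hypothesis (2)(a), $r^\lambda=0$ for $\lambda\notin\Lambda_0$, so $d_G$ (hence $e_G$) only sees the $\lambda\in\Lambda_0$ components, and one checks this family is actually a basis of $\C\Irr(B)$.

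Next I would identify the dual basis of $\{e_\lambda(\Phi_\phi)\}$ with respect to $\cyc{\,,}_G$. This is where Lemma~\ref{valeurE} does the work: it evaluates $e_{x_S}(\phi)$ on elements $y_S\cdot y_C$ and pins down $e_\lambda(\Phi_\phi)$ up to the explicit centralizer ratios, which is exactly what is needed to match $\langle e_\lambda(\Phi_\phi),\,l_\mu(\psi)\rangle_G$ against the evaluation of $\psi$ (the $l_\lambda$ are, up to the same centralizer factors, the transposes of the $d_\lambda$, by the very definition \eqref{eq:llambda} and the adjunction). So the dual basis vectors will be expressible through the $l_\lambda$ applied to the basis $b_\lambda$, after transport through the $J_\lambda$. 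Then I substitute into \eqref{eq:chapeau}: $\widehat{I}_{\{1\}}=\sum\overline{(\text{dual basis vector})}\otimes I_{\{1\}}(\text{basis vector})$, and use the intertwining relation $I_\lambda\circ r^\lambda=r'^{\sigma(\lambda)}\circ I_{\{1\}}$ from (2)(b) together with hypothesis (3), $J_\lambda=I_\lambda|_{\C b_\lambda^\vee}$ and $I_\lambda(\C b_\lambda^\vee)=\C {b'}_{\sigma(\lambda)}^\vee$, to push $I_{\{1\}}$ across and replace everything on the $G'$-side by primed objects. The relation $I_\lambda\circ r^\lambda=r'^{\sigma(\lambda)}\circ I_{\{1\}}$ commutes with $\res_{C}$ (since $I_\lambda$ preserves $\C\Irr(B_\lambda)^{\mathcal C}$, because it maps $\C b_\lambda^\vee$ isomorphically onto $\C{b'}_{\sigma(\lambda)}^\vee$ and these span the respective restricted spaces), hence $I_\lambda\circ d_\lambda=d'_{\sigma(\lambda)}\circ I_{\{1\}}$, and taking adjoints gives $e_\lambda\circ J_\lambda^{*}=I_{\{1\}}^{*}\circ e'_{\sigma(\lambda)}$ — this is the key identity that lets $e_\lambda(\Phi_\phi)$ on the $G$-side turn into $I_{\{1\}}^{*-1}e'_{\sigma(\lambda)}(\cdots)$ and ultimately produces the $J_\lambda^{*-1}(\phi)$ appearing in \eqref{eq:th}.

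Finally I evaluate the resulting operator at a pair $(x,x')$: $\widehat{I}_{\{1\}}(x,x')$ means feeding the indicator-type data at $x$ on the first tensor factor and reading off the value at $x'$ on the second, which by the definition of $l'_\lambda$ \eqref{eq:llambda} (supported on the class $\lambda'$, value $\psi(x'_{\mathcal C})$) collapses the double sum over $\Lambda_0'$ to a single sum over $\Lambda_0$ indexed via $\sigma$, giving precisely $\sum_{\lambda\in\Lambda_0}\sum_{\phi\in b_\lambda}\overline{e_\lambda(\Phi_\phi)(x)}\,l'_\lambda(J_\lambda^{*-1}(\phi))(x')$ after renaming $\sigma(\lambda)\rightsquigarrow\lambda$ on the primed side.

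I expect the main obstacle to be bookkeeping rather than a conceptual leap: correctly tracking the centralizer-ratio factors from Lemma~\ref{valeurE} so that they cancel between the $e_\lambda$ side and the $l'_\lambda$ side (they must, since $I_{\{1\}}$ is an \emph{isometry}, but making the cancellation explicit requires care), and verifying that $\{e_\lambda(\Phi_\phi)\}_{\lambda\in\Lambda_0,\phi\in b_\lambda}$ is genuinely a basis of $\C\Irr(B)$ — equivalently that $\dim\C\Irr(B)=\sum_{\lambda\in\Lambda_0}|b_\lambda|$ — which follows from injectivity of $e_G$ (Lemma~\ref{applicationE}) together with $l_G\circ d_G=\operatorname{id}$ (Lemma~\ref{applicationD}), but needs to be spelled out. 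Once the basis and its dual are correctly identified and the intertwining identity $e_\lambda\circ J_\lambda^{*}=I_{\{1\}}^{*}\circ e'_{\sigma(\lambda)}$ is established, the formula \eqref{eq:th} drops out of \eqref{eq:chapeau} by substitution and evaluation.
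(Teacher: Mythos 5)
Your proposal is correct and follows essentially the same route as the paper's own proof: you expand $\widehat{I}_{\{1\}}$ via Equation~(\ref{eq:chapeau}) in the dual pair of bases $\{l_{\lambda}(\phi)\}$ and $\{e_{\lambda}(\Phi_{\phi})\}$ (identified exactly as the paper does, using Lemma~\ref{valeurE} for orthogonality across distinct $\lambda$ and the adjunction together with $d_{\lambda}\circ l_{\lambda}=\operatorname{id}$ on $\C b_{\lambda}$ within each $\lambda$), and transport through hypotheses (2)(b) and (3). The only cosmetic difference is how the key intertwining relation is obtained — you note that $\res_C$ is the orthogonal projection onto $\C b_{\lambda}^{\vee}$, so the isometry $I_{\lambda}$ commutes with it, giving $I_{\lambda}\circ d_{\lambda}=d'_{\sigma(\lambda)}\circ I_{\{1\}}$, whereas the paper dualizes the inclusion square of hypothesis (3) to get $J_{\lambda}^{*-1}\circ d_{\lambda}=d'_{\sigma(\lambda)}\circ I_{\{1\}}$ (Equation~(\ref{eq:relcom})); since $J_{\lambda}$ is a surjective isometry these are the same identity, and the centralizer factors you worried about never need to cancel explicitly because the within-$\lambda$ duality is handled purely by adjunction.
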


\begin{proof}
First, we remark that, for $\lambda\in\Lambda_0$, the adjoint of the
inclusion $i:\C b_{\lambda}^{\vee}\rightarrow
\C\Irr(B_{\lambda})$ is $i^*=\res_C$. Moreover, Hypothesis (3) implies
that the following diagram is commutative:
$$
\xymatrix{
   \C b_{\lambda}^{\vee}\ar[rr]^{J_{\lambda}} \ar[d]^{i} &&
   \C{b'}_{\sigma(\lambda)}^{\vee}\ar[d]^{i}\\
   \C\Irr(B_{\lambda}) \ar[rr]^ {I_{\lambda}}&&
   \C\Irr(B_{\sigma(\lambda)})
   } 
$$
Dualizing, we obtain the following commutative diagram:
$$
\xymatrix{
   \C b_{\lambda} &&
   \C{b'}_{\sigma(\lambda)}\ar[ll]^{J_{\lambda}^*}\\
   \C\Irr(B_{\lambda}) \ar[u]^{\res_{C}}&&
   \C\Irr(B_{\sigma(\lambda)})\ar[u]^{\res_{C'}}  \ar[ll]^ {I_{\lambda}^{-1}}
   } 
$$
(The bottom arrow is indeed $I_{\lambda}^{-1}$ because we identified $ \C\Irr(B_{\lambda}) $ and $\C\Irr(B_{\sigma(\lambda)})$ with their duals.) Thus, we have $\res_C\circ
I_{\lambda}^{-1}=J_{\lambda}^*\circ\res_{C'}$, which implies that
$J_{\lambda}^{*-1}\circ\res_C=\res_{C'}\circ I_{\lambda}$, and
we obtain 
\begin{eqnarray}
J_{\lambda}^{*-1}\circ\res_C\circ r^{\lambda}&=&\res_{C'}\circ
I_{\lambda}\circ r^{\lambda}\nonumber\\
J_{\lambda}^{*-1}\circ d_{\lambda}&=&\res_{C'}\circ
r'^{\sigma(\lambda)}\circ I_{\{1\}}\nonumber\\
J_{\lambda}^{*-1}\circ d_{\lambda}&=&d'_{\sigma(\lambda)}\circ
I_{\{1\}},
\label{eq:relcom}
\end{eqnarray}
where the second equality comes from Hypothesis (2). 

Write $\Z b_{\lambda}(\mathcal C)$ as in Remark~\ref{rk:baseadaptee}.
We have $d_{\lambda}(\C\Irr(B))\subset \C b_{\lambda}(\mathcal C)$.
Define $\mathfrak b_{\lambda}$, $\mathfrak b_{\lambda}^{\mathcal
C}$ as in Remark~\ref{rk:baseadaptee}, and set
$V_{\lambda}=l_{\lambda}(\C \mathfrak b_{\lambda}^{\mathcal C})$ and
$V'_{\lambda'}=l'_{\lambda'}(\C \mathfrak {b'}_{\lambda'}^{\mathcal C'})$. 

Now, 
the assumption (2.a) implies that
$d_G=\sum_{\lambda\in\Lambda_0}d_{\lambda}$ and if we again write $l_G$
for the restriction of $l_G$ to $\oplus_{\lambda\in\Lambda_0}
\C\Irr(G_{\lambda})^{\mathcal C_{\lambda}}$ to simplify the notation, 
then Lemma~\ref{applicationD} gives that $l_G\circ
d_G$ is the identity on $\C\Irr(B)$ (the same is true for $l'_{G'}\circ
d'_{G'}$).
% Then
In particular, $l_{\lambda}$ is surjective. Furthermore, for
$\lambda'\in\Lambda_0$ such that $\lambda\neq\lambda'$, one has
$d_{\lambda'}\circ l_\lambda(\phi)=0$ for
all $\phi\in\C b_{\lambda}(\mathcal C)$. Indeed, for every $x\in
C\cap G_{\lambda'}$, one has
$$d_{\lambda'}\circ l_{\lambda}(\phi)(x)=\res_{C}\circ
r^{\lambda'}(l_{\lambda}(\phi))(x)=r^{\lambda'}(l_{\lambda}(\phi)(x)=l_{\lambda}(\phi)(x_{\lambda'}x)=0,$$
because $\lambda\neq\lambda'$.
It follows that $\C\Irr(B)=\oplus_{\lambda\in\Lambda_0}V_{\lambda}$ (the
same is true for $\C\Irr(B')$). Thus, 
by Equation~(\ref{eq:relcom}), % and Lemma~\ref{applicationD} imply that the
the following diagram is commutative:
\begin{equation}
\label{eq:diag1}
\xymatrix{
   \C \Irr(B)\ar[rr]^{I_{\{1\}}} \ar[d]^{d_G} &&
   \C\Irr(B')\ar[d]^{d'_{G'}}\\
   \bigoplus_{\lambda\in\Lambda_0}\C b_{\lambda} \ar[d]^{l_G}
\ar[rr]^{\oplus J_{\lambda}^{*-1}}&&
   \bigoplus_{\lambda\in\Lambda_0}\C
b'_{\sigma(\lambda)}\ar[d]^{l'_{G'}}\\
\bigoplus_{\lambda\in\Lambda_0}V_{\lambda}\ar[rr]^{I_{\{1\}}}&&
\bigoplus_{\lambda\in\Lambda_0}V'_{\sigma(\lambda)}
}
\end{equation}
% Now, identifying $\C\Irr(B)$ with its dual using $\cyc{,}_G$, we can
% identify the space $W=\C\Irr(B)\otimes\C\Irr(B')$ and
% $\operatorname{End}(\C\Irr(B),\C\Irr(B'))$ as follows. For
% $\sum\chi\otimes\chi'\in W$, we associate the
% morphism defined by $\varphi\mapsto \sum\cyc{\varphi,\chi}\chi'$.
% Conversely, if $e=(e_1,\ldots,e_r)$ denotes a basis of $\C\Irr(B)$
% with dual basis $e^*=(e_1^*,\ldots,e_r^*)$, then we associate to any
% morphism $f:\C\Irr(B)\rightarrow\C\Irr(B')$ the element $\hat{f}=\sum_{i=1}^r
% e_i^*\otimes f(e_i)\in W$.  Note that the element $\hat{f}$ does not depend 
% on the choice of $e$. In particular, if we choose $\Irr(B)$ as basis of
% $\C\Irr(B)$, then we have $\chi^*=\chi$ for $\chi\in\Irr(B)$, and we
% deduce that $\widehat{I}_{\{1\}}=\Theta$. 

Let $\lambda\in\Lambda_0$ and $\phi\in \C b_{\lambda}(\mathcal C)$. Then
$d_{\lambda}\circ l_{\lambda}(\phi)\in\C b_{\lambda}(\mathcal C)$. Let $x\in
G_{\lambda}$. If $x\notin C$, then 
$\phi(x)=0=d_{\lambda}\circ l_{\lambda}(\phi)(x)$. 
Assume that $x\in C$. Then $x\in G_{\lambda}\cap C$. So, by
Definition~\ref{defMN}(3), $(x_{\lambda},x)\in A$, and
by Definition~\ref{defMN}(4) and Remark~\ref{rk:fonctionCstable} we have
$$d_{\lambda}\circ l_{\lambda}(\phi)(x)=r^{\lambda}\circ
l_{\lambda}(\phi)(x)=l_{\lambda}(\phi)(x_{\lambda}\pd x).$$ 
Therefore, Remark~\ref{rk:fonctionCstable} gives $d_{\lambda}\circ
l_{\lambda}(\phi)(x)=\phi(x)$. So, this proves that for every $\phi\in
\C b_{\lambda}(\mathcal C)$, we have
\begin{equation}
d_{\lambda}\circ l_{\lambda}(\phi)=\phi.
\label{eq:invlambda}
\end{equation}
Consider 
$$e=\bigcup_{\lambda\in\Lambda_0}\{l_{\lambda}(\phi)\,|\,\phi\in
\mathfrak b_{\lambda}^{\mathcal C}\}.$$
By Equation~(\ref{eq:invlambda}), for $\lambda\in\Lambda_0$, the
family $\{l_{\lambda}(\phi)\,|\,\phi\in\mathfrak
b_{\lambda}^{\mathcal C}\}$ is linearly independent, and since $l_{\lambda}$ is
surjective, it is a basis of $V_{\lambda}$. Hence, it follows that
$e$ is a basis of $\C \Irr(B)$.

Now, we claim that
$$e^\vee=\bigcup_{\lambda\in\Lambda_0}
\left\{e_{\lambda}(\Phi_{\phi})\,|\,\phi\in
\mathfrak b_{\lambda}^{\mathcal C}\right\}.$$
Indeed, if $\lambda,\,\mu\in\Lambda_0$ with $\lambda\neq\mu$, then for any
$\vartheta\in \mathfrak b_{\lambda}^{\mathcal C}$ and $\phi \in
\mathfrak b_{\mu}^{\mathcal C}$, we have
$$\cyc{e_{\lambda}(\Phi_{\vartheta}),l_{\mu}(\phi)}_G=
\frac{1}{|G|}\sum_{g\in
G}e_{\lambda}(\Phi_{\vartheta})(g)\,\overline{l_{\mu}(\phi)(g)}=0,$$
by Equation~(\ref{eq:llambda}) and Lemma~\ref{valeurE}.
% because for any
% $g\in G$, the elements $g$ and $g^{-1}$ have same type; see
% Definition~\ref{defMN}(3). 
Furthermore, if $\phi,\,\varphi\in
\mathfrak b_{\lambda}^{\mathcal C}$, then Equation~(\ref{eq:invlambda})
gives
$$\cyc{e_{\lambda}(\Phi_{\varphi}),l_{\lambda}(\phi)}_G=\cyc{
\Phi_{\varphi},d_{\lambda}\circ
l_{\lambda}(\phi)}_{G_{\lambda}}
=\cyc{\Phi_{\varphi},\phi}_{G_{\lambda}}=\delta_{\varphi\phi},
$$
and the result follows.
% This proves that $l_{\lambda}(\phi)^{\vee}=e_{\lambda}(\Phi_{\phi})$ for
% $\lambda\in\Lambda_0$ and $\phi\in b_{\lambda}$.
Thus, writing $\widehat{I}_{\{1\}}$ with respect to the basis $e$, we
obtain
$$\widehat{I}_{\{1\}}=\sum_{\lambda\in\Lambda_0}\sum_{\phi\in
b_{\lambda}}\overline{e_{\lambda}(\Phi_{\phi})}\otimes
l'_{\sigma(\lambda)}(J_{\lambda}^{*-1}(\phi)),$$
as required.
\end{proof}

\begin{remark}
\label{rk:point3}
Note that the assumption (2) of the theorem implies that the assumption
(3) of the theorem holds for $\lambda=\{1\}$. Indeed, for $\phi\in
\C\Irr(B)$,
% by
% Corollary~\ref{blocs}, 
we have $\phi\in \C b_{\{1\}}^{\vee}$ if and only
if %$\phi\in\Z\Irr(B)$ and 
$\res_{\overline{C}}(\phi)=0$, where
$\overline{C}=G\backslash C$. However, $x\in G$ lies in $\overline{C}$
if and only if its type $\lambda$ is non-trivial. Thus,
Definition~\ref{defMN}(4) implies that $\phi\in \C b_{\{1\}}^{\vee}$ if
and only if %$\phi\in\Z\Irr(B)$ and 
$r^{\lambda}(\phi)=0$ for all
$\lambda\neq \{1\}$. Let $\phi\in\C b_{\{1\}}^{\vee}$. 
% Then
% $I_{\{1\}}(\phi)\in\C\Irr(B')$ (because $\phi\in\C\Irr(B)$ and
% $I_{\{1\}}$ is an isometry). Moreover,
Then for any $\{1\}\neq
\lambda\in\Lambda_0$,
$$r'^{\sigma(\lambda)}(I_{\{1\}}(\phi))=I_{\lambda}(r^{\lambda}(\phi))=0.$$
Since $\sigma$ is a bijection with $\sigma(\{1\})=\{1\}$, we deduce that
$I_{\{1\}}(\phi)\in\C b'^{\vee}_{\sigma(\lambda)}$.
To obtain the reverse inclusion, we apply this argument to $I_{\{1\}}^{-1}$.

In particular, if for any $\lambda\in\Lambda_0$, the group $G_{\lambda}$
has an MN-structure with respect to $C\cap G_{\lambda}$ and
$B_{\lambda}$, then the assumption (3) of Theorem~\ref{th:iso} is
automatically satisfied.
\end{remark}

\begin{remark}\label{rk:adjoint}
Suppose that
$(I_{\lambda}:\C\Irr(B_\lambda)\rightarrow\C\Irr(B'_{\sigma(\lambda)}))_{\lambda\in\Lambda_0}$ 
are isometries such that properties (1), (2) and (3) of
Theorem~\ref{th:iso} hold. Then
$I_{\lambda}^{-1}:\C\Irr(B'_{\sigma(\lambda)})\rightarrow
\C\Irr(B_{\lambda})$ also satisfies the hypotheses of the theorem (for
$\sigma^{-1}:\Lambda_0'\rightarrow\Lambda_0$).
Moreover, writing $\widehat{I}$ with respect to the self-dual
$\C$-basis $\Irr(B)$ of $\C\Irr(B)$, we have
\begin{equation}
\label{eq:Ichapeau}
\widehat{I}=\sum_{\chi\in \Irr(B)}\overline{\chi}\otimes I(\chi).
\end{equation}
It follows that
$$\widehat{I}=\sum_{\chi'\in\Irr(B')}\overline{I^{-1}(\chi')}\otimes \chi'=
\operatorname{conj}\left(\sum_{\chi'\in\Irr(B')}
I^{-1}(\chi')\otimes\overline{\chi'}\right)=\operatorname{conj}\left(\widehat{I^{-1}}\circ\tau\right),$$
where $\tau:G\times G'\rightarrow G'\times G,\ (x,x')\mapsto (x',x)$ and
$\operatorname{conj}$ denotes the complex conjugation.
\end{remark}

\subsection{Generalized perfect isometries}

An isometry $I:\C\Irr(B)\rightarrow\C\Irr(B')$ with respect to the
scalar products $\cyc{\,,}_G$ and $\cyc{\,,}_{G'}$ is said to be a
generalized perfect isometry if $I(\Z\Irr(B))=\Z(\Irr(B'))$ and
\begin{equation}
\label{eq:isoperf}
I\circ\res_C=\res_{C'}\circ I.
\end{equation}

\begin{remark}
Following K\"ulshammer, Olsson and Robinson (see~\cite{KOR}), we say
that an isometry $I:\C\Irr(B)\rightarrow\C\Irr(B')$ is a KOR-isometry
if $I(\Z(\Irr(B))=\Z\Irr(B')$ and for all $\chi,\,\psi\in\Irr(B)$, one
has $$\cyc{\res_{C}(\chi),\res_C(\psi)}_G=\cyc{\res_{C'}\left(
I(\chi)\right),\res_{C'}\left( I(\psi)\right)}_{G'}.$$ Note that the 
argument in the proof of~\cite[Proposition 2.2]{BrGr} shows that the
KOR-isometries are precisely the isometries that satisfy
Equation~(\ref{eq:isoperf}). For the convenience of the reader, we now
prove this fact. Before this, we recall that the notion of blocks
in~\cite{KOR} is not the same as ours. The KOR-blocks are the
equivalence classes for the equivalence relation on $\Irr(G)$ obtained
by extending by transitivity the relation defined by
$\cyc{\res_C(\chi),\res_C(\psi)}_G\neq 0$. First, we will show that
$\Irr(B)$ is a union of KOR-blocks. Since the KOR-blocks are a partition
of $\Irr(G)$, it is clear that $\Irr(B)$ is contained in a union of
KOR-blocks. It is sufficient to show that if $\chi\in \Irr(B)$ and
$\psi\in\Irr(G)$ are such that $\cyc{\res_C(\chi),\res_C(\psi)}_G\neq 0$,
then $\psi\in\Irr(B)$.  Let $\chi\in \Irr(B)$ and $\psi\in\Irr(G)$ be
such that $\cyc{\res_C(\chi),\res_C(\psi)}_G\neq 0$, that is
$$\sum_{\varphi,\,\vartheta\in
b}d_{\chi\varphi}d_{\psi\vartheta}\cyc{\varphi,\vartheta}_G\neq 0.$$
In particular, there exists some $\varphi,\,\vartheta\in b$ such that
$d_{\chi\varphi}d_{\psi\vartheta}\cyc{\varphi,\vartheta}_G\neq 0$.
Hence, $d_{\chi\varphi}\neq 0\neq d_{\psi\vartheta}$ and
$\cyc{\varphi,\vartheta}_G\neq 0$. Thanks to Corollary~\ref{critere}, we
conclude that $\psi$ lies in the $\mathcal C$-block of $\chi$.

Now suppose $I:\C\Irr(B)\rightarrow\C\Irr(B')$ is a generalized perfect
isometry. % which satisfies Equation~(\ref{eq:isoperf}) and 
Let
$\chi,\,\psi\in\Irr(B)$. Then
$$\cyc{\res_{C'}\left( I(\chi)\right),\res_{C'}\left(
I(\psi)\right)}_G=\cyc{I\left(\res_C(\chi)\right),I\left(\res_C(\psi)\right)}_G=\cyc{\res_C(\chi),\res_C(\psi)}_G,$$
because $I$ is an isometry.

Conversely, assume that $I$ is a
KOR-isometry. Let $\chi\in\Irr(B)$. We have
\begin{eqnarray*}
I(\res_C(\chi))&=&I\left(\sum_{\psi\in\Irr(G)}\cyc{\res_C(\chi),\psi}_G\psi\right)\\
&=&\sum_{\psi\in\Irr(B)}\cyc{\res_C(\chi),\res_C(\psi)}_GI(\psi)\\
&=&\sum_{\psi\in\Irr(B)}\cyc{\res_{C'}(I(\chi)),\res_{C'}(I(\psi))}_{G'}I(\psi)\\
&=&\sum_{\psi\in\Irr(B)}\cyc{\res_{C'}(I(\chi)),I(\psi)}_{G'}I(\psi)\\
&=&\res_{C'}(I(\chi)),
\end{eqnarray*}
proving the claim.
\label{rk:kor}
\end{remark}
%
%So, suppose that $I$ satisfied  Equation~(\ref{eq:isoperf}) and let
%$\chi,\,\psi\in\Irr(B)$. Then
%$\cyc{\res_{C'}\left( I(\chi)\right),\res_{C'}\left(
%I(\psi)\right)}_G=\cyc{I\left(\res_C(\chi)\right),I\left(\res_C(\psi)\right)}_G=\cyc{\res_C(\chi),\res_C(\psi)}_G$,
%because $I$ is an isometry. Conversely, assume that $I$ is a
%KOR-isometry. Let $\chi\in\Irr(B)$. We have
%\begin{eqnarray*}
%I(\res_C(\chi))&=&I\left(\sum_{\psi\in\Irr(G)}\cyc{\res_C(\chi),\psi}_G\psi\right)\\
%&=&\sum_{\psi\in\Irr(B)}\cyc{\res_C(\chi),\res_C(\psi)}_GI(\psi)\\
%&=&\sum_{\psi\in\Irr(B)}\cyc{\res_{C'}(I(\chi)),\res_{C'}(I(\psi))}_{G'}I(\psi)\\
%&=&\sum_{\psi\in\Irr(B)}\cyc{\res_{C'}(I(\chi)),I(\psi)}_{G'}I(\psi)\\
%&=&\res_{C'}(I(\chi)),
%\end{eqnarray*}
%proving the claim.
%\label{rk:kor}
%\end{remark}

\begin{proposition}
Suppose that $\{B_i\,|1\leq i\leq r\}$ is the set of KOR-blocks of
$G$ with respect to a set of classes $\mathcal C$. Then there is a
$\Z$-basis $b$ of $\Z\Irr(G)^{\mathcal C}$ such that the $B_i$'s are the
$\mathcal C$-blocks of $G$ with respect to $b$.
\label{prop:choixbase}
\end{proposition}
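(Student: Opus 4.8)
The plan is to build the desired $\Z$-basis $b$ block by block, by choosing, for each KOR-block $B_i$, a $\Z$-basis $b_i$ of the lattice $\Z\Irr(G)^{\mathcal C}$ ``supported on $B_i$'', and then setting $b=\bigcup_i b_i$. First I would make precise what ``supported on $B_i$'' should mean: recall from Remark~\ref{rk:kor} that each KOR-block $\Irr(B_i)$ is a union of $\mathcal C$-blocks for \emph{any} basis, but here we are going the other way and want a basis whose associated graph has exactly the $B_i$ as connected components. The natural candidate is the lattice $L_i := \Z\Irr(B_i)\cap\Z\Irr(G)^{\mathcal C}$, i.e.\ the generalized characters vanishing off $C$ and supported (as generalized characters) on $\Irr(B_i)$; equivalently $L_i = \res_C(\Z\Irr(B_i))$ once one checks that $\res_C$ restricted to the sublattice spanned by $B_i$ has the right image. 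The key structural fact is that $\Z\Irr(G)^{\mathcal C} = \bigoplus_{i=1}^r L_i$: indeed $\res_C$ is compatible with the decomposition $\Z\Irr(G)=\bigoplus_i \Z\Irr(B_i)$, and $\res_C(\psi)$ for $\psi\in\Z\Irr(G)$ decomposes as $\sum_i \res_C(\psi_i)$ where $\psi_i$ is the $\Irr(B_i)$-component of $\psi$; the summands lie in distinct $L_i$, and the sum is direct because the $\Irr(B_i)$ are disjoint subsets of $\Irr(G)$ and $\res_C$ does not mix coordinates in $\Irr(G)$.

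Granting this direct sum decomposition, I would pick any $\Z$-basis $b_i$ of the free $\Z$-module $L_i$ (each $L_i$ is a subgroup of the finitely generated free abelian group $\Z\Irr(G)^{\mathcal C}$, hence free) and set $b=\bigcup_i b_i$; this is then a $\Z$-basis of $\Z\Irr(G)^{\mathcal C}$. It remains to verify that the $\mathcal C$-blocks of $G$ \emph{with respect to this $b$} are exactly the $B_i$. For this I would argue in two directions. On one hand, every $\varphi\in b_i$ lies in $L_i\subseteq\C\Irr(B_i)$, so in the expansion $\res_C(\chi)=\sum_{\varphi\in b}d_{\chi\varphi}\varphi$ the coefficient $d_{\chi\varphi}$ can be nonzero only when $\chi\in\Irr(B_i)$ (pair with $\chi$ and use that $\varphi$ is orthogonal to $\Irr(B_j)$ for $j\neq i$, by Proposition~\ref{prop:mod}(i) applied to the sub-basis, or directly since $\varphi$ is a combination of $\res_C(\chi')$ with $\chi'\in\Irr(B_i)$); hence no edge of the $b$-graph joins a vertex of $B_i$ to a vertex of $B_j$, so each $\mathcal C$-block is contained in some $B_i$. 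On the other hand, each $B_i$ must be connected for the $b$-graph: this is where I expect the main obstacle to lie, and I would deduce it from the KOR-block property of $B_i$. Concretely, if $B_i$ split into two $\mathcal C$-blocks $B_i'\sqcup B_i''$ for the basis $b$, then writing $b_i = b_i'\sqcup b_i''$ accordingly (using the block-diagonal ordering from \S\ref{pargen}), one gets $L_i = \Z b_i{}^{\vee}$ decomposing compatibly, and in particular every $\res_C(\chi)$ with $\chi\in\Irr(B_i')$ is orthogonal to every $\res_C(\psi)$ with $\psi\in\Irr(B_i'')$ (by the orthogonality $\cyc{\Phi_\varphi,\vartheta}=\delta$ and Corollary~\ref{critere}, since no $\varphi\in b_i'$ has $\cyc{\varphi,\vartheta}_G\neq 0$ for $\vartheta\in b_i''$). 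That would mean $\cyc{\res_C(\chi),\res_C(\psi)}_G=0$ for all $\chi\in\Irr(B_i')$, $\psi\in\Irr(B_i'')$, contradicting that $B_i$ is a single KOR-block (which by definition cannot be so split). Hence each $B_i$ is exactly one $\mathcal C$-block for $b$, completing the proof.

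The one technical point worth isolating as a lemma is the claim $\Z\Irr(G)^{\mathcal C}=\bigoplus_i L_i$ together with $L_i = \res_C(\Z\Irr(B_i))$; this uses only that $\res_C$ is a coordinate projection on $\C\Irr(G)=\bigoplus_{\chi}\C\chi$ (zeroing out no characters but rescaling values), so it respects any partition of $\Irr(G)$ into subsets, and that generalized characters supported off $C$ on a union of KOR-blocks decompose accordingly. Everything else is bookkeeping with the graph definition of $\mathcal C$-blocks and the duality statements already established in Proposition~\ref{prop:mod} and Corollaries~\ref{blocs} and~\ref{critere}.
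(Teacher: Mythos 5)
Your overall strategy is the paper's: split $\Z\Irr(G)^{\mathcal C}$ into pieces indexed by the KOR-blocks, take a $\Z$-basis of each piece, let $b$ be their union, and then check both that the decomposition matrix is block-diagonal with respect to the $B_i$ and that each $B_i$ stays connected (your connectedness argument via Corollary~\ref{critere} is exactly the content of Remark~\ref{rk:kor} that the paper invokes). However, the foundational step is stated incorrectly in two places. First, your ``natural candidate'' $L_i=\Z\Irr(B_i)\cap\Z\Irr(G)^{\mathcal C}$ is not equivalent to $\res_C(\Z\Irr(B_i))$: by Proposition~\ref{prop:mod}(ii) and Corollary~\ref{blocs} the intersection is the lattice $\Z b_i^{\vee}$ of ``projective'' elements, which is in general a proper sublattice of $\Z\Irr(B_i)^{\mathcal C}=\res_C(\Z\Irr(B_i))$. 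For instance, for $G$ cyclic of order $p$ with $\mathcal C=\{\{1\}\}$ one has $\Z\Irr(G)^{\mathcal C}=\Z\delta_1$ while $\Z\Irr(G)\cap\Z\Irr(G)^{\mathcal C}=p\Z\delta_1$ (spanned by the regular character); with the intersection definition your union of bases would span only $\Z b^{\vee}$ and would \emph{not} be a $\Z$-basis of $\Z\Irr(G)^{\mathcal C}$, so that step fails. Second, your justification of the direct sum, namely that ``$\res_C$ is a coordinate projection on $\C\Irr(G)=\bigoplus_{\chi}\C\chi$, so it respects any partition of $\Irr(G)$,'' is false: $\res_C$ is diagonal with respect to the basis of class-indicator functions, not with respect to $\Irr(G)$, and in general $\res_C(\chi)\notin\C\chi$ (if $\res_C$ respected every partition, every irreducible character would be alone in its KOR-block). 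The decomposition $\Z\Irr(G)^{\mathcal C}=\bigoplus_i\res_C(\Z\Irr(B_i))$ does hold, but only because the $B_i$ are KOR-blocks: by their definition $\cyc{\res_C(\chi),\res_C(\psi)}_G=0$ whenever $\chi$ and $\psi$ lie in different $B_i$, which gives both $\res_C(\C\Irr(B_i))\subseteq\C\Irr(B_i)$ and the directness of the sum. This orthogonality is precisely the opening line of the paper's proof, and it is the ingredient your ``technical lemma'' omits.

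Once you take $L_i=\res_C(\Z\Irr(B_i))=\Z\Irr(B_i)^{\mathcal C}$ and justify $\Z\Irr(G)^{\mathcal C}=\bigoplus_i L_i$ by the KOR-orthogonality rather than by a purported general property of $\res_C$, the remainder of your argument is sound and coincides with the paper's: uniqueness of expansion in $b=\bigcup_i b_i$ forces $d_{\chi\varphi}=0$ for $\chi\in\Irr(B_j)$ and $\varphi\in b_i$ with $i\neq j$ (the paper extracts this via the dual basis $\Phi_{\varphi}$, but your spanning argument works too), so each $B_i$ is a union of $\mathcal C$-blocks; and Corollary~\ref{critere} shows a direct KOR-link never crosses a $\mathcal C$-block boundary, so each $B_i$ is a single $\mathcal C$-block. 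So the gap is not in the combinatorial bookkeeping but in the identification and justification of the building-block lattices, which as stated would not survive even the cyclic-group example above.
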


\begin{proof}
By definition of the KOR-blocks, the sets $\Irr(B_i)^{\mathcal C}$ and
$\Irr(B_j)^{\mathcal C}$ for $i\neq j$ are orthogonal with respect to
$\cyc{\,,}_G$, implying that
$$\Z\Irr(G)^{\mathcal C}=\bigoplus_{i=1}^r\Z\Irr(B_i)^{\mathcal
C}.$$
Choose any $\Z$-basis $b_i$ of $\Z\Irr(B_i)^{\mathcal C}$ and write
$b_i^{\vee}$ for the dual basis of $b_i$ in the $\C$-space
$\C\Irr(B_i)^{\mathcal C}$ with respect to $\cyc{\,,}_G$. Define
$b=b_1\cup\ldots\cup b_r$. Then $b$ is a $\Z$-basis of
$\Z\Irr(G)^{\mathcal C}$. Moreover, since $b_i^{\vee}\subseteq
\C\Irr(B_i)^{\mathcal C}$, and since the KOR-blocks are
orthogonal, we deduce that $b^{\vee}=b_1^\vee\cup\ldots\cup b_r^\vee$ is
the dual basis of $b$. 
Now, for $\varphi\in b_i$, we have
$$\Phi_{\varphi}=\res_C(\Phi_{\varphi})=\sum_{j=1}^r\sum_{\chi\in\Irr(B_j)}d_{\chi\varphi}\res_C(\chi)=\sum_{\chi\in\Irr(B_i)}d_{\chi\varphi}\res_C(\chi),$$
because $\Phi_{\varphi}\in\C\Irr(B_i)^{\mathcal C}$. Hence, for
$\chi'\notin \Irr(B_i)$, we have
$$d_{\chi'\varphi}=\cyc{\Phi_{\varphi},\chi'}_G=\sum_{\chi\in\Irr(B_i)}d_{\chi\varphi}\cyc{\res_{C}(\chi),\res_C(\chi')}_G=0.$$
This proves that $B_i$ is a union of $\mathcal C$-blocks. Furthermore, we
have seen in Remark~\ref{rk:kor} that conversely, the $\mathcal
C$-blocks are unions of KOR-blocks. The result follows.
\end{proof}

\begin{proposition}
Let $I:\C\Irr(B)\rightarrow\C\Irr(B')$ be an isometry, and assume that
$I(\Z\Irr(B))=\Z\Irr(B')$. The following
assertions are equivalent
\begin{enumerate}[(i)]
\item $I$ is a generalized perfect isometry.
\item If $\widehat{I}(x,y)\neq 0$, then either $(x,y)\in C\times
C'$, or $(x,y)\in\overline{C}\times\overline{C}'$, where
$\overline{C}=G\backslash C$ and $\overline{C}'=G\backslash C'$.
\end{enumerate}
\label{prop:equiviso}
\end{proposition}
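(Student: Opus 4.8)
The statement is, in essence, a translation of the compatibility relation \eqref{eq:isoperf} for $I$ into a support condition on the class function $\widehat I$ on $G\times G'$, and the plan is to prove it by pushing $\res_C$ and $\res_{C'}$ through the isomorphism $\Theta$ of \S\ref{pariso}. Strictly speaking, what the argument below establishes is the equivalence of the relation \eqref{eq:isoperf} with (ii); the integrality clause $I(\Z\Irr(B))=\Z\Irr(B')$ in the definition of a generalized perfect isometry is not detected by (ii) and should be regarded as a standing hypothesis (equivalently, by Remark~\ref{rk:kor}, (i) may be read as ``$I$ is a KOR-isometry'').

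The first step is to record that, under $\Theta^{-1}$, precomposition of an endomorphism with $\res_C$ corresponds to the operator $\res_C\otimes\operatorname{id}$ on $\C\Irr(B)\otimes\C\Irr(B')$, and postcomposition with $\res_{C'}$ corresponds to $\operatorname{id}\otimes\res_{C'}$. First note that, since $\Irr(B)$ and $\Irr(B')$ are unions of KOR-blocks (Remark~\ref{rk:kor}), $\res_C$ stabilizes $\C\Irr(B)$ and $\res_{C'}$ stabilizes $\C\Irr(B')$, so that $I\circ\res_C$ and $\res_{C'}\circ I$ make sense. Then, for $\alpha\in\C\Irr(B)$, $\beta\in\C\Irr(B')$ and $\varphi\in\C\Irr(B)$,
\[
\Theta\big(\res_C(\alpha)\otimes\beta\big)(\varphi)=\cyc{\varphi,\overline{\res_C(\alpha)}}_G\,\beta=\cyc{\res_C(\varphi),\overline{\alpha}}_G\,\beta=\big(\Theta(\alpha\otimes\beta)\circ\res_C\big)(\varphi),
\]
where I use that $\res_C$ commutes with complex conjugation and is self-adjoint for $\cyc{\,,}_G$; similarly $\Theta(\alpha\otimes\res_{C'}(\beta))=\res_{C'}\circ\Theta(\alpha\otimes\beta)$. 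Evaluating both identities at $\widehat I=\Theta^{-1}(I)$ and applying $\Theta^{-1}$ gives $\widehat{I\circ\res_C}=(\res_C\otimes\operatorname{id})(\widehat I)$ and $\widehat{\res_{C'}\circ I}=(\operatorname{id}\otimes\res_{C'})(\widehat I)$.

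Next, viewing $\C\Irr(G)\otimes\C\Irr(G')$ as the space of class functions on $G\times G'$, one checks on simple tensors (and extends by linearity) that $\big((\res_C\otimes\operatorname{id})h\big)(x,x')=\indicatrice_C(x)\,h(x,x')$ and $\big((\operatorname{id}\otimes\res_{C'})h\big)(x,x')=\indicatrice_{C'}(x')\,h(x,x')$, where $\indicatrice_C$ and $\indicatrice_{C'}$ are the indicator functions of $C$ and $C'$. Since $\Theta$ is a linear isomorphism, \eqref{eq:isoperf} is then equivalent to $\widehat{I\circ\res_C}=\widehat{\res_{C'}\circ I}$, i.e.\ to $\indicatrice_C(x)\,\widehat I(x,x')=\indicatrice_{C'}(x')\,\widehat I(x,x')$ for all $(x,x')\in G\times G'$. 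A four-case analysis on whether $x\in C$ and whether $x'\in C'$ finishes the proof: on $(C\times C')\cup(\overline C\times\overline C')$ this identity is automatic, while on each of the two ``mixed'' regions it forces $\widehat I(x,x')=0$. Thus \eqref{eq:isoperf} holds iff $\widehat I$ is supported on $(C\times C')\cup(\overline C\times\overline C')$, which is precisely (ii).

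The computation is short. The only point requiring genuine care is the first step, namely getting the interaction of $\Theta$ with complex conjugation and with the adjoint of $\res_C$ exactly right, together with the (easy, but needed) observation that $\res_C$ and $\res_{C'}$ preserve $\C\Irr(B)$ and $\C\Irr(B')$ so that all the compositions are defined. The concluding case analysis is entirely routine.
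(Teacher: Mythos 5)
Your argument is correct, and it takes a genuinely different route from the paper's. The paper proves the two implications separately: for (i)$\Rightarrow$(ii) it decomposes $\C\Irr(B)=\C\Irr(B)^{C}\oplus\C\Irr(B)^{\overline{C}}$, shows that a generalized perfect isometry respects this decomposition (Equation~(\ref{eq:int2})), and then writes $\widehat{I}$ in a basis adapted to the decomposition to read off the vanishing on the mixed regions; for (ii)$\Rightarrow$(i) it derives the kernel formula $I(\psi)(y)=\frac{1}{|G|}\sum_{x\in G}\widehat{I}(x,y)\psi(x)$ and applies it to $\res_C(\chi)$. You instead transport the operators through $\Theta$ once and for all: precomposition with $\res_C$ becomes $\res_C\otimes\operatorname{id}$ and postcomposition with $\res_{C'}$ becomes $\operatorname{id}\otimes\res_{C'}$, and these act on class functions of $G\times G'$ as multiplication by the indicator functions of $C\times G'$ and $G\times C'$; since $\Theta$ is an isomorphism, Equation~(\ref{eq:isoperf}) is then equivalent to the support condition (ii) in one stroke, with both directions handled symmetrically and no choice of adapted dual bases or kernel computation. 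Your explicit appeal to Remark~\ref{rk:kor} to see that $\res_C$ stabilizes $\C\Irr(B)$ (and $\res_{C'}$ stabilizes $\C\Irr(B')$) makes precise a point the paper leaves implicit in writing its direct sum decomposition, and your caveat about the lattice condition $I(\Z\Irr(B))=\Z\Irr(B')$ is fair: condition (ii) cannot detect it, and indeed the paper's own proof of (ii)$\Rightarrow$(i) only verifies Equation~(\ref{eq:isoperf}), so reading that clause as a standing hypothesis matches what is actually established. The trade-off is that the paper's basis-adapted computation in the forward direction is reused elsewhere (e.g.\ in the proof of Corollary~\ref{cor:isoparfaitegene} and the remark following the proposition on $\res_{\overline{C}'}\circ I=I\circ\res_{\overline{C}}$), whereas your argument is shorter and more conceptual but does not produce those intermediate identities explicitly.
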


\begin{proof}
Suppose that $I$ is a generalized perfect isometry. Note that
$\C\Irr(B)^{C\perp}=\C\Irr(B)^{\overline{C}}$ and
\begin{equation}
\C\Irr(B)=\C\Irr(B)^{C}\oplus\C\Irr(B)^{\overline{C}}.
\label{eq:int3}
\end{equation}
Moreover, for any $\phi\in\C\Irr(B')$, there is $\chi\in\C\Irr(B)$ 
such that $I(\chi)=\phi$ (because $I$ is an
isometry). Thanks to Equation~(\ref{eq:isoperf}), we have
$\res_{C'}(\phi)=I(\res_C(\chi))$. Hence, the restriction
$I:\C\Irr(B)^{C}\rightarrow \C\Irr(B')^{C'}$ is surjective, and yet
bijective (because $I$ is injective). Since $I$ is an isometry, we have
$$I\left(
(\C\Irr(B)^{C})^{\perp}\right)=I\left(\C\Irr(B)^{C}\right)^{\perp}=
(\C\Irr(B')^{C'})^{\perp}.$$  It follows that 
\begin{equation}
I(\C\Irr(B)^{\overline{C}})=\C\Irr(B')^{\overline{C}'}.
\label{eq:int2}
\end{equation}
% Let $\psi\in\C\Irr(B)$. Then
% $\psi=\res_{C}(\psi)+\res_{\overline{C}}(\psi)$. It follows that
% $I(\psi)=\res_{C'}(I(\psi))+I(\res_{\overline{C}}(\psi))$. Using
% that $I(\res_{\overline{C}}(\psi))\in\C\Irr(B')^{\overline{C}'}$ and the fact
% that the writing of $I(\psi)$ with respect to
% $\C\Irr(B')=\C\Irr(B')^{C'}\oplus \C\Irr(B')^{\overline{C}'}$ is
% unique, we deduce that
% $I(\res_{\overline{C}}(\psi))=\res_{\overline{C}'}(I(\psi))$. Thus,
% $$\res_{\overline{C}'}\circ I=I\circ\res_{\overline{C}}.$$
Now, we choose a $\C$-basis $b$ of $\C\Irr(B)^{C}$ with dual basis
$b^{\vee}$
and a $\C$-basis $\overline{b}$ 
of $\C\Irr(B)^{\overline{C}}$ with dual basis $\overline{b}^{\vee}$.
Therefore, thanks to Equation~(\ref{eq:int3}),
$b\cup \overline{b}$ is a $\C$-basis of $\C\Irr(B)$ with dual
basis $b^{\vee}\cup\overline{b}^{\vee}$. 
Writing $\widehat{I}$ with respect to
this basis, we obtain
\begin{equation}
\label{eq:int1}
\widehat{I}=\sum_{\alpha\in b}\overline{\alpha^{\vee}}\otimes
I(\alpha)+\sum_{\beta\in\overline{b}}\overline{\beta^{\vee}}\otimes I(\beta).
\end{equation}
Now, let $(x,y)\in C\times \overline{C}'$. Then Equation~(\ref{eq:int1})
gives $\widehat{I}(x,y)=\sum_{\alpha\in b}\overline{\alpha^{\vee}}(x)
I(\alpha)(y)$. But 
$I(\alpha)\in\C\Irr(B')^{C'}$, implying that $I(\alpha)(y)=0$. Hence,
$\widehat{I}(x,y)=0$.

For $(x,y)\in \overline{C}\times C'$, we similarly conclude that
$\widehat{I}(x,y)=0$ using Equations~(\ref{eq:int1})
and~(\ref{eq:int2}). This proves that (i) implies (ii).

Conversely, assume that (ii) holds. For $y\in G'$, we write
$\widehat{I}_y:G\rightarrow \C,\,x\mapsto \widehat{I}(x,y)$. This is a
class function on $G$. We now write $\widehat{I}$ with respect to the
$\C$-basis $\Irr(B)$. Thus, Equation~(\ref{eq:Ichapeau}) implies that
for $\chi\in \Irr(G)$ and $y\in G'$, we have
\begin{eqnarray*}
I(\chi)(y)&=&\sum_{\theta\in\Irr(B)}I(\theta)(y)\cyc{\overline{\theta},\overline{\chi}}_G
\\&=&\cyc{\widehat{I}_y,\overline{\chi}}_G\\
&=&\frac{1}{|G|}\sum_{x\in G}\widehat{I}(x,y)\chi(x).
\end{eqnarray*}
In particular, for any $\psi\in\C\Irr(G)$ and $y\in G'$, we have
\begin{equation}
I(\psi)(y)=\frac{1}{|G|}\sum_{x\in G}\widehat{I}(x,y)\psi(x).
\label{eq:int4}
\end{equation}
Let $\chi\in\C\Irr(B)$ and $y\in G'$. Applying Equation~(\ref{eq:int4}) to
$\res_C(\chi)$, we obtain 
% \begin{eqnarray*}
% I(\res_C(\chi))(y)&=&\frac{1}{|G|}\sum_{x\in
% G}I(x,y)\overline{\res_C(\chi)(x)}\\
% &=&\frac{1}{|G|}\sum_{x\in C}I(x,y)\overline{\chi(x)}\\
% &=&I(\chi)(y)\\
% &=&\res_{C'}\circ I(\chi)(y).
% \end{eqnarray*}
$$ I(\res_C(\chi))(y)=\frac{1}{|G|}\sum_{x\in
G}\widehat{I}(x,y)\res_C(\chi)(x)
=\frac{1}{|G|}\sum_{x\in C}\widehat{I}(x,y)\chi(x).$$
Suppose that $y\in\overline{C}'$. Then $\widehat{I}(x,y)\neq 0$ only if
$x\in\overline{C}$ and the second equality gives $I(\res_C(\chi))(y)=0$.
Otherwise, if $y\in C'$, then $\widehat{I}(x,y)=0$ for $x\notin C$. 
In particular,
$\frac{1}{|G|}\sum_{x\in C}\widehat{I}(x,y)\chi(x)$ is equal to $I(\chi)(y)=\res_{C'}(I(\chi))(y)$.
This proves that $I$ satisfies Equation~(\ref{eq:isoperf}), whence is a generalized perfect isometry.
\end{proof}

\begin{remark}
Note that Equation~(\ref{eq:int3}) applied to $B'$ and
Equation~(\ref{eq:int2}) imply that
$$\res_{\overline{C}'}\circ I=I\circ\res_{\overline{C}}.$$
\end{remark}

\begin{corollary}
Let $G$ and $G'$ be two finite groups. We assume that Hypotheses (1),
(2) and (3) of Theorem~\ref{th:iso} are satisfied, and we keep the same
notation. If $I_{\{1\}}(\Z\Irr(B))=\Z\Irr(B')$, then $I_{\{1\}}$ is a 
generalized perfect isometry.
\label{cor:isoparfaitegene}
\end{corollary}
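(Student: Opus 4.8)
The plan is to deduce Corollary~\ref{cor:isoparfaitegene} directly from the explicit formula for $\widehat{I}_{\{1\}}$ obtained in Theorem~\ref{th:iso}, together with the characterization of generalized perfect isometries given in Proposition~\ref{prop:equiviso}(ii). So the first thing to record is that, since Hypotheses (1), (2) and (3) of Theorem~\ref{th:iso} are assumed, we have for all $x\in G$ and $x'\in G'$
\begin{equation*}
\widehat{I}_{\{1\}}(x,x')=\sum_{\lambda\in\Lambda_0}\sum_{\phi\in b_{\lambda}}
\overline{e_{\lambda}(\Phi_{\phi})(x)}\;l'_{\lambda}(J_{\lambda}^{*-1}(\phi))(x').
\end{equation*}
By Proposition~\ref{prop:equiviso}, it suffices to show that $\widehat{I}_{\{1\}}(x,x')=0$ whenever $(x,x')\in C\times\overline{C}'$ or $(x,x')\in\overline{C}\times C'$. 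Since $I_{\{1\}}$ is an isometry (being a composite of data supplied by Hypothesis (2)), once that vanishing is established the corollary follows — note that $I_{\{1\}}(\Z\Irr(B))=\Z\Irr(B')$ is part of the requirement but is, in fact, not needed for the ``generalized perfect'' conclusion in the sense used just above; I will remark that an isometry satisfying \eqref{eq:isoperf} is what Proposition~\ref{prop:equiviso} characterizes, and address the integrality separately if the paper's definition demands it (in our situation it will follow from Hypothesis (3), via the $\Z$-module statements of Corollary~\ref{blocs}).

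The main work, then, is the two vanishing statements, and each reduces to a pointwise computation of one of the two tensor factors. For the case $x'\in\overline{C}'$: the factor $l'_{\lambda}(J_{\lambda}^{*-1}(\phi))(x')$ is, by the defining Equation~\eqref{eq:llambda} for $l'_{\lambda'}$, equal to $J_{\lambda}^{*-1}(\phi)(x'_{\mathcal C'})$ if the $S'$-type of $x'$ equals $\sigma(\lambda)$ and $0$ otherwise. But $x'\in\overline{C}'$ means precisely that $x'$ is not in $C'$, hence (by Definition~\ref{defMN}(2) applied to $G'$, exactly as in Remark~\ref{rk:point3}) its $S'$-type is non-trivial, i.e. $x'_S^{G'}\neq\{1\}$. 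Now $\sigma(\{1\})=\{1\}$, so for $\lambda=\{1\}$ the type of $x'$ differs from $\sigma(\{1\})=\{1\}$ and the $\{1\}$-summand dies. For $\lambda\neq\{1\}$ one must instead use that $\phi\in b_{\lambda}$ only produces a nonzero contribution through $J_{\lambda}^{*-1}(\phi)$, and the point is that $x'$ being outside $C'$ kills the $l'_\lambda$ factor for every $\lambda$ whose $\sigma$-image is not the $S'$-type of $x'$ — so one needs in addition that $x'$ being in $\overline{C}'$ forces the type not to be among the $\sigma(\lambda)$ with $\lambda\in\Lambda_0$. This is where I expect the real subtlety to lie; the cleanest route is to observe that $\overline C'=\bigcup_{\{1\}\neq\lambda'}\lambda'$ and that $l'_{\lambda'}(\psi')(x')\neq 0$ requires $x'_S^{G'}=\lambda'$ with $x'_{\mathcal C'}\in C'$, but $x'\in\overline{C}'$ means $x'=x'_{\lambda'}$ with trivial $\mathcal C'$-part, i.e. $x'_{\mathcal C'}=1$; so the nonvanishing of $l'_{\lambda'}(J_\lambda^{*-1}(\phi))(x')$ degenerates to the value $J_\lambda^{*-1}(\phi)(1)$, and summing over $\phi\in b_\lambda$ against $\overline{e_\lambda(\Phi_\phi)(x)}$ one recognizes, via Lemma~\ref{valeurE} (which shows $e_\lambda(\Phi_\phi)(x)=0$ unless the $S$-type of $x$ equals $\lambda$), that we are back in the symmetric situation — and here the hypothesis $x\in C$ forces the $S$-type of $x$ to be $\{1\}$, so only $\lambda=\{1\}$ can contribute, contradiction with $\lambda\neq\{1\}$. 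So both cases funnel through: \emph{if $x\in C$ then by Lemma~\ref{valeurE} only $\lambda=\{1\}$ survives in the $e$-factor; if simultaneously $x'\in\overline{C}'$ then the $\lambda=\{1\}$ term of the $l'$-factor vanishes because $\sigma(\{1\})=\{1\}$ and the type of $x'$ is nontrivial}. Hence $\widehat{I}_{\{1\}}(x,x')=0$ on $C\times\overline{C}'$.

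For the symmetric case $(x,x')\in\overline{C}\times C'$, I would invoke Remark~\ref{rk:adjoint}: under the hypotheses, $I_{\{1\}}^{-1}$ also satisfies (1), (2), (3) (with $\sigma^{-1}$), and $\widehat{I_{\{1\}}}=\operatorname{conj}(\widehat{I_{\{1\}}^{-1}}\circ\tau)$, so $\widehat{I}_{\{1\}}(x,x')=\overline{\widehat{I_{\{1\}}^{-1}}(x',x)}$; now $x'\in C'$ and $x\in\overline C$, which is exactly the previous case applied to the pair $(G',G)$, whence it vanishes. This halves the bookkeeping. Having checked both off-diagonal vanishings, Proposition~\ref{prop:equiviso} gives that $I_{\{1\}}$ satisfies Equation~\eqref{eq:isoperf}; combined with the fact that $I_{\{1\}}$ is an isometry and (for the $\Z$-lattice condition) the identifications $I_\lambda(\Z\Irr(B_\lambda))=\Z\Irr(B'_{\sigma(\lambda)})$ transported along $d_G$, $l_G$ via the commutative diagram \eqref{eq:diag1} of Theorem~\ref{th:iso}, we conclude $I_{\{1\}}$ is a generalized perfect isometry. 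The one place to be careful, and which I flag as the main obstacle, is making the reduction ``$x\in C\Rightarrow$ only $\lambda=\{1\}$ contributes'' fully rigorous for \emph{all} $\lambda\in\Lambda_0$ rather than just $\lambda=\{1\}$ — it hinges on reading Lemma~\ref{valeurE} correctly as saying $e_\lambda(\Phi_\phi)$ is supported on elements whose $S$-type is $\lambda$, so that $x\in C$ (type $\{1\}$) annihilates every $\lambda\neq\{1\}$ term of the first tensor factor — after which the argument is purely formal.
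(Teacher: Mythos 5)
Your argument is correct and essentially identical to the paper's proof: both evaluate formula~(\ref{eq:th}) termwise, using Lemma~\ref{valeurE} (the $e_{\lambda}$-factor is supported on elements of $S$-type $\lambda$) and Equation~(\ref{eq:llambda}) (the $l'_{\lambda}$-factor is supported on elements of type $\sigma(\lambda)$) together with $\sigma(\{1\})=\{1\}$ to kill every term when exactly one of $x$, $x'$ lies in $C$, respectively $C'$, and then invoke Proposition~\ref{prop:equiviso}; your use of Remark~\ref{rk:adjoint} for the case $(x,x')\in\overline{C}\times C'$ is a harmless variant of the paper's uniform treatment of both mixed cases. The only blemish is the intermediate assertion that $x'\in\overline{C}'$ forces $x'_{\mathcal{C}'}=1$, which is false (an element outside $C'$ merely has non-trivial $S'$-type, and its $\mathcal{C}'$-part can be arbitrary), but it plays no role in your final ``funnel'' argument, which is exactly the paper's.
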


\begin{proof}
Let $(x,x')\in G\times G'$. Write $\mu$ and $\mu'$ for the type of $x$
and $x'$. Suppose that $(x,x')\notin C\times C'$ and
$(x,x')\notin\overline{C}\times\overline{C}'$. Then either $\mu=\{1\}$
and $\mu'\neq\{1\}$, or $\mu\neq\{1\}$ and $\mu'=\{1\}$. Since
$\sigma(\{1\})=\{1\}$, we deduce that $\mu'\neq\sigma(\mu)$. Thanks to
Lemma~\ref{valeurE} and Equation~(\ref{eq:llambda}), we have for every
$\lambda\in \Lambda_0$ and $\phi\in \mathfrak b_{\lambda}^{\mathcal C}$,
either $e_{\lambda}(\Phi_{\phi})(x)=0$, or
$l'_{\sigma(\lambda)}(J_{\lambda}^{*-1}(\phi))(x')=0$. In particular,
Equation~(\ref{eq:th}) gives $\widehat{I}_{\{1\}}(x,x')=0$, and the
result follows from Proposition~\ref{prop:equiviso}.
\end{proof}

\subsection{Brou\'e's isometries}\label{secbroue}
In this subsection, we fix a prime
number $p$ and assume that $C$ and $C'$ are the sets of $p$-regular elements
(that is, elements whose order is prime to $p$) of $G$ and $G'$,
respectively. Let $B$ and $B'$ be a union of $p$-blocks of $G$ and $G'$.
Denote by $(K,\mathcal R,k)$ a splitting $p$-modular system for $G$ and $G'$.
Let $I:\C\Irr(B)\rightarrow\C\Irr(B')$ be an isometry such that
$I(\Z\Irr(B))=\Z\Irr(B')$ and $\widehat{I}$ defined in
Equation~(\ref{eq:chapeau}) is perfect, that is
% In~\cite{Broue}, Brou\'e says that an isometry
% $I:\Z\Irr(B)\rightarrow\Z\Irr(B')$ is perfect if the character
% $\widehat{I}$ defined in~\S\ref{pariso} is perfect, that is if the
% following properties holds.
\begin{enumerate}[(i)]
\item For every $(x,x')\in G\times G'$, $\widehat{I}(x,x')$ lies in
$|\Cen_{G}(x)|\mathcal R\cap |\Cen_{G'}(x')|\mathcal R$.
\item $\widehat{I}$ satisfies property (ii) of
Proposition~\ref{prop:equiviso}.
\end{enumerate}
We call such an isometry a Brou\'e isometry.

\begin{remark}
In fact, the perfect character $\mu:G\times G'\rightarrow\C$ defined by 
Brou\'e in~\cite{Broue} 
is not exactly $\widehat{I}$, but
$\mu(x,x')=\widehat{I}(x^{-1},x')$. However, since the
sets of $p$-regular and $p$-singular elements are stable under $g\mapsto
g^{-1}$, it follows that $\mu$ is perfect if and only if $\widehat{I}$ is
perfect.
\end{remark}

\begin{remark}Since the set of irreducible Brauer characters $\IBr_p(G)$
is a $\Z$-basis of $\Z\Irr(G)^{C}$ which satisfies the
conclusion of Proposition~\ref{prop:choixbase}, Remark~\ref{rk:kor} and
Proposition~\ref{prop:equiviso} imply that a Brou\'e isometry is a
perfect generalized isometry (in our sense and in the sense of
K\"ulshammer, Olsson and Robinson).
\label{rk:broue}
\end{remark}

\begin{theorem}
Assume that $G$ and $G'$ are two finite groups and that $C$ and $C'$ are
the sets of $p$-regular elements of $G$ and $G'$, respectively. Suppose 
that the following three conditions are satisified:
\begin{enumerate}[(i)]
\item
The hypotheses of Theorem~\ref{th:iso} hold.
\item
For any $\lambda\in\Lambda_0$, we have $I_{\lambda}(\Z\Irr(B_{\lambda}))=
\Z\Irr(B'_{\sigma(\lambda)})$. 
%We also assume that 
\item For every
$g=g_S\pd g_C\in
G$ and $g'=g'_{S'}\pd g'_{C'}\in G'$, $p$ does not divide $|\mathcal
E_{g_S,g_C}^{\lambda}|$ for any $\lambda\in\Lambda$, and $p$ does not divide
$|\mathcal E_{g'_S,g'_{C'}}^{\lambda'}|$ for any $\lambda'\in\Lambda'$. %, where the set $[g_C]$ is as in
%Equation~(\ref{eq:llambda}).
\end{enumerate} 
Then $I_{\{1\}}$ is a Brou\'e isometry.
\label{th:broue}
\end{theorem}

\begin{proof}By Remark~\ref{rk:broue} and
Corollary~\ref{cor:isoparfaitegene}, $I_{\{1\}}$ satisfies Property
(ii). We thus only prove Property (i). %Let $x\in G$ and $x'\in G'$. 
For $\lambda\in\Lambda_0$, we take $b_{\lambda}=\IBr_p(B_{\lambda})$.  
In particular, $\Z b_{\lambda}^{\vee}$ is the set of projective characters of
$B_{\lambda}$. 
% Since $I_{\{1\}}(\Z\Irr(B))=\Z\Irr(B')$ and
% $I_{\{1\}}$ satisfies Property (ii), we deduce from
% Proposition~\ref{prop:equiviso} 
By Assumption~(ii), and Hypothesis~(iii) of Theorem~\ref{th:iso}, since
$I_{\lambda}$ is injective, one has 
$$I_{\lambda}\left(\Z\Irr(B_{\lambda})\cap\C
b_{\lambda}^{\vee}\right)=I_{\lambda}\left(\Z\Irr(B_{\lambda})\right)
\cap I_{\lambda}(\C
b_{\lambda}^{\vee})=\Z\Irr(B'_{\sigma(\lambda)})\cap\C
b'_{\sigma(\lambda)}.$$
Hence, Corollary~\ref{blocs} gives $J_{\lambda}(\Z
b_{\lambda}^{\vee})=\Z {b'}_{\sigma(\lambda)}^{\vee}$, and it follows that
$J_{\lambda}^{*-1}(\phi)\in\Z\IBr_p(B'_{\sigma(\lambda)})$ for all
$\phi\in\Z b_{\lambda}$.
Now, let
$\mathfrak b_{\lambda}$ and $\mathfrak b_{\lambda}^{\mathcal C}$ be as in
Remark~\ref{rk:baseadaptee}. Let 
$\phi\in \mathfrak b_{\lambda}^{\mathcal C}$. 
% the character $\Phi_{\phi}$ is a projective character of $G_{\lambda}$, % We then deduce that
% $J_{\lambda}^{*-1}(\phi)(x')/|\Cen_{G_{\lambda}}(x)|_{p'}\in\mathcal O$. 
Let $g\in G$ and $g'\in G'$. Write
$g=g_S\pd g_C$ and $g'=g'_{S'}\pd g'_{C'}$, and assume that $g$ is of 
type $\mu$. Then by
Lemma~\ref{valeurE}, one has
$e_{\lambda}(\Phi_{\phi})(g)=0$ for $\lambda\neq\mu$ and 
$e_{\mu}(\Phi_{\phi})(g)=|\Cen_G(g)|\sum_{x_C\in
\mathcal E_{g_S,g_C}^{\mu}}\frac{\Phi_{\phi}(x_C)}{|\Cen_{G_{\mu}}(x_C)|}$. 
Furthermore, thanks to Equation~(\ref{eq:llambda}) and the fact that $p$
does not divide $|\mathcal E_{g'_{S'},g'_{C'}}^{\sigma(\mu)}|$, we deduce that
$l'_{\sigma(\mu)}(J_{\mu}^{*-1}(\phi))(g')\in\mathcal R$. Now, by
Equation~(\ref{eq:th}) and Theorem~\ref{th:iso}, we obtain
% Lemma~\ref{valeurE} give
$$\frac{\widehat{I}_{\{1\}}(g,g')}{|\Cen_G(g)|}=\sum_{\phi\in
\mathfrak b_{\mu}^{\mathcal C}}\sum_{x_C\in\mathcal E_{g_S,g_C}^{\mu}}\frac{\overline{\Phi_{\phi}(x_C)}}{|\Cen_{G_{\mu}}(x_C)|_p}\cdot\frac{
l'_{\sigma(\mu)}(J_{\mu}^{*-1}(\phi))(g')}{|\Cen_{G_{\mu}}(x_C)|_{p'}}\in
\mathcal R,$$ 
%
% and
% Equation~(\ref{eq:llambda}) implies that
%$l_{\sigma(\lambda)}(J_{\lambda}^{*-1}(\phi))(x')\in\mathcal
%R$. 
%
because $1/|\Cen_{G_{\mu}}(x_C)|_{p'}\in\mathcal R$, and
$\Phi_{\phi}(x_C)/|\Cen_{G_\mu}(x_C)|_p\in\mathcal R$ by~\cite[2.21]{Navarro}.
Similarly, using Remark~\ref{rk:adjoint}, we deduce that
$\widehat{I}_{\{1\}}(g,g')/|\Cen_{G'}(g')|\in\mathcal R$, as required.
\end{proof}

\begin{remark}
The proof of Theorem~\ref{th:broue} shows that the condition (iii) can
be replaced by
$I_{\lambda}(\C\Irr(B_{\lambda})(\mathcal
C))=\C\Irr(B_{\sigma(\lambda)})(\mathcal C')$ for any $\lambda\in\Lambda$, where similarly to
Remark~\ref{rk:baseadaptee}, $\C\Irr(B_{\lambda})(\mathcal C)$ denotes
the set of class functions of $\C\Irr(B_{\lambda})$ constant on
$\mathcal E_{x_{\lambda},y}^{\lambda}$ for any $y\in C\cap
G_{\lambda}$. Indeed, with this assumption, we have
$J_{\lambda}^{*-1}(\Z b_{\lambda}({\mathcal C}))\subseteq
\Z b_{\sigma(\lambda)}({\mathcal C'})$, and
Remark~\ref{rk:fonctionCstable} gives that
$l'_{\sigma(\lambda)}(J_{\lambda}^{*-1}(\phi))(g')\in\mathcal R$ for any
$\phi\in\mathfrak b_{\lambda}^{\mathcal C}$ and $g'\in G'$.
\label{rk:prop3bis}
\end{remark}

\section{Alternating groups}
\label{section:An}

Let $n$ be a positive integer and $p$ be a prime. 
We denote by $\mathcal{P}_n$ (or $\mathcal P$)
the set of partitions of $n$, by $\mathcal
O_n$ (or $\mathcal O$) the set of partitions of $n$ whose parts are odd, 
and by $\mathcal D_n$ (or $\mathcal D$) the set of partitions of
$n$ whose parts are distinct. We also write $\mathcal{OD}_n$ (or
$\mathcal{OD}$) for $\mathcal O_n\cap\mathcal D_n$ (respectively $\mathcal
O\cap\mathcal D$). Moreover, for $\lambda=(\lambda_1,\ldots,\lambda_r)
\in\mathcal P$, we write
$|\lambda|=\sum \lambda_i $ and $\ell(\lambda)=r$.

\subsection{Notation}\label{subsec:notAn}
For any $\lambda\in\mathcal{P}_n$, we write $\chi_{\lambda}$ for the
corresponding irreducible character of $\sym_n$, and $\lambda^*$ for the
conjugate partition of $\lambda$. It is well-known that
$\chi_{\lambda^*}=\chi_{\lambda}\otimes\varepsilon$, where $\varepsilon$
denotes the sign character of $\sym_n$. 
The character
$\chi_{\lambda}$ is called self-conjugate when
$\lambda=\lambda^*$. %, where $\varepsilon$
% denotes the sign character of $\sym_n$. It is well-known that
%$\chi_{\lambda}$ is self-conjugate if and only if $\lambda=\lambda^*$,
%where $\lambda^*$ is the conjugate partition of $\lambda$.  
We denote by
$\mathcal{SC}_n$ (or $\mathcal{SC}$) the set of self-conjugate
partitions of $n$.  For any $\lambda\in\mathcal{SC}_n$, write
$\overline{\lambda}\in\mathcal{OD}_n$ for the partition whose parts are
the diagonal hook lengths of $\lambda$ (see~\cite[p.\,4]{olsson} for the
definition of a hook and its hook length. 
Recall that with the notation of~\cite{olsson}, 
a diagonal hook is an $(i,i)$-hook for some $i$), and define the map
\begin{equation}
\label{eq:defa}
a:\mathcal{SC}_n\longrightarrow\mathcal{OD}_n,\quad\lambda\mapsto
\overline{\lambda}.
\end{equation}
We remark that $a$ is bijective, and that $a^{-1}(\lambda)$ is the
self-conjugate partition whose diagonal hooks have lengths the parts of
$\lambda$.

Now, recall that
$\Res_{\Alt_n}^{\sym_n}(\chi_{\lambda})$ is irreducible if and only if
$\lambda$ is a non self-conjugate partition (i.e. $\lambda\neq\lambda^*$).
In this case,
$\chi_{\lambda}$ and $\chi_{\lambda^*}$ restrict to the same
irreducible character, which we denote by $\caralt_{\lambda}$. Otherwise,
when $\lambda=\lambda^*$, the restriction of $\carsym_{\lambda}$ to
$\Alt_n$ is the sum of two irreducible characters $\caralt_{\lambda}^-$
and $\caralt_{\lambda}^+$. 
% For any self-conjugate partition
% $\lambda$, write 
% $\overline{\lambda}\in\mathcal{D}_n^+$ for the partition
% consisting in the diagonal hooks 
% of $\lambda$. 
Moreover, the conjugacy class of $\sym_n$ labeled by
$a(\lambda)$ splits into two classes $a(\lambda)^{\pm}$
of $\Alt_n$, and following~\cite[Theorem 2.5.13]{James-Kerber}, the
notation can be chosen such that 
$\caralt_{\lambda}^{\pm}(a(\lambda)^+)=x_{\lambda}\pm y_{\lambda}$
and $\caralt_{\lambda}^{\pm}(a(\lambda)^-)=x_{\lambda}\mp
y_{\lambda}$ with
\begin{equation}
\label{eq:Anchoix}
x_{\lambda}=\frac{1}{2}(-1)^{\frac{n-k}{2}}\quad\textrm{and}
\quad y_{\lambda}=\frac{1}{2}\sqrt{(-1)^{\frac{n-k}{2}}h_1\cdots h_k},
\end{equation}
where $a(\lambda)=(h_1>h_2>\cdots>h_k)$. Note that
$x_{\lambda}=\carsym_{\lambda}(a(\lambda))/2$, and 
if $x\in\Alt_n$ does
not belong to the class of $\sym_n$ parametrized by
$a(\lambda)$, then
$\caralt_{\lambda}^+(x)=\caralt_{\lambda}^-(x)=\carsym_{\lambda}(x)/2$.

Let $q$ be a positive integer.
To any $\lambda\in\mathcal P_n$, we associate its $q$-core
$\lambda_{(q)}$ and its $q$-quotient
$\lambda^{(q)}=(\lambda^1,\ldots,\lambda^q)$; see for
example~\cite[p.\,17]{olsson}. Recall that the map
\begin{equation}
\label{eq:partquotientcoeur}
\lambda\mapsto(\lambda_{(q)},\lambda^{(q)})
\end{equation}
is bijective. Define
\begin{equation}
\lambda^{(q)*}=\left( (\lambda^{q})^*,\ldots,(\lambda^{1})^*\right).
\label{eq:conjquotient}
\end{equation}
Then by~\cite[Proposition 3.5]{olsson}, the $q$-core and $q$-quotient 
of $\lambda^*$ are
$\lambda_{(q)}^*$ and $\lambda^{(q)*}$ respectively. In particular,
\begin{equation}
\lambda=\lambda^* \Longleftrightarrow
\lambda_{(q)}^*=\lambda_{(q)}\quad\textrm{and}\quad\lambda^{(q)*}=\lambda^{(q)}.
\label{souv}
\end{equation}

\subsection{$p$-blocks of $\Alt_n$}
\label{subsec:pblockAn}
The ``Nakayama Conjecture'' asserts that two irreducible characters lie
in the same $p$-block of $\sym_n$ if and only if the partitions
labeling them have the same
$p$-core; see~\cite[Theorem 6.1.21]{James-Kerber}. Hence, the $p$-blocks
of $\sym_n$ are labeled by the $p$-cores of
partitions of $n$. Such $p$-cores are called $p$-cores of $n$ (or of
$\sym_n$).  For a $p$-core $\gamma$ of $n$, we denote by $B_{\gamma}$
the corresponding $p$-block of $\sym_n$. Moreover, we define the
$p$-weight of $\gamma$ (or of $B_{\gamma}$) by setting
$w=(n-|\gamma|)/p$.

Let $\gamma$ be a $p$-core of $n$. Then $\gamma^*$ is also a $p$-core of
$n$, and
$\Irr(B_{\gamma^*})=\{\chi_{\lambda^{*}}\in\Irr(\sym_n)\,|\,\lambda_{(p)}=\gamma\}=\Irr(B_{\gamma})^*$.
%  the set
% $\{\carsym_{\lambda^*}\in\Irr(\sym_n)\,|\,\lambda_{(p)}=\gamma\}$ defines a
% $p$-block $B^*$ of $\sym_n$ (because multiplication by the sign
% character permutes the $p$-blocks of $\sym_n$) such that
% $B_{\gamma}^*=B_{\gamma^*}$.

If $\gamma\neq \gamma^*$, then
$\Irr(B_{\gamma})\cap\Irr(B_{\gamma^*})=\emptyset$ and 
$\Irr(B_{\gamma})$ contains no self-conjugate character. In particular,
the $p$-blocks
$B_{\gamma}$ and $B_{\gamma^*}$
cover a unique $p$-block $b_{\gamma,\gamma^*}$ of $\Alt_n$, which is such that
$\Irr(b_{\gamma,\gamma^*})=\{\caralt_{\lambda}\in\Irr(\Alt_n)\,|\,\lambda_{(p)}=\gamma\}
=\{\caralt_{\lambda}\in\Irr(\Alt_n)\,|\,\lambda_{(p)}=\gamma^*\}$.

Assume instead that $\gamma=\gamma^*$. Suppose that $w>0$.  % Then
% $\Irr(B_{\gamma})$ contains a non self-conjugate character. Indeed, 
By
Equation~(\ref{eq:partquotientcoeur}), there is a partition $\lambda$ of
$n$ with $p$-core $\gamma$ and $p$-quotient $(
(w),\emptyset,\ldots,\emptyset)$. Furthermore,
$\chi_{\lambda}\neq\chi_{\lambda^*}$ by  Equation~(\ref{souv}) and
$\chi_{\lambda}\in\Irr(B_{\gamma})$.  
% Indeed, if $w$ denotes the $p$-weight of $B_{\gamma}$, then
% we define a $p$-multipartition  
% $\mathfrak{m}=(\lambda^1,\ldots,\lambda^p)$ of $w$ as follows. If 
% $w$ is even, then we set $\lambda^1=(w/2)$, $\lambda^p=(1^{w/2})$ and
% $\lambda^i=\emptyset$ otherwise. If $w$ is odd, then we set
% $\lambda^1=( (w-1)/2)$, $\lambda^p=(1^{(w-1)/2})$,
% $\lambda^{(p+1)/2}=(1)$, and $\lambda^i=\emptyset$ otherwise. Now, 
% let $\lambda$ be the partition of $n$ with $p$-core $\gamma$ and
% $p$-quotient $\mathfrak{m}$. Then $\lambda\in\Irr(B_{\gamma})$, and
% thanks to Equation~(\ref{souv}), $\lambda$ is self-conjugate.
Hence, $\chi_{\lambda}$ restricts irreducibly to $\Alt_n$, 
and~\cite[Theorem 9.2]{Navarro} implies that $B_{\gamma}$ covers a unique
$p$-block $b_{\gamma}$ of $\Alt_n$.

If, on the other hand, $w=0$, then $\Irr(B_{\gamma})=\{\chi_{\gamma}\}$
has defect zero. If $n\leq 1$, then $\Alt_n=\sym_n=\{1\}$, and
$\rho_{\gamma}=\chi_{\gamma}$ is the trivial character. The case $n=2$
does not occur, because there are no self-conjugate partitions of size
$2$.
If $n\geq 3$, then $\{\rho_{\gamma}^{+}\}$ and $\{\rho_{\gamma}^-\}$ are
$p$-blocks of defect zero of $\Alt_n$.

\subsection{Brou\'e perfect isometries}\label{subsec:BroueAn}

Let $q$ be a positive integer.
For $\lambda\in\mathcal P_n$, we denote by $M_q(\lambda)$ the set of
$\mu\in\mathcal P_{n-q}$ such that $\mu$ is obtained from $\lambda$ by
removing a $q$-hook. (The definition of $q$-hooks, and the process 
to remove a $q$-hook from a partition, is for example given
in~\cite[p.\,4,\,5,\,6]{olsson}). Note that, if $\mu\in M_{q}(\lambda)$, then $\mu^*\in M_{q}(\lambda^*)$.

For $\mu\in M_{q}(\lambda)$, we denote by $c_{\mu}^{\lambda}$ the $q$-hook of
$\lambda$ such that $\mu$ is obtained from $\lambda$ by removing
$c_{\mu}^{\lambda}$. Define
\begin{equation}
\label{eq:alpha}
\alpha_{\mu}^{\lambda}=(-1)^{L(c_{\mu}^{\lambda})},
\end{equation}
where $L(c_{\mu}^{\lambda})$ denotes the leglength of
$c_{\mu}^{\lambda}$ (see for
example~\cite[p.\,4]{olsson} for the definition of the leglength of a
hook). 

\begin{lemma}
If $q$ is an odd integer, then
$\alpha_{\mu}^{\lambda}=\alpha_{\mu^*}^{\lambda^*}$. 
\label{lem:jambeconj}
\end{lemma}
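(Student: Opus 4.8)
The claim relates the leglength of a $q$-hook $c_\mu^\lambda$ removed from $\lambda$ to the leglength of the corresponding $q$-hook $c_{\mu^*}^{\lambda^*}$ removed from $\lambda^*$. The natural tool is the $\beta$-set (first-column hook length) description of partitions and of hook removal. First I would recall that removing a $q$-hook from $\lambda$ corresponds, on a $\beta$-set $X$ of $\lambda$ (of some odd size, say), to choosing a bead at position $x\in X$ with $x-q\notin X$ and sliding it down to position $x-q$; the leglength $L(c_\mu^\lambda)$ is then the number of beads $y\in X$ with $x-q<y<x$. Conjugation $\lambda\mapsto\lambda^*$ corresponds, on $\beta$-sets, to passing to the ``complementary'' set: if $X$ is a $\beta$-set for $\lambda$ with $|X|=N$, then $X^* = \{0,1,\dots,N+s-1\}\setminus\{N+s-1-x : x\in X\}$ is a $\beta$-set for $\lambda^*$ for a suitable $s$; see~\cite[Section 2]{olsson}.

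The key computation is to check that under this complementation, sliding a bead down by $q$ in $X$ corresponds exactly to sliding a bead down by $q$ in $X^*$ (namely, the ``hole'' that the bead in $X$ moves into becomes a bead in $X^*$, and vice versa), and that the two leglengths are related by $L(c_{\mu^*}^{\lambda^*}) = q - 1 - L(c_\mu^\lambda)$. Concretely: the beads of $X$ strictly between $x-q$ and $x$ are in bijection with the non-beads (holes) of $X$ strictly between $x-q$ and $x$ via $y\mapsto$ (its absence), and there are exactly $q-1$ integers strictly between $x-q$ and $x$; passing to $X^*$ interchanges beads and holes in that range, so the leglength gets replaced by $(q-1)$ minus itself. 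Hence
\begin{equation*}
L(c_{\mu^*}^{\lambda^*}) + L(c_\mu^\lambda) = q-1.
\end{equation*}

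Now, since $q$ is odd, $q-1$ is even, so $L(c_{\mu^*}^{\lambda^*})$ and $L(c_\mu^\lambda)$ have the same parity, and therefore
$\alpha_{\mu^*}^{\lambda^*} = (-1)^{L(c_{\mu^*}^{\lambda^*})} = (-1)^{L(c_\mu^\lambda)} = \alpha_\mu^\lambda$,
which is the assertion. One should also note that $\mu\in M_q(\lambda)$ does imply $\mu^*\in M_q(\lambda^*)$ (already observed in the text just before the lemma), so the quantities on both sides are defined.

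\textbf{Main obstacle.} The only real subtlety is bookkeeping with $\beta$-sets: making sure the complementation $X\mapsto X^*$ is set up so that it genuinely is a $\beta$-set for $\lambda^*$ (choosing the size parameter consistently, e.g. using a $\beta$-set of fixed size $N$ divisible by nothing in particular but large enough), and verifying carefully that bead-removal in $X$ translates to bead-removal at the ``mirrored'' position in $X^*$ rather than, say, a bead-addition. Once the dictionary between hook removal and bead sliding under conjugation is pinned down precisely (this is standard, essentially~\cite[Proposition 3.5]{olsson} and the surrounding material, but must be stated for an individual hook rather than the whole $p$-quotient), the parity argument is immediate. An alternative, purely combinatorial route avoiding $\beta$-sets entirely is to use the Young-diagram description: a $q$-hook of $\lambda$ with arm $a$ and leg $L$ (so $a+L = q-1$) corresponds under transposition to the $q$-hook of $\lambda^*$ with arm $L$ and leg $a$, giving $L(c_{\mu^*}^{\lambda^*}) = a = q-1-L(c_\mu^\lambda)$ directly; this may in fact be the cleanest presentation, and I would likely use it, citing the definition of arm/leg length from~\cite{olsson}.
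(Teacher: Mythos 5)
Your argument is correct, and in fact your closing ``alternative'' is word for word the paper's own proof: the paper simply notes that $c_{\mu^*}^{\lambda^*}=(c_{\mu}^{\lambda})^*$, so the leg of the transposed hook is the arm of the original, whence $L(c_{\mu}^{\lambda})+L(c_{\mu^*}^{\lambda^*})=q-1$, and oddness of $q$ gives equal parities. Your main route via $\beta$-sets (bead sliding, with conjugation realized as complementation of the $\beta$-set, and the observation that beads and holes strictly between $x-q$ and $x$ are exchanged, giving the same identity $L(c_{\mu^*}^{\lambda^*})+L(c_\mu^\lambda)=q-1$) is also valid, and it has the mild virtue of being the framework one would use anyway for the $p$-quotient statements elsewhere in the paper; but for this isolated lemma it is heavier machinery, since it forces the bookkeeping about $\beta$-set sizes and the bead/hole mirror that you yourself flag as the main obstacle, whereas the transposition-of-the-hook argument needs no setup at all. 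So: no gap, two correct proofs, and the shorter of your two options is the one the paper uses.
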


\begin{proof}
First, note that $c_{\mu^*}^{\lambda^*}=(c_{\mu}^{\lambda})^*$. 
In particular, the leg of
$c_{\mu^*}^{\lambda^*}$ is the arm of $c_{\mu}^{\lambda}$. Hence,
$L(c_{\mu}^{\lambda})+L(c_{\mu^*}^{\lambda^*})=q-1$. Since $q$ is odd, the
result follows.
\end{proof}

\begin{lemma}
Assume that $q$ is odd, and that $\lambda=\lambda^*$. 
The set $M_q(\lambda)$ contains a self-conjugate partition if and only if
$q\in\{\overline{\lambda}_1,\ldots,\overline{\lambda}_k\}$. In this
case, $M_q(\lambda)$ contains a unique self-conjugate partition $\mu$,
and $\mu$ is such that $\overline{\mu}=\overline{\lambda}\backslash \{q\}$.
\label{lem:uniqueselfcrochet}
\end{lemma}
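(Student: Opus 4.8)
The statement is about removing an odd $q$-hook from a self-conjugate partition $\lambda = \lambda^*$ and asking when the result can again be self-conjugate. The natural tool is the beta-set / abacus description of hooks combined with the symmetry of self-conjugate partitions under conjugation. Recall that removing a $q$-hook from $\lambda$ corresponds to sliding a bead up $q$ positions on the $1$-runner abacus (equivalently, in a first-column hook-length description, replacing a hook length $h$ by $h-q$, provided $h-q$ is not already a hook length and $h > q$). So the plan is: (1) translate ``$M_q(\lambda)$ contains a self-conjugate partition'' into a statement about the abacus / beta-set of $\lambda$ together with the involution coming from conjugation; (2) use the fact that for $\lambda = \lambda^*$ the diagonal hook lengths $\overline\lambda = (h_1 > \cdots > h_k) \in \mathcal{OD}_n$ parametrize $\lambda$ (via the bijection $a$ from Equation~(\ref{eq:defa})), and that the full set of first-column hook lengths of $\lambda$ is symmetric about the $h_i$'s in a precise sense; (3) show the removed $q$-hook, if it is to yield a self-conjugate partition, must be conjugation-invariant, i.e. must be a diagonal hook, which forces $q \in \{h_1,\dots,h_k\}$; and (4) conversely, if $q = h_i$ for some $i$, exhibit the self-conjugate $\mu$ obtained by deleting the $i$-th diagonal hook and check $\overline\mu = \overline\lambda \setminus \{q\}$.

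For the forward direction, suppose $\mu \in M_q(\lambda)$ is self-conjugate and let $c = c_\mu^\lambda$ be the removed $q$-hook. Conjugating, $\mu^* = \mu \in M_q(\lambda^*) = M_q(\lambda)$ is obtained by removing $c^* = c_{\mu^*}^{\lambda^*}$ (as noted in the proof of Lemma~\ref{lem:jambeconj}, $c_{\mu^*}^{\lambda^*} = (c_\mu^\lambda)^*$). Since a partition determines uniquely which hook was removed to reach a given smaller partition, and since removing $c$ and removing $c^*$ both take $\lambda$ to $\mu$, I would argue $c = c^*$, i.e. the hook $c$ is symmetric under conjugation — hence it is a diagonal hook of $\lambda$, say the $(i,i)$-hook, whose length is $\overline\lambda_i = h_i$. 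Therefore $q = h_i \in \{\overline\lambda_1,\dots,\overline\lambda_k\}$. Here one must be a little careful: the argument ``removing $c$ and $c^*$ give the same $\mu$, so $c=c^*$'' uses that the map $c \mapsto \lambda \setminus c$ is injective on the set of $q$-hooks of $\lambda$, which is standard (distinct hooks of the same size removed from $\lambda$ give distinct partitions — this follows from the abacus picture, where a $q$-hook removal is a specific bead-slide and the resulting bead configuration remembers which bead moved).

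For the converse, assume $q = h_i$ for some $i$. Using the description of self-conjugate partitions via diagonal hooks: $\lambda = a^{-1}(h_1,\dots,h_k)$ is built by nesting the principal hooks of lengths $h_1 > \cdots > h_k$. Deleting the $i$-th principal hook entirely (its arm, leg and corner cell $(i,i)$) produces a smaller Young diagram which is again self-conjugate — it is $a^{-1}(h_1,\dots,\widehat{h_i},\dots,h_k)$ — and this deletion is exactly the removal of a single $q$-hook from $\lambda$ since the $i$-th principal hook of a self-conjugate partition has length $h_i = q$. Call this partition $\mu$; then by construction $\overline\mu = \overline\lambda \setminus \{q\}$. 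This also shows $\mu$ is the only self-conjugate member of $M_q(\lambda)$: by the forward argument any self-conjugate $\mu' \in M_q(\lambda)$ arises by removing the $j$-th diagonal hook for some $j$ with $h_j = q$, and since $\overline\lambda \in \mathcal{D}$ the value $q$ occurs at most once among the $h_j$, so $j = i$ and $\mu' = \mu$.

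The main obstacle I expect is step (3): rigorously justifying that the removed $q$-hook must equal its own conjugate. The cleanest route is the abacus — encode $\lambda$ by a beta-set $X$ (a finite subset of $\Z_{\geq 0}$, or the $\beta$-numbers); conjugation corresponds to the complementation $X \mapsto \{0,1,2,\dots\} \setminus \{N-1-x : x \in X\}$ for suitable $N$; removing a $q$-hook is replacing some $x \in X$ with $x - q \notin X$. Self-conjugacy of $\lambda$ makes $X$ ``symmetric'', and one checks directly that a symmetric beta-set admits a $q$-hook removal landing in a symmetric beta-set only by moving the bead in the ``central'' position, which is precisely the diagonal hook of length $q$. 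The bookkeeping — matching the central bead to the $(i,i)$-cell and the bead-slide length to $h_i$ — is routine but needs the conventions of~\cite{olsson} to be pinned down; the oddness of $q$ is what guarantees the self-conjugate partition has a genuine $(i,i)$-hook of that odd length (even diagonal hook lengths cannot occur since distinct diagonal hooks of a self-conjugate partition have distinct odd lengths by definition of $\mathcal{OD}$).
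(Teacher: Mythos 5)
Your argument is correct, but it follows a genuinely different route from the paper. The paper proves the lemma through the $q$-quotient: by Equation~(\ref{souv}), self-conjugacy of $\lambda$ means $\lambda^i=(\lambda^{q-i+1})^*$, and by \cite[Theorem 3.3]{olsson} removing a $q$-hook from $\lambda$ is removing a $1$-hook from $\lambda^{(q)}$; hence a self-conjugate member of $M_q(\lambda)$ can only arise by removing a diagonal box from the middle component $\lambda^{(q+1)/2}$ (this is where oddness of $q$ enters, giving a well-defined fixed middle component), and the identification $\overline{\mu}=\overline{\lambda}\setminus\{q\}$ is then taken from the argument of \cite[3.4]{BrGr}. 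You instead work directly on $\lambda$: conjugation-equivariance of hook removal ($c_{\mu^*}^{\lambda^*}=(c_\mu^\lambda)^*$, as in Lemma~\ref{lem:jambeconj}) together with injectivity of $q$-hook removal (distinct $q$-hooks of $\lambda$ yield distinct partitions, immediate on the abacus) forces the removed hook to be its own conjugate, hence a diagonal hook, so $q\in\overline{\lambda}$ and uniqueness follows since the diagonal hook lengths are distinct; conversely removing the diagonal $q$-hook gives a self-conjugate partition by the same equivariance. Your route is more elementary and self-contained (no quotient machinery; it even shows the statement holds vacuously for even $q$, since diagonal hook lengths are odd), while the paper's route stays inside the core/quotient formalism that the rest of Section~\ref{section:An} runs on and reuses \cite[3.4]{BrGr}, which is needed later anyway. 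One small imprecision on your side: literally deleting the arm, leg and corner of the $i$-th principal hook does not leave a Young diagram (try $\lambda=(3,3,3)$, $i=2$); the correct operation is removal of the rim $q$-hook at the diagonal cell, and the identity $\overline{\mu}=\overline{\lambda}\setminus\{q\}$ then does require the small beta-set computation you sketch at the end (the diagonal bead moves to its mirror position, deleting exactly one ``large'' bead), so that step should be written out rather than attributed to a literal cell deletion; with that, the proof is complete.
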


\begin{proof}
Since $\lambda=\lambda^*$, it follows from Equation~(\ref{souv})
that $\lambda^i=(\lambda^{q-i+1})^*$, where
$\lambda^{(q)}=(\lambda^1,\ldots,\lambda^q)$ is the $q$-quotient of
$\lambda$. Moreover, by~\cite[Theorem 3.3]{olsson}
the multipartitions of $n-q$ obtained from $\lambda^{(q)}$ by
removing any $1$-hook are the $q$-quotients of partitions of
$M_{q}(\lambda)$. In particular, $\mu\in M_{q}(\lambda)$ is
self-conjugate if and only if $\mu^i=\lambda^i$ for $1\leq i\leq (q-1)/2$ and
$\mu^{(q+1)/2}$ is a self-conjugate 
partition obtained from $\lambda^{(q+1)/2}$ by
removing a $1$-hook. However, when we remove a $1$-hook
from a self-conjugate partition, the resulting partition is never a
self-conjugate partition, except if the removing box is a diagonal
$1$-hook. We now conclude with the argument of the proof
of~\cite[3.4]{BrGr}.
\end{proof}

Assume $\lambda=\lambda^*$. In the case that $M_q(\lambda)$ contains a
(unique) self-conjugate 
partition $\mu$, then we write
$\mu_{\lambda}=\mu$ (which is well-defined by
Lemma~\ref{lem:uniqueselfcrochet}).
Let $\mu,\,\mu'\in M_q(\lambda)$. We write $\mu\sim \mu'$ if and only if
$\mu'=\mu^*$, and we denote by $M'_q(\lambda)$ a set of representatives
modulo $\sim$.
Finally, for $\lambda\in\mathcal P_n$, we set $\alpha(\lambda)=1$ if
$\lambda\neq\lambda^*$ and $\alpha(\lambda)=\frac{1}{2}$ otherwise.

\begin{theorem}
Let $q$ an odd integer and $\lambda\in\mathcal P_n$. If
$\lambda\neq\lambda^*$, then we write
$\caralt_{\lambda}=\caralt_{\lambda}^+=\caralt_{\lambda}^-$.
Let $\sigma$ be a $q$-cycle with support $\{n-q+1,\ldots, n\}$. 
Then for $\epsilon\in\{\pm 1\}$
and $g\in\Alt_{n-q}$, we have
$$\caralt^{\epsilon}_{\lambda}(\sigma g)=\sum_{\mu\in M'_q(\lambda)\atop
\mu\neq\mu^*}a(\caralt_{\lambda}^{\epsilon},\caralt_{\mu})\,\caralt_{\mu}(g)
+\sum_{\mu\in M_q(\lambda)\atop \mu=\mu^*}\left(
a(\caralt_{\lambda}^{\epsilon},\caralt_{\mu}^+)\,\caralt_{\mu}^+(g)+
a(\caralt_{\lambda}^{\epsilon},\caralt_{\mu}^-)\,\caralt_{\mu}^-(g)\right),$$
where the complex numbers 
$a(\caralt_{\lambda}^{\epsilon},\caralt_{\mu}^{\eta})$ (
$\eta\in\{\pm 1\}$) are defined as follows.
\begin{enumerate}
\item[--] If $\mu^*\neq \mu$ and $\mu^*\in M_{q}(\lambda)$, then
$a(\caralt_{\lambda}^{\epsilon},\caralt_{\mu})=
\alpha(\lambda)(\alpha_{\mu}^{\lambda}+\alpha_{\mu^*}^{\lambda})$.
\item[--] If $\mu^*\neq \mu$ and $\mu^*\notin M_{q}(\lambda)$, then
$a(\caralt_{\lambda}^{\epsilon},\caralt_{\mu})=
\alpha(\lambda)\alpha_{\mu}^{\lambda}$.
\item[--] If $\mu^*=\mu$ and $\mu\neq\mu_{\lambda}$, then
$a(\caralt_{\lambda}^{\epsilon},\caralt_{\mu}^{\eta})=
\alpha(\lambda)\alpha_{\mu}^{\lambda}$.
\item[--] If $\mu^*=\mu$ and $\mu=\mu_{\lambda}$, %(this case occurs if
% and only
%if $\lambda=\lambda^*$ and $\sigma g$ has cycle type $\overline{\lambda}$), 
then
$a(\caralt_{\lambda}^{\epsilon},\caralt_{\mu_{\lambda}}^{\eta})=
\frac{1}{2}\left(
\alpha_{\mu_{\lambda}}^{\lambda}+\epsilon\eta \sqrt{(-1)^{(q-1)/2}q}\right)$.
\end{enumerate}
\label{theo:MNAn}
\end{theorem}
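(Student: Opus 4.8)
The plan is to deduce this Murnaghan-Nakayama rule for $\Alt_n$ from the classical Murnaghan-Nakayama rule for $\Sym_n$, carefully tracking the character values on the conjugacy classes of $\Alt_n$, including the behaviour on the split classes. Recall that the classical rule states $\carsym_\lambda(\sigma g) = \sum_{\mu\in M_q(\lambda)} (-1)^{L(c_\mu^\lambda)}\carsym_\mu(g) = \sum_{\mu\in M_q(\lambda)}\alpha_\mu^\lambda\,\carsym_\mu(g)$, where $\sigma$ is a $q$-cycle on $\{n-q+1,\dots,n\}$ and $g\in\Sym_{n-q}$. The first task is to restrict this to $g\in\Alt_{n-q}$ (so that $\sigma g\in\Alt_n$ since $q$ is odd) and then push it down to $\Alt_n$-characters using the relations $\Res(\carsym_\lambda)=\caralt_\lambda$ when $\lambda\neq\lambda^*$ and $\Res(\carsym_\lambda)=\caralt_\lambda^++\caralt_\lambda^-$ when $\lambda=\lambda^*$, together with the identities $\carsym_{\lambda^*}=\carsym_\lambda\otimes\e$ and $\chi_\lambda(x)=\chi_{\lambda^*}(x)$ for $x\in\Alt_n$.

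First I would treat the case $\lambda\neq\lambda^*$: here $\caralt_\lambda^\epsilon=\caralt_\lambda=\Res_{\Alt_n}(\carsym_\lambda)$, so $\caralt_\lambda(\sigma g)=\sum_{\mu\in M_q(\lambda)}\alpha_\mu^\lambda\carsym_\mu(g)$. I then collect the terms according to whether $\mu=\mu^*$ or not, and whether $\mu^*$ also lies in $M_q(\lambda)$. If $\mu\neq\mu^*$ and $\mu^*\in M_q(\lambda)$, the two summands contribute $(\alpha_\mu^\lambda+\alpha_{\mu^*}^\lambda)\carsym_\mu(g)=(\alpha_\mu^\lambda+\alpha_{\mu^*}^\lambda)\caralt_\mu(g)$ (using $\carsym_\mu|_{\Alt}=\carsym_{\mu^*}|_{\Alt}=\caralt_\mu$); if $\mu^*\notin M_q(\lambda)$ we simply get $\alpha_\mu^\lambda\caralt_\mu(g)$; and if $\mu=\mu^*$ we get $\alpha_\mu^\lambda\carsym_\mu(g)=\alpha_\mu^\lambda(\caralt_\mu^+(g)+\caralt_\mu^-(g))$. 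This matches the stated formula with $\alpha(\lambda)=1$, since for $\lambda\neq\lambda^*$ the coefficient $a(\caralt_\lambda^\epsilon,\caralt_\mu^\eta)$ does not depend on $\eta$, and the self-conjugate case $\mu=\mu_\lambda$ cannot occur (by Lemma~\ref{lem:uniqueselfcrochet}, $M_q(\lambda)$ contains a self-conjugate partition only when $\lambda$ is self-conjugate).

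The substantive case is $\lambda=\lambda^*$, where I would proceed in two steps. Step one: evaluate on a non-split class, i.e. $g\in\Alt_{n-q}$ with $\sigma g\notin a(\lambda)^{\pm}$; then $\caralt_\lambda^\pm(\sigma g)=\carsym_\lambda(\sigma g)/2 = \tfrac12\sum_\mu\alpha_\mu^\lambda\carsym_\mu(g)$, and using Lemma~\ref{lem:jambeconj} ($\alpha_\mu^\lambda=\alpha_{\mu^*}^{\lambda^*}=\alpha_{\mu^*}^\lambda$ for odd $q$ and $\lambda=\lambda^*$) together with $\carsym_\mu(g)+\carsym_{\mu^*}(g)=2\caralt_\mu(g)$ for $\mu\neq\mu^*$, and for $\mu=\mu^*$ writing $\carsym_\mu(g)=\caralt_\mu^+(g)+\caralt_\mu^-(g)$, one obtains exactly the claimed coefficients $\alpha(\lambda)\alpha_\mu^\lambda=\tfrac12\alpha_\mu^\lambda$ — provided also that $\caralt_\mu^+(g)=\caralt_\mu^-(g)$ on non-split classes, so the $\tfrac12(\alpha_{\mu_\lambda}^\lambda+\epsilon\eta\sqrt{\pm q})$ in the last bullet averages back to $\tfrac12\alpha_{\mu_\lambda}^\lambda$ on such $g$. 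Step two is the hard part: the \emph{split} classes $a(\mu_\lambda)^{\pm}$ of $\Alt_{n-q}$, which contribute the square-root term. Here I would note that if $\sigma g$ has cycle type $a(\lambda)$ in $\Sym_n$ and $g$ has cycle type $a(\mu_\lambda)$ in $\Sym_{n-q}$ — which, by Lemma~\ref{lem:uniqueselfcrochet}, happens precisely when $\overline{\lambda}=\overline{\mu_\lambda}\cup\{q\}$, i.e. $g$ is a single $q$-cycle short of being conjugate to $\sigma g$'s complement — then I must compute $\caralt_\lambda^\epsilon(\sigma g)$ directly from Equation~(\ref{eq:Anchoix}). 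The key computation is to express the $y$-values: with $a(\lambda)=(h_1>\dots>h_k)$ and $a(\mu_\lambda)$ obtained by deleting the entry equal to $q$, one has $y_\lambda=\tfrac12\sqrt{(-1)^{(n-k)/2}h_1\cdots h_k}$ and $y_{\mu_\lambda}=\tfrac12\sqrt{(-1)^{(n-q-(k-1))/2}(h_1\cdots h_k)/q}$, so that the ratio involves $\sqrt{(-1)^{(q-1)/2}q}$, giving the term $\epsilon\eta\sqrt{(-1)^{(q-1)/2}q}$ after matching signs $\epsilon,\eta$ on the $\pm$ classes; the $\alpha_{\mu_\lambda}^\lambda$ piece comes from the contributions of the other $\mu$'s (whose values at the split class are $\carsym_\mu/2$ as before) combined with the real part $x_\lambda$ of $\caralt_\lambda^\epsilon$ on $a(\lambda)$. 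I expect the main obstacle to be bookkeeping the signs: pinning down how the labelling convention $a(\lambda)^{\pm}$ (and the induced convention $a(\mu_\lambda)^{\pm}$) propagates under hook-removal, and verifying that the square roots in Equation~(\ref{eq:Anchoix}) combine with the right branch so that $\sqrt{(-1)^{(n-k)/2}h_1\cdots h_k} = \sqrt{(-1)^{(q-1)/2}q}\cdot\sqrt{(-1)^{(n-q-(k-1))/2}(h_1\cdots h_k)/q}$ up to the correct sign; this is where the hypothesis that $q$ is odd and Lemma~\ref{lem:jambeconj} are used decisively, and where one should check consistency against small cases or against~\cite{James-Kerber} to fix the convention.
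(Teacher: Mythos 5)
Your proposal is correct and follows essentially the same route as the paper: Clifford theory applied to the classical Murnaghan--Nakayama rule for $\sym_n$, with the only substantive work in the case $\lambda=\lambda^*$ on the split classes, which both you and the paper resolve via Lemma~\ref{lem:uniqueselfcrochet}, the values in Equation~(\ref{eq:Anchoix}), and the factorization $\sqrt{(-1)^{(n-k)/2}h_1\cdots h_k}=\sqrt{(-1)^{(q-1)/2}q}\,\sqrt{(-1)^{((n-q)-(k-1))/2}h'_1\cdots h'_{k-1}}$, rewritten as $\sqrt{(-1)^{(q-1)/2}q}\,(\caralt_{\mu_\lambda}^+(g)-\caralt_{\mu_\lambda}^-(g))$. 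The sign bookkeeping you flag at the end is exactly the point the paper settles with this last identity, so no new idea is missing.
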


\begin{proof}
This is a consequence of Clifford theory and the Murnaghan-Nakayama
formula in $\sym_n$. We only prove the last case for $\epsilon=+1$. 
Assume that
$\lambda=\lambda^*$
% . In particular, if $\mu\in\mathcal P_{n-q}$ is such
% that $\{\mu,\mu^*\}\subset M_q(\lambda)$ and $\mu\neq\mu^*$, then
% Lemma~\ref{lem:jambeconj} gives that
% $\alpha_{\mu}^{\lambda}=\alpha_{\mu^*}^{\lambda}$.
% Assume moreover 
and that $\sigma g$ has cycle type $\overline{\lambda}$. By
Lemma~\ref{lem:uniqueselfcrochet}, $g\in\overline{\mu_{\lambda}}^{\pm}$.
Now, if $\overline{\lambda}=(h_1>h_2>\cdots>h_k)$ then
\begin{eqnarray*}
\caralt_{\lambda}^+(\sigma g)&=&\frac{1}{2}\left(\carsym_{\lambda}(\sigma
g)\pm\sqrt{(-1)^{\frac{n-k}{2}}h_1\cdots h_k}\ \right),\\
&=&\sum_{\{\mu,\mu^*\}\subset
M_q(\lambda)\atop\mu\neq\mu^*}\frac{1}{2}(\alpha_{\mu}^{\lambda}+\alpha_{\mu^*}^{\lambda})\caralt_{\mu}(g)+
\sum_{\mu\in
M_q(\lambda)\atop\mu^*\notin M_q(\lambda)\
\mu\neq\mu^*}\frac{1}{2}\alpha_{\mu}^{\lambda}\caralt_{\mu}(g)\\
&&+\sum_{\mu\in
M_q(\lambda)\atop
\mu=\mu^*}\frac{1}{2}\alpha_{\mu}^{\lambda}(\caralt_{\mu}^+(g)
+\caralt_{\mu}^-(g))
\pm\frac{1}{2}\sqrt{(-1)^{\frac{n-k}{2}}
h_1\cdots h_k},\\
\end{eqnarray*}
If we write $\overline{\lambda}\backslash\{q\}=(h'_1>\cdots>h'_{k-1})$,
then
\begin{eqnarray*}
\sqrt{(-1)^{\frac{n-k}{2}}h_1\cdots
h_k}&=&\sqrt{(-1)^{\frac{q-1}{2}}q}\cdot\sqrt{(-1)^{\frac{(n-q)-(k-1)}{2}}h'_1\cdots
h'_{k-1}},\\
&=&\sqrt{(-1)^{\frac{q-1}{2}}q}\cdot\left(\caralt_{\mu_{\lambda}}^+(g)-
\caralt_{\mu_{\lambda}}^-(g)\right).
\end{eqnarray*}
The result follows.
\end{proof}

\begin{remark}
In the last proof, when $\lambda=\lambda^*$ and $\mu\in\mathcal P_{n-q}$
is not self-conjugate and satisfies $\{\mu,\mu^*\}\subset M_q(\lambda)$,
then
$a(\caralt_{\lambda}^{\epsilon},\caralt_{\mu})=\alpha_{\mu}^{\lambda}$,
because, by Lemma~\ref{lem:jambeconj}, $\alpha_{\mu}^{\lambda}
=\alpha_{\mu^*}^{\lambda}$.
\end{remark}

For $q_1,\,q_2$ multiples of $p$, we define 
\begin{equation}
\label{eq:Mlambda2}
M_{q_1,q_2}(\lambda)=\{\mu\in M_{q_2}(\nu)\,|\,\nu\in
M_{q_1}(\lambda)\},
\end{equation} 
and $M'_{q_1,q_2}(\lambda)$ denotes a set of representatives modulo
$\sim$, where $\sim$ is defined in a similar way as before
Theorem~\ref{theo:MNAn}.
Moreover, for $\mu\in M_{q_1,q_2}(\lambda)$, we
denote by $\mathcal{P}_{\lambda\fleche\mu}^{q_1,q_2}$ the set all of
pairs 
$(c_{\nu}^{\lambda},c_{\mu}^{\nu})$, where $\nu\in M_{q_1}(\lambda)$ 
and $\mu\in M_{q_2}(\nu)$.
% Note that by~\cite[Corollary 3.14]{olsson}, the sign
% We then define
% $$\alpha_{\mu}^{\lambda}:=(-1)^{L(c_{\mu_1}^{\lambda})+L(c_{\mu}^{\mu_1})}.$$
% where $\delta_p(\lambda)$ is the $p$-sign of $\lambda$ defined
% in~\cite[p.\,22]{olsson}, does not depend on $\mu_1$.

\begin{theorem}Assume that $q_1$ and $q_2$ are even multiples of $p$.
Let $\sigma=\sigma_1\sigma_2$ be such that $\sigma_i$ is a $q_i$-cycle
(for $1\leq i\leq 2$),
and the supports of $\sigma_1$ and
$\sigma_2$ are $\{n-q_1-q_2+1,\ldots, n-q_2\}$ and $\{n-q_2+1,\ldots, n\}$,
respectively.
Then for $\epsilon\in\{\pm 1\}$ and $g\in \Alt_{n-q_1-q_2}$,
$$\caralt^{\epsilon}_{\lambda}(\sigma g)=\sum_{\mu\in M'_{q_1,q_2}(\lambda)\atop
\mu\neq\mu^*}a(\caralt_{\lambda}^{\epsilon},\caralt_{\mu})\,\caralt_{\mu}(g)
+\sum_{\mu\in M_{q_1,q_2}(\lambda)\atop \mu=\mu^*}\left(
a(\caralt_{\lambda}^{\epsilon},\caralt_{\mu}^+)\,\caralt_{\mu}^+(g)+
a(\caralt_{\lambda}^{\epsilon},\caralt_{\mu}^-)\,\caralt_{\mu}^-(g)\right),$$
where the complex numbers 
$a(\caralt_{\lambda}^{\epsilon},\caralt_{\mu}^{\eta})$ ($\eta\in\{\pm
1\}$) are defined as follows:
if $\mu^*\neq \mu$ and $\mu^*\in M_{q_1,q_2}(\lambda)$, then
$$a(\caralt_{\lambda}^{\epsilon},\caralt_{\mu})=\alpha(\lambda)\left(
\sum_{(c_{\nu}^{\lambda},c_{\mu}^{\nu})\in\mathcal
P_{\lambda\fleche\mu}^{q_1,q_2}}(-1)^{L(c_{\nu}^{\lambda})+L(c_{\mu}^{\nu})}
+
\sum_{(c_{\nu}^{\lambda},c_{\mu^*}^{\nu})\in\mathcal
P_{\lambda\fleche\mu^*}^{q_1,q_2}}(-1)^{L(c_{\nu}^{\lambda})+L(c_{\mu^*}^{\nu})}\right).$$
In all other cases, one has
$$a(\caralt_{\lambda}^{\epsilon},\caralt_{\mu}^{\eta})=\alpha(\lambda)
\sum_{(c_{\nu}^{\lambda},c_{\mu}^{\nu})\in\mathcal
P_{\lambda\fleche\mu}^{q_1,q_2}}(-1)^{L(c_{\nu}^{\lambda})+L(c_{\mu}^{\nu})}.$$
\label{theo:MNAn2}
\end{theorem}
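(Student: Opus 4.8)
The plan is to follow the proof of Theorem~\ref{theo:MNAn} almost verbatim, but applying the classical Murnaghan--Nakayama rule of $\sym_n$ \emph{twice} instead of once, and exploiting that --- because $q_1$ and $q_2$ are even --- the element $\sigma g$ never lies in a conjugacy class of $\sym_n$ which splits in $\Alt_n$. (Only the evenness of $q_1,q_2$ is used; that they are multiples of $p$ is irrelevant here.) That last point is exactly what makes the square-root terms of Theorem~\ref{theo:MNAn} disappear, and what makes the right-hand side independent of $\epsilon$. Concretely, the first step is: since the supports of $\sigma_1$, $\sigma_2$ and $g$ are pairwise disjoint, peeling off $\sigma_1$ and then $\sigma_2$ via the Murnaghan--Nakayama rule in $\sym_n$ gives
$$\carsym_{\lambda}(\sigma g)=\sum_{\nu\in M_{q_1}(\lambda)}(-1)^{L(c_{\nu}^{\lambda})}\carsym_{\nu}(\sigma_2 g)=\sum_{\nu\in M_{q_1}(\lambda)}\ \sum_{\mu\in M_{q_2}(\nu)}(-1)^{L(c_{\nu}^{\lambda})+L(c_{\mu}^{\nu})}\carsym_{\mu}(g),$$
and, regrouping by the final partition $\mu\in M_{q_1,q_2}(\lambda)$,
$$\carsym_{\lambda}(\sigma g)=\sum_{\mu\in M_{q_1,q_2}(\lambda)}b_{\mu}^{\lambda}\,\carsym_{\mu}(g),\qquad b_{\mu}^{\lambda}:=\sum_{(c_{\nu}^{\lambda},c_{\mu}^{\nu})\in\mathcal{P}_{\lambda\fleche\mu}^{q_1,q_2}}(-1)^{L(c_{\nu}^{\lambda})+L(c_{\mu}^{\nu})}.$$

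The second step is the descent to the alternating groups. A $q_i$-cycle with $q_i$ even is an odd permutation, so $\sigma=\sigma_1\sigma_2$ is even and $\sigma g\in\Alt_n$ whenever $g\in\Alt_{n-q_1-q_2}$; moreover the cycle type of $\sigma g$ contains the even parts $q_1$ and $q_2$, hence is not in $\mathcal{OD}_n$, and since $a(\lambda)\in\mathcal{OD}_n$ the element $\sigma g$ does not lie in the $\sym_n$-class labelled by $a(\lambda)$. By the character values recalled in \S\ref{subsec:notAn}, this forces $\caralt_{\lambda}^{+}(\sigma g)=\caralt_{\lambda}^{-}(\sigma g)=\carsym_{\lambda}(\sigma g)/2$ when $\lambda=\lambda^*$, and $\caralt_{\lambda}(\sigma g)=\carsym_{\lambda}(\sigma g)$ when $\lambda\neq\lambda^*$; in both cases $\caralt_{\lambda}^{\epsilon}(\sigma g)=\alpha(\lambda)\,\carsym_{\lambda}(\sigma g)$, for each $\epsilon\in\{\pm1\}$. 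On the other side, for $g\in\Alt_{n-q_1-q_2}$ one has $\carsym_{\mu}(g)=\caralt_{\mu}(g)$ if $\mu\neq\mu^*$ and $\carsym_{\mu}(g)=\caralt_{\mu}^{+}(g)+\caralt_{\mu}^{-}(g)$ if $\mu=\mu^*$. Substituting these two facts into the formula of the first step and collecting coefficients then gives the claimed expansion: for a self-conjugate $\mu$ the coefficient of $\caralt_{\mu}^{+}(g)$ and of $\caralt_{\mu}^{-}(g)$ is $\alpha(\lambda)b_{\mu}^{\lambda}$ (the ``all other cases'' expression); for $\mu\neq\mu^*$, since $\caralt_{\mu^*}=\caralt_{\mu}$ the summands indexed by $\mu$ and by $\mu^*$ merge, so the coefficient of $\caralt_{\mu}(g)$ is $\alpha(\lambda)(b_{\mu}^{\lambda}+b_{\mu^*}^{\lambda})$, with $b_{\mu^*}^{\lambda}$ the empty sum (i.e.\ $0$) unless $\mu^*\in M_{q_1,q_2}(\lambda)$ --- this is the first displayed case when $\mu^*\in M_{q_1,q_2}(\lambda)$, and $\alpha(\lambda)b_{\mu}^{\lambda}$ otherwise.

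It is also worth recording the two-hook analogue of Lemma~\ref{lem:jambeconj}, which shows the formula is consistent under the relabellings $\mu\leftrightarrow\mu^*$ and $\lambda\leftrightarrow\lambda^*$: since $c_{\nu^*}^{\lambda^*}=(c_{\nu}^{\lambda})^*$ and $c_{\mu^*}^{\nu^*}=(c_{\mu}^{\nu})^*$, we get $L(c_{\nu}^{\lambda})+L(c_{\nu^*}^{\lambda^*})=q_1-1$ and $L(c_{\mu}^{\nu})+L(c_{\mu^*}^{\nu^*})=q_2-1$, both odd, hence $(-1)^{L(c_{\nu^*}^{\lambda^*})+L(c_{\mu^*}^{\nu^*})}=(-1)^{L(c_{\nu}^{\lambda})+L(c_{\mu}^{\nu})}$, so the conjugation bijection $\mathcal{P}_{\lambda\fleche\mu}^{q_1,q_2}\to\mathcal{P}_{\lambda^*\fleche\mu^*}^{q_1,q_2}$ preserves signs and $b_{\mu^*}^{\lambda^*}=b_{\mu}^{\lambda}$. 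The only genuinely delicate part of the whole argument is the bookkeeping in this last step --- keeping track of which of $\mu,\mu^*$ belong to $M_{q_1,q_2}(\lambda)$, and of the fact that a non-self-conjugate $\caralt_{\mu}$ is indexed twice in the sum --- but this is entirely parallel to the single-hook situation of Theorem~\ref{theo:MNAn}, and is in fact simpler because no splitting class, and hence no square-root term, ever occurs, which is precisely why one assumes $q_1$ and $q_2$ even.
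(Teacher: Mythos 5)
Your proposal is correct and follows essentially the same route as the paper, whose proof of Theorem~\ref{theo:MNAn2} is precisely ``apply the Murnaghan--Nakayama formula in $\sym_n$ twice and conclude with Clifford theory''; your write-up simply makes explicit the key point that, $q_1$ and $q_2$ being even, the cycle type of $\sigma g$ is never in $\mathcal{OD}_n$, so no split class (hence no square-root term, and no dependence on $\epsilon$) arises, and the bookkeeping for $\mu$ versus $\mu^*$ matches the stated coefficients. The observation that divisibility by $p$ plays no role in this purely combinatorial statement is also accurate.
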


\begin{proof}
Apply twice the Murnaghan-Nakayama formula in $\sym_n$
and conclude with Clifford theory.
\end{proof}

Let $q$ be an integer.
For $\lambda\in\mathcal P_n$ and $\mu\in M_{q}(\lambda)$, we introduce the
relative $q$-sign $\delta_q(\lambda,\mu)=\delta_q(\lambda)\delta_q(\mu)$ as
in~\cite[p.\,62]{morrisolsson}, where $\delta_q(\lambda)$ is the
$q$-sign of $\lambda$ (see~\cite[\S2]{morrisolsson}).
% Then by~\cite[Proposition 2.2]{morrisolsson}, \cite[Corollary
% 2.3]{morrisolsson}, \cite[Proposition 3.1]{morrisolsson} and
% \cite[Corollary 3.4]{morrisolsson}, we have a result analogue of
% Theorem~\ref{relativesign}.

\begin{lemma}Assume that $q$ is odd.
For any $\lambda\in\mathcal P_n$, one has
$\delta_q(\lambda)=\delta_q(\lambda^*)$.
\label{lem:impair}
\end{lemma}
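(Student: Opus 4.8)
The statement to prove is Lemma~\ref{lem:impair}: for odd $q$, one has $\delta_q(\lambda)=\delta_q(\lambda^*)$. The $q$-sign $\delta_q(\lambda)$ is defined in~\cite{morrisolsson} via the $q$-quotient and $q$-core of $\lambda$; concretely, one chooses a beta-set for $\lambda$ whose size is divisible by $q$, arranges the beads on an abacus with $q$ runners, and $\delta_q(\lambda)$ records the sign of the permutation needed to sort the beads onto their runners (equivalently, it is determined by the $q$-core together with the number of beads moved, i.e. by a leglength count). Since Lemma~\ref{lem:jambeconj} already establishes that removing a single $q$-hook behaves compatibly with conjugation when $q$ is odd (the leglengths of a hook and of its conjugate sum to $q-1$, hence have the same parity), the natural approach is to reduce the statement about $\delta_q$ to an iterated application of that hook computation.

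\textbf{Key steps.} First I would recall the combinatorial description of $\delta_q(\lambda)$: writing $\lambda$ with $q$-core $\lambda_{(q)}$ and $q$-quotient $\lambda^{(q)}=(\lambda^1,\dots,\lambda^q)$, the quantity $\delta_q(\lambda)$ equals $(-1)^{N}$ where $N=\sum_{i}$ (sum of leglengths of the successive $q$-hooks removed to pass from $\lambda$ down to $\lambda_{(q)}$), this sum being independent of the order of removal (this independence is exactly what makes $\delta_q$ well-defined, and is part of~\cite{morrisolsson}). Second, by Equation~(\ref{eq:conjquotient}) and the sentence following it (quoting~\cite[Proposition 3.5]{olsson}), conjugation sends $\lambda_{(q)}$ to $\lambda_{(q)}^*$ and $\lambda^{(q)}$ to $\lambda^{(q)*}=((\lambda^q)^*,\dots,(\lambda^1)^*)$; in particular $\lambda$ and $\lambda^*$ have the same $q$-weight $w$. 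Third, I would run a sequence of $w$ successive $q$-hook removals $\lambda=\nu_0 \supset \nu_1 \supset \cdots \supset \nu_w=\lambda_{(q)}$; conjugating gives a parallel sequence $\lambda^*=\nu_0^* \supset \nu_1^* \supset \cdots \supset \nu_w^*=\lambda_{(q)}^*$, and at each step the hook removed from $\nu_{j-1}^*$ to reach $\nu_j^*$ is the conjugate of the one removed from $\nu_{j-1}$ (as in the first line of the proof of Lemma~\ref{lem:jambeconj}, $c_{\nu_j^*}^{\nu_{j-1}^*}=(c_{\nu_j}^{\nu_{j-1}})^*$). By Lemma~\ref{lem:jambeconj}, $(-1)^{L(c_{\nu_j}^{\nu_{j-1}})}=(-1)^{L(c_{\nu_j^*}^{\nu_{j-1}^*})}$ for each $j$. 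Multiplying over $j=1,\dots,w$ yields $\delta_q(\lambda)=(-1)^{\sum_j L(c_{\nu_j}^{\nu_{j-1}})}=(-1)^{\sum_j L(c_{\nu_j^*}^{\nu_{j-1}^*})}=\delta_q(\lambda^*)$, as desired.

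\textbf{Main obstacle.} The only delicate point is matching the particular normalization of $\delta_q$ used in~\cite{morrisolsson} with the ``product of hook-removal leglength signs'' description above — i.e. confirming that $\delta_q(\lambda)$ really is $(-1)^N$ with $N$ the total leglength count, rather than this quantity twisted by some core-dependent or quotient-dependent factor. If the definition in~\cite{morrisolsson} carries an extra term depending only on $\lambda_{(q)}$ and $\lambda^{(q)}$, I would additionally need that this term is conjugation-stable; but since conjugation just transposes each component of the $q$-quotient and conjugates the $q$-core, any reasonable sign attached to that data will again be invariant under a parity argument of the same flavour (each component $\lambda^i$ contributes via $|\lambda^i|$ or via $n(\lambda^i)+n((\lambda^i)^*)$-type sums, which are symmetric). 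So the proof is essentially a bookkeeping reduction to Lemma~\ref{lem:jambeconj}, and the write-up would simply cite the relevant lemma of~\cite{morrisolsson} for the independence-of-order and normalization facts and then apply Lemma~\ref{lem:jambeconj} iteratively.
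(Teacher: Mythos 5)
Your proposal is correct and follows essentially the same route as the paper: the paper likewise removes a sequence of $q$-hooks $c_1,\dots,c_k$ from $\lambda$ down to $\lambda_{(q)}$, observes that $c_1^*,\dots,c_k^*$ is such a sequence for $\lambda^*$, invokes \cite[Corollary 2.3, Proposition 2.2]{morrisolsson} to identify $\delta_q(\lambda)$ with $(-1)^{L(c_1)+\cdots+L(c_k)}$ (which settles the normalization worry you raise), and concludes termwise by the parity argument of Lemma~\ref{lem:jambeconj}. No changes needed.
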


\begin{proof}
Let $k$ be the $q$-weight of $\lambda$. % and $c_0=\emptyset$. 
We construct a sequence of $q$-hooks $c_1,\ldots, c_k$ by
choosing $c_1$ to be a $q$-hook of $\lambda$ and
$c_i$ to be a $q$-hook of
$\lambda\backslash\{c_1,\ldots,c_{i-1}\}$ for $2\leq i\leq k$, such that
$\lambda\backslash\{c_1,\ldots,c_k\}=\lambda_{(q)}$. Note that 
$c_1^*,\ldots,c_k^*$ is a sequence of $q$-hooks from $\lambda^*$ to
$\lambda^*_{(q)}$. So, by~\cite[Corollary 2.3]{morrisolsson},
$\delta_q(\lambda)=\delta_q(\lambda,\lambda^{(q)})$ and~\cite[Proposition 2.2]{morrisolsson} yields 
\begin{equation}
\label{eq:sommecrochet}
\delta_q(\lambda)=(-1)^{L(c_1)+\cdots+L(c_k)}\quad\textrm{and}\quad
\delta_q(\lambda^*)=(-1)^{L(c_1^*)+\cdots+L(c_k^*)}.
\end{equation}
Now, by the argument of Lemma~\ref{lem:jambeconj}, we deduce that 
$L(c_i)\equiv L(c_i^*)\mod 2$ for $1\leq i\leq k$, because $q$ is odd.
The result follows.
\end{proof}

Let $\gamma$ and $\gamma'$ be two self-conjugate $p$-cores of 
$\sym_n$ and $\sym_m$ of the same $p$-weight $w>0$. We denote by $b_{\gamma}$ and 
$b_{\gamma'}$ the corresponding $p$-blocks of $\Alt_n$ and $\Alt_m$,
respectively.
Let $\lambda\in\mathcal P_n$ be such that $\lambda_{(p)}=\gamma$. By
Equation~(\ref{eq:partquotientcoeur}),
there is a unique partition $\Psi(\lambda)\in\mathcal P_m$
\label{defpsi} such that 
$\Psi(\lambda)_{(p)}=\gamma'$ and $\Psi(\lambda)^{(p)}=\lambda^{(p)}$.
In particular, if we denote by $f$ the canonical bijection between the
set of hooks of length divisible by $p$ in $\lambda$ and the set of
hooks in $\lambda^{(p)}$ as in~\cite[Proposition 3.1]{morrisolsson},
then for any integer $q$ divisible by $p$ and $\mu\in M_q(\lambda)$,
we have
\begin{equation}
\label{eq:fcrochet}
f\left(c_{\mu}^{\lambda}\right)=f\left(c_{\Psi(\mu)}^{\Psi(\lambda)}\right),
\end{equation}
where $\Psi:\mathcal P_{n-q}\rightarrow\mathcal P_{m-q}$ is defined as
above.
Moreover,~\cite[Corollary 3.4]{morrisolsson} gives
\begin{equation}
(-1)^{L(c_{\mu}^{\lambda})}=(-1)^{L\left(f(c_{\mu}^{\lambda})\right)}
\delta_p(\lambda,\mu).
\label{eq:lienjambef}
\end{equation}

\begin{lemma}\label{Psi}
\label{lem:coherent} Let $\lambda$ and $\Psi(\lambda)$ be as above.
For any multiple $q$ of $p$ and $\mu\in
M_q(\lambda)$ such that $\mu\neq\mu^*$, we have
$\Psi(\mu)\neq\Psi(\mu^*)$. Moreover,
$\mu^*\in M_q(\lambda)$ if and only if $\Psi(\mu^*)\in
M_q(\Psi(\lambda))$. In this case, $\Psi(\mu^*)=\Psi(\mu)^*$.
\end{lemma}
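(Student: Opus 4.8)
The strategy is to reduce everything to statements about $p$-quotients, where the relevant facts become essentially combinatorial, using the two key tools already cited: the bijectivity of $\lambda\mapsto(\lambda_{(p)},\lambda^{(p)})$ from Equation~(\ref{eq:partquotientcoeur}), the description of $\lambda^*$ via $\lambda_{(p)}^*$ and $\lambda^{(p)*}$ from~(\ref{eq:conjquotient}) and~(\ref{souv}), and the correspondence between $q$-hooks of $\lambda$ of length divisible by $p$ and $1$-hooks in $\lambda^{(p)}$ from~\cite[Theorem 3.3]{olsson}. First I would recall that $\Psi$ is, by construction, the composite of ``take $p$-quotient'' followed by ``glue back onto the new $p$-core $\gamma'$'': that is, $\Psi(\lambda)^{(p)}=\lambda^{(p)}$ and $\Psi(\lambda)_{(p)}=\gamma'$. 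Since both $\gamma$ and $\gamma'$ are self-conjugate, Equation~(\ref{souv}) tells us that $\lambda=\lambda^*$ iff $\lambda^{(p)*}=\lambda^{(p)}$, and likewise $\Psi(\lambda)=\Psi(\lambda)^*$ iff $\Psi(\lambda)^{(p)*}=\Psi(\lambda)^{(p)}$; but these two $p$-quotients are equal, so $\Psi$ preserves self-conjugacy, and more importantly the relation between $\mu$ and $\mu^*$ at the level of quotients is unchanged under $\Psi$.

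For the first assertion, suppose $\mu\in M_q(\lambda)$ with $\mu\neq\mu^*$. If we had $\Psi(\mu)=\Psi(\mu^*)$, then applying $\Psi^{-1}$ (which exists since $\Psi$, being composed of bijections, is a bijection $\mathcal P_{n-q}\to\mathcal P_{m-q}$ when restricted to partitions with $p$-core $\gamma$, resp.\ $\gamma'$) forces $\mu=\mu^*$, a contradiction; the only subtlety is to check that $\mu$ and $\mu^*$ have the \emph{same} $p$-core (namely $\gamma=\gamma^*$, using that $\mu$ has $p$-core $\gamma$ because removing a $q$-hook with $p\mid q$ does not change the $p$-core), so that they lie in the common domain on which $\Psi$ is injective. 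For the second assertion, I would use the hook correspondence $f$: removing a $q$-hook $c_\mu^\lambda$ from $\lambda$ corresponds under $f$ to removing a $1$-hook from $\lambda^{(p)}$, and since $\Psi(\lambda)$ has the same $p$-quotient as $\lambda$, the partitions of $M_q(\lambda)$ and of $M_q(\Psi(\lambda))$ are in bijection via $\Psi$ (both being parametrized by the $1$-hook removals from the common $p$-quotient, with the $p$-core merely relabeled). Hence $\mu^*\in M_q(\lambda)$ iff, at the level of quotients, $\mu^{(p)*}$ (which is $(\mu^*)^{(p)}$) arises from $\lambda^{(p)}$ by a $1$-hook removal; the same condition, verbatim, characterizes $\Psi(\mu^*)\in M_q(\Psi(\lambda))$. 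Finally $\Psi(\mu^*)=\Psi(\mu)^*$: both sides have $p$-core $\gamma'^*=\gamma'$ and $p$-quotient equal to $\mu^{(p)*}$ — the left side because $\Psi$ preserves $p$-quotients and $(\mu^*)^{(p)}=\mu^{(p)*}$, the right side by~(\ref{eq:conjquotient}) applied to $\Psi(\mu)$ together with $\Psi(\mu)^{(p)}=\mu^{(p)}$ — so they coincide by the bijectivity in~(\ref{eq:partquotientcoeur}).

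The main obstacle, I expect, is purely bookkeeping rather than conceptual: one must be careful that all the partitions involved ($\lambda$, $\mu$, $\mu^*$, and their images) genuinely lie in the block to which $\Psi$ applies, i.e.\ that taking $p$-quotients and removing hooks divisible by $p$ interacts correctly with conjugation in the way recorded in~(\ref{eq:conjquotient})--(\ref{souv}), and that the ``same $p$-core'' hypotheses needed to invoke injectivity of $\Psi$ are in place at each step. Once the self-conjugacy of $\gamma$ and $\gamma'$ is used to pin down that $\Psi$ commutes with $*$ on $p$-quotients, everything else follows formally from the bijection~(\ref{eq:partquotientcoeur}) and the hook correspondence.
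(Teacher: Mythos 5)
Your argument is correct and is essentially the paper's proof written out in full: the paper simply cites \cite[Proposition 3.1]{morrisolsson} (the hook--quotient correspondence) and \cite[Proposition 3.5]{olsson} (conjugation acts on the $p$-core and $p$-quotient as in Equations~(\ref{eq:conjquotient})--(\ref{souv})), and your reduction of all three assertions to the common $p$-quotient, using the self-conjugacy of $\gamma$ and $\gamma'$ and the bijectivity of $\lambda\mapsto(\lambda_{(p)},\lambda^{(p)})$, is exactly that. One small correction: removing a $q$-hook from $\lambda$ with $q=kp$ corresponds under $f$ to removing a $k$-hook (of length $q/p$, not a $1$-hook) from $\lambda^{(p)}$; since your equivalence only uses that the condition is phrased purely in terms of the shared quotient, this slip does not affect the argument.
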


\begin{proof}
This is a consequence of
\cite[Proposition 3.1]{morrisolsson} and of
\cite[Proposition 3.5]{olsson}.
\end{proof}

\begin{proposition}% Let $\epsilon,\,\eta\in \{\pm 1\}$.
% For any coefficients
% $a(\caralt_{\lambda}^{\epsilon},\caralt_{\mu}^{\eta})$ appearing in
% Theorems~\ref{theo:MNAn} and~\ref{theo:MNAn2}, we have
Assume $p$ is odd and keep the notation as above. We have
$$\delta_p(\lambda)\delta_p(\mu)a\left
(\caralt_{\lambda}^{\epsilon\delta_p(\lambda)},
\caralt_{\mu}^{\eta\delta_p(\mu)}\right)=
\delta_p(\Psi(\lambda))\delta_p(\Psi(\mu))a\left
(\caralt_{\Psi(\lambda)}^{\epsilon
\delta_p(\Psi(\lambda))},
\caralt_{\Psi(\mu)}^{\eta\delta_p(\Psi(\mu))}\right).$$
\label{prop:coeffAn}
\end{proposition}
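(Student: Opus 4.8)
The plan is to reduce the identity to the combinatorial statement about $q$-signs and leg lengths established in the previous lemmas, treating the various cases of Theorem~\ref{theo:MNAn} separately. The map $\Psi$ preserves $p$-quotients and sends the $p$-core $\gamma$ to $\gamma'$, so by Lemma~\ref{lem:coherent} it is compatible with the combinatorics of removing $p$-hooks: $\mu\in M_q(\lambda)$ with $\mu\ne\mu^*$ forces $\Psi(\mu)\ne\Psi(\mu)^*$, and $\mu^*\in M_q(\lambda)$ iff $\Psi(\mu)^*\in M_q(\Psi(\lambda))$, with $\Psi(\mu^*)=\Psi(\mu)^*$. Also $\lambda=\lambda^*$ iff $\Psi(\lambda)=\Psi(\lambda)^*$ by Equation~(\ref{souv}) (since $\gamma$ and $\gamma'$ are self-conjugate and $\Psi$ preserves $p$-quotients), so $\alpha(\lambda)=\alpha(\Psi(\lambda))$, and similarly for $\mu$. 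Thus the case distinctions in the formula for $a(\caralt_\lambda^\epsilon,\caralt_\mu^\eta)$ match up on the two sides.

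First I would dispose of the cases where $\mu\ne\mu^*$ or where $\mu=\mu^*$ but $\mu\ne\mu_\lambda$: here $a(\caralt_\lambda^\epsilon,\caralt_\mu^\eta)$ does not depend on $\epsilon$ or $\eta$, so twisting the superscripts by $\delta_p(\lambda)$ and $\delta_p(\mu)$ changes nothing, and the claimed identity becomes $\delta_p(\lambda)\delta_p(\mu)\,a(\caralt_\lambda,\caralt_\mu)=\delta_p(\Psi(\lambda))\delta_p(\Psi(\mu))\,a(\caralt_{\Psi(\lambda)},\caralt_{\Psi(\mu)})$. Recalling $\delta_p(\lambda)\delta_p(\mu)=\delta_p(\lambda,\mu)$, the point is that $a(\caralt_\lambda,\caralt_\mu)$ is (up to the factor $\alpha(\lambda)$, which matches) a sum of terms $\alpha_\mu^\lambda=(-1)^{L(c_\mu^\lambda)}$, and by Equation~(\ref{eq:lienjambef}) we have $(-1)^{L(c_\mu^\lambda)}=(-1)^{L(f(c_\mu^\lambda))}\delta_p(\lambda,\mu)$, while by Equation~(\ref{eq:fcrochet}) $f(c_\mu^\lambda)=f(c_{\Psi(\mu)}^{\Psi(\lambda)})$, so $(-1)^{L(c_\mu^\lambda)}\delta_p(\lambda,\mu)=(-1)^{L(f(c_\mu^\lambda))}=(-1)^{L(c_{\Psi(\mu)}^{\Psi(\lambda)})}\delta_p(\Psi(\lambda),\Psi(\mu))^{-1}$. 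Since $\delta_p(\lambda,\mu)^2=1$, multiplying the displayed $a$-value by $\delta_p(\lambda,\mu)$ turns $\alpha_\mu^\lambda$ into $(-1)^{L(c_{\Psi(\mu)}^{\Psi(\lambda)})}\delta_p(\Psi(\lambda),\Psi(\mu))$, which is exactly $\delta_p(\Psi(\lambda),\Psi(\mu))\,\alpha_{\Psi(\mu)}^{\Psi(\lambda)}$; doing this termwise (and using Lemma~\ref{lem:coherent} to identify the $\mu^*$-terms with the $\Psi(\mu)^*$-terms when $\mu^*\in M_q(\lambda)$) gives the identity in these cases.

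The remaining, and hardest, case is $\mu=\mu^*=\mu_\lambda$ (so $\lambda=\lambda^*$ too), where $a(\caralt_\lambda^{\epsilon},\caralt_{\mu_\lambda}^{\eta})=\tfrac12\bigl(\alpha_{\mu_\lambda}^\lambda+\epsilon\eta\sqrt{(-1)^{(p-1)/2}p}\bigr)$ genuinely depends on the signs. Here $q=p$ (since $p$ is a diagonal hook length of $\lambda$ and $p\mid q$ forces $q=p$ for $\mu_\lambda$ to exist with $\overline{\mu_\lambda}=\overline\lambda\setminus\{p\}$). I would substitute $\epsilon\mapsto\epsilon\delta_p(\lambda)$, $\eta\mapsto\eta\delta_p(\mu)$ and multiply by $\delta_p(\lambda)\delta_p(\mu)$: the first term $\tfrac12\alpha_{\mu_\lambda}^\lambda$ transforms exactly as in the previous paragraph into $\tfrac12\delta_p(\Psi(\lambda),\Psi(\mu))\alpha_{\Psi(\mu_\lambda)}^{\Psi(\lambda)}$, and the cross term becomes $\tfrac12\,\delta_p(\lambda)^2\delta_p(\mu)^2\,\epsilon\eta\sqrt{(-1)^{(p-1)/2}p}=\tfrac12\epsilon\eta\sqrt{(-1)^{(p-1)/2}p}$, which is sign-free and hence equal to its $\Psi$-analogue. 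So the identity reduces once more to the leg-length computation, provided one checks $\Psi(\mu_\lambda)=\mu_{\Psi(\lambda)}$, which follows from Lemma~\ref{lem:uniqueselfcrochet} and the fact that $\Psi$ acts as the identity on $p$-quotients (so it preserves the condition "$M_p(\cdot)$ contains a self-conjugate partition"). The main obstacle is bookkeeping: making sure the superscript signs of the $\rho^\pm$ characters, the $q$-signs $\delta_p$, and the leg-length parities are tracked consistently through Equations~(\ref{eq:lienjambef}) and~(\ref{eq:fcrochet}) and Lemma~\ref{lem:impair}, and that the self-conjugate/non-self-conjugate case structure is genuinely preserved by $\Psi$; once that is in place, the verification is a case-by-case application of the already-established identities.
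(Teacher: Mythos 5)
Your proof follows essentially the same route as the paper's: the identity is reduced, case by case in the Murnaghan--Nakayama coefficients, to the termwise relation $\delta_p(\lambda,\mu)\,\alpha_{\mu}^{\lambda}=\delta_p(\Psi(\lambda),\Psi(\mu))\,\alpha_{\Psi(\mu)}^{\Psi(\lambda)}$ obtained from Equations~(\ref{eq:fcrochet}) and~(\ref{eq:lienjambef}), and the only $\epsilon,\eta$-dependent case $\mu=\mu_{\lambda}$ is handled by observing that the cross term is unchanged because $\delta_p(\lambda)^2\delta_p(\mu)^2=1$ and that $\Psi(\mu_{\lambda})=\mu_{\Psi(\lambda)}$. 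This is exactly the paper's argument.

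Three small corrections. First, your parenthetical claim that $q=p$ in the case $\mu=\mu_{\lambda}$ is not justified and in general false: $q$ can be any odd multiple of $p$ occurring as a diagonal hook length of $\lambda$ (for instance $q=3p$), so the cross term is $\tfrac12\epsilon\eta\sqrt{(-1)^{(q-1)/2}q}$; fortunately your argument never uses $q=p$, since that term is sign-free for every $q$. Second, the proposition, as it is used in the proof of Theorem~\ref{theo:mainAn}, also covers the coefficients of Theorem~\ref{theo:MNAn2} (removal of two hooks of even length divisible by $p$); the same computation applies to each pair $(c_{\nu}^{\lambda},c_{\mu}^{\nu})$ after writing $\delta_p(\lambda,\mu)=\delta_p(\lambda,\nu)\delta_p(\nu,\mu)$ so that the intermediate signs $\delta_p(\nu)$ cancel --- the paper in fact treats this case first --- and a complete write-up should include it rather than restrict to Theorem~\ref{theo:MNAn}. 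Third, in the subcase $\mu\neq\mu^*$ with $\mu^*\in M_q(\lambda)$, multiplying the $\alpha_{\mu^*}^{\lambda}$-term by $\delta_p(\lambda,\mu)$ rather than $\delta_p(\lambda,\mu^*)$ requires $\delta_p(\mu)=\delta_p(\mu^*)$ and $\delta_p(\Psi(\mu))=\delta_p(\Psi(\mu)^*)$, which is exactly Lemma~\ref{lem:impair} (valid because $p$ is odd); you cite Lemma~\ref{lem:coherent} at that point and mention Lemma~\ref{lem:impair} only as general bookkeeping, whereas this is precisely the step the paper's ``Note'' isolates.
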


\begin{proof}
Let $\mathcal E_{\gamma}$ be the set of partitions of $n$ with
$p$-core $\gamma$.
Since $\gamma$ is self-conjugate, by
Equation~(\ref{souv}), $\lambda\in\mathcal E_{\gamma}$ is self-conjugate
if and only if its $p$-quotient is self-conjugate. The same holds for
$\gamma'$ and $\Psi(\lambda)\in\mathcal E_{\gamma'}$. In particular, 
for $\lambda\in \mathcal E_{\gamma}$, we have
$$\alpha(\lambda)=\alpha(\Psi(\lambda)).$$

Let $q_1$ and $q_2$ be even multiples of $p$. With the notation of
Theorem~\ref{theo:MNAn2}, for any partition
$\mu\in M_{q_1,q_2}(\lambda)$, if
$(c_{\nu}^{\lambda},c_{\mu}^{\nu})\in\mathcal
P_{\lambda\fleche\mu}^{q_1,q_2}$, then Equations~(\ref{eq:fcrochet})
and~(\ref{eq:lienjambef}) give
\begin{eqnarray*}
\delta_p(\lambda,\mu)(-1)^{L(c_{\nu}^{\lambda})+L(c_{\mu}^{\nu})}&=&\delta_p(\lambda,\mu)(-1)^{
L(f(c_{\nu}^{\lambda}))+L(f(c_{\mu}^{\nu}))}\delta_p(\lambda,\nu)\delta_p(\nu,\mu)
\\
&=&\delta_p(\lambda)\delta_p(\mu)(-1)^{
L(f(c_{\nu}^{\lambda}))+L(f(c_{\mu}^{\nu}))}\delta_p(\lambda)\delta_p(\nu)
\delta_p(\nu)\delta_p(\mu)\\
&=& (-1)^{
L\left(f(c_{\nu}^{\lambda})\right)+L\left(f(c_{\mu}^{\nu})\right)}\\
&=& (-1)^{
L\left(f(c_{\Psi(\nu)}^{\Psi(\lambda)})\right)+
L\left(f(c_{\Psi(\mu)}^{\Psi(\nu)})\right)}\\
&=&\delta_p(\Psi(\lambda),\Psi(\mu))(-1)^{L\left(c_{\Psi(\nu)}^{\Psi(\lambda)}\right)
+L\left(c_{\Psi(\mu)}^{\Psi(\nu)}\right)}.
\end{eqnarray*}
Now, using Lemmas~\ref{lem:impair} and~\ref{lem:coherent}, 
we deduce that, if $\mu\neq\mu^*$ and 
$\mu^*\in M_{q_1,q_2}(\lambda)$, then 
$$\delta_p(\lambda,\mu)
a(\caralt_{\lambda}^{\pm\delta_p(\lambda)},\caralt_{\mu})=
\delta_p(\Psi(\lambda),\Psi(\mu))
a(\caralt_{\Psi(\lambda)}^{\pm\delta_p(\Psi(\lambda))},\caralt_{\Psi(\mu)}).$$
\textbf{Note :} Lemma~\ref{lem:impair} is used only when we apply the above
computations to evaluate
$\delta_p(\lambda,\mu)(-1)^{L(c_{\nu}^{\lambda})+L(c_{\mu^{*}}^{\nu})}$;
at the second line, we get a term $\delta_p(\mu)\delta_p(\mu^*)$, which
is thus $1$. In the same way, a term
$\delta_p(\Psi(\mu))\delta_p(\Psi(\mu)^*)$ disappears at the end.

We conclude similarly for the other cases appearing in
Theorem~\ref{theo:MNAn2} and for the coefficients appearing 
in Theorem~\ref{theo:MNAn}, except for $\lambda=\lambda^*$ and
$\mu=\mu_{\lambda}$. In this last case, first note that
$\Psi(\mu)=\mu_{\Psi(\lambda)}$. Moreover, 
\begin{eqnarray*}
a(\caralt_{\lambda}^{\epsilon},\caralt_{\mu_{\lambda}}^{\eta})&=&
\frac{1}{2}\left(\alpha_{\mu_{\lambda}}^{\lambda}+\epsilon\eta\sqrt{(-1)^{(q-1)/2}q}\right)\\
&=&\delta_p(\lambda)\delta_p(\mu_{\lambda})\delta_p(\Psi(\lambda))\delta_p(\Psi(\mu_{\lambda}))\frac{1}{2}\left(\alpha_{\Psi(\mu_{\lambda})}^{\Psi(\lambda)}\right.\\
&&\left.+
\delta_p(\lambda)\delta_p(\mu_{\lambda})\delta_p(\Psi(\lambda))\delta_p(\Psi(\mu_{\lambda}))\epsilon\eta\sqrt{(-1)^{(q-1)/2}q}\right)\\
%&=&a\left(\delta_p(\lambda)\delta_p(\Psi(\lambda))\caralt_{\Psi(\lambda)}^{\epsilon\delta_p(\lambda)\delta_p(\Psi(\lambda))},
%\delta_p(\mu_{\lambda})\delta_p(\Psi(\mu_{\lambda}))\caralt_{\Psi(\mu_{\lambda})}^{\eta\delta_p(\mu_{\lambda})\delta_p(\Psi(\mu_{\lambda}))}\right)\\
&=&\delta_p(\lambda)\delta_p(\Psi(\lambda))\delta_p(\mu_{\lambda})\delta_p(\Psi(\mu_{\lambda}) )a\left(\caralt_{\Psi(\lambda)}^{\epsilon\delta_p(\lambda)\delta_p(\Psi(\lambda))},
\caralt_{\Psi(\mu_{\lambda})}^{\eta\delta_p(\mu_{\lambda})\delta_p(\Psi(\mu_{\lambda}))}\right).
\end{eqnarray*}
as required.
\end{proof}

\begin{theorem}Let $p$ be an odd prime.
Assume that $\gamma$ and $\gamma'$ are self-conjugate $p$-cores of
$\sym_n$ and $\sym_m$ respectively, and of same $p$-weight $w>0$.  Let
$b_{\gamma}$ and $b_{\gamma'}$ be the corresponding $p$-blocks of
$\Alt_n$ and $\Alt_m$. Define, for all $\lambda\in \mathcal E_{\gamma}$
and $\epsilon\in\{\pm 1\}$,
\begin{equation}
\label{eq:defisoAn}
I:\C\Irr(b_{\gamma})\rightarrow\C\Irr(b_{\gamma'}),\
\caralt_{\lambda}^{\epsilon}\mapsto
\delta_p(\lambda)\delta_p(\Psi(\lambda))\caralt_{\Psi(\lambda)}^{\epsilon\delta_p(\lambda)\delta_p(\Psi(\lambda))}.
\end{equation}
Then $I$ is a Brou\'e perfect isometry.
\label{theo:mainAn}
\end{theorem}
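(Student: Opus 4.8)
The plan is to realise $I$ as the isometry $I_{\{1\}}$ produced by Theorem~\ref{th:broue}. For this we equip $\Alt_n$ with an MN-structure (Definition~\ref{defMN}) relative to the set $C$ of $p'$-elements of $\Alt_n$ and to $B=b_{\gamma}$, and likewise $\Alt_m$ relative to its $p'$-elements and to $B'=b_{\gamma'}$; the same recipe, applied to the smaller alternating groups occurring below, supplies the auxiliary MN-structures one needs.

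\emph{The MN-structure.} Let $S$ be the set of elements of $\Alt_n$ all of whose non-trivial cycles have length divisible by $p$, so $1\in S$ and $S$ is conjugation-stable. Given $x\in\Alt_n$, let $x_S$ be the product of the cycles of $x$ of length divisible by $p$ and $x_C$ the product of the other cycles; then $x=x_S\pd x_C=x_Sx_C=x_Cx_S$, and this provides the equivariant bijection $A\to\Alt_n$ of Definition~\ref{defMN}(2), $A$ being the set of commuting pairs with disjoint support. For $x_S\in S$, let $G_{x_S}$ be the alternating group on the points not moved by $x_S$; it lies in $\Cen_{\Alt_n}(x_S)$ and meets $C$ exactly in the $p'$-elements of $\Alt_n$ supported off $\operatorname{supp}(x_S)$, as required by Definition~\ref{defMN}(3). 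Let $B_{x_S}$ be the block of $G_{x_S}$ with $p$-core $\gamma$, of $p$-weight $w-|\operatorname{supp}(x_S)|/p$, interpreted as the union of the defect-zero blocks $\{\rho_{\gamma}^{+}\}$, $\{\rho_{\gamma}^{-}\}$ when that weight is $0$; and set $r^{x_S}(\chi)(g)=\chi(x_Sg)$ for $\chi\in\C\Irr(B)$, $g\in G_{x_S}$, which is a class function on $G_{x_S}$ because $G_{x_S}\leq\Cen_{\Alt_n}(x_S)$. That $r^{x_S}$ actually takes values in $\C\Irr(B_{x_S})$ — the one point that is not a formality — follows by peeling the cycles of $x_S$ off one at a time when they are of odd length (Theorem~\ref{theo:MNAn}, with $q$ an odd multiple of $p$) and two at a time when they are of even length (Theorem~\ref{theo:MNAn2}, with $q_1,q_2$ even multiples of $p$; their number is even since $x_S$ is even): each step removes a hook of length divisible by $p$, hence preserves the $p$-core, so every partition that occurs lies in $\mathcal E_{\gamma}$. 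For $x_S=1$ one recovers $B$, $G$ and $r^{1}=\operatorname{id}$; the remaining axioms are clear, and the same applies to $\Alt_m$.

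\emph{Checking the hypotheses of Theorem~\ref{th:iso}.} Here $\Lambda$ (resp.\ $\Lambda'$) is the set of $\Alt_n$- (resp.\ $\Alt_m$-) conjugacy classes contained in $S$ (resp.\ $S'$), i.e.\ cycle types with all parts divisible by $p$, each counted twice when its class splits in the alternating group. Put $\Lambda_0=\{\lambda\in\Lambda\mid r^{\lambda}\neq 0\}$ and $\Lambda_0'$ likewise, so Theorem~\ref{th:iso}(2a) holds; since whether $r^{\lambda}\neq 0$ depends only on the cycle type and on $w$, the matching sending a cycle type to the same cycle type on the other side — and, on a split class, $+$ to $+$ and $-$ to $-$ according to the conventions of~(\ref{eq:Anchoix}) — is a bijection $\sigma\colon\Lambda_0\to\Lambda_0'$ with $\sigma(\{1\})=\{1\}$. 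For $\lambda\in\Lambda_0$ write $\Psi_{\lambda}$ for the bijection preserving $p$-quotients and changing the $p$-core from $\gamma$ to $\gamma'$ between the partitions indexing $\Irr(B_{\lambda})$ and $\Irr(B'_{\sigma(\lambda)})$ (the map defined before Lemma~\ref{lem:coherent}, for the relevant weight), and set
\[
I_{\lambda}\colon\C\Irr(B_{\lambda})\to\C\Irr(B'_{\sigma(\lambda)}),\qquad
\rho_{\kappa}^{\epsilon}\mapsto
\delta_p(\kappa)\delta_p(\Psi_{\lambda}(\kappa))\,\rho_{\Psi_{\lambda}(\kappa)}^{\,\epsilon\delta_p(\kappa)\delta_p(\Psi_{\lambda}(\kappa))},
\]
so that $I_{\{1\}}=I$. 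By~(\ref{souv}) and Lemma~\ref{lem:coherent}, $\Psi_{\lambda}$ carries self-conjugate partitions to self-conjugate partitions and commutes with conjugation, and $\delta_p$ is conjugation-invariant (Lemma~\ref{lem:impair}); hence each $I_{\lambda}$ is well defined and is a signed permutation of the orthonormal basis $\Irr(B_{\lambda})$, in particular an isometry with $I_{\lambda}(\Z\Irr(B_{\lambda}))=\Z\Irr(B'_{\sigma(\lambda)})$. The compatibility $I_{\lambda}\circ r^{\lambda}=r'^{\sigma(\lambda)}\circ I_{\{1\}}$ of Theorem~\ref{th:iso}(2b) is obtained by expanding both sides with the Murnaghan--Nakayama formulas (Theorems~\ref{theo:MNAn} and~\ref{theo:MNAn2}) and applying Proposition~\ref{prop:coeffAn}, iterated over the successive hook removals: transporting leg-lengths through the $p$-quotient bijection (Equations~(\ref{eq:fcrochet}) and~(\ref{eq:lienjambef})) together with the $p$-sign identities of Lemmas~\ref{lem:jambeconj}, \ref{lem:impair} and~\ref{lem:coherent} makes the sign twists built into $I_{\{1\}}$ and $I_{\lambda}$ telescope and cancel exactly. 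Finally Theorem~\ref{th:iso}(3) is automatic: by Remark~\ref{rk:point3} it is enough that each $G_{\lambda}$ have an MN-structure with respect to $C\cap G_{\lambda}$ and $B_{\lambda}$, which the construction above (applied to the smaller alternating group) provides. All hypotheses of Theorem~\ref{th:iso} now hold, so Theorem~\ref{th:broue} shows that $I=I_{\{1\}}$ is a Broué perfect isometry.

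\emph{The main obstacle} is the bookkeeping around self-conjugate partitions and the split conjugacy classes they govern in $\Alt_n$ and $\Alt_m$: one must verify that the two halves of a split class in $S$ and in $S'$ are matched consistently by $\sigma$, that the sign conventions~(\ref{eq:Anchoix}) are coherent on both sides, and — most delicately — that the square-root terms in the last case of Theorem~\ref{theo:MNAn} (and the corresponding terms of Theorem~\ref{theo:MNAn2}) are correctly intertwined by $I$. This is exactly the content of the last, most intricate case of Proposition~\ref{prop:coeffAn}, where the parity statements of Lemmas~\ref{lem:impair} and~\ref{lem:jambeconj} — which hold precisely because $p$ is odd — are indispensable; everything else reduces to the combinatorics of cycle and hook removal.
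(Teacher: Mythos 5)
Your construction of the MN-structure, the reduction via Remark~\ref{rk:point3}, and the use of Proposition~\ref{prop:coeffAn} to telescope the signs in the intertwining relations all match the paper's argument in its first (generic) case. But there is a genuine gap at the point where you assert that sending each cycle type to the same cycle type, ``$+$ to $+$ and $-$ to $-$'', gives a bijection $\sigma\colon\Lambda_0\to\Lambda_0'$. Whether a class of $S$ splits in the alternating group depends not only on the cycle type $p\cdot\beta$ but on the number of fixed points $n-p|\beta|$: splitting occurs only when $p\cdot\beta\in\mathcal O\cap\mathcal D$ and $n-p|\beta|\leq 1$, i.e.\ when $|\beta|=w$ and $|\gamma|\leq 1$. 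So if exactly one of the two cores has size at most $1$ (say $n\in\{pw,pw+1\}$ while $|\gamma'|\geq 3$, e.g.\ $p=5$, $\gamma=\emptyset$, $\gamma'=(2,1)$), then $|\Lambda_0|>|\Lambda_0'|$ and no such bijection exists. Worse, in that situation the local isometry $I_{\lambda}$ you write down cannot exist for $|\beta|=w$: on one side $G_{\lambda}$ is the trivial group and $\C\Irr(B_{\lambda})$ is one-dimensional (the restriction of $\chi_{\gamma}$ is irreducible), while on the other side $\gamma'$ is self-conjugate of size $\geq 3$ and labels the two defect-zero characters $\rho_{\gamma'}^{\pm}$, so the spaces have different dimensions and hypothesis (2b) of Theorem~\ref{th:iso} fails outright. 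Your proof therefore only covers the case $\Lambda_0=\Lambda_0'$.

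The paper spends the second half of its proof precisely on this exceptional case: it introduces two copies $G_{\beta^{\pm}}$ of the trivial group, defines $I_{\beta}(1_{\beta^{\pm}})=\rho_{\gamma'}^{\pm}$ as in Equation~(\ref{eq:defIbetabizarre}), and proves the modified intertwining relation $I_{\beta}\circ(r^{\beta^+}+r^{\beta^-})=r^{\beta}\circ I$ by an explicit computation with the split-class values $x_{p\cdot\beta_0}\pm y_{p\cdot\beta_0}$ and Enguehard's theorem for $\sym_n$ (Equations~(\ref{eq:rhosomme})--(\ref{eq:rbetaplusmoins})). Since this is not of the shape required by Theorem~\ref{th:iso}, the paper cannot invoke that theorem as a black box; it reruns its proof by hand to obtain the decomposition~(\ref{eq:guguAn}) of $\widehat I$ and only then concludes as in Theorem~\ref{th:broue}. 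To repair your argument you would need to add this case, or at least explain why the two-versus-one mismatch of classes and of defect-zero characters can be absorbed into a decomposition of $\widehat I$ of the form~(\ref{eq:th}); as written, the step ``the matching is a bijection $\sigma$'' is false in general and the appeal to Theorems~\ref{th:iso} and~\ref{th:broue} does not go through.
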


\begin{proof}
First, we prove that $\Alt_n$ has an MN-structure. Let $S$ be the set
of elements of $\Alt_n$ with cycle decomposition
$\sigma_1\cdots\sigma_r$ (where we only indicate non-trivial cycles) such that each $\sigma_i$ is a
cycle of length divisible by $p$. We remark that when $\sigma_i$ has even
length, there is $j\neq i$ such that $\sigma_j$ has even length (because
$\sigma_1\cdots\sigma_r\in\Alt_n$). Moreover, $S$ contains $1$ and is stable
by $\Alt_n$-conjugation. Let $C$ be the set of $p$-regular elements
of $\Alt_n$.
Now take any $\sigma\in\Alt_n$. Using the cycle decomposition of $\sigma$, there are
unique elements $\sigma_S\in S$ and $\sigma_C\in C$ with disjoint
support such that
$\sigma=\sigma_S\sigma_C=\sigma_C\sigma_S$. In
particular, Definition~\ref{defMN}(1) and (2) hold. Denote
by $J$ the support of $\sigma_S$, $\overline{J}=\{1,\ldots,n\}\backslash
J$ and define $G_{\sigma_S}=\Alt_{\overline{J}}$. Then $G_{\sigma_S}$
satisfies Definition~\ref{defMN}(3). Write $\Omega$ for the set of
partitions of the form $p\cdot\beta$ such that 
\begin{itemize}
\item[--] There is some $i\leq n$ such that $p\cdot \beta$ is a
partition of $i$. 
\item[--] The number of even parts of $\beta$ is even. In particular, we
choose the notation such that
$\beta=(\beta_1,\ldots,\beta_k)$ with $|\beta|=\beta_1+\cdots+\beta_k$
and there is $1\leq r\leq k$ with $\beta_i$ even for $1\leq i\leq r$ and
$\beta_i$ odd for $i>r$.
\end{itemize}

\noindent
Note that each partition of $\Omega$ labels either one $\Alt_n$-conjugacy class of
$S$ or two classes. 
% If $\beta$ is such that $p\cdot\beta\in\Omega$ satisfies the
% second case (i.e., when $p\cdot\beta\in \mathcal D_{n}\cup D_{n-1}$),
% then we write $\beta_+$ and $\beta_-$ for the two classes of cycle
% structure $\beta$. The choice of labelling is as in
% \S\ref{subsec:notAn}. 
Denote by $\Lambda$ the set of parameters for the
$\Alt_n$-classes of $S$ obtained by this process.
The elements of $\Lambda$ will be denoted $p\cdot\widehat{\beta}$, with
$\widehat{\beta}=\beta$ when $p\cdot\beta\in\Omega$ labels a unique class of $S$,
and $\widehat{\beta}\in\{\beta^+,\beta^-\}$ when $p\cdot\beta$ labels two classes of
$S$. The notation is chosen as in Equation~(\ref{eq:Anchoix}).
For $p\cdot \widehat{\beta}\in\Lambda$, 
% we assume that the
%representative 
we assume that the representative 
$\sigma_{\widehat{\beta}}=\sigma_{\widehat{\beta}_1}\cdots\sigma_{\widehat{\beta}_k}$
%be the cycle decomposition of a representative of 
of the $\Alt_n$-class labeled by $p\cdot\widehat{\beta}$ in $\Alt_n$ 
satisfies that the cycle
$\sigma_{\widehat{\beta}_i}$ has support
$\{1+\sum_{j<i} p\beta_j,\ldots,\sum_{j\leq i}p\beta_j\}$.
Moreover, when $p\cdot\beta$ labels two classes of $\Alt_n$, 
we assume that
$\sigma_{\beta^+_i}=\sigma_{\beta^-_i}$ for every
$2\leq i\leq k$, and $\sigma_{\beta^+_1}$ and $\sigma_{\beta^-_1}$ are
representatives of the $\Alt_{p\cdot\beta_1}$-classes labeled by $p\cdot
\beta_1^+$
and $p\cdot\beta_1^-$, respectively. In particular,
$\sigma_{\widehat{\beta}_i}$ has length $p\cdot\beta_i$ and
the support of $\sigma_{\widehat{\beta}}$ is $\{1,\ldots,p|\beta|\}$. %n-p|\beta|+1,\ldots,n\}$. 
Hence, $G_{\sigma_{\widehat{\beta}}}=\Alt_{n-p|\beta|}$.
% Write
%$\sigma_{\widehat{\beta}}=\sigma_{\widehat{\beta}_1}\cdots\sigma_{\widehat{\beta}_r}$ its
%cycle decomposition. In particular, for $1\leq i\leq r$, the cycle
%$\sigma_{\widehat{\beta}_i}$ has length $p\beta_i$ and support
%$\{n+1-\sum_{k=1}^ip\beta_i,\ldots,n-\sum_{k=1}^{i-1}p\beta_i\}$. Moreover, 
%when $p\cdot\beta$ labels two classes of $\Alt_n$, 
%we assume that
%$\sigma_{\beta^+_i}=\sigma_{\beta^-_i}(:=\sigma_{\beta_i})$ for every
%$1\leq i\leq r-1$, and $\sigma_{\beta^+_r}$ and $\sigma_{\beta^-_r}$ are
%representatives of the $\Alt_{p\beta_r}$-classes labelled by $p\beta_r^+$
%and $p\beta_r^-$, respectively.

Now, we denote by
$\Omega_0$ the subset of partitions $p\cdot\beta\in\Omega$ such that
$|\beta|\leq w$, and by $\Lambda_0$ the corresponding subset of
$\Lambda$. 
% 
% 
% So, the elements of
% $\Lambda$ will be denoted $p\cdot\beta'$, with
% $\beta'\in\{\beta,\beta_+,\beta_-\}$.
% For $p\cdot \beta'\in\Lambda$, we assume that the
% representative $\sigma_{\beta'}$ of the class $p\cdot\beta'$ in $\Alt_n$ has
% support $\{1,\ldots,p|\beta|\}$. In particular,
% $G_{\sigma_{\beta'}}=\Alt_{n-p|\beta|}$.
% Write $\sigma_{\beta'}=\sigma_{\beta_1}\cdots\sigma_{\beta_r}$ its
% cycle decomposition.
% 
% or $p\cdot \beta_{\pm}$
%with $p\cdot \beta\in\Omega$. 
% Now, we write
%$\Lambda_0$ for the set of $p\cdot \beta'\in\Lambda$ such that
%$|\beta|\leq w$, and 
For $p\cdot\beta\in\Omega_0$, define
$r^{\widehat{\beta}}:\C\Irr(b_{\gamma})\rightarrow\C\Irr(b_{\gamma}(\Alt_{n-p|\beta|}))$
by applying iteratively Theorem~\ref{theo:MNAn} with
$\sigma=\sigma_{\widehat{\beta}_i}$ when $\beta_i$ is odd and
Theorem~\ref{theo:MNAn2} with
$\sigma=\sigma_{\widehat{\beta}_i}\sigma_{\widehat{\beta}_{i+1}}$ when
$\beta_{i}$ and $\beta_{i+1}$ are even. By Theorems~\ref{theo:MNAn}
and~\ref{theo:MNAn2}, Definition~\ref{defMN}(4) holds. This proves that
$\Alt_n$ has an MN-structure with respect to $b_{\gamma}$ and the set of
$p$-regular elements of $\Alt_n$.
Let $\lambda\in\mathcal E_{\gamma}$. Then
$r^{\widehat{\beta}}(\caralt_{\lambda}^{\pm})(g)=
\caralt_{\lambda}^{\pm}(\sigma_{\widehat{\beta}}g)$, and
for $p\cdot\widehat{\beta}\in\Lambda\backslash\Lambda_0$,
the Murnaghan-Nakayama rule in~$\sym_n$ and Clifford theory imply that 
$\caralt_{\lambda}^{\pm}(\sigma_{\widehat{\beta}}g)=0$ except, maybe, when
$\lambda=\lambda^*$ and
$\sigma_{\widehat{\beta}}\,g$ is in the class of $\sym_n$ labeled by
$a(\lambda)$. In this
last case, $\lambda$ has more than $w$ diagonal hooks with length
divisible by $p$, contradicting the fact that $\lambda$ 
has $p$-weight $w$.
This proves that, if $p\cdot\widehat{\beta}\notin \Lambda_0$, then
$r^{\widehat{\beta}}=0$.

We define similarly an MN-structure for $\Alt_m$ with respect to
$b_{\gamma'}$ and the set of $p$-regular elements of $\Alt_m$. We denote by
$\Omega'$, $\Omega'_0$, $\Lambda'$ and $\Lambda'_0$ the corresponding
sets. Note that $\Omega_0=\Omega'_0$.

%In order to prove the result, we will use Theorem~\ref{th:iso}. 
There
are two cases to consider.
First, assume that $|\Lambda_0|=|\Lambda'_0|$. In fact, this case occurs
if and only if $\Lambda_0=\Lambda'_0$, because $\Omega_0=\Omega'_0$. 
% Indeed, we have either
% $\Lambda_0=\Omega_0=\Lambda'_0$ or $\Lambda_0 \neq
% \Omega_0\neq\Lambda'_0$. But $
%Thanks to Remark~\ref{rk:point3} and Theorem~\ref{th:broue}, 

Now, we will prove that
Theorem~\ref{th:iso}(2) holds.
Let $p\cdot \widehat{\beta}\in\Lambda_0$. Write $\beta=(\beta_1,\ldots,\beta_k)$
and $r$ as above. Set $q_i=p|\beta_i|$ for $1\leq i\leq k$.
For $i > r$, write $x_i=q_i$, and for
$1\leq i\leq r/2$, write $x_{i}=\{q_{2i-1},q_{2i}\}$. We 
also set $s=n-r/2$.
For $1\leq i\leq s$, define
$M_{x_1,\ldots,x_i}(\lambda)=\{\mu\in M_{x_i}(\nu)\,|\,
\nu\in M_{x_1,\ldots,x_{i-1}}(\lambda)\}$ %for $2\leq i\leq r$ 
(recall that $M_{x_i}(\nu)$ is defined as in
Equation~(\ref{eq:Mlambda2}) when $x_i$ has two elements). 

Let $\theta\in \Irr(b_{\gamma})$. There are $\lambda\in\mathcal E_{\gamma}$ and
$\epsilon\in\{\pm 1\}$ such that
$\theta=\caralt_{\lambda}^{\epsilon\delta_p(\lambda)}$ (with the
convention, as above, that if $\lambda\neq\lambda^*$, then
$\caralt_{\lambda}^+=\caralt_{\lambda}=\caralt_{\lambda}^-$). Then we
set $\delta_{p}(\theta)=\delta_p(\lambda)$ and
$\Psi(\theta)=\rho_{\Psi(\lambda)}^{\epsilon\delta_p(\Psi(\lambda))}\in\Irr(
b_{\gamma'})$.
We have
\begin{equation}
\label{eq:dec}
r^{\widehat{\beta}}(\theta)=\sum_{\vartheta\in
\Irr(b_{\gamma}(n-p|\beta|))}a(\theta,\vartheta)\,\vartheta,
\end{equation}
where $b_{\gamma}(n-p|\beta|)$ denotes the union of $p$-blocks of
$\Alt_{n-p|\beta|}$ covered by the $p$-block $B_\gamma$ of
$\sym_{n-p|\beta|}$ labeled by $\gamma$. By~\S\ref{subsec:pblockAn}, 
$b_{\gamma}(n-p|\beta|)$ is a $p$-block of $\Alt_{n-p|\beta|}$, except 
when $|\gamma|>2$ and $|\beta|=w$. In this last case, it is a union of
two $p$-blocks $\{\rho_{\gamma}^+\}$ and $\{\rho_{\gamma}^-\}$ of defect
zero.
Similarly, we denote by $b_{\gamma'}(m-p|\beta|)$ the union of
$p$-blocks of $\mathcal A_{m-p|\beta|}$ covered by the $p$-block of
$B_{\gamma'}$ of $\sym_{m-p|\beta|}$ labeled by $\gamma'$. Define
$I_{\beta}:\C\Irr(b_{\gamma}(n-p|\beta|))\rightarrow
\C\Irr(b_{\gamma'}(m-p|\beta|))$ as in
Equation~(\ref{eq:defisoAn}).
Note that
\begin{equation}
\label{eq:coeffAitere}
a(\theta,\vartheta)=\sum_{\vartheta_1,\ldots,\vartheta_{s-1}}a(\vartheta_0,\vartheta_1)a(\vartheta_1,\vartheta_2)\cdots
a(\vartheta_{s-1},\vartheta_{s}),
\end{equation}
where $\vartheta_0=\theta$, $\vartheta_{s}=\vartheta$, and the sum runs
over the set of $\vartheta_1,\ldots,\vartheta_{s-1}$ such that 
for each $1\leq i\leq s$, there are $\mu_i\in
M_{x_1,\ldots,x_i}(\lambda)$ and $\epsilon_i\in\{\pm 1\}$ such that
$\vartheta_i=\rho_{\mu^i}^{\epsilon_i\delta_p(\mu_i)}$.
Since
$$\delta_p(\theta)\delta_p(\vartheta)=(\delta_p(\theta)\delta_p(\vartheta_1))\cdot(\delta_p(\vartheta_1)\delta_p(\vartheta_2))\cdots(\delta_p(\vartheta_{s-1})\delta_p(\vartheta)),$$
and thanks to Proposition~\ref{prop:coeffAn}, we deduce that
\begin{equation}
\label{eq:acommute}
\begin{split}
\delta_p(\theta)\delta_p(\vartheta)a(\theta,\vartheta)
&=\sum\delta_{p}(\vartheta_0)\delta_p(\vartheta_1)a(\vartheta_0,\vartheta_1)\cdots\\
&\qquad\cdots\delta_p(\vartheta_{s-1})\delta_p(\vartheta_{s})
a(\vartheta_{s-1},\vartheta_{s})\\
&=\sum\delta_{p}(\Psi(\vartheta_0))\delta_p(\Psi(\vartheta_1))a(\Psi(\vartheta_0),\Psi(\vartheta_1))\cdots\\
&\qquad\cdots\delta_p(\Psi(\vartheta_{s-1}))\delta_p(\Psi(\vartheta_{s}))a(\Psi(\vartheta_{s-1}),\Psi(\vartheta_{s}))\\
&=
\delta_p(\Psi(\theta))\delta_p(\Psi(\vartheta))a(\Psi(\theta),\Psi(\vartheta)).
\end{split}
\end{equation}
In particular, one has
$$\delta_p(\theta)\delta_p(\Psi(\theta))a(\Psi(\theta),\Psi(\vartheta))=
\delta_p(\vartheta)\delta_p(\Psi(\vartheta))\,a(\theta,\vartheta),$$
and it follows that
\begin{equation}
\label{eq:commuteAn}
\begin{split}
r^{\widehat{\beta}}(I(\theta))\quad&=\quad\delta_p(\theta)\delta_p(\Psi(\theta))r^{\widehat{\beta}}(\Psi(\theta)),\\
&=
\sum_{\vartheta\in
\Irr(b_{\gamma}(n-p|\beta|))}
\delta_p(\theta)\delta_p(\Psi(\theta))a(\Psi(\theta),\Psi(\vartheta))\,
\Psi(\vartheta),\\
&=\sum_{\vartheta\in
\Irr(b_{\gamma}(n-p|\beta|))}
\delta_p(\vartheta)\delta_p(\Psi(\vartheta))a(\theta,\vartheta)\,
\Psi(\vartheta),\\
&=\sum_{\vartheta\in
\Irr(b_{\gamma}(n-p|\beta|))}
a(\theta,\vartheta)\,
I_{\widehat{\beta}}(\vartheta),\\
&=\quad I_{\widehat{\beta}}(r^{\widehat{\beta}}(\theta)).
\end{split}
\end{equation}

\noindent \textbf{Note :} Assume $H$ is a normal subgroup of $G$ and 
the MN-structure of $H$ comes from Clifford theory. Then the $\mathcal
E_{x_{\lambda},y}^{\lambda}$ for $G$ have all size $1$ and the $\mathcal
E_{x_{\lambda},y}^{\lambda}$ for $H$ have size dividing this size for
$G$ multiplied by $[G:H]$. % then the sets $\mathcal
% E_{x_{\lambda},y}^{\lambda}$ appearing before Lemma~\ref{valeurE} have order
% dividing the index of $H$ in $G$. 
Since here this index is $2$, and since 
$p$ is odd, condition (iii) of Theorem~\ref{th:broue} holds. 

Note that the groups $G_{\sigma_{\widehat{\beta}}}$ and
$G'_{\sigma'_{\widehat{\beta}}}$ have an MN-structure with respect to
$$(b_{\gamma}({n-p|\beta|}),C\cap
G_{\sigma_{\widehat{\beta}}})\quad\textrm{ and }\quad 
(b'_{\gamma'}({m-p|\beta|}),C\cap
G'_{\sigma'_{\widehat{\beta}}}),$$ respectively. Applying the previous
computations to $G_{\sigma_{\widehat{\beta}}}=\mathcal A_{n-p|\beta|}$ and 
$G'_{\sigma'_{\widehat{\beta}}}=\mathcal A_{m-p|\beta|}$, we conclude that 
the condition (2) of Theorem~\ref{th:iso} holds for $I_{\widehat{\beta}}$. 
Now, Remark~\ref{rk:point3} gives the condition (3) of Theorem~\ref{th:iso}
for $I$. On
the other hand, by construction, assumption (ii) of Theorem~\ref{th:broue}
holds. The result then follows from Theorems~\ref{th:iso} and~\ref{th:broue}.

Assume now that $|\Lambda_0|\neq|\Lambda_0'|$. Without loss of
generality, we can suppose that $|\Lambda_0|>|\Lambda'_0|$. 
This means that $\Lambda'_0=\Omega_0$. %, and every
%$p\cdot\beta\in\Omega_0$ does not belong to $\mathcal D\cap\mathcal O$. 
Since $\Lambda_0\leq \Omega_0$, there is $p\cdot\beta\in\Omega_0$ such
that $(p\cdot\beta,1^{|\gamma|})\in\mathcal D_n\cap\mathcal O_n$. This
$p\cdot \beta$ also belongs to $\Lambda_0'$ and since
$\Lambda_0=\Omega_0$, $p\cdot\beta$ labels a unique conjugacy class of
$\Alt_{m}$, \emph{i.e.} $(p\cdot \beta,1^{|\gamma'|})\notin\mathcal D_m\cap
\mathcal O_m$. This happens if and only if $|\gamma'|\geq 2$.
% In
% particular, $|\gamma'|\geq 2$. 
(In fact, $|\gamma'|\geq 3$ because
$\gamma'$ is self-conjugate.) Since $\gamma'$ is self-conjugate, it
labels two irreducible characters $\rho_{\gamma'}^+$ and
$\rho_{\gamma'}^-$ of $\Alt_{m-pw}$.
Similarly, $\Lambda_0\neq\Omega_0$ implies 
that $|\gamma|\leq 1$. Note that in this case, although $\gamma$ is
self-conjugate, the restriction of $\chi_{\gamma}$ from $\sym_1$ (or
$\sym_0$) to $\Alt_1$ (or $\Alt_0$) is irreducible (because it is the
trivial character of the trivial group).
Let $p\cdot\beta$ be in $\Omega_0$. 

Suppose that $|\beta|<w$ or that
$|\beta|=w$ and $(p\cdot\beta,1^{|\gamma|})\notin\mathcal D_n\cap\mathcal
O_n$. Then
$p\cdot\beta\in \Lambda_0$.
% Define
% $I_{\beta}:\C\Irr(b_{\gamma}(n-p|\beta|))\rightarrow
% \C\Irr(b_{\gamma'}(m-p|\beta|))$ as in
% Equation~(\ref{eq:defisoAn}). 
The same computation as in
Equation~(\ref{eq:commuteAn}) gives 
\begin{equation}
\label{eq:comm1}
r^{\widehat{\beta}}\circ
I=I_{\beta}\circ r^{\widehat{\beta}}.
\end{equation}

Suppose now that $|\beta|=w$ and $(p\cdot\beta,1^{|\gamma|}) \in\mathcal
D_n\cap\mathcal O_n$. Then $p\cdot\beta$ parametrizes two classes of $S$
and one class of $S'$. Moreover, $|\Irr(b_{\gamma}(n-pw))|=1$ and
$|\Irr(b_{\gamma'}(m-pw))|=2$. Denote by
$G_{\beta^+}$ and $G_{\beta^-}$ two copies of the trivial group,
and set $\Irr(G_{\beta^{\pm}})=\{1_{\beta^\pm}\}$. In particular, $r^{\beta^{\pm}}(\C b_{\gamma})=\C\Irr(G_{\beta^{\pm}})$.
Define $I_{\beta}:\C\Irr(G_{\beta^{+}})\oplus\C\Irr(G_{\beta^-})\longrightarrow \C
\Irr(b_{\gamma'}(m-wp))$ by setting
\begin{equation}
\label{eq:defIbetabizarre}
I_{\beta}(1_{\beta^+})=\rho_{\gamma'}^+\quad\textrm{and}\quad
I_{\beta}(1_{\beta^-})=\rho_{\gamma'}^-.
\end{equation}

Let $\kappa$ be the self-conjugate partition of $n$ %such that
%$\overline{\beta}_0=\beta$ (that is, the 
whose diagonal hook lengths are
the parts of the partition $(p\cdot\beta,1^{|\gamma|})$. 
By~\cite[3.4]{BrGr}, the $p$-quotient of
$\kappa$ satisfies $\kappa^i=\emptyset$ if $i\neq
(p+1)/2$ and  $\kappa^{(p+1)/2}=\beta_0$, where $\beta_0$ is the
partition of $w$ such that $a(\beta_0)=\beta$.
By the definition of $\Psi$, the partition $\Psi(\kappa)$ of $m$ has
the same $p$-quotient as $\kappa$. Thus, the proof of~\cite[3.4]{BrGr} 
also implies
that $\Psi(\kappa)$ has the same diagonal hook lengths
divisible by $p$
as $\kappa$, and the other diagonal hooks of $\Psi(\kappa)$ 
have $p'$-length. In particular,
$a(\Psi(\kappa))$ has $p\cdot\beta$ as a
subpartition (corresponding exactly to those of the parts of
$a(\Psi(\kappa))
$ that are divisible by $p$).
On the other hand, the $\sym_m$-class labeled by $a(\Psi(\kappa))$
 splits into two $\Alt_m$-classes with
representatives $\sigma'_{\beta}\sigma^+$ and
$\sigma'_{\beta}\sigma^-$, where the cycle type of $\sigma'_{\beta}$ 
is $p\cdot\beta$, and the $p$-regular elements $\sigma^+$ and
$\sigma^-$ are representatives of the split classes of
$\Alt_{m-p|\beta|}$ labeled by $a(\gamma')^{+}$ and $a(\gamma')^{-}$,
respectively. 
% Note that, even change $\sigma^+$ and
% $\sigma^-$, we can assume that the representatives of the cycles
% composing $\sigma'_{\beta}$ are compatible with

Let $\mu_1=\kappa$, $\mu_{\ell(\beta)}=\gamma$ and the $\mu_i$'s be partitions 
such that $\mu_1\fleche\mu_2\fleche\cdots\fleche\mu_{\ell(\beta)}$, where
$\mu_i$ is obtained from $\mu_{i-1}$ by removing the diagonal hook of
length $p\beta_i$. Since
$L(c_{\mu_i}^{\mu_{i-1}})=L(c_{\Psi(\mu_i)}^{\Psi(\mu_{i-1})})$ for every
$1\leq i\leq \ell(\beta)-1$, Equations~(\ref{eq:fcrochet})
and~(\ref{eq:lienjambef}) give
$\delta_p(\mu_i,\mu_{i+1})=\delta_p(\Psi(\mu_i),\Psi(\mu_{i+1}))$ 
and it follows from~\cite[Corollary 2.3]{morrisolsson} that
\begin{equation}
\label{eq:signedeltap}
\delta_p(\kappa)=\prod_{i=1}^{\ell(\beta)-1}\delta_p(\mu_i,\mu_{i+1})=
\prod_{i=1}^{\ell(\beta)-1}\delta_p(\Psi(\mu_i),\Psi(\mu_{i+1}))=
\delta_p(\Psi(\kappa)).
\end{equation}
Now, by~\cite[Theorem 11]{Enguehard}, we have
\begin{equation*}
\begin{split}
r^{\beta}(\chi_{\Psi(\kappa)})&=\quad
r^{\beta}\left(\delta_p(\kappa)\delta_p(\Psi(\kappa))\chi_{\Psi(\kappa)}
\right)\\
&=\quad r^{\beta}\circ I(\chi_{\kappa})\\
&=\quad I\circ
r^{\beta}(\chi_{\kappa})\\
&=\quad \chi_{\kappa}(x_{\beta})I(1_{\{1\}})\\
&=\quad \chi_{\kappa}(x_{\beta})\chi_{\gamma'},
\end{split}
\end{equation*}
where $x_{\beta}$ denotes a representative of the $\sym_n$-class
labeled by $(p\cdot\beta,1^{|\gamma|})$.
%the value of
%$\chi_{\kappa}$ on any element of cycle type $p\cdot\beta$.
Furthermore, Clifford theory gives 
\begin{equation}
\label{eq:rhosomme}
r^{\beta}\left(\caralt_{\Psi(\kappa)}^+\right)+
r^{\beta}\left(\caralt_{\Psi(\kappa)}^-\right)
=\carsym_{\kappa}(x_{\beta})\left(\caralt_{\gamma'}^+
+\caralt_{\gamma'}^-\right),
\end{equation}

For $1\leq i\leq \ell(\beta)$, write $q_i=p\beta_i$.
%Recall that $n=pw$ or $n=pw+1$. Note that in the second case, there is a
%slight abuse of notation. Indeed, $p\cdot\beta$ labels the class of
%$\sym_n$ with cycle structure
%$\pi=(p\beta_1,\ldots,p\beta_{\ell(\beta)},1)$, and $\chi_{p\cdot\beta_0}$
% (respectively
% $\rho_{p\cdot\beta_0}^{\pm}$) is the character of $\sym_n$ (respectively of
% $\Alt_n$) labelled by $a^{-1}(\pi)$. However, in all cases, 
Then we
have   
$$(-1)^{\frac{1}{2}(n-\ell(
(p\cdot\beta,1^{|\gamma|}))}=(-1)^{\frac{1}{2}(pw-\ell(\beta))}$$
and the product of the parts of $(p\cdot\beta,1^{|\gamma|})$ is $q_1\ldots q_{\ell(\beta)}$. 
Thus, Theorem~\ref{theo:MNAn} gives
\begin{equation}
\label{eq:rhodiff}
\begin{split}
r^{\beta}(\caralt_{\Psi(\kappa)}^+)-r^{\beta}(\caralt_{\Psi(\kappa)}^-)
&=\quad\sqrt{(-1)^{\frac{1}{2}\sum(q_i-1)}q_1\cdots
q_{\ell(\beta)}}\left(\caralt_{\gamma'}^+-\caralt_{\gamma'}^-\right)\\
&=\quad 2 y_{\kappa}\left(\caralt_{\gamma'}^+-\caralt_{\gamma'}^-\right),
\end{split}
\end{equation}
because $\sum(q_i-1)=pw-\ell(\beta)$. 
So, we deduce from Equations~(\ref{eq:rhosomme}) and~(\ref{eq:rhodiff})
that
\begin{equation}
\label{eq:rbeta}
r^{\beta}(\rho_{\Psi(\kappa)}^{\epsilon})=(x_{\kappa}+\epsilon
y_{\kappa})\rho_{\gamma'}^+ + (x_{\kappa}-\epsilon
y_{\kappa})\rho_{\gamma'}^-.
\end{equation}
Furthermore, one has
\begin{equation}
\label{eq:rbetaplusmoins}
r^{\beta^+}(\rho_{\kappa}^{\epsilon})=(x_{\kappa}+\epsilon
y_{\kappa}) 1_{\beta^+}\quad\textrm{and}\quad
r^{\beta^-}(\rho_{\kappa}^{\epsilon})=(x_{\kappa}-\epsilon
y_{\kappa}) 1_{\beta^-}.
\end{equation}
% Since the diagonal hooks divisible by $p$ of $p\cdot \beta_0$ and
% $\Psi(p\cdot \beta_0)$ are the same,~\cite[Corollary 2.3]{morrisolsson}
% and~\cite[Corollary 3.4]{morrisolsson} give that
% $\delta_p(p\cdot\beta_0)=\delta_p(\Psi(p\cdot\beta_0))$. So, we deduce
% that
Hence, Equations~(\ref{eq:defIbetabizarre}),
(\ref{eq:signedeltap}),~(\ref{eq:rbeta}) and~(\ref{eq:rbetaplusmoins})
give
$$I_{\beta}\left(r^{\beta^+}(\rho_{\kappa}^{\epsilon})+r^{\beta^-}(\rho_{\kappa}^{\epsilon})\right)=r^{\beta}\left(I(\rho_{\kappa}^{\epsilon})\right).$$
Let now $\lambda\neq \kappa$ be with $p$-core $\gamma$.
% For every
% $g\in \Alt_{m-pw}$, we derive from
% the Murnaghan-Nakayama rule in $\sym_n$ and $\sym_m$ that $$\carsym_{\Psi(\lambda)}(g\sigma'_{\beta})=\delta_p(\lambda)\delta_p(\Psi(\lambda)\carsym_{\lambda}(\beta)\carsym_{\gamma'}(g).$$
Since
$\caralt_{\lambda}^{\epsilon}(\sigma_{\beta^{\pm}})=\alpha(\lambda)\chi_{\lambda}(x_{\beta})$,
we derive from
~\cite[Theorem 11]{Enguehard} and Clifford theory that
%\begin{eqnarray*}
%\rho_{\Psi(\lambda)}^{\pm}(g\sigma'_{\beta})&=&\delta_p(\lambda)\delta_p(\Psi(\lambda)\rho_{\lambda}^{\pm}(\sigma_{\beta^\pm})\chi_{\gamma'}(g)\\
%&=&\delta_p(\lambda)\delta_p(\Psi(\lambda)\rho_{\lambda}^{\pm}(\sigma_{\beta^{\pm}}) 
%(\rho_{\gamma'}^+(g)+\rho_{\gamma'}^-(g)).
%\end{eqnarray*}
%This proves that 
$I_{\beta}(r^{\beta^+}(\rho_{\lambda}^{\epsilon})+
r^{\beta^-}(\rho_{\lambda}^{\epsilon}))=
r^{\beta}(I(\rho_{\lambda}^{\epsilon}))$.
Finally, we obtain
\begin{equation}
\label{eq:com2}
I_{\beta}\circ(r^{\beta^+}+r^{\beta^-})=r^{\beta}\circ I.
\end{equation}
Using Equations~(\ref{eq:comm1}) and~(\ref{eq:com2}),
Remark~\ref{rk:point3} holds. Hence, the condition (2.b) of
Theorem~\ref{th:iso} is automatic for $I_{\widehat \beta}$ with
$|\beta|<w$, and is true for $I_{\beta}$ with $|\beta|=w$ (because the
characters $1_{\beta^{\pm}}$ and $\rho_{\gamma'}^{\pm}$ have defect
zero). We remark that in the last case, with the notation of
Theorem~\ref{th:iso}, one has
${J_{\beta}^*}^{-1}=I_{\beta}$.

Write $A$ and $B$ for the sets of partitions $\beta\in\Omega_0$ such
that $\beta=\widehat{\beta}$ and  $\beta\neq\widehat{\beta}$,
respectively.
% $e_{\widehat{\beta}}=e_{\beta}$ when $\widehat{\beta}=\beta$, and
% $e_{\widehat{\beta}}=e_{\beta^{\pm}}$ when
% $\widehat{\beta}=\beta^{\pm}$. Then 
Now, following step by step 
the proof of Theorem~\ref{th:iso}, 
we obtain
\begin{equation}
\begin{split}
\widehat{I}(x,x')=&\sum_{\beta\in A}\sum_{\phi\in
\mathfrak
b_{\beta}}\overline{e_{{\beta}}(\Phi_{\phi})(x)}l'_{\beta}({J_{\beta}^*}{^{-1}}(\phi))(x')\\&+\sum_{\beta\in
B}\sum_{\delta\in\{+,-\}}\overline{e_{\beta^\delta}(1_{\beta^\delta})}(x)
l'_{\beta}({J_{\beta}^*}{^{-1}}(1_{\beta^\delta})(x'),
%+\overline{e_{\beta^-}(1_{\beta^-})}(x)
%l'_{\beta}({J_{\beta}^*}{^{-1}}(1_{\beta^-})(x')\right)
\end{split}
\label{eq:guguAn}
\end{equation}
where $\mathfrak b_{\beta}$ (for $\beta\in A)$)
is the basis constructed from 
the set of irreducible Brauer characters
in the $p$-block $b_{\gamma}(n-p|\beta|)$ as in
Remark~\ref{rk:baseadaptee}. % when $\widehat{\beta}=\beta$, and 
%$\mathfrak b_{\widehat{\beta}}=\{1_{\beta^+},1_{\beta^-}\}$ when
%$\widehat{\beta}\neq\beta$.
Since an analogue of Remark~\ref{rk:adjoint} holds, we conclude as in
Theorem~\ref{th:broue}.
\end{proof}

\begin{theorem}Let $p$ be an odd prime.
Assume that $\gamma$ and $\gamma'$ are non self-conjugate $p$-cores of
$\sym_n$ and $\sym_m$ respectively, of same $p$-weight $w>0$.  Let
$b_{\gamma,\gamma^*}$ and $b_{\gamma',\gamma'^*}$ be the corresponding
$p$-blocks of $\Alt_n$ and $\Alt_m$. Let
$$I:\C\Irr(b_{\gamma,\gamma^*})\rightarrow\C\Irr(b_{\gamma',\gamma'^*}),\
\caralt_{\lambda}\mapsto
\delta_p(\lambda)\delta_p(\Psi(\lambda))\caralt_{\Psi(\lambda)}.$$
Then $I$ is a Brou\'e perfect isometry.
\label{theo:mainAn2}
\end{theorem}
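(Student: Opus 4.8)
**The plan is to mimic the proof of Theorem~\ref{theo:mainAn}, but in the (much simpler) non-self-conjugate setting, where no signs split and no crossover phenomenon occurs.** First I would set up an MN-structure on $\Alt_n$ with respect to the union of $p$-blocks $b_{\gamma,\gamma^*}$ and the set $C$ of $p'$-elements: take $S$ to be the set of elements of $\Alt_n$ all of whose nontrivial cycles have length divisible by $p$, and for $\sigma_S\in S$ with support $J$ put $G_{\sigma_S}=\Alt_{\overline{J}}$, exactly as in the proof of Theorem~\ref{theo:mainAn}. The parameter set $\Lambda$ is indexed by partitions $p\cdot\beta$ (with the same evenness constraint on $\beta$ forcing the product of cycles to lie in $\Alt_n$), and $\Lambda_0$ by those with $|\beta|\le w$. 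The maps $r^{\widehat\beta}$ are obtained by iterating Theorem~\ref{theo:MNAn} and Theorem~\ref{theo:MNAn2}; for $|\beta|>w$ the Murnaghan--Nakayama rule in $\sym_n$ together with Clifford theory forces $r^{\widehat\beta}=0$, since a partition with $p$-core $\gamma$ cannot have more than $w$ hooks of length divisible by $p$. The decisive simplification is that, because $\gamma\neq\gamma^*$, the blocks $b_{\gamma,\gamma^*}$ and the blocks $b_{\gamma,\gamma^*}(\Alt_{n-p|\beta|})$ covered after hook removal contain \emph{no} self-conjugate character (by~\S\ref{subsec:pblockAn}), so in Theorems~\ref{theo:MNAn} and~\ref{theo:MNAn2} only the ``$\mu\neq\mu^*$'' branches ever occur, and the coefficients $a(\caralt_\lambda,\caralt_\mu)$ are the honest integers $\alpha(\lambda)(\alpha_\mu^\lambda+\alpha_{\mu^*}^\lambda)$ or $\alpha(\lambda)\alpha_\mu^\lambda$ — no square roots appear.

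Next I would verify the key commutation relation $r^{\widehat\beta}\circ I = I'\circ r^{\widehat\beta}$ for the analogous MN-structure on $\Alt_m$. Here $\Psi:\mathcal E_\gamma\to\mathcal E_{\gamma'}$ is the bijection preserving $p$-quotients, and $I$ is defined by $\caralt_\lambda\mapsto \delta_p(\lambda)\delta_p(\Psi(\lambda))\caralt_{\Psi(\lambda)}$. The engine is Proposition~\ref{prop:coeffAn}, which tells us precisely that $\delta_p(\lambda)\delta_p(\mu)\,a(\caralt_\lambda^{\epsilon\delta_p(\lambda)},\caralt_\mu^{\eta\delta_p(\mu)})$ is $\Psi$-invariant; iterating over a chain of single-hook (odd $q_i$) and double-hook (even pairs $q_i,q_{i+1}$) removals and telescoping the product of signs $\delta_p(\vartheta_{i-1})\delta_p(\vartheta_i)$ — exactly as in Equations~(\ref{eq:coeffAitere})--(\ref{eq:commuteAn}) of the previous proof — gives $\delta_p(\theta)\delta_p(\vartheta)\,a(\theta,\vartheta)=\delta_p(\Psi\theta)\delta_p(\Psi\vartheta)\,a(\Psi\theta,\Psi\vartheta)$, and hence $r^{\widehat\beta}(I(\theta))=I(r^{\widehat\beta}(\theta))$. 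Note that $\Omega_0=\Omega_0'$ since these sets depend only on $w$, and because neither $\gamma$ nor $\gamma'$ is self-conjugate one always has $\Lambda_0=\Lambda_0'$ with all classes of $S$ and $S'$ \emph{un}split; thus the awkward case $|\Lambda_0|\neq|\Lambda_0'|$ of Theorem~\ref{theo:mainAn} simply does not arise, and we never need the ad hoc construction involving defect-zero characters. Lemma~\ref{lem:impair} (the $p$-sign $\delta_p$ is conjugation-invariant for odd $p$... here used for odd $q$, but $p$ odd) and Lemma~\ref{lem:coherent} are invoked exactly as before to handle the $\mu^*\in M_q(\lambda)$ bookkeeping.

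Finally I would package this: $\sigma$ with $\sigma(\{1\})=\{1\}$ is the bijection $\Lambda_0\to\Lambda_0'$ induced by $p\cdot\beta\mapsto p\cdot\beta$; the isometries $I_{\widehat\beta}:\C\Irr(b_{\gamma,\gamma^*}(\Alt_{n-p|\beta|}))\to\C\Irr(b_{\gamma',\gamma'^*}(\Alt_{m-p|\beta|}))$ are defined by the same formula~(\ref{eq:defisoAn}) (with the empty-signature convention), and they satisfy $I_{\{1\}}=I$; that $I$ is an isometry sending $\Z\Irr$ to $\Z\Irr$ follows because $\Psi$ is a bijection on $\mathcal E_\gamma$ and the signs $\pm1$ do not affect this. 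Hypothesis~(2) of Theorem~\ref{th:iso} is the commutation relation just proved together with $r^{\widehat\beta}=0$ off $\Lambda_0$; hypothesis~(3) is automatic by Remark~\ref{rk:point3}, since each $G_{\widehat\beta}=\Alt_{n-p|\beta|}$ itself carries an MN-structure of the same type (induction on $n$). Then Corollary~\ref{cor:isoparfaitegene} gives that $I$ is a generalized perfect isometry and Theorem~\ref{th:broue} upgrades this to a Brou\'e isometry, since $C,C'$ are the $p'$-elements. \textbf{The main obstacle}, modest here, is simply checking carefully that the coefficient-transport of Proposition~\ref{prop:coeffAn} composes correctly along a mixed chain of odd single-hook and even double-hook removals — i.e.\ that the intermediate partitions and their $p$-quotients are tracked consistently by $\Psi$ (Lemma~\ref{lem:coherent}) and that no self-conjugate intermediate $\mu_i$ sneaks in (which it cannot, as all relevant blocks are $b_{\gamma,\gamma^*}$-type and contain no self-conjugate characters); this is exactly the content already verified in the proof of Theorem~\ref{theo:mainAn}, restricted to its ``generic'' branch.
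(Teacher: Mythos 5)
Your proposal is correct and follows essentially the same route as the paper, which simply notes that the proof of Theorem~\ref{theo:mainAn} carries over and is easier here because every character of $b_{\gamma,\gamma^*}$ is the restriction of an irreducible character of $\sym_n$, so the Murnaghan--Nakayama rule for $\sym_n$ (equivalently, the sign transport via $\delta_p$ as in Proposition~\ref{prop:coeffAn}, whose relevant $\mu\neq\mu^*$ case does not use self-conjugacy of the cores) gives the commutation $r^{\widehat\beta}\circ I=I\circ r^{\widehat\beta}$ directly. Your observations that no self-conjugate partitions or split classes occur (since $|\gamma|,|\gamma'|\geq 2$), hence $\Lambda_0=\Lambda_0'$ and the awkward case of Theorem~\ref{theo:mainAn} disappears, match the paper's intent exactly.
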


\begin{proof}
The proof is similar to that of Theorem~\ref{theo:mainAn}. We use the
same MN-structure. In a sense, this case is easier, because every
irreducible character in $\Irr(b_{\gamma,\gamma^*})$ is the restriction
of a character of $\sym_n$. Hence, the Murnaghan-Nakayama rule for $\sym_n$  
directly gives the result. 
\end{proof}

\begin{theorem}
Let $\gamma$ and $\gamma'$ be $2$-blocks of $\Alt_n$ and $\Alt_m$ of the
same positive weight. Then $I$ defined as in
Equation~(\ref{eq:defisoAn}) is a Brou\'e perfect isometry.
\label{theo:mainAnpegal2}
\end{theorem}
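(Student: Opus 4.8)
The plan is to transcribe the proof of Theorem~\ref{theo:mainAn} (the self-conjugate case), which simplifies considerably at the even prime. First, every $2$-core of a symmetric group is a staircase $(k,k-1,\dots,1)$, hence self-conjugate; writing $\gamma$ (resp.\ $\gamma'$) for the $2$-core labelling the block of $\sym_n$ (resp.\ $\sym_m$) that covers $b_\gamma$ (resp.\ $b_{\gamma'}$), we thus always have $\gamma=\gamma^*$ and $\gamma'=\gamma'^*$, and since the common weight is positive the discussion in~\S\ref{subsec:pblockAn} shows that $b_\gamma$ and $b_{\gamma'}$ are genuine $2$-blocks. The bijection $\Psi\colon\mathcal E_\gamma\to\mathcal E_{\gamma'}$ preserving the $2$-quotient and swapping the $2$-core is then well defined; it preserves self-conjugacy (by Equation~(\ref{souv}), since $\gamma=\gamma^*$ and $\gamma'=\gamma'^*$, so that $\alpha(\lambda)=\alpha(\Psi(\lambda))$) and satisfies $\Psi(\mu^*)=\Psi(\mu)^*$ (Lemma~\ref{lem:coherent} and~\cite[Proposition 3.5]{olsson}); hence the map $I$ of Equation~(\ref{eq:defisoAn}) is a well-defined isometry $\C\Irr(b_\gamma)\to\C\Irr(b_{\gamma'})$.

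I would next build an MN-structure on $\Alt_n$, and symmetrically on $\Alt_m$, exactly as in the proof of Theorem~\ref{theo:mainAn}: $C$ is the set of $2'$-elements and $S$ the set of elements all of whose non-trivial cycles have even length. Two features are special to $p=2$. A cycle of even length is an odd permutation, so the non-trivial cycles of an element of $S$ come in even number, and no cycle-type label $2\cdot\beta$ splits into two $\Alt_n$-classes (splitting in $\sym_n$ requires pairwise distinct \emph{odd} cycle lengths); hence $\Lambda=\Omega$, $\Lambda_0=\Omega_0$, $\widehat\beta=\beta$ throughout, and the restriction maps $r^\beta\colon\C\Irr(b_\gamma)\to\C\Irr(b_\gamma(\Alt_{n-2|\beta|}))$ are built by iterating only Theorem~\ref{theo:MNAn2} — each cycle of length $2\beta_i$ is an odd permutation and a multiple of $p$, so they are removed two at a time, and Theorem~\ref{theo:MNAn} is never invoked. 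Moreover $\Omega_0=\Omega'_0$, so we are always in the \emph{first} of the two cases of the proof of Theorem~\ref{theo:mainAn}; the vanishing $r^\beta=0$ for $2\cdot\beta\notin\Lambda_0$ is verified as there. By Remark~\ref{rk:point3} it then suffices to check hypothesis~(2) of Theorem~\ref{th:iso}, namely $r^\beta\circ I=I_\beta\circ r^\beta$ for every $2\cdot\beta\in\Lambda_0$, where $I_\beta$ is the map~(\ref{eq:defisoAn}) for $\Alt_{n-2|\beta|}$ and $\Alt_{m-2|\beta|}$ (read, when $|\beta|=w$ and $|\gamma|\ge 3$, on the pair of defect-zero blocks $\{\rho_\gamma^\pm\}$, as in the proof of Theorem~\ref{theo:mainAn}).

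The crux is the $p=2$ analogue of Proposition~\ref{prop:coeffAn}: for $\mu\in M_{q_1,q_2}(\lambda)$ one has, after the sign-twist of the labels $\epsilon,\eta$,
$$\delta_2(\lambda)\,\delta_2(\mu)\,a\bigl(\caralt_\lambda^{\epsilon\delta_2(\lambda)},\caralt_\mu^{\eta\delta_2(\mu)}\bigr)=\delta_2(\Psi(\lambda))\,\delta_2(\Psi(\mu))\,a\bigl(\caralt_{\Psi(\lambda)}^{\epsilon\delta_2(\Psi(\lambda))},\caralt_{\Psi(\mu)}^{\eta\delta_2(\Psi(\mu))}\bigr).$$
Here only the two cases of Theorem~\ref{theo:MNAn2} occur (the square-root term of Theorem~\ref{theo:MNAn} and the ``$\mu_\lambda$''-case of Proposition~\ref{prop:coeffAn} never arise, since we use only Theorem~\ref{theo:MNAn2}). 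I would run the computation of Proposition~\ref{prop:coeffAn} verbatim, via Equations~(\ref{eq:fcrochet}) and~(\ref{eq:lienjambef}), which hold for every prime. The single step needing a new input is the one where that proof invokes Lemma~\ref{lem:impair} to collapse a factor $\delta_2(\mu)\delta_2(\mu^*)$ to $1$ in the ``$\mu^*$'' summand (the case $\mu\ne\mu^*$, $\mu^*\in M_{q_1,q_2}(\lambda)$): Lemma~\ref{lem:impair} genuinely fails for $q=2$. Its substitute is the identity
$$\delta_2(\nu)\,\delta_2(\nu^*)=(-1)^{w(\nu)},$$
obtained by summing the arm/leg relation $L(c_i)+L(c_i^*)=q-1=1$ over a sequence of $w(\nu)$ dominoes from $\nu$ to $\nu_{(2)}$ (the argument of Lemma~\ref{lem:impair}, now reading the \emph{odd} value $1$). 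Since $\Psi$ preserves the $2$-quotient we have $w(\mu)=w(\Psi(\mu))$, whence $\delta_2(\mu)\delta_2(\mu^*)=\delta_2(\Psi(\mu))\delta_2(\Psi(\mu)^*)$, which is exactly what makes the two ``$\mu^*$'' summands of Theorem~\ref{theo:MNAn2} correspond under $\Psi$.

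Granting this analogue, the telescoping identities~(\ref{eq:coeffAitere})--(\ref{eq:commuteAn}) of the proof of Theorem~\ref{theo:mainAn} carry over without change (the $\delta_2$'s multiply along a chain of domino removals, with no odd-cycle steps to worry about), yielding $r^\beta\circ I=I_\beta\circ r^\beta$; Theorems~\ref{th:iso} and~\ref{th:broue} then give that $I=I_{\{1\}}$ is a Brou\'e perfect isometry. I expect the main obstacle to be precisely this replacement of Lemma~\ref{lem:impair} at the even prime — once one has $\delta_2(\nu)\delta_2(\nu^*)=(-1)^{w(\nu)}$, everything else is a strictly simpler copy of the proof of Theorem~\ref{theo:mainAn}.
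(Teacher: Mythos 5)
Your proposal is correct and follows essentially the same route as the paper: run the proof of Theorem~\ref{theo:mainAn} with $\Lambda_0=\Omega_0=\Lambda'_0$, noting that only Theorem~\ref{theo:MNAn2} is ever invoked, and replace the use of Lemma~\ref{lem:impair} by the observation that $L(c)+L(c^*)\equiv 1 \pmod 2$ for a $2$-hook, so that $\delta_2(\mu)\delta_2(\mu^*)=(-1)^{r}$ with $r$ the $2$-weight, which is preserved by $\Psi$. This is exactly the adjustment the paper makes, so your identification of the one point where the odd-$p$ argument needs modification matches the published proof.
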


\begin{proof}
The MN-structure is defined as in the case where $p$ is odd, and one
always has that $\Lambda_0=\Omega_0=\Lambda'_0$. 
Note that $I$ satisfies the assumption of Remark~\ref{rk:prop3bis}.
Only the situation of Theorem~\ref{theo:MNAn2} occurs. The result of
Proposition~\ref{prop:coeffAn} still holds, but the simplifications
explained in the note within the proof are different. For any $2$-hook
$c$, one has $L(c) + L(c^*)\equiv 1\mod 2$. In particular,
for any $\mu\in M_{q_1,q_2}(\lambda)$, we deduce from
%From this and
Equation~(\ref{eq:sommecrochet}) that
$$\delta_2(\mu)\delta_2(\mu^*)=(-1)^r
=\delta_2(\Psi(\mu))\delta_2(\Psi(\mu)^*),$$
where $r$ is the number of $2$-hooks to remove from $\mu$ to get to
$\mu_{(2)}$ (this is also the number of $2$-hooks we have to remove from
$\Psi(\mu)$ to obtain $\Psi(\mu)_{(2)}$).
%for any $\mu\in M_{q_1,q_2}(\lambda)$ such that $\mu^*\in
%M_{q_1,q_2}(\lambda)$. 
The rest of the proof is similar to that of Theorem~\ref{theo:mainAn}.
\end{proof}

\section{Double covering groups of the symmetric and alternating groups}
\label{section:SnTilde}

In this section, we will consider the double covering group
$\widetilde{\Sym}_n$ (for a positive integer $n$) of the symmetric group
$\sym_n$ defined by
$$\widetilde{\Sym}_n=\left\langle z,\, t_i,\,1\leq i\leq n-1\ |\
z^2=1,\,t_i^2=z,\,(t_it_{i+1})^3=z,\,(t_it_j)^2=z\ (|i-j|\geq 2)
\right\rangle.$$
The group $\tSym_n$ and its representation theory were first studied by I. Schur in \cite{schur}, and, unless otherwise specified, we always refer to \cite{schur} for details or proofs.

\noindent
We recall %that $\Zz(\widetilde{\Sym}_n)=\cyc{z}$ and 
that we have the following exact sequence
$$1\rightarrow \cyc{z}\rightarrow \widetilde{\Sym}_n\rightarrow
\sym_n\rightarrow 1.$$ We denote by
$\theta:\widetilde{\Sym}_n\rightarrow\sym_n$ the natural projection. Note
that for every $\sigma\in\sym_n$, we have
$\theta^{-1}(\sigma)=\{\widetilde{\sigma},z\widetilde{\sigma}\}$, where
$\widetilde{\sigma}\in\widetilde{\Sym}_n$ is such that 
$\theta(\widetilde{\sigma})=\sigma$.

\noindent
If we set
$$\tAlt_n=\theta^{-1}(\Alt_n),$$
then $\tAlt_n$ is the double covering group of the alternating group $\Alt_n$.

\medskip
Throughout this section, we fix an odd prime $p$.

\subsection{Conjugacy classes and spin characters of $\tSym_n$}

%We now describe the conjugacy classes of $\tSym_n$ and $\tAlt_n$, as well as parametrizations for them.

If $x,\,y\in\widetilde{\Sym}_n$ are $\widetilde{\Sym}_n$-conjugate, then
$\theta(x)$ and $\theta(y)$ are $\sym_n$-conjugate. Let
$\sigma,\,\tau\in\sym_n$. Choose
$\widetilde{\sigma},\, \widetilde{\tau}\in\widetilde{\Sym}_n$ such that
$\theta(\widetilde{\sigma})=\sigma$ and $\theta(\widetilde{\tau})=\tau$.
Suppose that $\sigma$ and $\tau$ are $\sym_n$-conjugate. Then
$\widetilde{\tau}$ is $\tSym_n$-conjugate to $\widetilde{\sigma}$ or
to $z\widetilde{\sigma}$ (possibly both). Hence, each conjugacy class $C$
of $\sym_n$ gives rise to either one or two conjugacy classes of $\widetilde{\Sym}_n$,
according to whether $\widetilde{\sigma}$ and $z\widetilde{\sigma}$ are
conjugate or not (here, $\sigma$ lies in $C$ and $\widetilde{\sigma}$ is
as above). In the first case, we say that the class is {\emph{non-split}}, and, in the second case, that it is \emph{split}. The split classes of
$\widetilde{\Sym}_n$ are characterized as follows. Recall that the conjugacy classes of
$\sym_n$ are labeled by the set $\mathcal{P}_n$ of partitions of $n$.
Write $\mathcal{O}_n$ for the set of $\pi\in\mathcal{P}_n$ such that
all parts of $\pi$ have odd length, and $\mathcal{D}_n$ for the set of
$\pi\in\mathcal{P}_n$ with distinct parts. The partitions in
$\mathcal D_n$ are called {\emph{bar partitions}}. Denote by $\mathcal{D}^+_n$
(respectively $\mathcal{D}^-_n$) the subset of $\mathcal{D}_n$ consisting of all
partitions $\pi\in \mathcal{D}_n$ such that the number of parts of
$\pi$ with an even length is even (respectively odd).  Schur proved (see~\cite[\S7]{schur})

\begin{proposition}
The split conjugacy classes of $\widetilde{\Sym}_n$ are those classes $C$ such that
$\theta(C)$ is labeled by $\mathcal{O}_n\cup \mathcal{D}^-_n$.
\label{prop:classschur}
\end{proposition}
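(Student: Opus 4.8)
The plan is to reduce the statement to computing a sign homomorphism on a centraliser in $\sym_n$. Fix $\sigma\in\sym_n$ and a lift $\widetilde\sigma\in\tSym_n$. For $g\in\Cen_{\sym_n}(\sigma)$ and any lift $\tilde g$ of $g$, the element $\tilde g\,\widetilde\sigma\,\tilde g^{-1}$ projects onto $\sigma$, hence equals $\widetilde\sigma$ or $z\widetilde\sigma$; as $z$ is central this does not depend on the chosen lift, so we obtain a well-defined map $\epsilon_\sigma\colon\Cen_{\sym_n}(\sigma)\to\cyc{z}$ determined by $\tilde g\,\widetilde\sigma\,\tilde g^{-1}=\epsilon_\sigma(g)\,\widetilde\sigma$, and it is a homomorphism because $\cyc{z}$ is abelian. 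Conversely, if $\tilde g\,\widetilde\sigma\,\tilde g^{-1}=z\widetilde\sigma$ for some $\tilde g\in\tSym_n$, then $\theta(\tilde g)$ centralises $\sigma$; hence $\widetilde\sigma$ and $z\widetilde\sigma$ are $\tSym_n$-conjugate if and only if $\epsilon_\sigma\neq 1$. Thus the class $C$ with $\theta(C)=\sigma^{\sym_n}$ is split exactly when $\epsilon_\sigma$ is trivial, and it remains to decide for which cycle types this holds.

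Write $\sigma=c_1\cdots c_\ell$ as the product of its disjoint cycles, of lengths $\pi_1,\dots,\pi_\ell$ (the parts of the partition $\pi$ labelling $C$). A standard description of $\Cen_{\sym_n}(\sigma)$ shows it is generated by the cycles $c_i$ together with, for each value $j$ occurring at least twice among the $\pi_i$, an involution $\tau_j$ which interchanges two of the corresponding $j$-cycles point by point and fixes all remaining points (a product of $j$ disjoint transpositions). Hence $\epsilon_\sigma=1$ if and only if $\epsilon_\sigma(c_i)=1$ for every $i$ and $\epsilon_\sigma(\tau_j)=1$ for every repeated value $j$, and I must evaluate these.

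The computational core consists of two sign lemmas in $\tSym_n$. First, if $a,b\in\sym_n$ have disjoint supports, then $\tilde a\tilde b=z^{\,k(a)k(b)}\tilde b\tilde a$, where $k(x)=|\operatorname{supp}(x)|-(\text{number of cycles of }x)$ is the number of transpositions in $x$ (only the parity of the exponent is relevant); this reduces, by conjugating the relation $(t_it_j)^2=z$ valid for $|i-j|\ge 2$ and decomposing $a$ and $b$ into transpositions supported inside $\operatorname{supp}(a)$, respectively $\operatorname{supp}(b)$, to the anticommutation of the lifts of two disjoint transpositions. Second, if $\tau_j$ exchanges two disjoint $j$-cycles $c,c'$ and fixes everything else, then $\tilde\tau_j\,\widetilde{cc'}\,\tilde\tau_j^{-1}=z^{\,j-1}\,\widetilde{cc'}$; this follows from the fact that $\tilde\tau_j^{\,2}\in\cyc{z}$ is central, which forces the transport exponents $u,v$ in $\tilde\tau_j\tilde c\tilde\tau_j^{-1}=z^{u}\widetilde{c'}$ and $\tilde\tau_j\widetilde{c'}\tilde\tau_j^{-1}=z^{v}\tilde c$ to satisfy $u+v$ even, combined with the first lemma applied to $\widetilde{c'}\tilde c$ versus $\tilde c\,\widetilde{c'}$ and $k(c)=k(c')=j-1$. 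Writing $\widetilde\sigma=z^{e}\widetilde{c_i}\,\widetilde{\sigma_i'}$ with $\sigma_i'=\prod_{k\ne i}c_k$, respectively $\widetilde\sigma=z^{e}\,\widetilde{cc'}\,\widetilde{\sigma''}$ with $\sigma''$ the product of the remaining cycles, and applying these lemmas yields
\[\epsilon_\sigma(c_i)=z^{(\pi_i-1)\sum_{k\ne i}(\pi_k-1)},\qquad \epsilon_\sigma(\tau_j)=z^{(j-1)+j\sum_{c_k\ne c,c'}(\pi_k-1)}.\]
Letting $E$ denote the number of even parts of $\pi$, these exponents reduce modulo $2$ to: $\epsilon_\sigma(c_i)=1$ iff $\pi_i$ is odd or $E$ is odd; and $\epsilon_\sigma(\tau_j)=1$ iff $j$ is odd and $E$ is even.

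Finally I would run through the cases. If all parts of $\pi$ are odd (so $E=0$), every $\epsilon_\sigma(c_i)=1$ and every $\epsilon_\sigma(\tau_j)=1$, so $C$ is split; these are exactly the $\pi\in\mathcal O_n$. If $\pi$ has distinct parts and $E$ is odd, then there are no $\tau_j$ and each $\epsilon_\sigma(c_i)=1$, so $C$ is split; these are exactly the $\pi\in\mathcal D^-_n$. In every remaining case $\pi$ has an even part and either $E$ is even — then an even $\pi_i$ gives $\epsilon_\sigma(c_i)=-1$ — or $E$ is odd and $\pi$ has a repeated part: if that repeated part is even its $\tau_j$ has $j$ even, and if it is odd then $j$ is odd while $E$ is odd, so in both situations $\epsilon_\sigma(\tau_j)=-1$; hence $C$ is non-split, and such a $\pi$ belongs to neither $\mathcal O_n$ nor $\mathcal D^-_n$. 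Therefore the split classes are precisely those $C$ with $\theta(C)$ labelled by $\mathcal O_n\cup\mathcal D^-_n$. The main obstacle is the careful proof of the two sign lemmas from Schur's relations — keeping track of all the factors of $z$ that arise when a permutation is rewritten in the generators $t_i$ and when one lift is traded for another; everything after that is parity bookkeeping.
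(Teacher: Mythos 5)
Your proposal is correct, and it does more than the paper does at this point: the paper gives no proof of Proposition~\ref{prop:classschur}, simply citing Schur's original memoir, so what you have written is a self-contained replacement along the classical lines (the argument one finds in Schur and in the standard accounts of projective representations of $\sym_n$): splitness of the class of $\widetilde{\sigma}$ is equivalent to triviality of the homomorphism $\epsilon_\sigma\colon\Cen_{\sym_n}(\sigma)\to\cyc{z}$, which is then evaluated on cycles and on block involutions using the anticommutation of lifts of disjoint transpositions, and the rest is parity bookkeeping. I have checked your two sign lemmas, the resulting exponents $\epsilon_\sigma(c_i)=z^{(\pi_i-1)\sum_{k\neq i}(\pi_k-1)}$ and $\epsilon_\sigma(\tau_j)=z^{(j-1)+j\sum_{c_k\neq c,c'}(\pi_k-1)}$, and the final case analysis; all are right, and they do yield exactly $\mathcal{O}_n\cup\mathcal{D}_n^-$. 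Two small points deserve a line each. First, your description of the centraliser is slightly too economical: a single involution $\tau_j$ per repeated length $j$ does not generate the factor $\sym_{m_j}$ of $\Cen_{\sym_n}(\sigma)\cong\prod_j(\Z_j\wr\sym_{m_j})$ when that length occurs $m_j\geq 3$ times. This is harmless for your purpose, but say why: once $\epsilon_\sigma$ kills all the cycles it factors through $\prod_j\sym_{m_j}$, and a homomorphism from $\sym_{m_j}$ to $\{\pm1\}$ is determined by its value on one transposition (all block transpositions for a fixed $j$ being conjugate inside the centraliser), so testing one $\tau_j$ per repeated $j$ does decide triviality. Second, the relation you conjugate, $(t_it_j)^2=z$ for $|i-j|\geq 2$ (equivalently $t_it_j=zt_jt_i$), is the intended Schur relation; the presentation of $\tSym_n$ as displayed in the paper has the exponents $2$ and $3$ interchanged, so your computation is carried out in Schur's group, which is what the paper means, rather than in the presentation as literally printed.
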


We set $s_i=(i,i+1)\in\sym_n$. Then for every $1\leq i\leq n-1$, we have
$\theta(t_i)=s_i$. For $\pi=(\pi_1,\ldots,\pi_k)\in\mathcal{P}_n$,
write $s_{\pi}$ for a representative of the class of $\sym_n$ labeled
by $\pi$. If $s_{\pi}=s_{\pi_1}\cdots s_{\pi_k}$ is the
cycle decomposition (with disjoint supports)
of $s_{\pi}$, then we assume that the support of $s_{\pi_i}$ is 
\begin{equation}
\label{eq:suppSn}
\left\{1+\sum_{j<i} \pi_j,\ldots,\sum_{j\leq i} \pi_j\right\}.
\end{equation}
% In order to make the notation precise, we will need the following lemma.
% \begin{lemma}
% Let $\pi\in\mathcal O_n$, and $s_{\pi}$ be as above. Then the set
% $\theta^{-1}(s_{\pi})$ has an element of odd order.
% \label{lem:ordre}
% \end{lemma}
% 
% \begin{proof}
% Let $g\in\theta^{-1}(s_{\pi})$, so that $\theta^{-1}(s_{\pi})=\{g, \, zg\}$, and let $d$ be the order of $s_{\pi}$,
% which is an odd integer because $\pi\in\mathcal O_n$. Since
% $\theta(g^d)=\theta(g)^d=s_{\pi}^d=1$, we obtain $g^{d}\in\{1,z\}$. If
% $g^d=1$, then the order of $g$ is odd. Otherwise, $g^d=z$, and
% $(zg)^d=z^dg^d=z^2=1$ because $d$ is odd. Thus $zg$ has odd order, as
% required.
% \end{proof}

Now, for any $\pi\in\mathcal P_n$, we make the same choice of Schur~\cite[\S11]{schur} for a representative $t_{\pi}\in\widetilde{\sym}_n$ 
such that $\theta(t_{\pi})=s_{\pi}$. % By Lemma~\ref{lem:ordre}, if
% $\pi\in\mathcal
% O_n$, then we can also assume that $t_{\pi}$ has odd order.
So, when $\pi\in\mathcal
O_n\cup\mathcal D_n^-$, the elements $t_{\pi}$ and $zt_{\pi}$ are
representatives of the two split classes of $\widetilde{\sym}_n$
labeled by $\pi$. We denote by $C_{\pi}^+$ (respectively $C_{\pi}^-$) 
the conjugacy class of $t_{\pi}$ (respectively $z t_{\pi}$) in $\tSym_n$. 
It will also sometimes be convenient to write $t_{\pi}^+$ for $t_{\pi}$, 
and $t_{\pi}^-$ for $z t_{\pi}$.
When $\pi \in \cal{P}_n \setminus (  \mathcal{O}_n\cup
\mathcal{D}^-_n)$, the elements 
$t_{\pi}$ and $z t_{\pi}$ belong to the same conjugacy class $C_{\pi}$ of 
$\tSym_n$. In all cases, an element $g$ (or an $\tSym_n$-class $C$) is said
to be of \emph{type} $\pi$ if the $\sym_n$-class of $\theta(g)$ (respectively of
$\theta(g)$ for any $g\in C$)  is labeled by $\pi$. 

Note that if $\pi=(\pi_1,\ldots,\pi_k)\in \mathcal P_n$, then the
construction of~\cite[III p.\,172]{schur} implies that
\begin{equation}
t_{\pi}=t_{\pi_1}\cdots t_{\pi_k}.
\label{eq:schurcycle}
\end{equation}
 
\begin{convention}
\label{conv}
Let $\pi=(\pi_1,\ldots,\pi_k)\in\mathcal P_n$. %\mathcal D_n^+
%be such that $\pi$ has an
%odd part. 
In the following, we do not necessarily assume as usual that $\pi_1\geq \cdots\geq \pi_k$.
Instead, we assume that the parts of $\pi$ are ordered in such a way that $\pi_1\geq \cdots\geq \pi_u$ and
$\pi_{u+1}\geq \cdots\geq \pi_k$, where $u$ is such that $\pi_{u+1}, \, \ldots, \,  \pi_k$ are exactly the odd parts of $\pi$ 
which are divisible by $p$ (if there is no such part in $\pi$, then $u=k$).

%, but that
%there is some $1\leq r\leq k$ such that $\pi_j$ is even for all $j<r$
%and $\pi_j$ is odd for all $r\leq j\leq k$.
% Moreover, we assume that for $1\leq j\leq k$, 
% the support of $s_{\pi_j}$ is as in
% Equation~(\ref{eq:suppSn}). 
% is
%decreasing (see the choice after
%Proposition~\ref{prop:classschur}). Furthermore, 
% By
% Remark~\ref{rk:commute} and Equation~(\ref{eq:schurcycle}), 
% we have $t_{\pi_1}\cdots t_{\pi_k}=t_{\pi}$,
% where $t_{\pi}$ is Schur's representative, defined as above.
\end{convention}

% with our choice of notation, 
% one has by~\cite{schur}
% \begin{equation}
% \label{eq:representantSnmultip}
% t_{\pi}=t_{\pi_1}\cdots t_{\pi_r}.
% \end{equation}
% 
% Write
% $$s_{\pi}=(1,\ldots,\pi_1)(\pi_1+1,\ldots,\pi_1+\pi_2)\cdots(\pi_1+\cdots
% +\pi_{r-1},\ldots,n).$$ 
% Then $s_{\pi}$ is a representative of the class of $\sym_n$ labelled by
% $\pi$. Moreover, we have $s_{\pi}=\prod\limits_{i=1}^{r-1}(s_i\cdots
% s_{\pi_i-1})$. Now, define
% $$t_{\pi}=\prod_{i=1}^{r-1}(t_i\cdots
% t_{\pi_i-1}).$$
% For any $\pi\in\mathcal{P}_n$, we have $\theta(t_{\pi})=s_{\pi}$. 
% 
% , and any element of $C_{\pi}$ is said to have {\emph{type}} $\pi$.
% 
% On the other hand,
% the split conjugacy classes of $\widetilde{\Sym}_n$ have
% representatives $t_{\pi}$ and $zt_{\pi}$ for $\pi\in \mathcal{O}_n\cup
% \mathcal{D}^-_n$. For any $\pi\in \mathcal{O}_n\cup
% \mathcal{D}^-_n$,  Any element of $C_{\pi}^+ \cup C_{\pi}^-$ is said to have {\emph{type}} $\pi$.
% 

\medskip

%We can now describe the split conjugacy classes of $\tAlt_n$. We have to deal with two types of splitting that can happen: one between $\Sym_n$ and $\Alt_n$, and one between $\Alt_n$ and $\tAlt_n$. It will become clear in the subsection on spin characters that we only need to consider classes labelled by partitions belonging to $\cal{O}_n \cup \cal{D}_n^+$ (note that this is not a disjoint union).

%\subsection{Spin characters of $\tSym_n$ and $\tAlt_n$}

We are now interested in the set of irreducible complex characters of
$\tSym_n$. Any irreducible (complex) character of $\tSym_n$ with $z$ in
its kernel is simply lifted from an irreducible character of the
quotient $\Sym_n$. Any other irreducible character $\xi$ of $\tSym_n$ is
called a {\emph{spin character}}, and it satisfies $\xi(z)=-\xi(1)$. In
particular, for any spin character $\xi$ and any $\pi\in\mathcal{P}_n$,
one has $\xi(zt_{\pi})=-\xi(t_{\pi})$, which implies that $\xi$ vanishes
on the non-split conjugacy classes of $\widetilde{\Sym}_n$.

Define $\varepsilon=\operatorname{sgn}\circ \, \theta$, where
$\operatorname{sgn}$ is the sign character of $\sym_n$. Note that
$\widetilde{\Alt}_n=\ker(\varepsilon)$. Then $\varepsilon$ is a linear
(irreducible) character of $\widetilde{\Sym}_n$, and for any spin
character $\xi$ of $\widetilde{\Sym}_n$, $\varepsilon\otimes\xi$ is a
spin character (because
$\varepsilon\otimes\chi(z)=-\varepsilon\otimes\chi(1)$).  A spin
character $\xi$ is said to be {\emph{self-associate}} if $\varepsilon
\xi=\xi$. Otherwise, $\xi$ and $\varepsilon \xi$ are called
{\emph{associate characters}}. It follows that, if $\xi$ is
self-associate and $\pi\in \mathcal{D}^-_n$, then
$\xi(t_{\pi})=0=\xi(zt_{\pi})$. 

In~\cite{schur}, Schur proved that the spin characters of $\tSym_n$ are,
up to association, labeled by $\mathcal{D}_n$. More precisely, he
showed that every $\lambda\in\mathcal{D}^+_n$ indexes a self-associate
spin character $\xi_{\lambda}$, and every $\lambda\in\mathcal{D}^-_n$ a
pair $(\xi_{\lambda}^{+} , \xi_{\lambda}^-)$ of associate spin
characters. In this case, we will sometimes write $\cla$ for $\cla^+$,
so that $\cla^-=\e \cla$. 

For any partition $\lambda=(\lambda_1,\ldots,\lambda_k)$ of $n$ (where we don't include 0 parts), we set $|\lambda|=\sum \lambda_i$
and we define the {\emph{length}} $\ell(\la)$ of $\lambda$ by
$\ell(\lambda)=k$.  If $\lambda$ is furthermore a bar partition (i.e.
if the parts of $\lambda$ are pairwise distinct), then we set
$\sigma(\lambda)=(-1)^{|\lambda|-\ell(\lambda)}$. With this notation, we
then have (see e.g.~\cite[p. 45]{olsson})
\begin{equation}
\lambda\in\mathcal D_n^{\sigma(\lambda)}.
\label{eq:self}
\end{equation}
If $\sa(\la)=1$, then $\la$ is said to be {\emph{even}}; otherwise, it is said to be {\emph{odd}}.

Schur proved in~\cite{schur} that, whenever $\la
=(\lambda_1,\ldots,\lambda_k) \in \cal{D}_n^-$, the labeling can be chosen in such a way that, for any
$\pi\in\mathcal{D}^-_n$, we have
\begin{equation}
\cla^+(t_{\pi})=\delta_{\pi\lambda}i^{\frac{n-k+1}{2}}\sqrt{\frac{\lambda_1
\ldots  \lambda_k}{2}}.
\label{eq:valdmoins}
\end{equation}
Writing $z_{\la}$ for the product $\la_1 \ldots  \la_k$, we therefore have, for any $\pi \in \cal{D}_n^-$,
\begin{equation}
\label{eq:valdmoinsSn}
\cla^+(t_{\pi})=\cla^-(zt_{\pi})=\delta_{\pi\lambda}i^{\frac{n-r+1}{2}}\sqrt{\frac{z_{\la}}{2}} \; \;  \mbox{and} \; \;  \cla^+(zt_{\pi})=\cla^-(t_{\pi})=-\delta_{\pi\lambda}i^{\frac{n-r+1}{2}}\sqrt{\frac{z_{\la}}{2}}.
\end{equation}
Finally, for any $\pi \in \cal{O}_n$, we have
$$\cla^+(t_{\pi})=\cla^-(t_{\pi}) \qquad \mbox{and} \qquad  \cla^+(zt_{\pi})=\cla^-(zt_{\pi}).$$

\subsection{Conjugacy classes and spin characters of $\tAlt_n$}
\label{section:classAnTilde}
We also write $\theta:\tAlt_n\rightarrow\Alt_n$ for the restriction of
$\theta$ to $\tAlt_n$. As above, the type of $g\in\tAlt_n$ is the
partition encoding the cycle structure of $\theta(g)$. As above, there
is a notion of split classes with respect to $\theta$. Such classes will
be called $\Alt_n$-split in the following. On the other hand, since
$\tAlt_n$ is a subgroup of $\tSym_n$ with index $2$, every
$\tSym_n$-class contained in $\tAlt_n$ is either a single
$\tAlt_n$-class or a union of two $\tAlt_n$-classes. In the second case,
the $\tAlt_n$-classes will be called $\tSym_n$-split classes.
By~\cite[p.\,176]{schur}, we have

\begin{proposition}
Assume $n\geq 2$. The $\Alt_n$-split classes of $\tAlt_n$ are the classes whose elements have type
$\pi\in \mathcal D_n^+\cup \mathcal O_n$.
\label{prop:splitAntilde}
\end{proposition}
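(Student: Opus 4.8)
The plan is to reduce the statement to Proposition~\ref{prop:classschur} together with an analysis of centralisers in $\tSym_n$. Fix $g\in\tAlt_n$ and write $\pi$ for its type, so that $\pi\vdash n$ with $|\pi|-\ell(\pi)$ even; the $\tAlt_n$-class of $g$ is split exactly when $g$ and $zg$ are not $\tAlt_n$-conjugate. First, if $\pi\in\mathcal O_n$, then by Proposition~\ref{prop:classschur} the elements $g$ and $zg$ are not even $\tSym_n$-conjugate, so a fortiori the class is split; this already covers $\mathcal{OD}_n=\mathcal O_n\cap\mathcal D_n^+$. So from now on assume $\pi\notin\mathcal O_n$, i.e. $\pi$ has an even part. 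Then $\pi\notin\mathcal D_n^-$ as well (a partition with a repeated part is not in $\mathcal D_n$, while a bar partition which is the type of an even permutation lies in $\mathcal D_n^+$ by Equation~(\ref{eq:self})), so by Proposition~\ref{prop:classschur} the $\tSym_n$-class of $g$ is non-split: choose $h_0\in\tSym_n$ with $h_0gh_0^{-1}=zg$, so that $\{h\in\tSym_n\mid hgh^{-1}=zg\}=h_0\Cen_{\tSym_n}(g)$. Thus $g$ is $\tAlt_n$-conjugate to $zg$ if and only if this coset meets $\tAlt_n$, i.e. if and only if $\Cen_{\tSym_n}(g)\not\subseteq\tAlt_n$ or $h_0\in\tAlt_n$; everything therefore comes down to locating $\Cen_{\tSym_n}(g)$ with respect to $\tAlt_n$.

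Next I would work inside $C:=\Cen_{\sym_n}(\theta(g))\cong\prod_k\bigl(\Z/k\wr\sym_{a_k}\bigr)$, where $\pi=\prod_k k^{a_k}$. The assignment $\bar h\mapsto[h,g]$ (for either lift $h$ of $\bar h\in C$) is a well-defined homomorphism $\bar\phi:C\to\cyc{z}$, since its image is central; its kernel is $\theta(\Cen_{\tSym_n}(g))=:D$, and because the $\tSym_n$-class is non-split one has $[C:D]=2$ and $\Cen_{\tSym_n}(g)=\theta^{-1}(D)$. Here $C\not\subseteq\Alt_n$ (otherwise $\pi$ would have distinct odd parts, i.e. $\pi\in\mathcal O_n$), so $\Cen_{\tSym_n}(g)\subseteq\tAlt_n$ if and only if $D=C\cap\Alt_n$, i.e. if and only if $\bar\phi=\operatorname{sgn}|_C$ (under $\{\pm 1\}\cong\cyc{z}$). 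To evaluate $\bar\phi$ I would take for $g$ the lift $t_{\gamma_1}\cdots t_{\gamma_m}$, where $\gamma_1,\ldots,\gamma_m$ are the cycles of $\theta(g)$ in a fixed order, and use Schur's relations in $\tSym_n$: lifts of disjoint cycles of lengths $k,l$ satisfy $tt'=(-1)^{(k-1)(l-1)}t't$, a lift of a transposition squares to $z$, and lifts of permutations with disjoint supports commute up to the sign $(-1)^{(\mathrm{length})(\mathrm{length})}$. A short manipulation (using that $|\pi|-\ell(\pi)$ is even) then gives $[t_\gamma,g]=z^{k-1}$ for the generator $\gamma$ of a $\Z/k$-factor, so $\bar\phi$ coincides with $\operatorname{sgn}$ on the base group $\prod_k(\Z/k)^{a_k}$; and for a transposition $\tau\in\sym_{a_j}$ of $C$ exchanging two $j$-cycles of $\theta(g)$, moving $t_\tau$ past the two affected factors and past the remaining lift and using $t_\tau^2\in\cyc{z}$ yields $[t_\tau,g]=z^{j-1}$, whereas $\operatorname{sgn}(\tau)=z^{j}$. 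Hence $\bar\phi=\operatorname{sgn}|_C$ precisely when $C$ has no such $\tau$, i.e. precisely when $\pi$ is a bar partition.

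Finally I would conclude by cases. If $\pi$ has distinct parts then, together with the standing assumption that $\pi$ has an even part, $\pi\in\mathcal D_n^+\setminus\mathcal{OD}_n$, and $\Cen_{\tSym_n}(g)=\theta^{-1}(C\cap\Alt_n)\subseteq\tAlt_n$; moreover, for an even part $\pi_{i_0}$ of $\pi$ the cycle $\gamma_{i_0}$ is an odd permutation with $\bar\phi(\gamma_{i_0})=z$, so $t_{\gamma_{i_0}}g t_{\gamma_{i_0}}^{-1}=zg$ and we may take $h_0=t_{\gamma_{i_0}}\in\tSym_n\setminus\tAlt_n$; therefore every conjugator taking $g$ to $zg$ lies outside $\tAlt_n$, and the class is split. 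If instead $\pi$ has a repeated part, then $\bar\phi\neq\operatorname{sgn}|_C$, so the two index-$2$ subgroups $D$ and $C\cap\Alt_n$ of $C$ are distinct, whence $D\not\subseteq\Alt_n$ and $\Cen_{\tSym_n}(g)\not\subseteq\tAlt_n$; thus $h_0\Cen_{\tSym_n}(g)$ meets $\tAlt_n$ and the class is non-split. Since, for the type of an even permutation, ``$\pi$ has a repeated part'' is exactly $\pi\notin\mathcal O_n\cup\mathcal D_n^+$, while ``$\pi$ has distinct parts'' is exactly $\pi\in\mathcal D_n^+$, this shows the split classes are precisely those of type in $\mathcal D_n^+\cup\mathcal O_n$. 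The step I expect to be the main obstacle is the explicit evaluation of $\bar\phi$ on the transposition-generators --- equivalently, deciding when a lift of a permutation swapping two equal-length cycles commutes with $g$ --- which rests essentially on Schur's cocycle for $\tSym_n$; the remainder is bookkeeping with signs and the elementary fact about how a conjugacy class of an index-$2$ subgroup sits inside a class of the overgroup.
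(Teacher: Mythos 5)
Your proof is correct, and it is genuinely different from what the paper does: the paper offers no argument at all for this proposition, it simply quotes Schur (\cite[p.\,176]{schur}), exactly as it quotes him for the split classes of $\tSym_n$ (Proposition~\ref{prop:classschur}). You instead deduce the $\tAlt_n$-statement from Proposition~\ref{prop:classschur} by the standard index-two descent: for $\pi\in\mathcal O_n$ the $\tSym_n$-class is already split; for $\pi\notin\mathcal O_n$ (hence $\pi\notin\mathcal D_n^-$, since $\theta(g)$ is even) the conjugators carrying $g$ to $zg$ form one coset of $\Cen_{\tSym_n}(g)$, and everything reduces to comparing the homomorphism $\bar\phi:\Cen_{\sym_n}(\theta(g))\to\langle z\rangle$, $\bar h\mapsto [h,g]$, with the sign character on the centralizer $\prod_k(\Z/k\wr\sym_{a_k})$. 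Your key evaluations check out: on a $k$-cycle of $\theta(g)$ the relation for lifts of disjoint cycles together with the evenness of $|\pi|-\ell(\pi)$ gives $\bar\phi=z^{k-1}=\operatorname{sgn}$, while for a lift $t_\tau$ of a swap of two $j$-cycles the centrality of $t_\tau^2$ forces the two twisting signs to cancel, and moving $t_\tau$ past the untouched cycles contributes nothing because their total parity is again even, so $[t_\tau,g]=z^{(j-1)^2}=z^{j-1}\neq(-1)^j=\operatorname{sgn}(\tau)$; hence a repeated part destroys the splitting, while for distinct parts the centralizer is just the base group and the class splits (your choice $h_0=t_{\gamma_{i_0}}$ for an even part $\pi_{i_0}$ then shows every conjugator is odd). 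What your route buys is a self-contained verification relying only on the same facts about Schur's cover that the paper already uses elsewhere (lifts of disjoint permutations commuting up to a central sign, involutive lifts squaring into $\langle z\rangle$), at the cost of the cocycle bookkeeping that the paper avoids by citation. One small wording slip: lifts of disjoint permutations commute up to $(-1)$ raised to the product of their parities (numbers of transpositions), not of their lengths; the cycle-level rule $(-1)^{(k-1)(l-1)}$ that you state and actually use is the correct one.
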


\begin{remark}
Let $t\in\tAlt_n$ be with support contained in $X=\{k,k+1,\ldots,k+l\}$
for some $1\leq k<n$ and $1\leq l$ with $k+l\leq n$. 
Let $1\leq i\leq n-1$ be such that $\{i,i+1\}\cap X=\emptyset$. 
% Assume there is some generator $t_i\in\tSym_n$ of
% $\tSym_n$ such that $t$ and $t_i$ have disjoint supports. 
Then $t$ and $t_i$ commute. Indeed, since $\varepsilon(t)=1$, there
are integers $k\leq j_1,\ldots,j_{2r}\leq k+l-1$ such that
$t=t_{j_1}\cdots t_{j_{2r}}$. Furthermore, 
%the supports of $t_i$ and $t$ are
%disjoint, thus 
we have $|i-j_u|\geq 2$ for all $1\leq u\leq 2r$.
Hence, ${}^{t_i}t_u=zt_u$ and ${}^{t_i}t=z^{2r}t=t$,
as required.
\label{rk:commute}
\end{remark}

Assume $n\geq 2$. 
Let $\pi\in \mathcal D_n^+\cup \mathcal O_n$.  If $\pi\notin \mathcal
D_n^+\cap \mathcal O_n$, then $\pi$ labels two classes $D_{\pi}^+$
and $D_{\pi}^-$ of $\tAlt_n$. We assume that $t_{\pi}$ defined above lies in
$D_{\pi}^+$, so that representatives for $D_{\pi}^+$ and $D_{\pi}^-$ are
$\tau_{\pi}^+=t_{\pi}$ and $\tau_{\pi}^{-}=zt_{\pi}$.

Otherwise, if $\pi\in \mathcal D_n^+\cap \mathcal O_n$, then $\pi$
labels four $\Alt_n$-classes (that are $\Alt_n$-split and $\tSym_n$-split
simultaneously).
%labels two classes of $\Alt_n$ with representatives $s_{\pi^+}$ and
% $s_{\pi^-}$. % Since $s_{\pi^+}$ and $s_{\pi^-}$ have odd
% order, by Lemma~\ref{lem:ordre}, there are non $\tAlt_n$-conjugate
% elements $\tau_{\pi^+}$ and $\tau_{\pi^-}$ in $\tAlt_n$ of odd order
% such that $\theta(\tau_{\pi^{\delta}})=s_{\pi^{\delta}}$. Thus, the
%elements 
% Define $\tau_{\pi}^{++}=t_{\pi}$, and 
Write $\pi=(\pi_1,\ldots,\pi_k)$
%with $\pi_1>\cdots >\pi_k$
with respect to Convention~\ref{conv}, and
$o_{\pi_1}=z^{\frac{\pi_1^2-1}{8}}t_{\pi_1}$.  According
to~\cite[footnote (*), p.  179]{schur}, $o_{\pi_1}$ has odd order.
Furthermore,
%Note that $\pi$ respects
%Convention~\ref{conv} with $r=1$. % Let $1\leq i\leq n-1$ be
% such that $t_i\in\tSym_{\pi_k}$. 
we assume that the support of $s_{\pi_j}$ is as in
Equation~(\ref{eq:suppSn}). So,
$t_1\in\tSym_{\pi_1}$. Since $\pi_1\in\mathcal
D^+_{\pi_1}\cap \mathcal O_{\pi_1}$, the elements $o_{\pi_1}$ and
${}^{t_1}o_{\pi_1}$ are not $\tAlt_{\pi_1}$-conjugate. % On the
% other hand, % since the supports of $t_1$ and $t_{\pi_j}$ for $j > 1$
%are disjoint, 
% for $j > 1$ one has ${}^{t_1}t_{\pi_j}=t_{\pi_j}$ by Remark~\ref{rk:commute}. 
% Indeed, there
% are integers $1\leq j_1,\ldots,j_{2r}\leq n$ such that 
% $t_{\pi_j}=t_{j_1}\cdots t_{j_{2r}}$ (because $\varepsilon(t_{\pi_j})=1$). 
% Since the support
% of $t_i$ and $t_{\pi_j}$ is disjoint, it follows that $|j-j_u|\geq 2$
% for all $1\leq u\leq 2r$. Hence, ${}^{t_i}t_u=zt_u$ and
% ${}^{t_i}t_{\pi_j}=z^{2r}t_{\pi_j}=t_{\pi_j}$, as required.
Now, we get $\tau_{\pi}^{++}=t_{\pi_k}\cdots t_{\pi_2}o_{\pi_1}$ and 
\begin{equation}
\label{eq:choixrep}
\tau_{\pi}^{+-}={}^{t_1}\tau_{\pi}^{++},\quad
\tau_{\pi}^{-+}=z\tau_{\pi}^{++}\quad\textrm{and}\quad\tau_{\pi}^{--}=z\tau_{\pi}^{+-}.
\end{equation}
So, $\tau_{\pi}^{++}$, $\tau_{\pi}^{-+}$, $\tau_{\pi}^{+-}$ and
$\tau_{\pi}^{--}$ belong to $4$ distinct $\tAlt_n$-classes, labeled 
$D_{\pi}^{++}$, $D_{\pi}^{-+}$, $D_{\pi}^{+-}$ and $D_{\pi}^{--}$
respectively. % On the
% other hand, % since the supports of $t_1$ and $t_{\pi_j}$ for $j > 1$
%are disjoint, 
Note that ${}^{t_1}t_{\pi_j}=t_{\pi_j}$ for $j>1$ by
Remark~\ref{rk:commute}. In particular, one has
$\tau_{\pi}^{+-}=t_{\pi_k}\cdots t_{\pi_2}{}^{t_1}o_{\pi_1}$.

We can now describe the irreducible complex characters of $\tAlt_n$.
These are given by using Clifford's Theory between $\tSym_n$ and the
subgroup $\tAlt_n$ of index 2.  All the irreducible components of the
restrictions to $\tAlt_n$ of non-spin characters of $\tSym_n$ have $z$
in their kernel, whence are non-spin characters of $\tAlt_n$. They are
exactly the irreducible characters of $\tAlt_n$ lifted from those of
$\Alt_n$.

We now turn to spin characters (which are the irreducible components of the restrictions to $\tAlt_n$ of the spin characters of $\tSym_n$).
% Since $\tAlt_n= \ker(\e)$, we see that the relation of association we defined above is the same as that arising from Clifford's Theory.

First consider $\la \in \cal{D}_n^-$. Then $\la$ labels two associated
spin characters $\cla^+$ and $\cla^-$ of $\tSym_n$, which have the same
restriction to $\tAlt_n$. The restriction
\begin{equation}
\label{eq:caraltildenon}
\pla=\Res^{\tSym_n}_{\tAlt_n}(\cla^+)=\Res^{\tSym_n}_{\tAlt_n}(\cla^-)
\end{equation}
is an irreducible spin character of $\tAlt_n$, and its only non-zero
values are taken on elements of type belonging to $\cal{O}_n$.% (where it
%agrees with $\cla^+$ and $\cla^-$).
%\smallskip

Now consider $\la \in \cal{D}_n^+$. Then $\la$ labels a single spin
character $\cla$ of $\tSym_n$, and
$\Res^{\tSym_n}_{\tAlt_n}(\cla)=\pla^++\pla^-$, where $\pla^+$ and
$\pla^-$ are two conjugate irreducible spin characters of $\tAlt_n$.
Throughout, the characters $\pla^+$ and $\pla^-$ are also called 
\emph{associate characters}.
These only differ on elements of type $\la$. Following Schur \cite[p.
236]{schur}, we have, writing $\Delta_{\la}$ for the {\emph{difference
character of $\cla$}} (which is not well-defined, but just up to
a sign), that 
% $$\{ \pla^+, \, \pla^- \} = \left\{
% \frac{1}{2}\Res^{\tSym_n}_{\tAlt_n}(\cla + \Delta_{\la}) , \,
% \frac{1}{2}\Res^{\tSym_n}_{\tAlt_n}(\cla - \Delta_{\la}) \right\},$$
% where 
\begin{equation}
\label{eq:defdiff}
\Delta_{\la}(t)= \left\{ \begin{array}{cl} \pm
i^{\frac{n-\ell(\la)}{2}} \sqrt{z_{\la}} & \mbox{if $t$ has type $\la$,}
\\ 0 & \mbox{otherwise,} \end{array} \right.
\end{equation}
where $z_{\la}$ is defined after Equation~(\ref{eq:valdmoins}).

We will now make the notation precise.
%In order to find a correct labelling, unabling us to separate $\pla^+$ from $\pla^-$, we need to study more precisely the classes of elements of type $\la$. Note that, since $\la \in \cal{D}_n^+$, the conjugacy class of $\Sym_n$ of type $\la$ splits into two conjugacy classes of $\Alt_n$. 
We distinguish two cases. 
Suppose first that $\la \in \cal{D}_n^+ \setminus \cal{O}_n$. % Then
% $\tSym_n$ has a unique conjugacy class $C_{\la}$ of type $\la$, and
% $C_{\la}$ splits into two conjugacy classes $D_{\la}^+$ and $D_{\la}^-$
% of $\tAlt_n$. We choose representatives $\sa_{\la}^+ \in D_{\la}^+$ and
% $\sa_{\la}^- \in D_{\la}^-$, and 
Then %we {\bf{completely define}} 
$\pla^+$ and $\pla^-$ are completely defined by setting
% $$\left\{ \begin{array}{c} \Delta_{\la}=\pla^+-\pla^-  , \\
% \Delta_{\la}(\tau_{\la}^+)=  i^{\frac{n-\ell(\la)}{2}} \sqrt{z_{\la}} ,
% \\ \Delta_{\la}(\tau_{\la}^-)=  -i^{\frac{n-\ell(\la)}{2}}
% \sqrt{z_{\la}}.   \end{array} \right.,$$
$\Delta_{\la}=\pla^+-\pla^-$ and $\Delta_{\la}(\tau_{\la}^{+})=
i^{\frac{n-\ell(\la)}{2}} \sqrt{z_{\la}}$, where
$\tau_{\lambda}^{+}$ is the representative
of $D_{\lambda}^{+}$ as above. Note that, using
Equation~(\ref{eq:defdiff}) and 
$\tau_{\lambda}^-=z\tau_{\lambda}^+$, we deduce that 
$\Delta_{\la}(\tau_{\la}^{-})=
-i^{\frac{n-\ell(\la)}{2}} \sqrt{z_{\la}}$. Since, for
$\epsilon\in\{-1,1\}$,
\begin{equation}
\pla^{\epsilon}=\frac{1}{2}\left(\Res^{\tSym_n}_{\tAlt_n}(\cla) +
\epsilon \Delta_{\la}\right),
\label{eq:defcharaltildeass}
\end{equation}
and $\cla(t_{\la})=0$ (because $C_{\la}$ is a non-split class of
$\tSym_n$), %we have $\cla(t_{\la})=0$, so that
we obtain
\begin{equation}
\label{eq:valDnplus}
\pla^+(\tau_{\la}^{\pm})=\frac{1}{2}\Delta_{\la}(\tau_{\la}^{\pm}) \qquad
\mbox{and} \qquad
\pla^-(\tau_{\la}^{\pm})=\frac{1}{2}\Delta_{\la}(\tau_{\la}^{\mp})=-\frac{1}{2}\Delta_{\la}(\tau_{\la}^{\pm}).
\end{equation}
And, on any element $\sa$ of type $\pi \neq \la$ of $\tAlt_n$, we have
\begin{equation}
\label{eq:valailleur}
\pla^+(\sa)=\pla^-(\sa)=\frac{1}{2} \cla(\sa).
\end{equation}

Now suppose that $\la \in \cal{D}_n^+ \cap \cal{O}_n$. Again,
%Then $\tSym_n$
% has two conjugacy classes $C_{\la}^+$ and $C_{\la}^-$ of type $\la$
% (with representatives $t_{\la}^+$ and $t_{\la}^-=zt_{\la}^+$), and each
% splits into two conjugacy classes of $\tAlt_n$: $C_{\la}^+$ gives
% $D_{\la}^{++}$ and $D_{\la}^{+-}$, while $C_{\la}^-$ gives
% $D_{\la}^{-+}$ and $D_{\la}^{--}$. We choose representatives
%$\sa_{\la}^+=t_{\la}^+$ and $\sa_{\la}^-=t_{\la}^-$ for $D_{\la}^{++}$
% and $D_{\la}^{-+}$ respectively and, again, we {\bf{completely define}}
we completely define $\pla^+$ and $\pla^-$ by setting
$\Delta_{\la}=\pla^+-\pla^-$ and
$\Delta_{\la}(\tau_{\la}^{++})=i^{\frac{n-\ell(\la)}{2}}
\sqrt{z_{\la}}$.
%  \qquad \mbox{and} \qquad \Delta_{\la}(\sa_{\la}^-)= -  i^{\frac{n-\ell(\la)}{2}} \sqrt{z_{\la}} . $
Note that this does define $\Delta_{\la}$, and thus $\pla^+$ and %$\frac{1}{2}\Res^{\tSym_n}_{\tAlt_n}(\cla + \Delta_{\la}) $ and
$\pla^-$ %= \frac{1}{2}\Res^{\tSym_n}_{\tAlt_n}(\cla - \Delta_{\la})$)
by Equation~(\ref{eq:defcharaltildeass}),
since we have $\Delta_{\la}(\tau_{\lambda}^{-+})=- {\Dla(\tau_{\la}^{++})}$
because $\tau_{\lambda}^{-+}=z\tau_{\lambda}^{++}$, and
$\Delta_{\la}(\tau_{\lambda}^{+-})=-{\Dla(\tau_{\la}^{++})}$ by Clifford
theory, because the
elements $\tau_{\lambda}^{+-}$ and $\tau_{\lambda}^{++}$ are
$\widetilde{\sym}_n$-conjugate, and $\pla^+$ and
$\pla^-$ are $\widetilde{\sym}_n$-conjugate.
% so, for
% each $\epsilon \in \{ +, \, - \}$, $\Dla(D_{\la}^{-
% \epsilon})=-\Dla(D_{\la}^{+ \epsilon})$, while $\Dla(D_{\la}^{ \epsilon
% -})=-\Dla(D_{\la}^{ \epsilon +})$.
Finally, on any element $\sa$ of type $\pi \neq \la$ of $\tAlt_n$,
Equation~(\ref{eq:valailleur}) holds.
%we have
%$$\pla^+(\sa)=\pla^-(\sa)=\frac{1}{2} \cla(\sa).$$
% 
% 
% 
% \medskip
% Note that, as restrictions of spin characters of $\tSym_n$, the spin characters of $\tAlt_n$ vanish on all elements of $\tAlt_n$ of type $\pi \in \cal{P}_n \setminus ( \cal{O}_n \cup \cal{D}_n^-)$ (non-split classes of $\tSym_n$), except for elements of type $\pi \in \cal{D}_n^+ \setminus \cal{O}_n$ where the two characters $\ppi^+$ and $\ppi^-$ take non-zero values.
% 

\subsection{Combinatorics of bar partitions}

We just saw that the spin characters of $\tSym_n$ and $\tAlt_n$ are labeled by the set $\cal{D}_n$ of {\emph{bar partitions of $n$}}. We now present some of the combinatorial notions and properties we will need to study the characters and blocks of these groups. For all of these, and unless otherwise specified, we refer to \cite{olsson}. Note that, in this subsection and the next, where we only describe the standard combinatorics associated to bar partition and spin blocks, the parts of partitions and bar partitions are again ordered in decreasing order.

Let $\lambda=(\lambda_1,\ldots,\lambda_r)\in \mathcal{D}_n$ with
$\lambda_1>\cdots>\lambda_r>0$. For $1\leq i\leq r$, consider the set
$$J_{i,\lambda}=\left(\{1,\ldots,\lambda_i\}\cup
\{\lambda_i+\lambda_j\,|\,j>i\}\right)\backslash \{\lambda_i-\lambda_j\,|\,
j>i\}.$$
The multiset $\cal{B}(\la)= \bigcup_{i=1}^r J_{i,\la}$ is the multiset of {\emph{bar lengths}} of $\la$, which will play a role analogous to that played by hook lengths for partitions.

The {\emph{shifted tableau}} $S(\lambda)$ of $\lambda$ is obtained from
the usual Young diagram of $\lambda$ by shifting the $i$th row $i-1$
positions to the right, and writing in the nodes of the $i$th row the
elements of $J_{i,\lambda}$ in decreasing order. The $j$th node in the
$i$th row of $S(\lambda)$ will be called the $(i,j)$-node of
$S(\lambda)$.  Write $a_{i,j}$ for the integer lying in the $(i,j)$-node
of $S(\lambda)$. As in the case of hooks, we can associate to this node
a {\emph{bar}} $b_{i,j}$ of $\la$ whose length is $a_{i,j}$. The
construction goes as follows. If $i+j\geq r+2$, then $b_{i,j}$ is the
usual $(i,j)$-hook of $S(\lambda)$. If
$i+j=r+1$, then $b_{i,j}$ is the $i$th row of $S(\lambda)$. Finally, if
$i+j\leq r$, then $b_{i,j}$ is the union of the $i$th row together with the
$j$th row of $S(\lambda)$. In all cases, one checks that $a_{i,j}$ is exactly the number of nodes in $b_{i,j}$, and is therefore called the {\emph{bar length}} of $b_{i,j}$. We can also define the {\emph{leg length}} $L(b_{i,j})$ of the bar $b_{i,j}$ by setting
$$L(b_{i,j})= \left\{ \begin{array}{ll} | \{ k \, | \, \la_i > \la_k > \la_i - a_{i,j} \} | & \mbox{if $i+j \geq r+1$}, \\ \la_{i+j} +  | \{ k \, | \, \la_i > \la_k > \la_{i +j} \} | & \mbox{if $i+j \leq r$}. \end{array} \right.$$

As for hooks, it is always possible to {\emph{remove}} any bar $b$ from $S(\la)$. If $b$ has bar length $a$, then this operation produces a new bar partition, written $\la \setminus b$, of size $n-a$. 

\smallskip
Let $q$ be an odd integer. We call $q$-bar (respectively $(q)$-bar) any bar $b$ of $\lambda$ whose
length is $q$ (respectively divisible by $q$). Note that, for any positive integer $k$, the removal of a $kq$-bar can be achieved by succesively removing $k$ bars of length $q$ (this fails when $q$ is even). By removing all the $(q)$-bars in $\la$, one obtains the {\emph{$\q$-core}} $\la_{(\q)}$ of $\la$. One can show that $\la_{(\q)}$ is independent on the order in which one removes $q$-bars from $\la$. In particular, the total number $w_{\q}(\la)$ of $q$-bars to remove from $\la$ to get to $\la_{(\q)}$ is uniquely defined by $\la$ and $q$, and called the {\emph{$\q$-weight of $\la$}}. Note that $w_{\q}(\la)$ is also equal to the number of $(q)$-bars in $\la$.

It is also possible to define the {\emph{$\q$-quotient}} $\la^{(\q)}$ of $\la$, which contains the information about all the $(q)$-bars in $\la$ (see~\cite[p.
28]{olsson}).
%The process consisting in removing the
%$q$-bars of $\lambda$ can be encoded in an abacus, from which we derive
%the $\overline{q}$-quotient
We have $\lambda^{(\overline{q})}=(\lambda^0, \, \lambda^1, \, \ldots, \, \lambda^e)$, where $e=(q-1)/2$, $\lambda^0$ is a bar partition, and
the $\lambda^i$'s are partitions for $1\leq i\leq e$. For any integer $k$, we define a $k$-bar $b'$ of
$\lambda^{(\overline{q})}=(\lambda^{0}, \, \lambda^{1}, \, \ldots, \, \lambda^{e})$ to be either a $k$-bar of $\lambda^0$, or a
$k$-hook of $\lambda^i$ for some $1\leq i\leq e$. The removal of $b'$ from $\la^{(\q)}$ is then defined accordingly, and the resulting $\q$-quotient is denoted by $\la^{(\q)} \setminus b'$. The {\emph{leg length}} $L(b')$ is also defined in a natural manner. We then have the
following fundamental result (see~\cite[Proposition 4.2, Theorem 4.3]{olsson})

\begin{theorem}Let $q$ be an odd integer.
Then a bar partition $\lambda$ determines and is uniquely determined by its
$\overline{q}$-core $\lambda_{(\overline{q})}$ 
and its $\overline{q}$-quotient $\lambda^{(\overline{q})}$. Moreover,
there is a canonical bijection $g$ between the set of $(q)$-bars of
$\lambda$ and the set of bars of $\lambda^{(\overline{q})}$, such that, for each integer $k$, the image of a $kq$-bar of $\la$ is a $k$-bar of $\la^{(\q)}$. Furthermore, for the removal of corresponding bars, we have
$$(\lambda\backslash
b)^{(\overline{q})}=\lambda^{(\overline{q})}\backslash g(b).$$
\label{olsson}
\end{theorem}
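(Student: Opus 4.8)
The plan is to prove everything via the \emph{bar abacus} encoding of bar partitions, following Olsson \cite{olsson}. Given $\la = (\la_1 > \cdots > \la_r) \in \mathcal{D}_n$, one attaches to it the finite set $X(\la) = \{\la_1, \ldots, \la_r\}$ of beta-numbers (adjoining $0$ when necessary so that $|X(\la)|$ has a prescribed parity) and displays the elements of $X(\la)$ as beads on a $q$-runner abacus, with runners indexed by the residues $0, 1, \ldots, q-1$ modulo $q$. Under this display the shifted tableau $S(\la)$ and the three types of bars --- ordinary $(i,j)$-hooks when $i+j \geq r+2$, rows when $i+j = r+1$, and unions of two rows when $i+j \leq r$ --- all receive a uniform interpretation: each bar of $\la$ corresponds to a pair consisting of a bead on some runner together with an empty position, lying either on the same runner or on the ``mirror'' runner $q-\nu$ of a runner $\nu$. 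Since $q$ is odd, runner $0$ is its own mirror while the remaining runners pair up as $\{\nu, q-\nu\}$ for $1 \leq \nu \leq e := (q-1)/2$; reading the bead configuration on runner $0$ yields a bar partition $\la^0$, and reading it on each mirror pair $\{\nu, q-\nu\}$ yields an ordinary partition $\la^\nu$. This reproduces the decomposition $\la^{(\q)} = (\la^0, \la^1, \ldots, \la^e)$.

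The first key step is to establish the analogue for $q$-bars of James's abacus description of hook removal: removing a $q$-bar $b$ from $\la$ amounts to sliding a single bead up one position on its runner (in the ordinary-hook or row case), respectively moving a bead appropriately between a mirror pair of runners (in the two-row case); in all cases the $(q)$-bars of $\la$ biject, runner by runner, with the available bead/gap data. Iterating this --- using the fact, valid precisely because $q$ is odd, that a $kq$-bar is removed by successively removing $k$ bars of length $q$ --- one checks that pushing every bead as far up its runner as possible is reached regardless of the order of removals; the resulting bar partition is $\la_{(\q)}$, and the number of moves is $w_{\q}(\la)$, which is thereby well-defined and equals the number of $(q)$-bars of $\la$. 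The same analysis gives the determination statement: the core fixes the bottom bead of each runner-stack while the quotient records, for each runner, the displacements of the beads, so the abacus of $\la$ --- hence $\la$ --- is recovered from the pair $(\la_{(\q)}, \la^{(\q)})$; the converse is immediate.

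For the canonical bijection $g$: a $(q)$-bar of $\la$ of bar length $kq$ corresponds on the abacus to a bead that can be moved $k$ units up its runner (or across a mirror pair), and through the standard abacus-to-partition correspondence on that runner (respectively that mirror pair) this is exactly the datum of a $k$-bar of $\la^0$ (respectively a $k$-hook of some $\la^\nu$), that is, a $k$-bar of $\la^{(\q)}$; this assignment is visibly a bijection sending $kq$-bars to $k$-bars. The leg-length compatibility is then obtained by comparing the explicit formula for $L(b_{i,j})$ recalled before the statement with the leg length read off from the number of beads jumped over in the corresponding abacus move, treating separately the cases $i+j \geq r+1$ and $i+j \leq r$. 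Finally, since removing $b$ performs precisely the bead move that encodes $g(b)$ on the relevant runner, one reads off directly that
$$(\la\backslash b)^{(\q)}=\la^{(\q)}\backslash g(b).$$

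The main obstacle is the bookkeeping in the case analysis for the three kinds of bars, and in particular the correct handling of the mirror pairs $\{\nu, q-\nu\}$: this is where the hypothesis that $q$ is odd is genuinely used (without it one cannot reduce $kq$-bar removal to $q$-bar removals, and the runner symmetry breaks), and it is where the leg-length identity must be verified with care. Once the bar/bead dictionary is set up correctly, the remaining verifications are a routine transcription of the classical hook-and-abacus formalism into the bar setting.
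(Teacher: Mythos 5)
The paper gives no proof of this statement at all --- it is quoted verbatim from Olsson's monograph (Proposition 4.2 and Theorem 4.3 there) --- and your runner/abacus argument is essentially the standard proof of that cited result, so in substance you are following the same route as the source the paper relies on, and the outline is correct. The one point to state more carefully is the pair-type bars (parts $\la_i,\la_j$ with $\la_i+\la_j\equiv 0\ (\mbox{mod}\ q)$): on the abacus these delete \emph{two} beads on mirror runners (or on runner $0$), not a bead sliding past a gap, and they become $k$-bars of $\la^0$ or $k$-hooks of $\la^{\nu}$ only after the co-beta-set reflection of runner $q-\nu$ that you implicitly invoke; likewise the $\q$-core is reached by both bead-raising and cross-runner cancellation, not by raising beads alone.
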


Note that the above theorem provides a (canonical) bijection between the set of parts of $\la$ with length divisible by $q$ and the set of parts of $\la^0$ (see \cite[Corollary (4.6)]{olsson}).

Theorem \ref{olsson} also implies that the $\q$-weight $w_{\q}(\la)$ of $\la$ satisfies $w_{\overline{q}}(\lambda)=\sum_{i=0}^e |\lambda^i|$ (we say that $\la^{(\q)}$ is a $\q$-quotient of size $| \la^{(\q)} |= w_{\q}(\la)$), and that $|\lambda|=|\lambda_{(\overline q)}|+qw_{\overline{q}}(\lambda)$ (see~\cite[Corollary 4.4]{olsson}). In addition, if we write, in analogy with bar partitions, $\sigma(\lambda^{(\overline{q})})=(-1)^{| \la^{(\q)} | -  \ell(\la^0)}=(-1)^{w_{\overline{q}}(\lambda)-\ell(\lambda^0)}$, then we obtain that
\begin{equation}
\sigma(\lambda)=\sigma(\lambda_{(\overline{q})})
\sigma(\lambda^{(\overline{q})}).
\label{eq:sign}
\end{equation}

\medskip
When we introduce analogues of the Murnaghan-Nakayama rule for spin characters later on, we will also need to use the {\emph{relative sign}} for bar partitions introduced by Morris and Olsson in \cite{morrisolsson}. Given an odd integer $q$, one can associate in a canonical way to each bar partition $\la$ a sign $\da_{\q}(\la)$. If $\mu$ is a bar partition obtained from $\la$ by removing a sequence of $q$-bars, then we define the {\emph{relative sign}} $\da_{\q}(\la, \, \mu)$ by
\begin{equation}
\label{eq:signTilde}
\da_{\q}(\la, \, \mu)= \da_{\q}(\la) \da_{\q}( \mu).
\end{equation}
It is then possible to prove the following results (see \cite[Proposition (2.5), Corollary (2.6), Corollary (3.8)]{morrisolsson}):

\begin{theorem}\label{relativesign}
Let $\la$ and $\mu$ be bar partitions, and $q$ be an odd integer.

\begin{enumerate}[(i)]
\item
If $\mu$ is obtained from $\la$ by removing a sequence of $q$-bars with leg lengths $L_1, \, \ldots , \, L_s$, then
$$\da_{\q}(\la, \, \mu)=(-1)^{\sum_{i=1}^s L_i}.$$
In particular, the parity of $\sum_{i=1}^s L_i$ does not depend on the choice of $q$-bars being removed in going from $\la$ to $\mu$.

\item
If $\ga$ is a $\q$-core, then $\da_{\q}(\ga)=1$, so that
$$\da_{\q}(\la)=\da_{\q}(\la, \, \la_{(\q)}).$$

\item
If $b$ is a $(q)$-bar in $\la$ and $\mu = \la \setminus b$, then
$$(-1)^{L(b)}=(-1)^{L(g(b))} \da_{\q}(\la, \, \mu),$$
where $g$ is the bijection introduced in Theorem \ref{olsson}.
\end{enumerate}
\end{theorem}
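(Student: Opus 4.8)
The three assertions are \cite[Proposition (2.5), Corollary (2.6) and Corollary (3.8)]{morrisolsson}, and the plan is to recover them from the $\q$-abacus description of bar partitions underlying Theorem~\ref{olsson}. First I would recall the set-up: a bar partition $\la$ is encoded by a beta-set, and sorting its beta-numbers according to their residues modulo $q$ onto the $e+1$ runners (runner $0$ carrying the bar-partition component $\la^0$, the runners $1,\dots,e$ the ordinary partitions $\la^1,\dots,\la^e$) produces the pair $(\la_{(\q)},\la^{(\q)})$; the sign $\da_\q(\la)\in\{\pm1\}$ is, by definition, the sign of the permutation of beta-numbers effecting this sorting (equivalently, the parity of a suitable count of inversions), and one uses the defining relation $\da_\q(\la,\mu)=\da_\q(\la)\da_\q(\mu)$ of Equation~(\ref{eq:signTilde}).

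For (i), I would first treat the removal of a single $q$-bar $b$ of leg length $L$: on the abacus this is the move of one bead one step up its runner, and the number of beads it jumps over equals $L$, so the sorting permutation changes by $L$ transpositions and $\da_\q$ is multiplied by $(-1)^L$. Composing over a sequence $b_1,\dots,b_s$ carrying $\la$ to $\mu$ then gives $\da_\q(\la,\mu)=\da_\q(\la)\da_\q(\mu)=(-1)^{L_1+\cdots+L_s}$. The ``in particular'' clause is then automatic, since the left-hand side depends only on $\la$ and $\mu$, so $L_1+\cdots+L_s$ has a well-defined parity independent of the chosen sequence of $q$-bars.

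For (ii), I would observe that the beta-numbers of a $\q$-core $\ga$ are already, after a harmless normalization, in the canonical sorted position on each runner, so the sorting permutation is trivial (or even) and $\da_\q(\ga)=1$. Applying this to $\ga=\la_{(\q)}$ and using that in $\{\pm1\}$ every element equals its own inverse, we get $\da_\q(\la)=\da_\q(\la)\da_\q(\la_{(\q)})=\da_\q(\la,\la_{(\q)})$.

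Finally, for (iii), I would write a $(q)$-bar $b$ of $\la$ of length $kq$ as a succession of $k$ bars of length $q$, with leg lengths $\ell_1,\dots,\ell_k$, so that (i) gives $\da_\q(\la,\la\setminus b)=(-1)^{\ell_1+\cdots+\ell_k}$; then, comparing the abacus display of $\la$ with that of $\la^{(\q)}$ through the bijection $g$ of Theorem~\ref{olsson} (which sends $b$ to a $k$-bar $g(b)$), I would establish the congruence $L(b)\equiv L(g(b))+\ell_1+\cdots+\ell_k\pmod 2$ and combine it with the previous identity to obtain $(-1)^{L(b)}=(-1)^{L(g(b))}\da_\q(\la,\la\setminus b)$. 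The main obstacle throughout is the bookkeeping of leg-length contributions on the bar-partition runner $0$, where the positions $i$ and $q-i$ interact in a way with no analogue for ordinary partitions; once that local analysis is in place, the bead-counting arguments above close all three parts.
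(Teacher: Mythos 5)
The paper itself offers no proof of this statement: it is quoted, with the same references, from \cite[Proposition (2.5), Corollary (2.6), Corollary (3.8)]{morrisolsson}, so your opening citation matches what the paper does. The derivation you then sketch, however, does not go through. Its central step --- ``removing a $q$-bar is the move of one bead one step up its runner, and the number of beads it jumps over equals $L$'' --- is only true for one of the three kinds of $q$-bars, namely replacing a part $\la_i$ by $\la_i-q>0$. A $q$-bar can also be a single part equal to $q$, or a pair of parts $\la_i,\la_j$ with $\la_i+\la_j=q$; in the latter case the abacus move deletes two beads sitting on the \emph{conjugate} runners $c$ and $q-c$, and the leg length is $\la_j+|\{k : \la_i>\la_k>\la_j\}|$, which contains the size $\la_j$ of the smaller removed part and is therefore not a bead-crossing count at all. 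This is not ``bookkeeping on runner $0$'': it is precisely where the content of (i) and (iii) lies, and it is fatal to your proposed invariant. Concretely, take $q=5$, $\la=(3,2)$ and $\la'=(4,1)$. Both have $\q$-core $\emptyset$ and are a single $5$-bar away from it, with leg lengths $2$ and $1$ respectively, so (i) together with (ii) forces $\da_{\q}(\la)=+1\neq -1=\da_{\q}(\la')$. But the two abacus displays are order-isomorphic (one bead on each runner of a single conjugate pair, the smaller part on the smaller runner), so any sign defined as the sign of the sorting permutation of the beta-numbers takes the same value on both. Hence the quantity you propose to use as $\da_{\q}$ cannot satisfy (i); the Morris--Olsson sign is built from more than the relative arrangement of beads, exactly to absorb the pair-type and part-equal-to-$q$ bars.

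Two further points. Your proof of (iii) is essentially a restatement: given (i), the assertion of (iii) is equivalent to the congruence $L(b)\equiv L(g(b))+\ell_1+\cdots+\ell_k \pmod 2$ that you say you ``would establish'', and no argument is offered for it --- that comparison across the bijection $g$ of Theorem~\ref{olsson} is the substance of \cite[Corollary (3.8)]{morrisolsson}. Finally, since $\da_{\q}$ enters the rest of the paper only through the properties (i)--(iii) (e.g.\ in Propositions~\ref{MNcoef} and~\ref{MNcoefcroise}), redefining it would be acceptable \emph{only if} the new sign provably satisfies all three properties; as shown above, yours does not, so the correct course here is either to keep the citation to \cite{morrisolsson} (as the paper does) or to reproduce their case analysis over the three types of bars.
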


\subsection{Spin blocks of $\tSym_n$ and $\tAlt_n$; Bijections}

We now describe the blocks of irreducible characters of $\tSym_n$ and
$\tAlt_n$, as well as bijections between them. Throughout this section,
we assume that $q$ is an odd prime (even though all the combinatorial
arguments hold for any odd $q$).

If $B$ is any $q$-block of $\tSym_n$, then $B$ contains either no or
only spin characters. In the former case, $B$ coincides with a $q$-block
of $\Sym_n$; in the latter, we say that $B$ is a {\emph{spin block}}.
The distribution of spin characters into spin blocks was first
conjectured by Morris. It was first proved by J. F. Humphreys in
\cite{Humphreys-Blocks}, then differently by M. Cabanes, who also
determined the structure of the defect groups of spin blocks (see
\cite{Cabanes}).

Similarly, any $q$-block $B^*$ of $\tAlt_n$ contains either no spin
character, and coincides with a $q$-block of $\Alt_n$, or only spin
characters, and is then called a spin block.

The spin blocks of $\tSym_n$ and $\tAlt_n$ are described by the
following:

\begin{theorem}\label{blockstSn}
Let $\chi$ and $\psi$ be two spin characters of $\tSym_n$, or two spin
characters of $\tAlt_n$, labeled by bar partitions $\la$ and $\mu$
respectively, and let $q$ be an odd prime. Then $\chi$ is of $q$-defect
0 (and thus alone in its $q$-block) if and only if $\la$ is a $\q$-core.
If $\la$ is not a $\q$-core, then $\chi$ and $\psi$ belong to the same
$q$-block if and only if $\la_{(\q)}=\mu_{(\q)}$.
\end{theorem}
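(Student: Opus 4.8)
The plan is to prove the statement first for $\tSym_n$ and then transfer it to $\tAlt_n$ by Clifford theory, exploiting that $[\tSym_n:\tAlt_n]=2$ is prime to the odd prime $q$: the $q$-blocks of $\tSym_n$ and of $\tAlt_n$ are then related by covering, this covering sends spin blocks to spin blocks, and by the description of the restrictions of spin characters in \S\ref{section:classAnTilde} it is compatible with the bar-core labellings. The defect-zero assertion for $\tAlt_n$ will fall out of the same degree computation as for $\tSym_n$, since $|\tAlt_n|_q=|\tSym_n|_q$ and $\zeta(1)_q=\cla(1)_q$ for any spin character $\zeta$ of $\tAlt_n$ lying over $\cla$; and for the block distribution one transfers the $\tSym_n$-statement across the covering, checking that a spin block of $\tSym_n$ cannot cover $\tAlt_n$-blocks of different bar-cores and that the bar-core is preserved. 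Thus the core of the work is the $\tSym_n$-case.

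For $\tSym_n$, the defect-zero characterization is a degree computation: $\cla$ has $q$-defect $0$ if and only if $|\tSym_n|_q=\cla(1)_q$, and by Schur's degree formula in its bar-length form $\cla(1)$ equals a power of $2$ times $n!/\prod_{b\in\cal{B}(\la)}|b|$. As $q$ is odd the $2$-power is a $q$-unit, so $\cla(1)_q=n!_q/(\prod_{b}|b|)_q=|\tSym_n|_q/(\prod_b|b|)_q$, which equals $|\tSym_n|_q$ exactly when no bar length of $\la$ is divisible by $q$, i.e. (Theorem~\ref{olsson}, together with the remark that $w_{\q}(\la)$ equals the number of $(q)$-bars of $\la$) exactly when $\la$ is a $\q$-core. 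In particular, a $q$-block of positive $\q$-weight contains no $\q$-core-labelled spin character.

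Next, that distinct $\q$-cores force distinct $q$-blocks: I would compare central characters. For a fixed $\q$-core $\gamma$ and a fixed $\q$-weight, choose a $q$-singular class of $\tSym_n$ of type $\pi$ obtained by adjoining $w_{\q}(\la)$ parts equal to $q$ to an odd-type class of size $|\gamma|$, and evaluate $\cla(t_\pi)$ by means of the Morris--Olsson spin Murnaghan--Nakayama rule (the combinatorics underlying Theorem~\ref{relativesign}), removing $q$-bars one at a time. Reading the outcome modulo the maximal ideal of a suitable discrete valuation ring, and using the fact from the previous paragraph that $\cla(1)$ differs $q$-adically from $|\tSym_n|_q$ exactly by the contribution of the $(q)$-bars, one finds that the value of the central character on the corresponding class sum depends only on $\la_{(\q)}$ and separates different $\q$-cores; hence $\cla$ and $\cmu$ then lie in different $q$-blocks.

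The remaining implication — $\la_{(\q)}=\mu_{(\q)}$ forces $\cla$ and $\cmu$ into the same $q$-block — is the deep part, and I expect it to be the main obstacle. A natural strategy is induction on the common $\q$-weight $w$: for $w=1$ the defect group is cyclic of order $q$, the block is a Brauer-tree block, and one checks directly that all weight-$1$ spin characters with bar-core $\gamma$ occur on the same tree; for $w\geq 2$ one connects any two bar partitions of weight $w$ and bar-core $\gamma$ by a chain of elementary moves on their $\q$-quotients and propagates block equality along the chain using a Brauer-correspondence/covering argument together with the $w=1$ case. Carrying this out rigorously is precisely the content of the Morris conjecture, which was established by Humphreys in~\cite{Humphreys-Blocks} via an explicit construction of the block idempotents and reproved, differently, by Cabanes in~\cite{Cabanes}, who in addition determined the defect groups of the spin blocks; rather than reproduce their arguments, at this step I would simply invoke \cite{Humphreys-Blocks,Cabanes}.
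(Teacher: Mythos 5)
First, a point of comparison: the paper does not prove Theorem~\ref{blockstSn} at all — it is quoted as the Morris conjecture, proved by Humphreys \cite{Humphreys-Blocks} and reproved by Cabanes \cite{Cabanes} (who also determined the defect groups) — so your decision to delegate the crux (``same $\q$-core implies same block'') to those references is exactly in line with the paper's treatment, and your degree computation for the defect-zero characterization (Schur's bar-length degree formula, the $2$-power being a $q$-unit, and ``no $(q)$-bar'' $\Leftrightarrow$ ``$\q$-core'') is sound, for $\tAlt_n$ as well as $\tSym_n$ since $q$ is odd. Your central-character argument for ``different $\q$-cores give different blocks'' is only asserted, not carried out, and is itself a nontrivial piece of the Morris conjecture; but since the cited references prove the full block distribution for $\tSym_n$ in both directions, that vagueness is redundant rather than fatal.

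The genuine gap is in your Clifford-theoretic transfer to $\tAlt_n$. To get ``$\la_{(\q)}=\mu_{(\q)}$ implies same block of $\tAlt_n$'' you need that the spin block $B$ of $\tSym_n$ with $\q$-core $\ga$ and positive weight covers a \emph{unique} block of $\tAlt_n$ — equivalently, that the associate characters $\zeta_{\la}^{+}$ and $\zeta_{\la}^{-}$ (for $\sa(\la)=1$) lie in a common block of $\tAlt_n$. Your proposed check, that $B$ ``cannot cover $\tAlt_n$-blocks of different bar-cores,'' is automatically true and does not address this: the blocks of $\tAlt_n$ covered by $B$ form a single orbit under the action of $\tSym_n/\tAlt_n$, so if there were two they would be interchanged by the outer involution, which permutes $\zeta_{\la}^{+}\leftrightarrow\zeta_{\la}^{-}$ and hence preserves the labelling and the bar-core; you would only conclude that the spin characters with core $\ga$ fall into at most two conjugate blocks, not one. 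The missing step needs an actual argument, for instance: every positive-weight spin block of $\tSym_n$ (with $q\geq 3$) contains a character $\xi_{\mu}^{\pm}$ with $\sa(\mu)=-1$ (choose the $\q$-quotient with $\mu^0=\emptyset$ or $\mu^0=(w)$ to realize either sign), whose restriction to $\tAlt_n$ is irreducible, so \cite[Theorem 9.2]{Navarro} gives uniqueness of the covered block — this is precisely the device the paper uses in the analogous situation for $\Alt_n$ in \S\ref{subsec:pblockAn} — or else extend your citation of \cite{Humphreys-Blocks,Cabanes} to the $\tAlt_n$ statement as the paper implicitly does. With that repair the transfer goes through; as written, the $\tAlt_n$ half of the theorem is not established.
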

One can therefore define the {\emph{$\q$-core}} of a spin block $B$ and
its {\emph{$\q$-weight}} $w_{\q}(B)$, as well as its {\emph{sign}}
$\sa(B)=\sa( \la_{(\q)})$ (for any bar partition $\la$ labeling some
character $\chi \in B$).

\smallskip
One sees that the spin $q$-blocks of positive weight (or defect) of
$\tSym_n$ can be paired with those of $\tAlt_n$. The spin characters in
any such $q$-block of $\tAlt_n$ are exactly the irreducible components
of the spin characters of a $q$-block of $\tSym_n$.

\smallskip
We can now define bijections between different blocks of possibly
different groups. Let $w>0$ be any integer, and let $Q_w$ be the set of
$\q$-quotients of size $w$. For any $\overline{q}$-core $\ga$, we let
$E_{\gamma,w}$ be the set of bar partitions $\lambda$ of length
$|\gamma|+qw$ with $w_{\overline{q}}(\lambda)=w$ and
$\lambda_{(\overline{q})}=\gamma$, and we denote by $B_{\ga, w}$ and
$B_{\ga,w}^*$ the spin $\q$-blocks of $\tSym_{|\gamma|+qw}$ and
$\tAlt_{|\gamma|+qw}$ respectively labeled by $\ga$. Note that the
characters in $B_{\ga,w}$ and those in $B_{\ga,w}^*$ are labeled by the
partitions in $E_{\ga,w}$. Note also that $$\Psi_{\ga} \colon \left\{
\begin{array}{ccc} E_{\ga,w} & \longrightarrow & Q_w \\ \la &
\longmapsto & \la^{(\q)} \end{array} \right.$$ is a bijection. It
provides us with the following:

\begin{lemma}\label{bijections}
Let $q$ be an odd prime, $w>0$ be any integer, and $\ga$ and $\ga'$ be
any $\q$-cores. Define the bijection $$\Psi=\Psi_{\ga'}^{-1} \circ
\Psi_{\ga} \colon E_{\ga,w} \longrightarrow E_{\ga',w}.$$
\begin{enumerate}[(i)] 
\item If $\sa(\ga)=\sa(\ga')$, then $\Psi$ induces
bijections $\widetilde{\Psi}$ between $B_{\ga,w}$ and $B_{\ga',w}$, and
$\widetilde{\Psi}^*$ between $B_{\ga,w}^*$ and $B_{\ga',w}^*$.
\item If $\sa(\ga)=-\sa(\ga')$, then $\Psi$ induces bijections
$\widetilde{\Psi}$ between $B_{\ga,w}$ and $B_{\ga',w}^*$, and
$\widetilde{\Psi}^*$ between $B_{\ga,w}^*$ and $B_{\ga',w}$.
\end{enumerate}
\end{lemma}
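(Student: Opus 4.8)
The plan is to read off the structure of each block from its labelling and then track how the bijection $\Psi$ changes the sign of a bar partition. First I would record two preliminary facts. (a) The map $\Psi=\Psi_{\ga'}^{-1}\circ\Psi_{\ga}$ is a genuine bijection $E_{\ga,w}\to E_{\ga',w}$, since $\Psi_{\ga}$ and $\Psi_{\ga'}$ are bijections onto $Q_w$; moreover it preserves $\q$-quotients, because $\Psi(\la)^{(\q)}=\Psi_{\ga'}(\Psi(\la))=\Psi_{\ga}(\la)=\la^{(\q)}$, while of course $\Psi(\la)_{(\q)}=\ga'$. (b) By Theorem~\ref{blockstSn} and the remark preceding the lemma, the spin characters of $B_{\ga,w}$ (resp. $B_{\ga,w}^*$) are exactly those labelled by the partitions of $E_{\ga,w}$. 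Here the crucial asymmetry is: an \emph{even} bar partition $\mu$ (i.e. $\mu\in\mathcal D^+$) labels a single spin character $\xi_{\mu}$ of $\tSym$ but a pair $\zeta_{\mu}^{\pm}$ of $\tAlt$, whereas an \emph{odd} bar partition $\mu$ ($\mu\in\mathcal D^-$) labels a pair $\xi_{\mu}^{\pm}$ of $\tSym$ but a single character $\zeta_{\mu}$ of $\tAlt$.

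Next I would carry out the sign computation. Fix $\la\in E_{\ga,w}$, so $\la_{(\q)}=\ga$. Equation~\eqref{eq:sign} gives $\sigma(\la)=\sigma(\ga)\,\sigma(\la^{(\q)})$, and likewise, using (a), $\sigma(\Psi(\la))=\sigma(\ga')\,\sigma(\Psi(\la)^{(\q)})=\sigma(\ga')\,\sigma(\la^{(\q)})$. Multiplying these and using $\sigma(\la^{(\q)})^2=1$ yields
$$\sigma(\Psi(\la))=\sigma(\ga)\,\sigma(\ga')\,\sigma(\la).$$
Hence in case (i), where $\sigma(\ga)=\sigma(\ga')$, each $\la$ and its image $\Psi(\la)$ have the same sign, so $\Psi$ maps even bar partitions of $E_{\ga,w}$ to even ones of $E_{\ga',w}$ and odd to odd; in case (ii), where $\sigma(\ga)=-\sigma(\ga')$, they always have opposite signs, so $\Psi$ interchanges even and odd bar partitions.

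Finally I would assemble the bijections. In case (i): the recipe $\widetilde\Psi(\xi_{\la})=\xi_{\Psi(\la)}$ for $\la$ even and $\widetilde\Psi(\xi_{\la}^{\epsilon})=\xi_{\Psi(\la)}^{\epsilon}$ ($\epsilon\in\{+,-\}$) for $\la$ odd defines a bijection $\widetilde\Psi\colon B_{\ga,w}\to B_{\ga',w}$ compatible with $\Psi$ on labels, because by the previous paragraph the ``single vs. pair'' dichotomy is preserved; exchanging the roles of ``single'' and ``pair'' (that is, $\zeta_{\la}\mapsto\zeta_{\Psi(\la)}$ for $\la$ odd and $\zeta_{\la}^{\epsilon}\mapsto\zeta_{\Psi(\la)}^{\epsilon}$ for $\la$ even) gives $\widetilde\Psi^*\colon B_{\ga,w}^*\to B_{\ga',w}^*$. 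In case (ii): an even $\la$ labels the single $\xi_{\la}\in B_{\ga,w}$ while the odd $\Psi(\la)$ labels the single $\zeta_{\Psi(\la)}\in B_{\ga',w}^*$; an odd $\la$ labels the pair $\xi_{\la}^{\pm}\in B_{\ga,w}$ while the even $\Psi(\la)$ labels the pair $\zeta_{\Psi(\la)}^{\pm}\in B_{\ga',w}^*$. Matching these (and choosing $\epsilon\mapsto\epsilon$ on the pairs) gives $\widetilde\Psi\colon B_{\ga,w}\to B_{\ga',w}^*$, and the symmetric construction gives $\widetilde\Psi^*\colon B_{\ga,w}^*\to B_{\ga',w}$.

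I do not expect any real obstacle: the argument is bookkeeping built on \eqref{eq:sign}, Theorem~\ref{blockstSn}, and the $\tSym$/$\tAlt$ labelling asymmetry. The two points needing care are the precise direction of the sign flip $\sigma(\Psi(\la))=\sigma(\ga)\sigma(\ga')\sigma(\la)$, and the fact that the induced maps on associate pairs are only determined up to swapping $+$ and $-$; any fixed choice suffices for the present statement, though a \emph{coherent} choice will be needed later when the actual perfect isometries are constructed.
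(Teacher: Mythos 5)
Your proposal is correct and follows essentially the same route as the paper: both derive $\sigma(\Psi(\la))=\sigma(\ga)\sigma(\ga')\sigma(\la)$ from Equation~(\ref{eq:sign}) applied to $\la$ and $\Psi(\la)$ (using that $\Psi$ preserves the $\overline{q}$-quotient), and then match the single-character versus associate-pair labelling dichotomy between $\tSym$ and $\tAlt$, giving the same explicit descriptions of $\widetilde{\Psi}$ and $\widetilde{\Psi}^*$. Your closing remark about the choice on associate pairs being fixed but not canonical is also consistent with how the paper proceeds.
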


\begin{proof}
This follows easily from the definition of $\Psi$ and formula
(\ref{eq:sign}), which gives that, for any $\la \in E_{\ga,w}$ and $\mu
\in E_{\ga',w}$, we have $\sa(\la)=\sa(\ga) \sa(\Psi_{\ga}(\la))$ and
$\sa(\mu)=\sa(\ga') \sa(\Psi_{\ga'}(\mu))$. We therefore have (taking
$\mu$ to be $\Psi(\la)$) $$\sa(\Psi(\la))=\sa(\ga')
\sa(\Psi_{\ga'}(\Psi(\la))=\sa(\ga')
\sa(\Psi_{\ga'}(\Psi_{\ga'}^{-1}(\Psi_{\ga}(\la))))=\sa(\ga')
\sa(\Psi_{\ga}(\la)),$$ so that
$\sa(\Psi_{\ga}(\la))=\sa(\ga')\sa(\Psi(\la))$ and, finally,
$$\sa(\la)=\sa(\ga) \sa(\ga') \sa(\Psi(\la)) \qquad \mbox{for any $\la
\in E_{\ga,w}$}.$$ This means that, if $\sa(\ga)=\sa(\ga')$, then any $\la \in E_{\ga,w}$
labels the same numbers of spin characters in $\tSym_{|\ga|+qw}$
and $\tAlt_{|\ga|+qw}$ as $\Psi(\la)$ does in $\tSym_{|\ga'|+qw}$ and
$\tAlt_{|\ga'|+qw}$ respectively. If, on the other hand,
$\sa(\ga)=-\sa(\ga')$, then $\la$ labels label the same numbers of spin
characters in $\tSym_{|\ga|+qw}$ and $\tAlt_{|\ga|+qw}$ as $\Psi(\la)$
does in $\tAlt_{|\ga'|+qw}$ and $\tSym_{|\ga'|+qw}$ respectively.

We obtain the following description for the bijections
$\widetilde{\Psi}$ and $\widetilde{\Psi}^*$:

\begin{itemize}
\item[(i)]
If $\sa(\ga)=\sa(\ga')$, then, for any $\la, \, \mu \in E_{\ga,w}$ with
$\sa(\la)=1$ and $\sa(\mu)=-1$, $$\widetilde{\Psi} : \left\{
\begin{array}{ccc} \cla & \longmapsto & \xi_{\Psi(\la)} \\ \{ \cmu^+, \,
\cmu^- \} & \longmapsto & \{ \xi_{\Psi(\mu)}^+, \, \xi_{\Psi(\mu)}^- \}
\end{array} \right. \;  \mbox{and}  \;  \widetilde{\Psi}^* : \left\{
\begin{array}{ccc} \{ \pla^+, \, \pla^-\} & \longmapsto & \{
\zeta_{\Psi(\la)}^+, \, \zeta_{\Psi(\la)}^- \}  \\ \pmu & \longmapsto &
\zeta_{\Psi(\mu)} \end{array} \right. $$
\item[(ii)] If $\sa(\ga)=-\sa(\ga')$, then, for any $\la, \, \mu \in
E_{\ga,w}$ with $\sa(\la)=1$ and $\sa(\mu)=-1$, $$\widetilde{\Psi} :
\left\{ \begin{array}{ccc} \cla & \longmapsto & \zeta_{\Psi(\la)} \\ \{
\cmu^+, \, \cmu^- \} & \longmapsto & \{ \zeta_{\Psi(\mu)}^+, \,
\zeta_{\Psi(\mu)}^- \} \end{array} \right. \;  \mbox{and}  \;
\widetilde{\Psi}^* : \left\{ \begin{array}{ccc} \{ \pla^+, \, \pla^-\} &
\longmapsto & \{ \xi_{\Psi(\la)}^+, \, \xi_{\Psi(\la)}^- \}  \\ \pmu &
\longmapsto &  \xi_{\Psi(\mu)} \end{array} \right. $$
\end{itemize}
\end{proof}

\subsection{Morris' Recursion Formula and MN-structures for
$\widetilde{\sym}_n$ and $\tAlt_n$}
% and MN-stuctures for $\tSym_n$ and $\tAlt_n$}
First, for the convenience of the reader, we prove the following useful
lemma.

% In order to make the notation precise, we will need the following lemma.
\begin{lemma}
Let $\rho\in\sym_n$ be an element of odd order.
%, and $s_{\pi}$ be as above. 
Then the set
$\theta^{-1}(\rho)$ has an element of odd order.
\label{lem:ordre}
\end{lemma}

\begin{proof}
Let $g\in\theta^{-1}(\rho)$, so that $\theta^{-1}(\rho)=\{g, \, zg\}$, 
and let $d$ be the order of $\rho$.
%which is an odd integer because $\pi\in\mathcal O_n$. 
Since $\theta(g^d)=\theta(g)^d=\rho^d=1$, we obtain $g^{d}\in\{1,z\}$. If
$g^d=1$, then the order of $g$ is odd. Otherwise, $g^d=z$, and
$(zg)^d=z^dg^d=z^2=1$ because $d$ is odd. Thus $zg$ has odd order, as
required.
\end{proof}

In the following, if $\rho\in\sym_n$ has odd order, then we denote 
by $o_{\rho}$ the element of $\theta^{-1}(\rho)$ with odd order.

A. O. Morris was the first to prove a recursion formula, similar to the Murnaghan-Nakayama Rule, for computing the values of spin characters of $\tSym_n$ (see \cite{Morris1} and \cite{Morris}). This formula was then made more general by M. Cabanes in \cite{Cabanes}. We have the following:

\begin{theorem}{\cite[Theorem 20]{Cabanes}}\label{MNCabanes}
Let $n \geq 2$ be an integer, $q \in \{2, \, \ldots , \, n \}$ be an odd
integer, and $\rho$ a $q$-cycle of $\sym_n$ with support $\{n-q+1, \,
\ldots , \, n \}$. Let $\la$ be a
bar partition of $n$. If $\sa(\la)=1$, then we write
$\cla=\cla^+=\cla^-$. Then $x=o_{\rho}$ satisfies $C_{\tSym_n}(x)=
\tSym_{n-q} \times \langle x \rangle$ and, for all $g \in \tSym_{n-q}$,
\begin{equation}
\label{eq:claplusmarc}
\cla^{+}(xg)= \sum_{\mu \in M_q(\la) \atop \sa(\mu)=1}
a(\cla^{+},\cmu) \cmu (g)  +  \sum_{\mu \in M_q(\la) \atop \sa(\mu)=-1}
(a(\cla^{+},\cmu^+) \cmu^+ (g)+a(\cla^{+},\cmu^-) \cmu^- (g))
,
\end{equation}
where $M_q(\la)$ is the set of bar partitions of $n-q$ which
can be obtained from $\la$ by removing a $q$-bar, and $a(\cla^{+},
\cmu^+) , \, a(\cla^{+},\cmu^-) \in \C^*$ are the following:

\begin{itemize}
\item[--] if $\sa (\mu) = 1$, then $a(\cla^+, \cmu)= (-1)^{\frac{q^2-1}{8}} \alpha^{\la}_{\mu}$,

\item[--] if $\sa (\mu) = -1$ and $\mu \neq \la \setminus \{q\}$, then $a(\cla^+, \cmu^+)=a(\cla^+, \cmu^-)= \frac{1}{2} (-1)^{\frac{q^2-1}{8}} \alpha^{\la}_{\mu}$,

\item [--] if $\sa (\mu) = -1$ and $\mu = \la \setminus \{q\}$, then
$a(\cla^+, \cmu^+)= \frac{1}{2} (-1)^{\frac{q^2-1}{8}} (
\alpha^{\la}_{\mu} + i^{\frac{q-1}{2}} \sqrt{q})$ and $a(\cla^+,
\cmu^-)=  \frac{1}{2} (-1)^{\frac{q^2-1}{8}} ( \alpha^{\la}_{\mu} -
i^{\frac{q-1}{2}} \sqrt{q})$.
\end{itemize}

The $\alpha^{\la}_{\mu}$'s are the coefficient found by Morris in his
recursion formula (see~\cite[Theorem 2]{Morris}). They are given by
$$\alpha^{\la}_{\mu}= (-1)^{L(b)} 2^{m(b)},$$ where $L(b)$ is the leg
length of the $q$-bar $b$ removed from $\la$ to get $\mu$, and 
$$m(b)= \left\{ \begin{array}{rl} 1 & \mbox{if} \; \sa(\la) =1 \; \mbox{and} \;  \sa(\mu )= -1 , \\ 0 & \mbox{otherwise}. \end{array} \right.$$

\end{theorem}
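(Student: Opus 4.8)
The plan is to recover the formula, following the route of Morris and Cabanes, by transporting the computation to the ring of symmetric functions through Schur's characteristic map and then reading off the recursion from the Murnaghan--Nakayama (Pieri-type) rule for Schur's $Q$-functions; Clifford theory between $\tSym_n$ and $\tAlt_n$, together with the normalisations already recorded in~(\ref{eq:valdmoinsSn})--(\ref{eq:defdiff}), then separates the associate characters.

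First I would settle the centraliser assertion. As $\rho$ is a $q$-cycle with $q$ odd, its cycle type $\pi=(q,1^{n-q})$ lies in $\mathcal O_n$, so by Lemma~\ref{lem:ordre} we may take $x=t_\rho$ of odd order; since $\theta(x)=\rho$ has order $q$, the element $x$ has order exactly $q$, and hence $z\notin\langle x\rangle$. Let $\tSym_{n-q}\leq\tSym_n$ be the preimage under $\theta$ of the copy of $\sym_{n-q}$ on $\{1,\dots,n-q\}$. For $g\in\tSym_{n-q}$ the commutator $[x,g]$ lies in $\ker\theta=\langle z\rangle$, because $\rho$ and $\theta(g)$ have disjoint supports; writing $x^{-1}gx=c\,g$ with $c\in\langle z\rangle$ and iterating the conjugation by $x$ a total of $q$ times, the relations $x^q=1$ and $c^q=c$ (valid since $q$ is odd and $c^2=1$) force $c=1$, so $x$ centralises $\tSym_{n-q}$. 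Since $\langle x\rangle\cap\tSym_{n-q}=1$ and $|\langle x\rangle\times\tSym_{n-q}|=2q(n-q)!=|\theta^{-1}(\Cen_{\sym_n}(\rho))|$, and since $\Cen_{\tSym_n}(x)\subseteq\theta^{-1}(\Cen_{\sym_n}(\rho))$, we conclude $\Cen_{\tSym_n}(x)=\tSym_{n-q}\times\langle x\rangle$; in particular every element of $\Cen_{\tSym_n}(x)$ has a unique expression $xg$ with $g\in\tSym_{n-q}$.

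Next, I would identify the space of class functions of $\tSym_n$ supported on the split classes (types in $\mathcal O_n\cup\mathcal D^-_n$) with a subspace of the ring of symmetric functions: under Schur's characteristic map the spin character $\cla$ corresponds, up to the explicit scalars occurring in~(\ref{eq:valdmoinsSn})--(\ref{eq:defdiff}), to Schur's $Q$-function $Q_\la$, while the class indicator of a type $\pi\in\mathcal O_n$ corresponds to a scalar multiple of the power sum $p_\pi$ (see~\cite{schur},~\cite{olsson}). Since $xg$ with $g$ of type $\pi'\in\mathcal O_{n-q}$ has type obtained from $\pi'$ by adjoining a part $q$, evaluating $\cla^+(xg)$ amounts to extracting, from the $p$-basis expansion of $Q_\la$, the coefficient produced by peeling off one factor $p_q$; this is governed by the $Q$-function Murnaghan--Nakayama rule
\[
Q_\la=\sum_{b}(-1)^{L(b)}2^{m(b)}\;p_q\cdot Q_{\la\setminus b}\ +\ \bigl(\text{terms with no factor }p_q\bigr),
\]
the sum running over the $q$-bars $b$ of $\la$, with $m(b)$ recording whether $\sigma$ changes from $\la$ to $\la\setminus b$. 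Translating back through the characteristic map yields the coefficients $a(\cla^+,\cmu)$ and $a(\cla^+,\cmu^\pm)$ on every class of odd type, reproducing all of~(\ref{eq:claplusmarc}) except the extra summand $\pm i^{(q-1)/2}\sqrt q$ in the degenerate case $\mu=\la\setminus\{q\}$ (which forces $\sigma(\la)=\sigma(\mu)=-1$, since $q$ is odd). To pin that term down, I would evaluate both sides of~(\ref{eq:claplusmarc}) on the split class of type $\mu=\la\setminus\{q\}\in\mathcal D^-_{n-q}$, so that $xg$ has type $\la\in\mathcal D^-_n$: the left side is given by~(\ref{eq:valdmoinsSn}), the contributions of all $\mu'\in M_q(\la)$ with $\mu'\neq\la\setminus\{q\}$ together with the $\alpha^{\la}_{\mu}$-part of $\cmu^\pm(g)$ are already known from the previous step, and comparison of the residual (the genuinely $\sqrt q$-irrational part) distributes $i^{(q-1)/2}\sqrt q$ with the correct signs between $\cmu^+$ and $\cmu^-$; the global sign $(-1)^{(q^2-1)/8}$ emerges from the Gauss-sum factor relating $\sqrt{z_\la/2}$ to $\sqrt{z_\mu/2}$ and the powers of $i$ in~(\ref{eq:valdmoinsSn}). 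Finally, for the remaining split targets ($\sigma(\mu)=-1$, $\mu\neq\la\setminus\{q\}$) the identity $a(\cla^+,\cmu^+)=a(\cla^+,\cmu^-)$ follows from Clifford theory, since $\cmu^+$ and $\cmu^-$ differ only on the class of type $\mu$, on which the left-hand side of~(\ref{eq:claplusmarc}) vanishes.

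The main obstacle is the bookkeeping of constants rather than the overall structure: keeping track of the factor $2^{m(b)}$ (equivalently, of how the $2$-power normalisations of $Q_\la$ and $Q_{\la\setminus b}$ combine according to which of $\la,\mu$ is even or odd), of the sign $(-1)^{(q^2-1)/8}$, and of the correct assignment of $\pm i^{(q-1)/2}\sqrt q$ to $\cmu^+$ versus $\cmu^-$. All of these are sensitive to the sign and labelling conventions fixed in~(\ref{eq:valdmoinsSn})--(\ref{eq:defdiff}) and in the choice of the representatives $t_\pi$, so the computation must be run with those conventions held fixed throughout; Theorems~\ref{olsson} and~\ref{relativesign} supply the combinatorial identities (the bijection $g$ on bars, and the leg-length relations modulo $2$) needed to match signs across bar-core and bar-quotient. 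Alternatively, one may simply quote~\cite[Theorem 20]{Cabanes}, where precisely this analysis is carried out, refining Morris' original recursion in~\cite{Morris1}.
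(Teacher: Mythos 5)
The paper itself offers no proof of this statement: it is quoted directly from \cite[Theorem 20]{Cabanes}, so the fallback in your last sentence (``simply quote Cabanes'') is exactly what the paper does and is the only complete justification available. Of your independent sketch, the centraliser argument is sound: since $q$ is odd, $x=t_\rho$ may be taken of odd order $q$, conjugation by $x$ moves any $g\in\tSym_{n-q}$ to $cg$ with $c\in\langle z\rangle$, iterating $q$ times forces $c=1$, and the order count gives equality. The overall route (characteristic map, Murnaghan--Nakayama rule for Schur's $Q$-functions, then Clifford theory and the difference character for the degenerate case) is indeed the classical Morris--Cabanes path, though the displayed ``$Q$-function rule'' should really be phrased as the expansion of the adjoint $p_q^{\perp}Q_\la$ (coefficient extraction in the power-sum basis), since $Q_{\la\setminus b}$ itself involves $p_q$ and the decomposition as you wrote it is not the identity you need.

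The genuine gap is the origin of the factor $(-1)^{(q^2-1)/8}$ and of the assignment of $\pm i^{(q-1)/2}\sqrt q$ to $\cmu^+$ versus $\cmu^-$: these do \emph{not} ``emerge from the Gauss-sum factor relating $\sqrt{z_\la/2}$ to $\sqrt{z_\mu/2}$ and the powers of $i$''. Comparing the normalisations in~(\ref{eq:valdmoinsSn}) only yields $\cla^+(t_\la)=i^{(q-1)/2}\sqrt{q}\,\cmu^+(t_\mu)$ (since $z_\la=qz_\mu$ and the exponents of $i$ differ by $(q-1)/2$); it produces no sign $(-1)^{(q^2-1)/8}$, which is moreover already present in the first bullet, on classes of odd type where no $\sqrt q$ occurs. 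A minimal test case makes this concrete: for $n=q=3$, $\la=(3)$, $\mu=\emptyset$, the naive Morris coefficient is $(-1)^{L(b)}2^{m(b)}=+1$, yet $\xi_{(3)}(x)=-1$ for the odd-order lift $x$ of a $3$-cycle (trace of an order-$3$ element in the faithful $2$-dimensional spin representation of $\tSym_3$), in agreement with the stated $(-1)^{(q^2-1)/8}=-1$. That sign records the relation between the representative $x=t_\rho$ fixed here and the class identifications underlying the $Q$-function normalisation --- equivalently, in the degenerate case, in which of the two split classes the product $x\,t_\mu^{+}$ lies. Establishing it requires Schur's explicit computation (the footnote on p.~179 of \cite{schur}, which this paper itself invokes in the proof of Theorem~\ref{MNcoefAntilde}) or an equivalent calculation with the generators $t_i$; your sketch treats it as routine bookkeeping and never supplies this input, and without it neither the global sign nor the $\pm$ distribution in the third bullet is determined.
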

\begin{remark}\label{sign-coef}
Note that, with the notation of Theorem \ref{MNCabanes}, since
$\cla^-(xg)=\e \cla^+(xg)$, and since, with a slight abuse of notation,
$\e (xg)= \e(g)$ (as $q$ is odd and $x=o_{\rho}$), we can also write
\begin{equation}
\label{eq:clamoinsmarc}
\cla^{-}(xg)= \sum_{\mu \in M_q(\la) \atop \sa(\mu)=1} a(\cla^{-},\cmu) \cmu (g)  +  \sum_{\mu \in M_q(\la) \atop \sa(\mu)=-1} a(\cla^{-},\cmu^+) \cmu^+ (g)+a(\cla^{-},\cmu^-) \cmu^- (g),
\end{equation}
where, whenever $\sa(\mu)=1$, $a(\cla^{-},\cmu)=a(\cla^{+},\cmu)$, and, whenever $\sa(\mu)=-1$, $a(\cla^{-},\cmu^+)=a(\cla^{+},\cmu^-)$ and $a(\cla^{-},\cmu^-)=a(\cla^{+},\cmu^+)$.
\end{remark}
\medskip
% Theorem \ref{MNCabanes} will be the basis to show that $\tSym_n$ has an
% MN-structure. This will in turn be used to prove that any two spin
% $p$-blocks of the same weight and sign are perfectly isometric. At that
% stage, it will be useful to know how the coefficients $a(\cla^{\pm},
% \cmu^{\pm})$'s behave under the bijections introduced in Lemma
% \ref{bijections}. 

\begin{lemma}
Let $q$ be an odd number, and $a\in\tAlt_n$ be of cycle type $(q)$ such
that $\theta(a)$ has support $I=\{n-q+1,\ldots,n\}$. Let $g$ and $g'$ be
in $\tAlt_{n-q}$ such that $ag$ and $ag'$ are $\tAlt_n$-conjugate and in
an $\tAlt_n$-class labeled by $\lambda\in D_n^+$.  Then $g$ and $g'$
have type $\mu=\lambda\backslash\{q\}\in D_{n-q}^+$ and are
$\tAlt_{n-q}$-conjugate.
\label{lem:conjugaisondistinct}
\end{lemma}

\begin{proof} With the assumption, it is clear that $g$ and $g'$ have
cycle type $\mu$. 
Let $t\in\tAlt_n$ be such that ${}^t(ag)=ag'$. Then
${}^{\theta(t)}(\theta(a)\theta(g))=\theta(a)\theta(g')$. Since
${}^{\theta(t)}\theta(a)$ is a $q$-cycle of $\theta(a)\theta(g')$ and
$\theta(a)$ is the unique $q$-cycle of $\theta(a)\theta(g')$ (because
all the cycles of this element are distinct), it follows that
${}^{\theta(t)}\theta(a)=\theta(a)$. Thus, $I$ is invariant by 
$\theta(t)$. Set $v:=\theta(t)|_I\in\Sym_I$. Then
$v\in\Cen_{\Sym_n}(\theta(a))$,
and since the cycle type of $\theta(a)$ is odd and distinct in $\Alt_I$, one has
$\Cen_{\Sym_I}(\theta(a))=\Cen_{\Alt_I}(\theta(a))$, and in particular, $v\in \Alt_I$. Now, 
let $\widetilde{v}\in\tAlt_I$ be such
that $\theta(\widetilde{v})=v$. By Remark~\ref{rk:commute},
we have ${}^{\widetilde{v}}g=g$. Write
$w=t\,\widetilde{v}^{-1}\in\tAlt_{n-q}$. Then ${}^wa=a$ or
${}^wa=za$, and since $a$ and $za$ have distinct order, we deduce that
${}^wa=a$. It follows that $a\,{}^wg={}^w(ag)={}^t(ag)=ag'$, and
${}^wg=g'$ with $w\in\tAlt_{n-q}$, as required.
\end{proof}

\begin{remark}
\label{rk:choix}
Let $q$ be an odd multiple of $p$ and $\lambda=(\lambda_1,\ldots,\lambda_k=q)\in D_n^+$ be as in
Convention~\ref{conv} (in particular, $q$ is the smallest odd part of
$\lambda$  divisible by $p$).
% Without loss of generality (see the discussion
% in Convention~\ref{conv}), 
%If $q$ is a part of $\lambda$, then we moreover assume that $\lambda_k=q$.
Assume that $n-q\geq 2$ and write $I=\{n-q+1,\ldots,n\}$. 
Let $\rho$ be the $q$-cycle of $\sym_I$
%. In particular,
%$\rho$ is a representative of the class of $\sym_I$ labeled by the 
%partition $(q)$.% as in 
with respect to the choice of representatives given before
Equation~(\ref{eq:suppSn}).  
Denote by $t_{\rho}$ the
element of $\widetilde{\sym}_I$ such that $\theta(t_{\rho})=\rho$ with
respect to the choice of Schur~\cite[\S11]{schur}, and write
$\mu=(\lambda_1,\ldots,\lambda_{k-1})\in \mathcal D_{n-q}^+$.
With the choice of Equation~(\ref{eq:choixrep}), one has
$\tau_{\lambda}^{\pm}=t_{\rho}\tau_{\mu}^{\pm}$ if $\lambda\notin
\mathcal O_n$ (note that $\mu\notin \mathcal O_{n-q}$) and
$\tau_{\lambda}^{\pm\pm}=t_{\rho}\tau_{\mu}^{\pm\pm}$ if $\lambda\in
\mathcal O_n$ (in this case, $\mu$ is automatically in $\mathcal
O_{n-q}$).
% Assume that $q$ is a part of $\lambda$ and write $\mu$ for the partition of
% $D_{n-q}^+$ with parts $\lambda\backslash\{q\}$.
% Since $t_{(q)}$ is a product of even generators $t_j$, the argument
% before Equation~(\ref{eq:choixrep}) shows that $t_{(q)}$ and
% $t_{\lambda_i}$ commute for all $1\leq i\leq k$. Hence,
% $t_{\lambda}=t_{(q)}t_{\mu}$. If $\lambda\notin O_n$,
% then we deduce that $\tau_{\lambda}^{\pm}=t_{(q)}\tau^{\pm}_{\mu}$.
% Assume now that $\lambda\in O_n$.
% Then $\mu$ lies in $O_{n-q}$.
% %  with parts $\lambda\backslash\{q\}$.
% % By the argument before
% % Equation~(\ref{eq:choixrep}), the elements $t_q$ and $t_{\mu}$ commute. Hence
% %$t_{\lambda}=t_qt_{\mu}$. 
% On the other hand, without loss of generality, we can
% assume that the support of $t_{(q)}$ is $\{n-q+1,\ldots,n\}$ (see the
% construction of~\cite[\S11]{schur}).
% Define now $\tau_{\mu}^{\pm\pm}$ as in Equation~(\ref{eq:choixrep}), and
% set $\tau_{\lambda}^{\pm\pm}=t_{(q)} \tau_{\mu}^{\pm\pm}$ 
% for the representatives of the four $\tAlt_n$-classes labelled by $\lambda$. 
% Note that $\tau_{\lambda}^{++}=t_{\lambda}$.
\end{remark}

We now obtain an analogue of Theorem \ref{MNCabanes} for $\tAlt_n$. Let
$q$ be an odd number such that $n-q\geq 2$. Let $\rho$ and $t_{\rho}$ be
as in Remark~\ref{rk:choix}. According to~\cite[footnote (*), p.
179]{schur}, recall that $o_{\rho}=z^{\frac{q^2-1}{8}}t_{\rho}$. 

\begin{theorem}\label{MNcoefAntilde} Let $q$ be an odd integer such that
$n-q\geq 2$. 
We keep the notation of Remark~\ref{rk:choix} and we set
$x:=o_{\rho}=z^{\frac{q^2-1}{8}}t_{\rho}$. In particular,
$x \in \tAlt_n$, and $C_{\tAlt_n}(x) = \tAlt_{n-q} \times \langle x \rangle$.
Assume that the choice for the labels of the
classes (and thus for the labels of the characters by~\S\ref{section:classAnTilde}) are as in Remark~\ref{rk:choix}. 
Take any $\lambda\in\mathcal D_n$, $\epsilon\in\{-1,1\}$ and
$g \in \tAlt_{n-q}$. When $\lambda\in
\mathcal D_n^-$, we set $\pla^+=\pla^-=\pla$. Finally, let $\pi$ be the cycle type of $xg$.
Then, if $\lambda \neq \pi$, or if $\lambda=\pi$ and $q$ is the last part of $\lambda$,
% If $\nu \in {\cal
% D}_m^+$ and $\la= \Psi(\nu) \in {\cal D}_n^-$, then, 
%For all $\lambda\in\mathcal D_n$, $\epsilon\in\{-1,1\}$ and
%$g \in \tAlt_{n-q}$,
we have $$\pla^{\epsilon}(xg)=  \displaystyle  \sum_{\mu \in M_q(\la) \atop
\sa(\mu)=-1} a(\pla^{\epsilon},\pmu) \pmu (g) \, + \sum_{\mu \in M_q(\la) \atop
\sa(\mu)=1} \left( a(\pla^{\epsilon},\pmu^+) \pmu^+ (g) +
a(\pla^{\epsilon},\pmu^-)
\pmu^-(g) \right),$$
where the coefficients are the following:
\begin{itemize}
\item[--] if $\lambda\in\mathcal D_n^-$, then
$a(\pla,\pmu^{\eta})=(-1)^{\frac{q^2-1}{8}}\alpha_{\mu}^{\lambda}$ for
all $\mu\in M_q(\lambda)$ and $\eta\in\{-1,1\}$, where
$\alpha_{\mu}^{\lambda}$ is as in Theorem~\ref{MNCabanes}.
\item[--] if $\lambda\in\mathcal D_n^+$, then
$a(\pla^{\epsilon},\pmu)=
\frac{1}{2}(-1)^{\frac{q^2-1}{8}}\alpha_{\mu}^{\lambda}$
whenever $\sigma(\mu)=-1$, and
$a(\pla^{\epsilon},\pmu^{\eta})=
\frac{1}{2}(-1)^{\frac{q^2-1}{8}}(\alpha_{\mu}^{\lambda}+\epsilon\eta
i^{\frac{q-1}{2}}\sqrt{q})$ for
$\eta\in\{-1,1\}$ whenever $\sigma(\mu)=1$.
\end{itemize}
% 
% Furthermore, $\Psi$ gives a bijection between $M_q(\nu)$ and $M_q(\la)$, and, for any $\pi \in M_q(\nu)$, we have 
% \begin{enumerate}
% \item[(i)]
% If $\sa(\pi)=1$, then $\da_{\p}(\nu) \da_{\p}( \pi)a(\xi_{\nu},\xi_{\pi})=  \da_{\p}(\Psi(\nu)) \da_{\p}( \Psi( \pi)) a ( \zeta_{\Psi(\nu)}, \zeta_{\Psi(\pi)})$.
% 
% \item[(ii)]
% If $ \sa(\pi)=-1$, then $a(\xi_{\nu},\xi_{\pi}^+)= a(\xi_{\nu},\xi_{\pi}^-)$, $ a ( \zeta_{\Psi(\nu)}, \zeta_{\Psi(\pi)}^+)=a ( \zeta_{\Psi(\nu)}, \zeta_{\Psi(\pi)}^-)$, and $\da_{\p}(\nu) \da_{\p}( \pi)  a(\xi_{\nu},\xi_{\pi}^{\pm}) = \da_{\p}(\Psi(\nu)) \da_{\p}( \Psi( \pi))  a ( \zeta_{\Psi(\nu)}, \zeta_{\Psi(\pi)}^{\pm})$.
% \end{enumerate}
\end{theorem}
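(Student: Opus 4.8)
The strategy is to deduce the formula from Morris' recursion for $\tSym_n$ (Theorem \ref{MNCabanes}) via Clifford theory between $\tAlt_n$ and its index-2 subgroup, exactly as Theorem \ref{theo:MNAn} was deduced from the Murnaghan-Nakayama rule in $\sym_n$. First I would fix $\lambda \in \mathcal{D}_n$ and split into the two cases according to the sign $\sigma(\lambda)$, using the description of the spin characters of $\tAlt_n$ recalled in \S\ref{section:classAnTilde}: when $\sigma(\lambda)=-1$, $\Res^{\tSym_n}_{\tAlt_n}(\xi_\lambda^+)=\Res^{\tSym_n}_{\tAlt_n}(\xi_\lambda^-)=\zeta_\lambda$, while when $\sigma(\lambda)=1$, $\Res^{\tSym_n}_{\tAlt_n}(\xi_\lambda)=\zeta_\lambda^+ + \zeta_\lambda^-$ with difference character $\Delta_\lambda=\zeta_\lambda^+-\zeta_\lambda^-$ as in \eqref{eq:defdiff}. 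Since $x=t_\rho$ with $\rho$ a $q$-cycle and $q$ odd, $x \in \tAlt_n$ and $C_{\tAlt_n}(x)=\tAlt_{n-q}\times\langle x\rangle$ (this follows from $C_{\tSym_n}(x)=\tSym_{n-q}\times\langle x\rangle$ in Theorem \ref{MNCabanes} by intersecting with $\tAlt_n$), so $xg\in\tAlt_n$ for $g\in\tAlt_{n-q}$.

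Next I would express $\zeta_\lambda^\epsilon(xg)$ through $\tSym_n$-data. For $\sigma(\lambda)=-1$ one simply has $\zeta_\lambda(xg)=\xi_\lambda^+(xg)$, and substituting \eqref{eq:claplusmarc}, then restricting each $\xi_\mu$ or $\xi_\mu^\pm$ appearing on the right to $\tAlt_{n-q}$ and re-expressing in terms of the $\zeta_\mu$'s (again using \S\ref{section:classAnTilde}), gives the claimed formula with $a(\zeta_\lambda,\zeta_\mu^\eta)=(-1)^{(q^2-1)/8}\alpha_\mu^\lambda$; note that the bar $b$ removed never creates a $m(b)=1$ term here since $\sigma(\lambda)=-1$, so $\alpha_\mu^\lambda=(-1)^{L(b)}$ and no factor of $2$ intervenes—the $\frac12$ from restricting a character of $\tSym_{n-q}$ that splits is exactly cancelled by the multiplicity. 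For $\sigma(\lambda)=1$, I would use \eqref{eq:defcharaltildeass}: $\zeta_\lambda^\epsilon=\frac12(\Res\,\xi_\lambda + \epsilon\,\Delta_\lambda)$. The term $\frac12\Res^{\tSym_n}_{\tAlt_n}(\xi_\lambda)(xg)$ is handled by substituting \eqref{eq:claplusmarc} and restricting as before, producing the "average" part $\frac12(-1)^{(q^2-1)/8}\alpha_\mu^\lambda$ for each $\mu$. The remaining $\frac{\epsilon}{2}\Delta_\lambda(xg)$ is nonzero only when $xg$ has type $\lambda$, i.e. (by Lemma-type analysis on bar removal) only when $\mu=\lambda\setminus\{q\}$ is a $\overline q$-core-adjacent bar partition with $\sigma(\mu)=1$, and then, writing $\lambda=(\lambda_1>\cdots>\lambda_r)$ and $\mu=\lambda\setminus\{q\}$, one has $z_\lambda=q\cdot z_\mu$ and a length bookkeeping $i^{(n-r)/2}=i^{(q-1)/2}i^{((n-q)-(r-1))/2}$, so that $\Delta_\lambda(xg)=i^{(q-1)/2}\sqrt{q}\,(\zeta_\mu^+(g)-\zeta_\mu^-(g))$ by \eqref{eq:defdiff} applied on $\tAlt_{n-q}$. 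Collecting the coefficients of $\zeta_\mu^+(g)$ and $\zeta_\mu^-(g)$ then yields $a(\zeta_\lambda^\epsilon,\zeta_\mu^\eta)=\frac12(-1)^{(q^2-1)/8}(\alpha_\mu^\lambda+\epsilon\eta\, i^{(q-1)/2}\sqrt{q})$, as required.

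The one genuinely delicate point is the sign and normalization bookkeeping in the "diagonal" case $\sigma(\lambda)=1$, $\mu=\lambda\setminus\{q\}$: one must check that the constant $(-1)^{(q^2-1)/8}$ coming from Morris/Cabanes, the factor $i^{(q-1)/2}$ from the difference character \eqref{eq:defdiff}, the square-root $\sqrt q=\sqrt{z_\lambda/z_\mu}$, and the choice of which of $\zeta_\mu^\pm$ is labelled $+$ (fixed via the representative $\tau_\mu^+$ and Clifford theory, as in \S\ref{section:classAnTilde}) all combine with the correct signs, and that the $m(b)=1$ power of $2$ in $\alpha_\mu^\lambda$ is consistently tracked—here $\sigma(\lambda)=1$ and $\sigma(\mu)=1$ so $m(b)=0$. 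I expect this to be the main obstacle, while everything else is a routine translation through Clifford theory, entirely parallel to the proof of Theorem \ref{theo:MNAn}. Accordingly I would present the argument briefly, proving in detail only the case $\sigma(\lambda)=1$ with $\epsilon=+1$ and $\mu=\mu_\lambda:=\lambda\setminus\{q\}$, and indicate that the other cases and the sign $\epsilon=-1$ (via Remark \ref{sign-coef}) follow by the same computation.
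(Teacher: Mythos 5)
Your route is the same as the paper's (Clifford theory applied to Theorem~\ref{MNCabanes}, splitting according to $\sigma(\lambda)$, and a separate analysis of the difference character), and your treatment of the case $\lambda\in\mathcal D_n^-$ and of the ``average'' part $\tfrac12\Res^{\tSym_n}_{\tAlt_n}(\xi_\lambda)(xg)$ in the case $\lambda\in\mathcal D_n^+$ is correct. The genuine gap is exactly at the point you flag as delicate and then do not resolve: your asserted identity
$\Delta_{\lambda}(xg)=i^{\frac{q-1}{2}}\sqrt{q}\,\bigl(\zeta_{\mu}^+(g)-\zeta_{\mu}^-(g)\bigr)$
for $\mu=\lambda\setminus\{q\}$ is missing a factor $(-1)^{\frac{q^2-1}{8}}$. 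The bookkeeping you invoke ($z_\lambda=q\,z_\mu$ together with $i^{\frac{n-r}{2}}=i^{\frac{q-1}{2}}i^{\frac{(n-q)-(r-1)}{2}}$) only relates the value of $\Delta_\lambda$ on the class $D_\lambda^+$ to the value of $\Delta_\mu$ on the class $D_\mu^+$; it says nothing about which of the two split classes $D_\mu^{\pm}$ (or $D_\mu^{\pm\pm}$ when $\mu\in\mathcal O_{n-q}$) the element $g$ actually lies in when $xg\in D_\lambda^+$ (respectively $D_\lambda^{++}$), and since the labels $\zeta_\mu^{\pm}$ were already pinned down in \S\ref{section:classAnTilde} via the fixed representatives $\tau_\mu^{+}$, $\tau_\mu^{++}$, this ambiguity cannot be absorbed by a choice of names. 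The missing input, which the paper takes from Schur (footnote, p.~179 of \cite{schur}), is the class correspondence $xg\in C_\lambda^+\Leftrightarrow g\in C_\mu^{(-1)^{(q^2-1)/8}}$, hence $xg\in D_\lambda^+\Leftrightarrow g\in D_\mu^{(-1)^{(q^2-1)/8}}$; this is what produces Equation~(\ref{Deltas}), namely $\Delta_\lambda(xg)=(-1)^{\frac{q^2-1}{8}}\,i^{\frac{q-1}{2}}\sqrt{q}\,\Delta_\mu(g)$.

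This is not a cosmetic issue: with your version of the identity, collecting terms gives
$a(\zeta_\lambda^{\epsilon},\zeta_\mu^{\eta})=\tfrac12\bigl((-1)^{\frac{q^2-1}{8}}\alpha_\mu^\lambda+\epsilon\eta\, i^{\frac{q-1}{2}}\sqrt{q}\bigr)$,
which disagrees with the theorem precisely when $(-1)^{\frac{q^2-1}{8}}=-1$, i.e.\ for all $q\equiv 3,5\pmod 8$. So the ``delicate point'' you defer is in fact the essential content of the proof, and as written your step would fail there. Two smaller remarks: you should also separate the subcases $\lambda\in\mathcal D_n^+\setminus\mathcal O_n$ and $\lambda\in\mathcal D_n^+\cap\mathcal O_n$ (two versus four split classes, handled in the paper via $D_\lambda^{\pm}$ and $D_\lambda^{\pm\pm}$), and note that the preliminary reduction showing $\{\mu\in M_q(\lambda)\mid\sigma(\mu)=1\}\subseteq\{\lambda\setminus\{q\}\}$ (so that $\Delta_\lambda(xg)\neq 0$ forces $g$ of type $\lambda\setminus\{q\}$) needs the short leg-length/length parity argument spelled out in the paper; your sketch asserts it but does not prove it.
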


\begin{proof}
First assume that $\lambda\in\mathcal D_n^-$. Then, by
Equation~(\ref{eq:caraltildenon}), and Clifford theory applied to
Equation~(\ref{eq:claplusmarc}), we obtain the following.
Whenever $\sa(\mu)=-1$, %(even if $\mu = \la \setminus \{q\}$), 
we have
$$a(\pla,\pmu)=a(\cla^{+},\cmu^+) +a(\cla^{+},\cmu^-) =  \dis
\frac{1}{2} (-1)^{\frac{q^2-1}{8}} \alpha_{\mu}^{\la} + \dis \frac{1}{2}
(-1)^{\frac{q^2-1}{8}} \alpha_{\mu}^{\la} =(-1)^{\frac{q^2-1}{8}}
\alpha_{\mu}^{\la},$$and, whenever $\sa(\mu)=1$, $$a(\pla,\pmu^+) =
a(\pla,\pmu^-)=a(\cla^{+},\cmu)=(-1)^{\frac{q^2-1}{8}}
\alpha_{\mu}^{\la},$$
as required.

We now consider the case where $\la \in {\cal D}_n^+$. By
Equation~(\ref{eq:defcharaltildeass}) and Clifford theory applied to
Equation~(\ref{eq:claplusmarc}), we obtain
% We thus have
% $\pla^+=\dis \frac{1}{2} \Res^{\tSym_n}_{\tAlt_n} (\cla + \Delta_{\la})$
% and $\pla^-=\dis \frac{1}{2} \Res^{\tSym_n}_{\tAlt_n} (\cla -
% \Delta_{\la})$. Take $q$ and $x$ as in Theorem \ref{MNCabanes}. Then $x
% \in \tAlt_n$, $C_{\tAlt_n}(x)= \tAlt_{n-q} \times \langle x \rangle$,
% and, for all $g \in \tAlt_{n-q}$,
\begin{equation}
\label{eq:int411}
\begin{split}
\pla^+(xg) & = \dis \frac{1}{2} \left( \cla(xg) + \Dla (xg) \right) \\
% & =    {\dis \sum_{\mu \in M_q(\la) \atop \sa(\mu)=1}} \frac{a(\cla,\cmu)}{2} \cmu (g)  + {\dis \sum_{\mu \in M_q(\la) \atop \sa(\mu)=-1}} \left( \frac{a(\cla,\cmu^+)}{2} \cmu^+ (g)+ \frac{a(\cla,\cmu^-)}{2} \cmu^- (g) \right) + \frac{\Dla(xg)}{2}  \\ 
& =\sum_{\mu \in M_q(\la) \atop \sa(\mu)=1} \frac{a(\cla,\cmu)}{2}
(\pmu^+ (g) + \pmu^-(g)) + \frac{\Dla(xg)}{2} \\&\qquad + {\dis
\sum_{\mu \in M_q(\la) \atop \sa(\mu)=-1} }\frac{ a(\cla,\cmu^+)
+a(\cla,\cmu^-)}{2} \pmu (g).
\end{split}
\end{equation}
%In analogy with Theorem \ref{MNCabanes}, we write this as
%$$\pla^+(xg)=  \displaystyle  \sum_{\mu \in M_q(\la) \atop \sa(\mu)=-1} a(\pla^+,\pmu) 
% \pmu (g) \, + 
% \sum_{\mu \in M_q(\la) \atop \sa(\mu)=1} \left( a(\pla^+,\pmu^+) \pmu^+ (g) +  a(\pla^+,\pmu^-)  \pmu^-(g) \right) ,$$
% where, if $\sa(\mu)=-1$, then $a(\pla^+,\pmu)=\frac{a(\cla^{+},\cmu^+) +a(\cla^{+},\cmu^-) $, and, if $\sa(\mu)=1$, then $a(\pla,\pmu^+) = a(\pla,\pmu^-)=a(\cla^{+},\cmu)$.

%Note that, since $\sa(\la)=1$, if $\la$ does have a part of length $q$, then $\sa( \la \setminus \{q\})=1$.

We need to deal with the term $\frac{\Dla(xg)}{2}$. Recall that this is
0 unless $xg$ has cycle type $\pi=\la$.  We start by noticing that, if $xg$
does not have cycle type $\la$, then $g$ does not have cycle type $\mu$
for any $\mu \in M_q(\la)$ with $\sa(\mu)=1$. Indeed, if $\mu$ is
obtained from $\la$ by removing a bar $b$ of odd length $q$, then,
depending on the type of $b$, we have $\ell(\mu)=\ell(\la)$,
$\ell(\mu)=\ell(\la)-2$ or $\ell(\mu)=\ell(\la)-1$. In the first two
cases, we obtain $\sa (\mu) = (-1)^{n-q-\ell(\la)}=-\sa(\la)$. The last
case can only happen if $b$ is a part of $\la$, in which case $\mu = \la
\setminus \{q\}$ and $\sa(\mu)=\sa(\la)$. This has several consequences.
The first is that $\{ \mu \in M_q(\la) \, | \, \sa(\mu)=1 \}$ is either
empty, or contains only the partition $\la \setminus \{q\}$. This in
turn implies that $xg$ has cycle type $\la$ if and only if $\{ \mu \in
M_q(\la) \, | \, \sa(\mu)=1 \} = \{ \la \setminus \{q\} \}$ and $g$ has
cycle type $\la \setminus \{q\}$. Finally, $\{ \mu \in M_q(\la) \, | \,
\sa(\mu)=1 \}$ is empty if and only if $\la$ does not have a part of
length $q$, and, if this is the case, then $\frac{\Dla(xg)}{2}=0$ for
all $g \in \tAlt_{n-q}$.

\smallskip
We therefore suppose that $\la$ does have a part of length $q$, so that $$\{ \mu \in M_q(\la) \, | \, \sa(\mu)=1 \} = \{ \la \setminus \{q\} \}.$$
We will show that, if $\mu = \la \setminus \{ q \}$, then, for all $g \in \tAlt_{n-q}$, we have
\begin{equation}
\label{Deltas}
\Dla (xg) = (-1)^{\frac{q^2-1}{8}} i^{\frac{q-1}{2}} \sqrt{q} \Dmu (g).
\end{equation}
If $g$ does not have cycle type $\mu = \la \setminus \{ q \}$, then
$\Dmu(g)=0$, and $xg$ does not have cycle type $\la$, so that
$\Dla(xg)=0$ and Equation (\ref{Deltas}) holds. 

Now, assume that $g$ has cycle type $\mu$. Then $xg$ has cycle type
$\pi=\la$, so we assume furthermore that $q$ is the last part of $\la$. If $\lambda\in \mathcal O_n$ (resp. $\lambda\notin \mathcal O_n$), 
then there are
signs $\delta$ and $\eta$ such that 
$xg$ is $\tAlt_n$-conjugate to $\tau_{\lambda}^{\delta\eta}$ (resp. to
$\tau^{\delta}_{\lambda}$). It follows that
% Furthermore, $x$ has type $(q,1^{n-q})\in \mathcal O_n\backslash \mathcal
% D_n^+$. In particular, there are exactly two classes of $\tAlt_n$
% labeled by $(q,1^{n-q})$, one with odd order and the other with even
% order. Since $x$ has odd order,  %with the choice of Remark~\ref{rk:choix} 
%and
% by~\cite[footnote (*), p.
% 179]{schur}, we can assume without loss of generality that
% $x=z^{\frac{q^2-1}{8}}t_{(q)}$. Thus, $xg$ is
% $\tAlt_n$-conjugate to $t_{(q)}z^{\frac{q^2-1}{8}}g$ and to
$t_{\rho}z^{\frac{q^2-1}{8}}g$ and
$\tau_{\lambda}^{\delta\eta}=t_{\rho}\tau_{\mu}^{\delta\eta}$ (resp. 
$\tau_{\lambda}^{\delta}=t_{\rho}\tau_{\mu}^{\delta}$) are $\tAlt_n$-conjugate.
By Lemma~\ref{lem:conjugaisondistinct}, $z^{\frac{q^2-1}{8}}g$ is
$\tAlt_{n-q}$-conjugate to $\tau_{\mu}^{\delta\mu}$ (resp. to
$\tau_{\mu}^{\delta}$), that is, $g\in
D_{\mu}^{(\frac{q^2-1}{8}\delta)\eta}$ (resp. $g\in
D_{\mu}^{\frac{q^2-1}{8}\delta}$).
% 
% If, on the other hand,
% $g$ has cycle type $\mu$ (and $xg$ has cycle type $\la$), then $xg$
% belongs to one of the conjugacy classes $C_{\la}$, $C_{\la}^+$ or
% $C_{\la}^-$ of $\tSym_n$, and $g$ to one of the classes $C_{\mu}$,
% $C_{\mu}^+$ or $C_{\mu}^-$ of $\tSym_{n-q}$. And we see from
% \cite[footnote (*), p. 179]{schur} that
% $$xg \in C_{\la}^+ \qquad \mbox{if and only if} \qquad g \in C_{\mu}^{(-1)^{\frac{q^2-1}{8}}}.$$
% Considering now conjugacy classes of $\tAlt_n$ and $\tAlt_{n-q}$, we
% deduce from this that
% $$xg \in D_{\la}^+ \Longleftrightarrow g \in D_{\mu}^{(-1)^{\frac{q^2-1}{8}}}\qquad \mbox{and} \qquad xg \in D_{\la}^{++} \Longleftrightarrow g \in D_{\mu}^{(-1)^{\frac{q^2-1}{8}}+}.$$

Now, using the values and properties we gave for the difference characters, we obtain that, for $xg \in D_{\la}^+$ (or similarly for $xg \in D_{\la}^{++}$), we have
\begin{equation}
\label{eq:int4112}
\begin{array}{rcl} \Dla(xg)= \Dla(D_{\la}^{+}) & = & i^{\frac{n-
\ell(\la)}{2}} \sqrt{z_{\la}} \\ & = & i^{\frac{n-q+q- \ell(\mu)-1}{2}}
\sqrt{q} \sqrt{z_{\mu}} \\ & = &  i^{\frac{q-1}{2}}  \sqrt{q}
i^{\frac{n-q- \ell(\mu)}{2}}\sqrt{z_{\mu}}  \\ & = &  i^{\frac{q-1}{2}}
\sqrt{q}  \Dmu(D_{\mu}^{+})\\& = &(-1)^{\frac{q^2-1}{8}} i^{\frac{q-1}{2}}
\sqrt{q} \Dmu(g).\\
%& = &\frac{1}{2} (-1)^{\frac{q^2-1}{8}} i^{\frac{q-1}{2}}  \sqrt{q} (\pmu^+ (g) - \pmu^-(g)).
\end{array}
\end{equation}
Using the property $\Dla(D_{\la}^-)=-\Dla(D_{\la}^+)$, and its analogues
for the classes $D_{\la}^{\pm \pm}$, we easily deduce that 
Equation~(\ref{Deltas}) does hold for all $g \in \tAlt_{n-q}$.

% We now go back to the formula we obtained for $\pla^+(xg)$: for all $g \in \tAlt_{n-q}$,
% $$\pla^+(xg)  =  {\displaystyle \sum_{\mu \in M_q(\la) \atop \sa(\mu)=1}} \frac{a(\cla,\cmu)}{2} (\pmu^+ (g) + \pmu^-(g)) + \frac{\Dla(xg)}{2} + {\dis \sum_{\mu \in M_q(\la) \atop \sa(\mu)=-1} }\frac{ a(\cla,\cmu^+) +a(\cla,\cmu^-)}{2} \pmu (g) .$$
%
Now, from Equations~(\ref{eq:int411}) and~(\ref{Deltas}), and
Theorem \ref{MNCabanes},
% we know that, whenever $\sa (\mu)=-1$,
%we have $$\frac{ a(\cla,\cmu^+) +a(\cla,\cmu^-)}{2}= \frac{1}{2}
%(-1)^{\frac{q^2-1}{8}} \alpha_{\mu}^{\la},$$ and, whenever $\sa (\mu)=1$,
%we have $$\dis \frac{ a(\cla,\cmu) }{2}= \frac{1}{2}
%(-1)^{\frac{q^2-1}{8}} \alpha_{\mu}^{\la}.$$
%%  Also, we just proved that
%% $$\dis \frac{\Dla(xg)}{2}= \frac{1}{2} (-1)^{\frac{q^2-1}{8}}
%% i^{\frac{q-1}{2}}  \sqrt{q} \Dmu(g) =  \frac{1}{2}
%% (-1)^{\frac{q^2-1}{8}} i^{\frac{q-1}{2}}  \sqrt{q} (\pmu^+ (g) -
%% \pmu^-(g)).$$This implies that
%
%$${\displaystyle \sum_{\mu \in M_q(\la) \atop \sa(\mu)=1}} \frac{a(\cla,\cmu)}{2} (\pmu^+ (g) + \pmu^-(g)) + \frac{\Dla(xg)}{2} = $$$$ \frac{1}{2} (-1)^{\frac{q^2-1}{8}} ( \alpha_{\mu}^{\la} +  i^{\frac{q-1}{2}}  \sqrt{q} ) \pmu^+ (g) + \frac{1}{2} (-1)^{\frac{q^2-1}{8}} ( \alpha_{\mu}^{\la} -  i^{\frac{q-1}{2}}  \sqrt{q} ) \pmu^- (g) .$$
%We therefore obtain that, for all $g \in \tAlt_{n-q}$,
%
%\begin{equation}\label{MNAntilde+}
%\pla^+(xg)  =  {\displaystyle \sum_{\mu \in M_q(\la) \atop \sa(\mu)=1}}  (a(\pla^+, \pmu^+)\pmu^+(g) + a(\pla^+, \pmu^-) \pmu^-(g)) + {\dis \sum_{\mu \in M_q(\la) \atop \sa(\mu)=-1} } a(\pla^+,\pmu) \pmu (g),
%\end{equation}
we deduce that, for $\mu\in M_q(\lambda)$, 
if $\sa (\mu)=-1$, then $a(\pla^+,\pmu)=\frac{1}{2}
(-1)^{\frac{q^2-1}{8}} \alpha_{\mu}^{\la}$, and, if $\sa (\mu)=1$, then
$a(\pla^+, \pmu^+)= \frac{1}{2} (-1)^{\frac{q^2-1}{8}} (
\alpha_{\mu}^{\la} +  i^{\frac{q-1}{2}}  \sqrt{q} )$ and $a(\pla^+,
\pmu^-)= \frac{1}{2} (-1)^{\frac{q^2-1}{8}} ( \alpha_{\mu}^{\la} -
i^{\frac{q-1}{2}}  \sqrt{q} )$.

% Note that this also holds if $\la$ does not have a part of length $q$, in which case the first sum in the above formula is empty.

\smallskip
Our analysis of the term $\Dla(xg)$ also yields a similar formula for
$\pla^-(xg)$, and using Equation~(\ref{eq:clamoinsmarc}), we deduce the
values of $a(\cla^-,\cmu^{\eta})$ for all $\mu\in M_q(\lambda)$ and
$\eta\in\{-1,1\}$.
% =\frac{1}{2} (\cla(xg)-\Dla(xg))$. We easily obtain that, for all $g \in \tAlt_{n-q}$,
% \begin{equation}\label{MNAntilde-}
% \pla^-(xg)  =  {\displaystyle \sum_{\mu \in M_q(\la) \atop \sa(\mu)=1}}  (a(\pla^-, \pmu^+)\pmu^+(g) + a(\pla^-, \pmu^-) \pmu^-(g)) + {\dis \sum_{\mu \in M_q(\la) \atop \sa(\mu)=-1} } a(\pla^-,\pmu) \pmu (g),
% \end{equation}
%if $\sa (\mu)=-1$, then $a(\pla^-,\pmu)=\frac{1}{2} (-1)^{\frac{q^2-1}{8}} \alpha_{\mu}^{\la}=a(\pla^+, \pmu)$, and, whenever $\sa (\mu)=1$, we have $a(\pla^-, \pmu^+)= \frac{1}{2} (-1)^{\frac{q^2-1}{8}} ( \alpha_{\mu}^{\la} -  i^{\frac{q-1}{2}}  \sqrt{q} )=a(\pla^+, \pmu^-)$ and $a(\pla^-, \pmu^-)= \frac{1}{2} (-1)^{\frac{q^2-1}{8}} ( \alpha_{\mu}^{\la} +  i^{\frac{q-1}{2}}  \sqrt{q} )=a(\pla^+, \pmu^+)$.
\end{proof}

\begin{remark} Let $n$ and $q$ be as above.
Assume $n=q$ or $n=q+1$. Then $\tAlt_{n-q}=\Z_2$ and the only spin
character is the non-trivial character $\varepsilon$ of $\Z_2$, labeled by
$\mu=\emptyset$ or $\mu=(1)$ whenever $n=q$ or $n=q+1$. Set
$\pi=(q)$ if $n=q$ and $\pi=(q,1)$ if $n=q+1$. Then there are $4$
classes of $\tAlt_n$ labeled by $\pi$  with representatives
$\tau_{\pi}^{\pm\pm}$. Write $o_{\pi}^{\pm}=\tau_{\pi}^{+\pm}$.
% Let $o_{\pi}^{\pm}$ be two non
% $\tAlt_n$-conjugate elements with cycle type $\pi$ and odd order (see
% Lemma~\ref{lem:ordre}). 
Let $k\in\{0,1\}$ and $\lambda\in\mathcal D_n$ be
with $\overline{q}$-core $\mu$. If $\lambda\in\mathcal D_n^-$, then
$\zeta_{\lambda}(o_{\pi}^{\pm}z^k)=(-1)^{\frac{q^2-1}{8}}\alpha_{\mu}^{\lambda}
\varepsilon(z^k)$.
If $\lambda\in\mathcal D_n^+$ and $\lambda\neq \pi$, then 
$\zeta_{\lambda}(o_{\pi}^{\pm}z^k)=\frac{1}{2}
(-1)^{\frac{q^2-1}{8}}\alpha_{\mu}^{\lambda}\varepsilon(z^k)$.
Finally, for $\delta,\,\eta\in\{-1,1\}$, one has
\begin{equation}
\label{eq:valdiff}
\zeta_{\pi}^{\eta}(o_{\pi}^{\delta}z^k)=
\zeta_{\pi}^{\eta}(o_{\pi}^{\delta})\varepsilon(z^k)=
\frac{(-1)^{\frac{q^2-1}{8}}}{2}\left(\alpha_{\mu}^{\pi}+\eta\delta
i^{\frac{q-1}{2}}\sqrt{q}\right)\varepsilon(z^k).
\end{equation}
\label{rk:degeneree}
\end{remark}

%We can now define on $\tSym_n$ and $\tAlt_n$ an M-N-structure as in Definition \ref{defMN}.
%Let $p$ be an odd prime. From now on, we assume in Convention~\ref{conv}
%that if $\pi=(\pi_1,\ldots,\pi_k)\in\mathcal P_n$ has odd parts
%divisible by $p$, then we order the parts of $\pi$ in such a way that
%there exists $1 \leq u \leq k $ such that $\pi_j$ is odd and divisible
%by $p$ if and only if $u \leq j \leq k$. In particular, such a $u$ is
%unique and, with the notation of Convention 4.2, $r \leq u$.
% we assume in
% Convention~\ref{conv} that if $\pi=(\pi_1,\ldots,\pi_k)\in\mathcal P_n$ 
% has odd parts divisible
%by $p$, then there is $1\le u\leq k$ such that $\pi_j$ is prime to $p$
%for all $j<u$ and $\pi_j$ is divisible by $p$ for all $u\leq j\leq k$.
%With the notation of Convention~\ref{conv}, note that for such a
%partition, we have $r\leq u$.
% We define $S_{\Sym_n}$ to be the set of
% elements $\pi \in \Sym_n$ all of whose cycles have length 1 or an odd
% multiple of $p$, and 

We define $C_{\Sym_n}$ to be the set of elements of
$\Sym_n$ none of whose cycles has length an odd multiple of $p$. We then
let
%$$S_{\tSym_n}= \{ t_{\pi} \, | \, \pi \in S_{\Sym_n} \}  , \, S_{\tAlt_n} = S_{\tSym_n} \cap \tAlt_n  , \, 
\begin{equation}
\label{eq:defCtilde}
C_{\tSym_n}= \theta^{-1}(C_{\Sym_n})\quad \mbox{and}  \quad C_{\tAlt_n} = C_{\tSym_n} \cap \tAlt_n.
\end{equation}
%
% Note that, since $p$ is odd, and since we only consider odd multiples of
% $p$, we have $S_{\tAlt_n} = S_{\tSym_n} \cap \tAlt_n= S_{\tSym_n} $.
Finally, we let ${\cal C}_{\tSym_n}$ and ${\cal C}_{\tAlt_n}$ be the sets of (respectively $\tSym_n$- and $\tAlt_n$-) conjugacy classes in $C_{\tSym_n}$ and $C_{\tAlt_n}$ respectively.

\smallskip

From now on, if $G$ is a finite group and $\mathcal C$ a union
of conjugacy classes of $G$, then $\mathcal C$-blocks is meant in the sense
of KOR-blocks (see Proposition~\ref{prop:choixbase}).

We start by showing that the spin $p$-blocks of $\tSym_n$ (respectively
$\tAlt_n$) are also ${\cal C}_{\tSym_n}$-blocks (respectively ${\cal
C}_{\tAlt_n}$-blocks). Recall that the $p$-blocks of $\tSym_n$ are just
the $\tSym_{n,p'}$-blocks, where $\tSym_{n,p'}$ is the set of
$p$-regular elements of $\tSym_n$. Similarly, the $p$-blocks of
$\tAlt_n$ are its $\tAlt_{n,p'}$-blocks. Note that, by definition, we
have $\tSym_{n,p'} \subset C_{\tSym_n}$ and $\tAlt_{n,p'} \subset
C_{\tAlt_n}$.

\begin{lemma}\label{same-blocks}
The $p$-blocks and $\mathcal C_{\tSym_n}$-blocks of spin characters of
$\tSym_n$ coincide, and the $p$-blocks and $\mathcal C_{\tAlt_n}$-blocks of spin characters of
$\tAlt_n$ coincide.
%  Let $\lambda\in D_n^-$ be a $\overline{p}$-core. Then
%  $\{\xi_{\lambda}^-,\xi_{\lambda}^+\}$ is a ${\cal C}_{\tSym_n}$-block
%  which is the union of the two $p$-blocks $\{\xi_{\lambda}^-\}$ and
%  $\{\xi_{\lambda}^+\}$ with defect zero. In the other cases, 
%  the $p$-blocks and ${\cal C}_{\tSym_n}$-blocks of spin characters of
%  $\tSym_n$ coincide. Similarly, let $\lambda\in D_n^+$ be a
%  $\overline{p}$-core. Then $\{\zeta_{\lambda}^-,\zeta_{\lambda}^+\}$ is a
%  ${\cal C}_{\tAlt_n}$-block
%  which is the union of the two $p$-blocks $\{\zeta_{\lambda}^-\}$ and
%  $\{\zeta_{\lambda}^+\}$ with defect zero. In the other cases,
%  the $p$-blocks and ${\cal C}_{\tAlt_n}$-blocks
%  of spin characters of $\tAlt_n$ coincide.
\end{lemma}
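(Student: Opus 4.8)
The plan is to carry out the argument for $\tSym_n$ in detail; the case of $\tAlt_n$ is entirely parallel. Since $C_{\tSym_n}=\theta^{-1}(C_{\Sym_n})$ is a union of fibres $\{g,zg\}$ of $\theta$, a non-spin character $\chi$ satisfies $\chi(zg)=\chi(g)$ and a spin character $\xi$ satisfies $\xi(zg)=-\xi(g)$, so $\langle\res_{C_{\tSym_n}}\chi,\res_{C_{\tSym_n}}\xi\rangle_{\tSym_n}=0$ whenever $\chi$ is non-spin and $\xi$ spin. The same pairing shows that if $\psi\in\Z\Irr(\tSym_n)^{\cal{C}_{\tSym_n}}$ and $\psi=\psi^s+\psi^n$ is the decomposition into its spin and non-spin parts (the sums of the spin, resp.\ non-spin, irreducible constituents), then $\psi^s$ and $\psi^n$ each vanish off $C_{\tSym_n}$; hence $\Z\Irr(\tSym_n)^{\cal{C}_{\tSym_n}}=L^s\oplus L^n$ with $L^s$ spanned by the restrictions to $C_{\tSym_n}$ of the spin characters. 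I would therefore pick a $\Z$-basis of $\Z\Irr(\tSym_n)^{\cal{C}_{\tSym_n}}$ adapted to this decomposition; then the $\cal{C}_{\tSym_n}$-block graph, restricted to the spin characters, depends only on the basis $b^s$ of $L^s$, and it suffices to choose $b^s$ so that the induced graph on spin characters has connected components the spin $p$-blocks (Theorem~\ref{blockstSn}).

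Since $\tSym_{n,p'}\subseteq C_{\tSym_n}$, the spin Brauer characters $\IBr_p^{\mathrm{spin}}(\tSym_n)$, extended by zero off the $p$-regular elements, lie in $L^s$ and span a direct summand of it (restriction to $X:=C_{\tSym_n}\setminus\tSym_{n,p'}$ identifies the quotient with a lattice of class functions, hence it is torsion-free). The key point is a combinatorial description of a complement. The classes meeting $X$ consist of elements having a cycle of even length divisible by $p$ and no cycle of odd length divisible by $p$; a bar partition $\la$ labelling such a class is then not a $\q$-core, and, if the class is split, $\la\in\cal{D}_n^-$. By Equations~(\ref{eq:valdmoinsSn}), the only spin characters of $\tSym_n$ not vanishing on such a class are $\cla^+$ and $\cla^-$, and the generalized character $\Xi_\la:=\cla^+-\cla^-=(1-\e)\cla^+$ is supported exactly on the two classes of type $\la$ (as $\e=-1$ on $\cal{D}_n^-$-classes and $\cla^+$ vanishes on the $\cal{O}_n$-classes, where $\e=1$), so $\Xi_\la\in L^s$. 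I would then prove $L^s=\Z\IBr_p^{\mathrm{spin}}(\tSym_n)\oplus\bigoplus_\la\Z\Xi_\la$ (over these ``relevant'' $\la$): for $\psi\in L^s$, the $p$-regular truncation $\psi|_{\tSym_{n,p'}}$ (extended by zero) lies in $\Z\IBr_p^{\mathrm{spin}}(\tSym_n)$, the difference $\psi-\psi|_{\tSym_{n,p'}}$ is a spin generalized character supported on $X$, and, since tensoring with $\e$ fixes each $\cla$ with $\sa(\la)=1$ and interchanges $\cla^+\leftrightarrow\cla^-$, its $\e$-fixed part is supported both on $X$ and on the $\cal{O}_n$-classes, hence is $0$; so $\psi-\psi|_{\tSym_{n,p'}}$ is an integer combination of the $\Xi_\la$.

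With $b^s=\IBr_p^{\mathrm{spin}}(\tSym_n)\sqcup\{\Xi_\la\}_\la$, the edges of the block graph coming from $\varphi\in\IBr_p^{\mathrm{spin}}(\tSym_n)$ are exactly the usual incidences of the ordinary decomposition matrix (the $\Xi_\la$ vanish on $p$-regular elements, so these decomposition numbers are unchanged), whose components are the $p$-blocks; and evaluating on the two classes of type $\la$ shows that $d_{\chi\Xi_\la}\neq0$ forces $\chi\in\{\cla^+,\cla^-\}$, so every remaining edge only joins $\cla^+$ to $\cla^-$ — and $\la$, having a part divisible by $p$, is not a $\q$-core, so $\cla^+$ and $\cla^-$ already lie in a common spin $p$-block by Theorem~\ref{blockstSn}. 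Adjoining edges that lie inside existing blocks leaves the connected components unchanged, so the $\cal{C}_{\tSym_n}$-blocks of spin characters are the spin $p$-blocks. For $\tAlt_n$ the same scheme applies: the ``new'' split classes now have type $\la\in\cal{D}_n^+\setminus\cal{O}_n$, Equation~(\ref{eq:valailleur}) shows only $\pla^\pm$ are non-zero there, the difference character $\Dla=\pla^+-\pla^-$ — supported exactly on the classes of type $\la$ by Equation~(\ref{eq:defdiff}) — plays the role of $\Xi_\la$, and the $\e$-tensoring step is replaced by the conjugation automorphism of $\C\Irr(\tAlt_n)$ induced by $\tSym_n$, which fixes $\pla$ for $\la\in\cal{D}_n^-$ and swaps $\pla^+\leftrightarrow\pla^-$. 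The step needing the most care is the decomposition $L^s=\Z\IBr_p^{\mathrm{spin}}\oplus\bigoplus_\la\Z\Xi_\la$ — i.e.\ verifying that the ``new'' classes contribute exactly one basis vector apiece with no ``diagonal'' mixing — which is exactly where the character-value formulas of \S\ref{section:classAnTilde} and the sign/conjugation symmetry are needed.
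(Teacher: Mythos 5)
Your strategy is genuinely different from the paper's (the paper never constructs a basis of $\Z\Irr(\tSym_n)^{{\cal C}_{\tSym_n}}$; it compares the scalar products $\langle\xi,\xi'\rangle_{C_{\tSym_n}}$ and $\langle\xi,\xi'\rangle_{\tSym_{n,p'}}$ directly and invokes \cite{Olsson-blocks} to tie an associate pair to a self-associate character in its $p$-block), but your execution has a genuine gap, and it sits exactly at the step you flag as delicate. The identity $L^s=\Z\IBr_p^{\mathrm{spin}}(\tSym_n)\oplus\bigoplus_{\la}\Z\,\Xi_{\la}$ is false. The difference $\psi-\psi|_{\tSym_{n,p'}}$ is only a truncated class function, not a virtual character, so you cannot expand it integrally in $\Irr(\tSym_n)$ and then use the $\e$-symmetry to force \emph{integer} coefficients on the $\Xi_\la$; the symmetry argument only places it in $\bigoplus_\la\C\,\Xi_\la$. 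Concretely, take any relevant $\la\in{\cal D}_n^-$ (e.g.\ one part equal to $2p$, the other parts odd, distinct and prime to $p$; such $\la$ exist once $n\geq 2p$) and $\psi=\res_{C_{\tSym_n}}(\cla^+)\in L^s$. The two split classes of type $\la$ are $p$-singular but lie in $C_{\tSym_n}$, and by Equation~(\ref{eq:valdmoinsSn}) one has $\cla^+(t_\la)=v$ and $\Xi_\la(t_\la)=2v$ with $v=i^{\frac{n-\ell(\la)+1}{2}}\sqrt{z_\la/2}\neq 0$. Hence $\psi-\psi|_{\tSym_{n,p'}}=\tfrac12\,\Xi_\la$, which is not in $\bigoplus_{\mu}\Z\,\Xi_{\mu}$; so $\psi\in L^s$ does not lie in your proposed lattice, $b^s$ is not a $\Z$-basis of $L^s$ (and it is not even clear that the zero-extended spin Brauer characters belong to $L^s$), and the block-graph computation built on that basis collapses. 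The same half-integer phenomenon occurs in the $\tAlt_n$ case with $\tfrac12\Delta_\la$.

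A repair would have to take as basis elements on the ``extra'' classes lifts such as $\res_{C_{\tSym_n}}(\cla^+)$ itself (the image of $L^s$ on $C_{\tSym_n}\setminus\tSym_{n,p'}$ is spanned by the $\tfrac12\Xi_\la$, not the $\Xi_\la$), with a correspondingly smaller ``Brauer-type'' complement, and the incidence analysis then changes: such a lift has nonzero coefficients in the expansions of many spin characters, so one can no longer read off the graph as ``old decomposition-matrix edges plus harmless edges between $\cla^+$ and $\cla^-$''. This is precisely the difficulty the paper's proof sidesteps by working only with the inner-product (KOR) criterion of Remark~\ref{rk:kor} and Corollary~\ref{critere}: for $\xi'\notin\{\xi,\e\xi\}$ the extra classes support only the two characters labelled by their own partition, so $\langle\xi,\xi'\rangle_{C_{\tSym_n}}=\langle\xi,\xi'\rangle_{\tSym_{n,p'}}$, and the associate pair $\cla^{\pm}$ is handled through a self-associate character in the same $p$-block (\cite{Olsson-blocks}). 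Your closing observation — that a relevant $\la$ is not a $\q$-core, so $\cla^+$ and $\cla^-$ already share a spin $p$-block by Theorem~\ref{blockstSn} — is correct and attractive, but as it stands it rests on the faulty lattice decomposition.
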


\begin{proof}
Let $\chi$ and $\xi$ be a non-spin and a spin character of $\tSym_n$,
respectively. Since $\chi$ is constant on the split classes, we deduce 
that $\cyc{\chi,\xi}_{C_{\tSym_n}}=0$. Thus, spin and non-spin
characters are never in the same $\mathcal C_{\tSym_n}$-block.

Now, take any two spin characters $\xi$ and $\xi'$ of $\tSym_n$, such
that $\xi' \not \in  \{\xi, \e \xi \}$. Then the only elements of
$C_{\tSym_n} \setminus \tSym_{n,p'} $, if any, on which $\xi$ doesn't
vanish belong to split conjugacy classes labeled by the partition
labeling $\xi$ (this is because any split conjugacy class of
$C_{\tSym_n} $ labeled by a partition of ${\cal O}_n$, and thus without
even cycles, must also belong to $\tSym_{n,p'} $). And since $\xi' \not
\in  \{\xi, \e \xi \}$, we see that $\xi'$ vanishes on these elements.
In this case, we therefore have 
\begin{equation}
\label{eq:scal}
\langle \xi , \xi'
\rangle_{C_{\tSym_n}} = \langle \xi , \xi' \rangle_{\tSym_{n,p'}} =
\langle \e \xi , \xi' \rangle_{\tSym_{n,p'}}=\langle \varepsilon \xi , \xi'
\rangle_{C_{\tSym_n}}.
\end{equation}
Assume that $\xi'\notin \{\xi,\e \xi\}$ lies in the same $p$-block as
$\xi$. Then there are distinct spin irreducible characters
$\xi'=\xi_1,\ldots,\xi_s=\xi$ such that $\xi_i\neq \xi$ for $1\leq i\leq
s-1$ and $\langle
\xi_i,\xi_{i+1}\rangle_{\tSym_{n,p'}}\neq 0$. We can assume that $\xi_i\neq
\varepsilon\xi$ for all $1\leq i\leq s$. 
Indeed, let $2\leq i\leq s$ be such that
$\xi_i=\varepsilon\xi$. Since $\langle
\xi_{i-1},\xi_i\rangle_{\tSym_{n,p'}}= \langle
\xi_{i-1},\varepsilon\xi \rangle_{\tSym_{n,p'}}= \langle
\xi_{i-1},\xi\rangle_{\tSym_{n,p'}}$ by Equation~(\ref{eq:scal}), we
can take $s=i$. 
We also can assume that $\xi_{i+1}\neq\varepsilon \xi_{i}$ for all $1\leq
i\leq s-1$. Otherwise, if there is $1\leq i\leq s-1$ such that
$\xi_{i+1}=\varepsilon\xi_i$, then $i<s-1$, and since
$\xi_{i+2}\notin\{\xi_{i},\xi_{i+1}\}=\{\xi_i,\varepsilon\xi_i\}$, we
deduce from Equation~(\ref{eq:scal}) that $\langle
\xi_{i+1},\xi_{i+2}\rangle_{\tSym_{n,p'}}=\langle
\xi_{i},\xi_{i+2}\rangle_{\tSym_{n,p'}}$ and we can remove $\xi_{i+1}$
from the chain.

Hence, Equation~(\ref{eq:scal}) gives $\langle
\xi_i,\xi_{i+1}\rangle_{\tSym_{n,p'}}=\langle
\xi_i,\xi_{i+1}\rangle_{C_{\tSym_n}}$ for all $1\leq i\leq s-1$, and the
characters $\xi'$ and $\xi$ lie in the same $C_{\tSym_n}$-block.

By a similar argument, 
Equation~(\ref{eq:scal}) implies that if $\xi'\notin \{\xi,\e \xi\}$
lies in the same $C_{\tSym_n}$-block as $\xi$, then they are in the same
$p$-block.
% This
% shows that $\xi$ and $\xi'$ belong to the same $p$-block if and only if
% they belong to the same ${\cal C}_{\tSym_n}$-block.

Furthermore, if $\xi \neq \e \xi$, then either $\xi$ and $\e \xi$ 
belong to the same $p$-block, or
each is alone in its respective $p$-blocks.
% or they belong to the same $p$-block. 
% In the first case, this means $\xi$ and $\e \xi$ both vanish
% identically on $p$-singular elements, so in particular also on $\tSym_n
%\setminus {\cal C}_{\tSym_n}$. Hence they are each alone in their
%respective ${\cal C}_{\tSym_n}$-blocks as well. 
In the first case, the $p$-block which contains $\xi$ and $\e \xi$ also 
contains some spin character $\xi'$ such that $\xi'=\e\xi'$ (this follows from
\cite[(2.1)]{Olsson-blocks}). %Then, by the previous discussion, we see
%that $\xi$, $\e \xi$ and $\xi'$ all belong to the same ${\cal
%C}_{\tSym_n}$-block.
In particular, in the $p$-block of $\xi$, there is some irreducible
$\xi''\notin\{\xi,\e \xi\}$ such that $\langle
\xi'',\xi\rangle_{\tSym_{n,p'}}\neq 0$. Thus, by Equation~(\ref{eq:scal}), we
obtain $\langle
\xi'',\xi\rangle_{C_{\tSym_n}}=\langle
\xi'',\xi\rangle_{\tSym_{n,p'}}=\langle
\xi'',\varepsilon\xi\rangle_{C_{\tSym_n}}$,
and $\xi$ and $\e \xi$ belong to the same ${\cal C}_{\tSym_n}$-block.

In the second case,
$\xi$ and $\e \xi$ have $p$-defect zero. In particular, $\xi$ and $\e
\xi$ are labeled by a $\overline{p}$-core $\lambda\in D_n^-$, and
$\lambda$ is $p$-regular. Hence they both vanish
identically on $p$-singular elements, so also on $\cal C_{\tSym_n}
\setminus \tSym_{n,p'}$, and they are each alone in their
respective ${\cal C}_{\tSym_n}$-blocks as well. 
% Since the restriction of $\xi$ and $\e \xi$ to 
% the classes labeled by $\mathcal O_n$ (and thus to $\mathcal C_{\tSym_n}$)
% coincide, we deduce that $\xi$ and $\e \xi$ belong to the same $\mathcal
% C_{\tSym_n}$-block. By the above discussion, this block contains no
% other character. 
The result follows.

For the case of $\tAlt_n$, the argument is similar.
\end{proof}

We can now define on $\tSym_n$ and $\tAlt_n$ an MN-structure with
respect to the set of spin $p$-blocks of $\tSym_n$ and $\tAlt_n$, 
and the sets $C_{\tSym_n}$ and $C_{\tAlt_n}$ defined in
Equation~(\ref{eq:defCtilde}), respectively. 
% and as in
% Definition \ref{defMN}. Let $p$ be an odd prime. 
For this, we define $S_{\Sym_n}$
to be the set of elements $\sigma \in \Sym_n$ all of whose cycles have
length 1 or an odd multiple of $p$. By Lemma~\ref{lem:ordre}, we denote
by $o_{\sigma}$ the element of $\tSym_n$ of odd order such that
$\theta(o_{\sigma})=\sigma$, and
% , and we define $C_{\Sym_n}$ to be the
% set of elements of $\Sym_n$ none of whose cycles has length an odd
% multiple of $p$. We then 
we let 
\begin{equation}
\label{eq:defStilde}
S_{\tSym_n}= \{ o_{\sigma} \, | \, \sigma \in
S_{\Sym_n} \} \quad\textrm{and}\quad 
S_{\tAlt_n} = S_{\tSym_n} \cap \tAlt_n.
\end{equation}
% C_{\tSym_n}= \theta^{-1}(C_{\Sym_n})\; \mbox{and}  \; C_{\tAlt_n} =
% C_{\tSym_n} \cap \tAlt_n.$$ 
Note that, since $p$ is odd, and since we
only consider odd multiples of $p$, we have $S_{\tAlt_n} = S_{\tSym_n}
\cap \tAlt_n= S_{\tSym_n} $. 
%  Recall that, for any partition $\pi$ of
%  $n$, we have chosen a representative $s_{\pi}$ of the $\sym_n$-class
%  corresponding to $\pi$. We now precise some choices. 
%  For $\pi=(\pi_1,\ldots,\pi_r)
%  \in S_{\sym_n}$, identifying the integer $\pi_i$ with the partition
%  $(\pi_i,1^{n-|\pi_i|})$, we assume that $s_{\pi}=s_{\pi_1}\cdots
%  s_{\pi_r}$. By~\cite[III, p.\,172]{schur}, we in particular have
%  \begin{equation}
%  t_{\pi}=t_{\pi_1}\cdots t_{\pi_r}.
%  \label{eq:prodcyleSn}
%  \end{equation}
%  

\begin{proposition}\label{MNTilde}
Let $n>0$ be any integer, and $p$ be an odd prime. Let $Sp(\tSym_n)$ and
$Sp(\tAlt_n)$ be the sets of spin $p$-blocks of $\tSym_n$ and $\tAlt_n$
respectively. Then $\tSym_n$ has an MN-structure (as defined in
Definition \ref{defMN}) with respect to ${\cal C}_{\tSym_n}$ and
$Sp(\tSym_n)$, and $\tAlt_n$ has an MN-structure with respect to ${\cal
C}_{\tAlt_n}$ and $Sp(\tAlt_n)$.

\end{proposition}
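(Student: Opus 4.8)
The plan is to verify the four conditions of Definition~\ref{defMN} directly, treating $\tSym_n$ and $\tAlt_n$ in parallel, with $S=S_{\tSym_n}$ (resp.\ $S=S_{\tAlt_n}=S_{\tSym_n}$), $C=C_{\tSym_n}$ (resp.\ $C=C_{\tAlt_n}$) and $B=Sp(\tSym_n)$ (resp.\ $B=Sp(\tAlt_n)$); the latter is a union of $\mathcal C_{\tSym_n}$-blocks (resp.\ $\mathcal C_{\tAlt_n}$-blocks) by Lemma~\ref{same-blocks}. The one fact that makes everything fit together is an elementary remark that I would isolate first: if $x\in\tSym_n$ has odd order and $y\in\tSym_n$ is such that $\theta(x)$ and $\theta(y)$ commute, then $x$ and $y$ commute. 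Indeed $[x,y]\in\Ker\theta=\langle z\rangle$, so $xyx^{-1}=z^{\e}y$ for some $\e\in\{0,1\}$; iterating gives $x^kyx^{-k}=z^{k\e}y$, and taking $k$ equal to the (odd) order of $x$ forces $z^{k\e}=1$, hence $\e=0$. In particular every $o_\sigma\in S_{\tSym_n}$ commutes with the whole of $\theta^{-1}(\Sym_{\overline{J}})$, where $\overline{J}$ is the complement of the set of points moved by $\sigma$.

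For condition~(1), $S_{\tSym_n}$ visibly contains $1$ and is stable under conjugation, since conjugation preserves orders and the cycle type of the image under $\theta$. For condition~(2): given $g\in\tSym_n$, decompose the permutation $\theta(g)=\sigma_S\sigma_C$ into the product of its cycles of length an odd multiple of $p$ (so $\sigma_S\in S_{\Sym_n}$) and the product of its remaining cycles ($\sigma_C\in C_{\Sym_n}$); these have disjoint support and the decomposition is unique. Put $x_S=o_{\sigma_S}$ and $x_C=x_S^{-1}g$; then $\theta(x_C)=\sigma_C$, so $x_C\in C_{\tSym_n}$, and the remark gives $x_Sx_C=x_Cx_S$. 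Let $A$ be the set of $(x_S,x_C)\in S_{\tSym_n}\times C_{\tSym_n}$ with $\theta(x_S),\theta(x_C)$ of disjoint support, and set $x_S\pd x_C=x_Sx_C$. The assignment $(x_S,x_C)\mapsto x_Sx_C$ is a bijection $A\to\tSym_n$: surjectivity is the construction just given, and injectivity holds because the permutation decomposition is unique and because $\theta(x_S)$ has only odd cycles, so (Lemma~\ref{lem:ordre}) $x_S$ is the \emph{unique} odd-order lift of $\theta(x_S)$, the other lift $zx_S$ having even order. The conjugation-equivariance of the bijection and the clauses $(x_S,1),(1,x_C)\in A$ are immediate. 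For condition~(3), given $x_S\in S_{\tSym_n}$ let $J$ be the set of points moved by $\theta(x_S)$ and $\overline{J}$ its complement, and put $G_{x_S}=\theta^{-1}(\Sym_{\overline{J}})\cong\tSym_{|\overline{J}|}$; by the remark $G_{x_S}\leq\Cen_{\tSym_n}(x_S)$, and $G_{x_S}\cap C_{\tSym_n}=\theta^{-1}(\Sym_{\overline{J}}\cap C_{\Sym_n})$ is exactly $\{x_C\in C_{\tSym_n}\mid(x_S,x_C)\in A\}$.

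Condition~(4) is where the recursion enters. I would fix, for each $\tSym_n$-class meeting $S_{\tSym_n}$, a representative $x_S$ whose image $\theta(x_S)$ has its nontrivial cycles $\rho_1,\dots,\rho_k$, of lengths $pm_1,\dots,pm_k$ with each $m_j$ odd, occupying consecutive intervals at the top of $\{1,\dots,n\}$; then $x_S=o_{\rho_1}\cdots o_{\rho_k}$, the factors commuting pairwise by the remark. Since each $pm_j$ is an odd integer $\geq p\geq 3$, I can apply Cabanes' recursion (Theorem~\ref{MNCabanes}, together with Remark~\ref{sign-coef} for the associate characters) to peel off the $\rho_j$'s one at a time: this produces, for every spin character $\chi$ of $\tSym_n$, an explicit $\C$-linear combination $r^{x_S}(\chi)$ of spin characters of $G_{x_S}=\tSym_{n-(pm_1+\cdots+pm_k)}$ with $r^{x_S}(\chi)(g)=\chi(x_S g)$ for \emph{all} $g\in G_{x_S}$ (when some $pm_j$-bar cannot be removed the recursion simply returns $0$, so $r^{x_S}=0$). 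Hence $r^{x_S}$ is a homomorphism $\C\Irr(Sp(\tSym_n))\to\C\Irr(Sp(G_{x_S}))$, and by Lemma~\ref{same-blocks} we may take $B_{x_S}=Sp(G_{x_S})$; the normalisations $G_1=\tSym_n$, $B_1=Sp(\tSym_n)$, $r^1=\operatorname{id}$ hold by construction. This proves that $\tSym_n$ has an MN-structure; the argument for $\tAlt_n$ is the same line by line, using $S_{\tAlt_n}=S_{\tSym_n}\subseteq\tAlt_n$ and $C_{\tAlt_n}=C_{\tSym_n}\cap\tAlt_n$ (so that all the above decompositions restrict to $\tAlt_n$), $G_{x_S}=\theta^{-1}(\Alt_{\overline{J}})\cong\tAlt_{|\overline{J}|}$, the recursion of Theorem~\ref{MNcoefAntilde} in place of Theorem~\ref{MNCabanes}, and the second half of Lemma~\ref{same-blocks}.

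The genuinely load-bearing input is the odd-order commuting remark: it is what makes the factorisation $g=x_S\pd x_C$ exist with $x_S$ and $x_C$ actually commuting and $x_S$ centralising $G_{x_S}$, and it is precisely the restriction to cycles of \emph{odd} multiple of $p$ that lets $x_S$ be chosen of odd order and each $pm_j$ be an odd integer to which Theorems~\ref{MNCabanes} and~\ref{MNcoefAntilde} apply. Beyond that, the work is bookkeeping — arranging the supports of the $\rho_j$'s so the recursion formula applies at each peeling step, and checking the output stays inside the spin blocks — and the only nontrivial part of the latter is exactly what Lemma~\ref{same-blocks} was proved for, which is where I expect the main care to be needed.
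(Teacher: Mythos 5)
Your proof is correct and takes essentially the same route as the paper: you verify the four conditions of Definition~\ref{defMN} with the same $S$, $C$, the same set $A$ of pairs with disjoint supports, $G_{x_S}$ the double cover on the complementary points with $B_{x_S}=Sp(G_{x_S})$ (justified by Lemma~\ref{same-blocks}), and condition~4 obtained by peeling off the cycles one at a time via Theorems~\ref{MNCabanes} and~\ref{MNcoefAntilde}. The only deviation is minor and welcome: where the paper invokes Schur (III, p.~172) for $o_{\sigma}=o_{\sigma_1}\cdots o_{\sigma_k}$ and for the centralizer structure used in condition~3, you substitute the elementary commutator observation that an odd-order element of $\tSym_n$ commutes with any lift of a permutation commuting with its image, which gives a self-contained and slightly more direct verification of the same facts.
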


\begin{proof}
First note that, by Lemma \ref{same-blocks}, $Sp(\tSym_n)$ and
$Sp(\tAlt_n)$ are indeed unions of ${\cal C}_{\tSym_n}$-blocks and
${\cal C}_{\tAlt_n}$-blocks respectively.

To stick with the notation of Definition \ref{defMN}, we take $G \in \{
\tSym_n, \, \tAlt_n \}$, $B=Sp(G)$, ${\cal C}={\cal C}_G$ and $S=S_G$
(as defined above). Properties 1 and 2 of Definition \ref{defMN} are
immediate consequences of the definition of $S$ and ${\cal C}$. For $x_S
\in S$ and $x_C \in C$, we have $(x_S, \, x_C) \in A$ if and only if the
non-trivial cycles of $\theta(x_S)$ and $\theta(x_C)$ are disjoint (in
particular, $x_S$ and $x_C$ commute).

Now take any $x_S \in S$. If $x_S=1$, then $G_1=G$, $B_1=B$ and
$r^1=\operatorname{id}$ clearly satisfy Properties 3 and 4. If, on the
other hand, $x_S \neq 1$, then, by definition of $S$, we have
$x_S=o_{\sigma}$ for some $\sigma\in S_{\sym_n}$. Write
 $\sigma=\sigma_1  \cdots  \sigma_k$, where, for each $1 \leq i \leq
k$, $\sigma_i$ is a $q_i$-cycle for some odd multiple $q_i$ of $p$, and
the $\sigma_i$'s are pairwise disjoint. In particular, $\sigma_i\in
S_{\sym_n}$ and, since $\sigma_i\in \Alt_n$,~\cite[III, p.\,172]{schur} 
gives $o_{\sigma}=o_{\sigma_1}
  \cdots  o_{\sigma_k}$, and $C_G(x_S)$ has as a subgroup the
group $H=G_{x_S} \times \langle o_{\sigma_1} \rangle \times \cdots \times
\langle o_{\sigma_k} \rangle$, where $G_{x_S} \cong \tSym_{n -
\sum_{i=1}^k q_i}$ if $G = \tSym_n$, and $G_{x_S} \cong \tAlt_{n -
\sum_{i=1}^k q_i}$ if $G = \tAlt_n $ (and with the convention that
$\tSym_0=\tAlt_0=\langle z \rangle$). 

Property 3 now follows from the definition of $A$ we gave above.
Clearly, if $x_C \in G_{x_S} \cap C$, then the non-trivial cycles of
$\theta(x_C)$ and $\theta(x_S)$ are disjoint, so that $(x_S, \, x_C) \in
A$. Conversely, if $(x_S, \, x_C) \in A$, then one must have $x_C \in
C_G(x_S)$. If $x_C \in C_G(x_S) \setminus H$, then $\theta(x_C)$ must
permute (non-trivially) the $(p)$-cycles of $\theta(x_S)$; in
particular, the non-trivial cycles of $\theta(x_C)$ and $\theta(x_S)$
cannot be disjoint, so that $(x_S, \, x_C) \not \in A$. Hence, if $(x_S,
\, x_C) \in A$, then necessarily $x_C \in H$. Now, in order for $x_C$ to
be disjoint from $x_S$, we see that one must have $x_C \in G_{x_S}$.
This proves that $G_{x_S} \cap C= \{ x_C \in C \, | \, (x_S, \, x_C) \in
A \}$.

Finally, we obtain Property 4 by iterating Theorem
\ref{MNCabanes} and Theorem~\ref{MNcoefAntilde}. 
By considering (and removing) the ``cycles'' $o_{\sigma_i}$ ($1
\leq i \leq k$) one at a time, and in increasing order of size, one sees that we can define
$r^{x_S}(\chi)$ for any spin character $\chi \in B$. By construction,
$r^{x_S}(\chi)$ does satisfy $r^{x_S}(\chi)(x_C)=\chi(x_S \cdot x_C)$ for all
$(x_S, \, x_C) \in A$. Taking $B_{x_S}$ to be the set of spin characters
of $G_{x_S}$, and extending $r^{x_S}$ by linearity to $\C \Irr(B)$, we
obtain the result.
\end{proof}

\subsection{Brou\'e perfect isometries}
Throughout this section, we denote by $p$ an odd prime number.

Let $n$, $q$ and $\la$ be as in Theorem \ref{MNCabanes}. Suppose
furthermore that $q$ is an odd multiple of $p$, and that
$w_{\p}(\la)>0$. Next consider any spin $p$-block $B'$, of $\tSym_m$
say, of the same weight and sign as the $p$-block $B$ of $\cla^+$, and
the bijection $\Psi$ described in Lemma \ref{bijections}. In particular,
$\Psi$ preserves the parity of bar partitions. Now, since $q$ is a
multiple of $p$, the removal of a $q$-bar can be obtained by removing a
sequence of $p$-bars, and one sees from Theorem \ref{olsson} that
$M_q(\Psi(\la)) = \Psi(M_q(\la))$. This is a slight abuse of notation,
as $\Psi$ should only act on partitions of the same weight as $\la$,
while the elements of $M_q(\la)$ have a smaller weight. But we see that
$\Psi$ is compatible with the bijections $g_{\la}$ and $g_{\Psi(\la)}$
given by Theorem \ref{olsson}, since everything goes through the
(common) $\p$-quotient of $\la$ and $\Psi(\la)$. Also, thanks to
Equation~(\ref{eq:sign}) one has $\sa(\Psi(\mu))=\sa(\mu)$ for any 
$\mu \in M_q(\la)$. We then have the following:

\begin{proposition}\label{MNcoef}
Let the notation be as above. For any $\mu \in M_q(\la)$, and for any
$\epsilon, \, \eta\in \{ 1, \, -1 \}$, we have
$$\da_{\p}(\la) \da_{\p}(\mu) a( \cla^{\epsilon \da_{\p}(\la)},
\cmu^{\eta \da_{\p}(\mu)}) = \da_{\p}(\Psi(\la)) \da_{\p}(\Psi(\mu)) a(
\xi_{\Psi(\la)}^{\epsilon\da_{\p}(\Psi(\la))}, \xi_{\Psi(\mu)}^{\eta \da_{\p}(\Psi(\mu))}).$$

\end{proposition}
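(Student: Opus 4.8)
The plan is to mimic, bar by bar, the proof of Proposition~\ref{prop:coeffAn}. Fix $\mu\in M_q(\la)$, let $b$ be the $q$-bar of $\la$ with $\mu=\la\setminus b$, and let $b'$ be the $q$-bar of $\Psi(\la)$ with $\Psi(\mu)=\Psi(\la)\setminus b'$; by the compatibility of $\Psi$ with the bijections $g_{\la}$ and $g_{\Psi(\la)}$ of Theorem~\ref{olsson} recalled before the statement, $b'$ is characterised by $g_{\Psi(\la)}(b')=g_{\la}(b)$, this being one and the same bar of the common $\p$-quotient $\la^{(\p)}=\Psi(\la)^{(\p)}$. The first step I would carry out is the ``relative-sign corrected'' identity
$$\da_{\p}(\la,\mu)\,\alpha_{\mu}^{\la}\;=\;\da_{\p}(\Psi(\la),\Psi(\mu))\,\alpha_{\Psi(\mu)}^{\Psi(\la)}.$$
By Theorem~\ref{relativesign}(iii) one has $(-1)^{L(b)}=(-1)^{L(g_{\la}(b))}\da_{\p}(\la,\mu)$, hence $(-1)^{L(b)}\da_{\p}(\la,\mu)=(-1)^{L(g_{\la}(b))}$ since $\da_{\p}(\la,\mu)^2=1$; the same identity holds for $\Psi(\la),\Psi(\mu),b'$, and the two right-hand sides agree because $g_{\la}(b)=g_{\Psi(\la)}(b')$. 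Since $m(b)$ depends only on $\sa(\la)$ and $\sa(\mu)$, and since $\Psi$ preserves parities of bar partitions (so $\sa(\Psi(\la))=\sa(\la)$ and $\sa(\Psi(\mu))=\sa(\mu)$), we get $m(b)=m(b')$; multiplying by $2^{m(b)}$ and using $\alpha_{\mu}^{\la}=(-1)^{L(b)}2^{m(b)}$ yields the displayed equality. Recall also that $\da_{\p}(\la)\da_{\p}(\mu)=\da_{\p}(\la,\mu)$ by~(\ref{eq:signTilde}), since removing $b$ amounts to removing a sequence of $p$-bars.

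Next I would check that the three cases of Theorem~\ref{MNCabanes} (together with Remark~\ref{sign-coef}) governing $(\cla,\cmu)$ are matched by $\Psi$: as $\Psi$ preserves parities and $M_q(\Psi(\la))=\Psi(M_q(\la))$, the pair $(\xi_{\Psi(\la)},\xi_{\Psi(\mu)})$ falls in the same case as $(\cla,\cmu)$, using the elementary observation that ``$\mu=\la\setminus\{q\}$'' is equivalent to ``$\mu\in M_q(\la)$ and $\sa(\mu)=\sa(\la)$'' (which forces $\sa(\la)=\sa(\mu)=-1$ and hence $m(b)=0$), so that $\Psi(\la\setminus\{q\})=\Psi(\la)\setminus\{q\}$ whenever one side is defined. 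In the first two cases the coefficient $a(\cla^{\pm},\cmu)$, resp.\ $a(\cla^{\pm},\cmu^{\pm})$, is independent of the signs and equals a universal constant $c\in\{(-1)^{(q^2-1)/8},\tfrac12(-1)^{(q^2-1)/8}\}$ times $\alpha_{\mu}^{\la}$; so the desired identity reads $\da_{\p}(\la)\da_{\p}(\mu)\,c\,\alpha_{\mu}^{\la}=\da_{\p}(\Psi(\la))\da_{\p}(\Psi(\mu))\,c\,\alpha_{\Psi(\mu)}^{\Psi(\la)}$, which is the first step multiplied by $c$.

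For the remaining case $\mu=\la\setminus\{q\}$ I would argue exactly as at the end of the proof of Proposition~\ref{prop:coeffAn}. Writing $s=\epsilon\da_{\p}(\la)$ and $t=\eta\da_{\p}(\mu)$ and using Remark~\ref{sign-coef}, one has $a(\cla^{s},\cmu^{t})=\tfrac12(-1)^{(q^2-1)/8}\big((-1)^{L(b)}+st\,i^{(q-1)/2}\sqrt q\big)$ (recall $\alpha_{\mu}^{\la}=(-1)^{L(b)}$ here); multiplying by $\da_{\p}(\la)\da_{\p}(\mu)$ turns the first summand into $\da_{\p}(\la,\mu)(-1)^{L(b)}$ and the coefficient of $i^{(q-1)/2}\sqrt q$ into $\epsilon\eta$, because $(\da_{\p}(\la)\da_{\p}(\mu))^2=1$. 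Performing the same computation on the right-hand side with $b'$, $\Psi(\la)$, $\Psi(\mu)$ in place of $b$, $\la$, $\mu$, and comparing the two via the first step, finishes the proof.

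I expect the only real friction to be in the bookkeeping: making the compatibility of $\Psi$ with $g_{\la}$ and $g_{\Psi(\la)}$ precise at the level of individual bars — in particular that $\Psi$ identifies the distinguished bars $\la\setminus\{q\}$ and $\Psi(\la)\setminus\{q\}$ — and keeping track of which of the three cases of Theorem~\ref{MNCabanes} one is in. The underlying arithmetic is then a routine substitution into the explicit coefficients, entirely parallel to the symmetric-group computation of Proposition~\ref{prop:coeffAn}.
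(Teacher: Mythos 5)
Your proposal is correct and takes essentially the same route as the paper's proof: you first establish $\da_{\p}(\la)\da_{\p}(\mu)\,\alpha_{\mu}^{\la}=\da_{\p}(\Psi(\la))\da_{\p}(\Psi(\mu))\,\alpha_{\Psi(\mu)}^{\Psi(\la)}$ via Theorem~\ref{relativesign}(iii) and $g_{\Psi(\la)}(\Psi(b))=g_{\la}(b)$, match the three cases of Theorem~\ref{MNCabanes} under $\Psi$, and finish by the same substitution in the case $\mu=\la\setminus\{q\}$ (your parity criterion for identifying $\Psi(\la\setminus\{q\})=\Psi(\la)\setminus\{q\}$ is a harmless variant of the paper's tracking of the length-$q$ part through the bijection $g$). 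One small imprecision: $\mu=\la\setminus\{q\}$ does not by itself force $\sa(\la)=\sa(\mu)=-1$ (e.g.\ $\la=(3,1)$, $q=3$, $\mu=(1)$ has both signs $+1$), but that subcase lies in the first case of Theorem~\ref{MNCabanes}, and in the only place you use the forcing claim, namely the square-root case where $\sa(\mu)=-1$ by hypothesis, it is valid, so the argument stands.
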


\begin{proof}
Let $\mu \in M_q(\la)$ be obtained by removing the $q$-bar $b$ from
$\la$. Then, by definition of $\Psi$, we see, using Theorem
\ref{olsson}, that $\Psi(\mu) \in M_q(\Psi(\la))$ is obtained by
removing the $q$-bar $\Psi(b)$ from $\Psi(\la)$.

We start by comparing $\alpha_{\mu}^{\la}$ and
$\alpha_{\Psi(\mu)}^{\Psi(\la)}$.  By definition, we have

\begin{equation}
\label{eq:alphatilde}
\alpha_{\mu}^{\lambda}= (-1)^{L(b)} 2^{m(b)} \qquad \mbox{and} \qquad
\alpha_{\Psi(\mu)}^{\Psi(\la)}=(-1)^{L(\Psi(b))} 2^{m(\Psi(b))}, 
\end{equation}
where
\begin{equation}
\label{eq:mb}
m(b)= \left\{ \begin{array}{rl} 1 & \mbox{if} \; \sa(\lambda) =1 \;
\mbox{and} \;  \sa(\mu )= -1 , \\ 0 & \mbox{otherwise}. \end{array}
\right.
\end{equation} and 
\begin{equation}
\label{eq:mbpsi}
m(\Psi(b))= \left\{ \begin{array}{rl} 1 & \mbox{if} \;
\sa(\Psi(\lambda)) =1 \; \mbox{and} \;  \sa(\Psi(\mu) )= -1 , \\ 0 &
\mbox{otherwise}. \end{array} \right.  
\end{equation}

%  By definition, we have
% $$\alpha_{\mu}^{\la}= (-1)^{L(b)} 2^{m(b)} \qquad \mbox{and} \qquad \alpha_{\Psi(\mu)}^{\Psi(\la)}=(-1)^{L(\Psi(b))} 2^{m(\Psi(b))}, $$
% where
% 
% $$m(b)= \left\{ \begin{array}{rl} 1 & \mbox{if} \; \sa(\la) =1 \; \mbox{and} \;  \sa(\mu )= -1 , \\ 0 & \mbox{otherwise}. \end{array} \right.$$
% and
% $$m(\Psi(b))= \left\{ \begin{array}{rl} 1 & \mbox{if} \; \sa(\Psi(\la)) =1 \; \mbox{and} \;  \sa(\Psi(\mu ))= -1 , \\ 0 & \mbox{otherwise}. \end{array} \right.
% $$
And, since $\Psi$ preserves the parity of partitions, we see that $m(b)=m(\Psi(b))$.
Now $L(b)$ is related to $L(g_{\la}(b))$, where $g_{\la}$ is the
bijection described in Theorem~\ref{olsson}. Similarly, $L(\Psi(b))$ is
related to $L(g_{\Psi(\la)}(\Psi(b)))$, but, as we remarked above,
$g_{\Psi(\la)}(\Psi(b))=g_{\la}(b)$. We have, by Theorem
\ref{relativesign}(iii), applied to the $(p)$-bar $b$,
\begin{equation}
\label{eq:comparejambe}
(-1)^{L(b)}=(-1)^{L(g_{\la}(b))} \da_{\p}(\la, \, \mu)= (-1)^{L(g_{\la}(b))} \da_{\p}(\la) \da_{\p}( \mu)
\end{equation}
and similarly
\begin{equation}
\label{eq:comparejambepsi}
(-1)^{L(\Psi(b))}=
%(-1)^{L(g_{\Psi(\la)}(\Psi(b)))} \da_{\p}(\Psi(\la), \, \Psi(\mu))
(-1)^{L(g_{\la}(b))} \da_{\p}(\Psi(\la)) \da_{\p}( \Psi( \mu)),
\end{equation}
whence
\begin{equation}
\label{alphas}
\da_{\p}(\la) \da_{\p}( \mu) (-1)^{L(b)} = \da_{\p}(\Psi(\la))
\da_{\p}( \Psi( \mu)) (-1)^{L(\Psi(b))}.
\end{equation}

% Furthermore, since $\sigma(\Psi(\lambda))=\sigma(\lambda)$ and
% $\sigma(\Psi(\mu))=\sigma(\mu)$, we deduce from Equations~(\ref{eq:mb})
% and~(\ref{eq:mbpsi}) that $m(b)=m(\Psi(b))$. Now,
% Equations~(\ref{eq:alphatilde}) and~(\ref{alphas}) give
% 
% \begin{equation}
% \label{alphasbis}
% \da_{\p}(\la) \da_{\p}( \mu) \alpha_{\mu}^{\la} = \da_{\p}(\Psi(\la))
% \da_{\p}( \Psi( \mu)) \alpha_{\Psi(\mu)}^{\Psi(\la)}.
% \end{equation}

If $\sa(\mu)=1$, then $\cmu^+=\cmu^-=\cmu$ and $a(\cla^+,
\cmu)=a(\cla^-, \cmu)= (-1)^{\frac{q^2-1}{8}} \alpha^{\la}_{\mu}$. We
also have $\sa(\Psi(\mu))=1$, so $a(\xi_{\Psi(\la)}^+,
\xi_{\psi(\mu)})=a(\xi_{\Psi(\la)}^-,
\xi_{\psi(\mu)})=(-1)^{\frac{q^2-1}{8}} \alpha_{\Psi(\mu)}^{\Psi(\la)}$.
Thus Equation~(\ref{alphas}) immediately gives the result.

Suppose now that $\sa(\mu)=-1$. Then, by Remark \ref{sign-coef}, we have
$a(\cla^{-},\cmu^+)=a(\cla^{+},\cmu^-)$ and
$a(\cla^{-},\cmu^-)=a(\cla^{+},\cmu^+)$. We need to distinguish between
the cases $\mu = \la \setminus \{q\}$ and $\mu \neq \la \setminus
\{q\}$. If $\mu = \la \setminus \{q\}$, this means $b$ is a part of
length $q$ in $\la$. Then, in Theorem \ref{olsson}, $g(b)$ must be a
part of length $q/p$ in the first (bar) partition of $\la^{(\q)}$ (see
\cite[Theorem (4.3)]{olsson}), and $\Psi(b)$ is then a part of length
$q$ in $\Psi(\la)$. We thus have $\mu = \la \setminus \{q\}$ if and only
if $\Psi(\mu) = \Psi(\la) \setminus \{q\}$.

Suppose first that that $\mu \neq \la \setminus \{q\}$, so that
$\Psi(\mu) \neq \Psi( \la) \setminus \{q\}$. Then, by Theorem
\ref{MNCabanes} and Remark \ref{sign-coef}, $$a(\cla^+,
\cmu^+)=a(\cla^+, \cmu^-)=a(\cla^{-},\cmu^+)=a(\cla^{-},\cmu^-)=
\frac{1}{2} (-1)^{\frac{q^2-1}{8}} \alpha^{\la}_{\mu}$$ and
$$a(\xi_{\Psi(\la)}^+, \xi_{\psi(\mu)}^+)=a(\xi_{\Psi(\la)}^+,
\xi_{\psi(\mu)}^-)=a(\xi_{\Psi(\la)}^-,
\xi_{\psi(\mu)}^+)=a(\xi_{\Psi(\la)}^-, \xi_{\psi(\mu)}^-)=\frac{1}{2}
(-1)^{\frac{q^2-1}{8}} \alpha_{\Psi(\mu)}^{\Psi(\la)},$$ so that
Equation~(\ref{alphas}) gives the result.

Suppose, finally, that ($\sa(\mu)=\sa(\Psi(\mu))=-1$ and) $\mu = \la \setminus \{q\}$, so that $\Psi(\mu) = \Psi(\la) \setminus \{q\}$. This is the only case which is not straightforward. 
By Theorem \ref{MNCabanes}, we have
$$a(\cla^+, \cmu^+)= \frac{1}{2} (-1)^{\frac{q^2-1}{8}} (
\alpha^{\la}_{\mu} + i^{\frac{q-1}{2}} \sqrt{q}) \quad \mbox{and} \quad
a(\cla^+,  \cmu^-)=  \frac{1}{2} (-1)^{\frac{q^2-1}{8}} (
\alpha^{\la}_{\mu} - i^{\frac{q-1}{2}} \sqrt{q})$$
(and similar expressions for $a(\xi_{\Psi(\la)}^+, \xi_{\psi(\mu)}^+)$
and $a(\xi_{\Psi(\la)}^+, \xi_{\psi(\mu)}^-)$). Since, by Remark
\ref{sign-coef}, $a(\cla^{-},\cmu^+)=a(\cla^{+},\cmu^-)$ and
$a(\cla^{-},\cmu^-)=a(\cla^{+},\cmu^+)$, we deduce that, for any
$\epsilon \in \{1, \, -1\}$,
$$a(\cla^{\epsilon \da_{\p}(\la)}, \cmu^{\da_{\p}(\mu)})= \frac{1}{2}
(-1)^{\frac{q^2-1}{8}} ( \alpha^{\la}_{\mu}  + \epsilon \da_{\p}(\la)
\da_{\p}(\mu) i^{\frac{q-1}{2}} \sqrt{q})$$
(and a similar expression for $a(\xi_{\Psi(\la)}^{\epsilon
\da_{\p}(\Psi(\la))}, \xi_{\psi(\mu)}^{\da_{\p}(\Psi(\mu))})$).
Multiplying by $ \da_{\p}(\la) \da_{\p}(\mu) $, we obtain, using
Equation (\ref{alphas}),
$$\begin{array}{rl}
 \da_{\p}(\la) \da_{\p}(\mu)  a(\cla^{\epsilon \da_{\p}(\la)},
\cmu^{\da_{\p}(\mu)}) & =  \frac{1}{2} (-1)^{\frac{q^2-1}{8}}\left(
\da_{\p}(\la) \da_{\p}(\mu) \alpha^{\la}_{\mu}  + \epsilon
i^{\frac{q-1}{2}} \sqrt{q}\right) \\ & =  \frac{1}{2}
(-1)^{\frac{q^2-1}{8}}
\left( \da_{\p}(\Psi(\la)) \da_{\p}(\Psi(\mu)) \alpha^{\Psi(\la)}_{\Psi(\mu)}
+ \epsilon  i^{\frac{q-1}{2}} \sqrt{q}\right)   \end{array}$$
 $$ =   \da_{\p}(\Psi(\la)) \da_{\p}(\Psi(\mu))  \frac{1}{2}
(-1)^{\frac{q^2-1}{8}} \left(\alpha^{\Psi(\la)}_{\Psi(\mu)}  + \epsilon
\da_{\p}(\Psi(\la)) \da_{\p}(\Psi(\mu))   i^{\frac{q-1}{2}}
\sqrt{q}\right),$$
 whence
$$  \da_{\p}(\la) \da_{\p}(\mu)  a\left(\cla^{\epsilon \da_{\p}(\la)},
\cmu^{\da_{\p}(\mu)}\right) =   \da_{\p}(\Psi(\la)) \da_{\p}(\Psi(\mu))
a\left(\xi_{\Psi(\la)}^{\epsilon \da_{\p}(\Psi(\la))},
\xi_{\psi(\mu)}^{\da_{\p}(\Psi(\mu))}\right).$$
Using Remark \ref{sign-coef}, this implies the last equality we have to prove:
$$ \begin{array}{rcl} \da_{\p}(\la) \da_{\p}(\mu)  a\left(\cla^{\epsilon
\da_{\p}(\la)}, \cmu^{-\da_{\p}(\mu)}\right) & = &  \da_{\p}(\la)
\da_{\p}(\mu)  a\left(\cla^{- \epsilon \da_{\p}(\la)},
\cmu^{\da_{\p}(\mu)}\right)
\\  & = & \da_{\p}(\Psi(\la)) \da_{\p}(\Psi(\mu))
a\left(\xi_{\Psi(\la)}^{-\epsilon \da_{\p}(\Psi(\la))},
\xi_{\psi(\mu)}^{\da_{\p}(\Psi(\mu))}\right) \\ & = & \da_{\p}(\Psi(\la))
\da_{\p}(\Psi(\mu)) a\left(\xi_{\Psi(\la)}^{\epsilon
\da_{\p}(\Psi(\la))}, \xi_{\psi(\mu)}^{-\da_{\p}(\Psi(\mu))}\right). 
\end{array}$$
\end{proof}

Now we consider $\p$-cores $\ga$ and $\ga'$, and a
positive integer $w$. Let $B$ and $B'$ be the spin blocks of $\tSym_n$
and $\tSym_m$ of weight $w$ and $\p$-core $\ga$ and $\ga'$ respectively,
and let $B^*$ and $B'^*$ be the corresponding spin blocks of $\tAlt_n$
and $\tAlt_m$. Suppose furthermore that $\sa(\ga)=- \sa(\ga')$, so that,
with the notation of Lemma \ref{bijections}(ii), $\Psi$ is a sign inversing
bijection, and $\widetilde{\Psi}$ gives a bijection between $B$ and
$B'^*$.

\begin{proposition}\label{MNcoefcroise}
Let the notation be as above, and assume $m-q\geq 2$. 
For any $\lambda\in\mathcal D_n$ 
with $\p$-core $\gamma$
and $\mu \in M_q(\la)$, and for any
$\eta, \, \epsilon \in \{ 1, \, -1 \}$, we have
$$\da_{\p}(\la) \da_{\p}(\mu) a( \cla^{\eta \da_{\p}(\la)},
\cmu^{\epsilon \da_{\p}(\mu)}) = \da_{\p}(\Psi(\la)) \da_{\p}(\Psi(\mu))
a( \zeta_{\Psi(\la)}^{\eta\da_{\p}(\Psi(\la))}, \zeta_{\Psi(\mu)}^{\epsilon \da_{\p}(\Psi(\mu))}).$$
\end{proposition}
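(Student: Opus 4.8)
The plan is to imitate, step by step, the proof of Proposition~\ref{MNcoef}, the only change being that the right‑hand side is now read off from Theorem~\ref{MNcoefAntilde} (applied to the bar partition $\Psi(\la)$) instead of from Theorem~\ref{MNCabanes}. The first thing I would record is that, by Lemma~\ref{bijections}(ii) together with~(\ref{eq:sign}), the bijection $\Psi$ is sign‑\emph{inverting}: $\sa(\Psi(\nu))=-\sa(\nu)$ for every $\nu$ in the relevant $E_{\ga,w}$. This is precisely what makes the crossover statement consistent. Indeed, an even bar partition labels a single (self‑associate) spin character of $\tSym$ but a pair of associate spin characters of $\tAlt$, and an odd bar partition does the opposite; composing with the sign‑inverting $\Psi$ therefore matches the number of spin characters of $\tSym_n$ attached to $\la$ (resp.\ $\mu$) with the number of spin characters of $\tAlt_m$ attached to $\Psi(\la)$ (resp.\ $\Psi(\mu)$), so that $\cla^{\eta\da_{\p}(\la)}$ is indeed to be compared with $\zeta_{\Psi(\la)}^{\eta\da_{\p}(\Psi(\la))}$, as in the statement. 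As before (and for the same reason, everything factoring through the common $\p$‑quotient via Theorem~\ref{olsson}), $M_q(\Psi(\la))=\Psi(M_q(\la))$, so $\Psi(\mu)\in M_q(\Psi(\la))$.

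Next I would set up the combinatorial dictionary exactly as in Proposition~\ref{MNcoef}. Writing $\mu=\la\setminus b$ for the removed $q$‑bar $b$, we get $\Psi(\mu)=\Psi(\la)\setminus\Psi(b)$ with $g_{\Psi(\la)}(\Psi(b))=g_{\la}(b)$, and Theorem~\ref{relativesign}(iii) (applied to the $(p)$‑bar $b$, resp.\ $\Psi(b)$) gives
$$(-1)^{L(b)}\da_{\p}(\la)\da_{\p}(\mu)=(-1)^{L(g_{\la}(b))}=(-1)^{L(\Psi(b))}\da_{\p}(\Psi(\la))\da_{\p}(\Psi(\mu)).$$
This handles the $(-1)^{L}$‑part of $\alpha^{\la}_{\mu}=(-1)^{L(b)}2^{m(b)}$. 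The new feature, compared with Proposition~\ref{MNcoef}, is that $m(b)=m(\Psi(b))$ now fails: $m(b)=1$ iff $(\sa(\la),\sa(\mu))=(1,-1)$, whereas $m(\Psi(b))=1$ iff $(\sa(\la),\sa(\mu))=(-1,1)$, so at most one of these powers of $2$ is present. I would then observe that this is exactly cancelled by the different normalisations in the two theorems: the factor $\tfrac12$ occurs in the $\tSym$‑coefficient exactly when $\sa(\mu)=-1$, and in the $\tAlt$‑coefficient exactly when $\Psi(\la)$ is even, i.e.\ when $\sa(\la)=-1$. Concretely, in every \emph{non‑special} cell both the $\tSym$‑coefficient of Theorem~\ref{MNCabanes} and the $\tAlt$‑coefficient of Theorem~\ref{MNcoefAntilde} reduce, after the $2^{m}$ has absorbed whichever $\tfrac12$ is present, to $(-1)^{(q^2-1)/8}(-1)^{L(\cdot)}$, and the displayed identity closes the case. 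I would also note that $\mu=\la\setminus\{q\}$ iff $\Psi(\mu)=\Psi(\la)\setminus\{q\}$, that (as $q$ is odd) $\mu=\la\setminus\{q\}$ forces $\sa(\mu)=\sa(\la)$, and that $\sa(\mu)=-1\iff\sa(\Psi(\mu))=1$; hence the ``special'' sub‑cases of the two theorems (those producing the term $i^{(q-1)/2}\sqrt q$) correspond under $\Psi$.

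With this dictionary the proof is a finite verification organised by the pair $(\sa(\la),\sa(\mu))$, parallel to Proposition~\ref{MNcoef}. The cells $(\sa(\la),\sa(\mu))\in\{(1,1),(1,-1),(-1,1)\}$ are all non‑special (note $\sa(\la)=\sa(\mu)=-1$ can only occur when $\mu=\la\setminus\{q\}$), and each is settled by the previous paragraph, using that $\Psi(\la)\in\mathcal{D}_m^{\mp}$ places us in the appropriate branch of Theorem~\ref{MNcoefAntilde}. In the remaining cell $(\sa(\la),\sa(\mu))=(-1,-1)$ one has $\mu=\la\setminus\{q\}$, $\Psi(\la),\Psi(\mu)\in\mathcal{D}_m^{+}$ and $\Psi(\mu)=\Psi(\la)\setminus\{q\}$, and here $m(b)=m(\Psi(b))=0$. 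Using Theorem~\ref{MNCabanes} together with Remark~\ref{sign-coef} to pass to the $\da_{\p}$‑twisted superscripts, the $\tSym$‑coefficient is $\tfrac12(-1)^{(q^2-1)/8}\bigl(\alpha^{\la}_{\mu}+\eta\epsilon\,\da_{\p}(\la)\da_{\p}(\mu)\,i^{(q-1)/2}\sqrt q\bigr)$, and Theorem~\ref{MNcoefAntilde} gives the analogous expression for $\Psi(\la)$ (the $\tAlt$ superscript‑twisting being legitimate because, by Theorem~\ref{MNcoefAntilde}, that coefficient depends on its two signs only through their product). Multiplying the first by $\da_{\p}(\la)\da_{\p}(\mu)$ and the second by $\da_{\p}(\Psi(\la))\da_{\p}(\Psi(\mu))$, the $i^{(q-1)/2}\sqrt q$‑terms become $\eta\epsilon\,i^{(q-1)/2}\sqrt q$ on both sides, while the $\alpha$‑terms agree by the leg‑length identity; this yields the claimed equality, and the variants with the other twisted superscripts follow by the $\epsilon\eta$‑symmetry, as in Proposition~\ref{MNcoef}.

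The main obstacle is purely organisational: one has to line up the case‑division of Theorem~\ref{MNCabanes} (indexed by $\sa(\la)$, $\sa(\mu)$ and by whether $\mu=\la\setminus\{q\}$) with the \emph{different} case‑division of Theorem~\ref{MNcoefAntilde} (indexed by whether $\Psi(\la)$ lies in $\mathcal{D}^{+}$ or $\mathcal{D}^{-}$ and by $\sa(\Psi(\mu))$) under the sign reversal induced by $\Psi$, and to check in each cell that the powers $2^{m}$ and the factors $\tfrac12$ balance. Once the structural observation of the first paragraph is in place, each individual cell is a one‑line computation, exactly as in the proof of Proposition~\ref{MNcoef}.
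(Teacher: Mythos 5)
Your proposal is correct and follows essentially the same route as the paper's proof: both compare Theorem~\ref{MNCabanes} with Theorem~\ref{MNcoefAntilde} through the sign-inverting $\Psi$, use $g_{\Psi(\la)}(\Psi(b))=g_{\la}(b)$ and Theorem~\ref{relativesign}(iii) for the leg-length/sign identity, observe that the factors $2^{m(\cdot)}$ and $\tfrac12$ balance across the two formulas, and settle the $\sqrt{q}$-case using that both twisted coefficients depend only on the product of the superscript signs. Your organisation by the pair $(\sa(\la),\sa(\mu))$ is just a repackaging of the paper's case split on $\sa(\la)$ followed by $\sa(\mu)$.
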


\begin{proof}
%Write $\nu=\Psi(\lambda)$.  
First, assume that $\lambda\in\mathcal
D_n^+$. Then by Lemma~\ref{bijections}(ii), $\Psi(\lambda)\in\mathcal D_m^-$.
Furthermore, 
% Now consider the corresponding spin character $\xi_{\nu}$ of $B$ (since
% $\la  = \Psi( \nu) \in {\cal D}_n^-$, we have $\nu \in {\cal D}_m^+$).
by Theorem \ref{MNCabanes}, for any $\mu\in M_q(\lambda)$, we have
% , for all $h \in \tSym_{m-q}$,
% $$\xi_{\nu}(xh)= \sum_{\pi \in M_q(\nu) \atop \sa(\pi)=1}
% a(\xi_{\nu},\xi_{\pi}) \xi_{\pi} (h)  +  \sum_{\pi \in M_q(\mu) \atop
% \sa(\pi)=-1} \left( a(\xi_{\nu},\xi_{\pi}^+) \xi_{\pi}^+
% (h)+a(\xi_{\nu},\xi_{\pi}^-) \xi_{\pi}^- (h) \right).$$ And, since $\nu
% \in {\cal D}_m^+$ (and $q$ is odd), 
%$\lambda \setminus \{q\} \in {\cal
%D}_{m-q}^+$. We thus see that, whenever $\sa(\pi)=1$, we have
$a(\xi_{\lambda},\xi_{\mu}) = (-1)^{\frac{q^2-1}{8}}
\alpha_{\mu}^{\lambda}$
whenever $\sa(\mu)=1$, and $a(\xi_{\lambda},\xi_{\mu}^+)=
a(\xi_{\lambda},\xi_{\mu}^-) =  \frac{1}{2} (-1)^{\frac{q^2-1}{8}}
\alpha_{\mu}^{\lambda}$ whenever $\sa(\mu)=-1$.

%For $\mu\in M_q(\lambda)$, write $\pi=\Psi(\mu)$. 
% We therefore want to
% compare the coefficients $\alpha_{\mu}^{\la}$ for $\mu \in M_q(\la)$ and
% $ \alpha_{\pi}^{\nu}$ for $\pi \in M_q(\nu) $. 
As previously, we see that $\Psi$ is compatible with the bijections
$g_{\lambda}$ and $g_{\Psi(\lambda)}$ given by Theorem  \cite[Theorem
(4.3)]{olsson}, whence it gives a sign inversing bijection between
$M_q(\Psi(\la))$ and $M_q(\la)$. If $\mu \in M_q(\lambda)$ is obtained by
removing the $q$-bar $b$ from $\lambda$, then $\Psi(\mu) \in
M_q(\Psi(\la))$ is
obtained by removing the $q$-bar $\Psi(b)$ from $\Psi(\la)$, so that we want
to compare $ \alpha_{\mu}^{\lambda}$ and  $
\alpha_{\Psi(\mu)}^{\Psi(\lambda)}$. 
%   By definition, we have
%  
%  \begin{equation}
%  \label{eq:alphatilde}
%  \alpha_{\mu}^{\lambda}= (-1)^{L(b)} 2^{m(b)} \qquad \mbox{and} \qquad
%  \alpha_{\Psi(\mu)}^{\Psi(\la)}=(-1)^{L(\Psi(b))} 2^{m(\Psi(b))}, 
%  \end{equation}
%  where
%  \begin{equation}
%  \label{eq:mb}
%  m(b)= \left\{ \begin{array}{rl} 1 & \mbox{if} \; \sa(\lambda) =1 \;
%  \mbox{and} \;  \sa(\mu )= -1 , \\ 0 & \mbox{otherwise}. \end{array}
%  \right.
%  \end{equation} and 
%  \begin{equation}
%  \label{eq:mbpsi}
%  m(\Psi(b))= \left\{ \begin{array}{rl} 1 & \mbox{if} \;
%  \sa(\Psi(\lambda)) =1 \; \mbox{and} \;  \sa(\Psi(\mu) )= -1 , \\ 0 &
%  \mbox{otherwise}. \end{array} \right.  
%  \end{equation}
For this, we use Equations~(\ref{eq:alphatilde}),~(\ref{eq:mb})
and~(\ref{eq:mbpsi}).
Since $\Psi(\la) \in {\cal D}_m^-$,
we see that $m(\Psi(b))$ is
always 0, so that $ \alpha_{\Psi(\mu)}^{\Psi(\la)}=(-1)^{L(\Psi(b))}$.
And, since $\lambda \in {\cal D}_n^+$, we see that $m(b)=1$ and
$\alpha_{\mu}^{\lambda}= (-1)^{L(b)} 2$ whenever $\sa(\mu)=-1$, while
$m(b)=0$ and $\alpha_{\mu}^{\lambda}= (-1)^{L(b)} $ whenever $\sa(\mu)=1$.

As in the proof of Proposition \ref{MNcoef}, $L(b)$ is related to
$L(g_{\lambda}(b))$, and $L(\Psi(b))$ to $L(g_{\Psi(\lambda)}(\Psi(b)))$, and
$g_{\Psi(\lambda)}(\Psi(b))=g_{\lambda}(b)$. 
Thus, using Equations~(\ref{eq:comparejambe})
and~(\ref{eq:comparejambepsi}), we see that Equation~(\ref{alphas}) holds.
% We thus have, by Theorem
% \ref{relativesign}(iii), applied to the $(p)$-bar $b$, 
% $$(-1)^{L(b)}=(-1)^{L(g_{\lambda}(b))} \da_{\p}(\lambda) \da_{\p}( \mu)$$ and
% $$(-1)^{L(\Psi(b))}= (-1)^{L(g_{\lambda}(b))} \da_{\p}(\Psi(\la)) \da_{\p}(
% \Psi(\mu)),$$ whence we deduce that $$\da_{\p}(\lambda) \da_{\p}(
% \mu)(-1)^{L(b)} = \da_{\p}(\Psi(\lambda)) \da_{\p}( \Psi(
% \mu))(-1)^{L(\Psi(b))}.$$ 
If $\sa(\mu)=1$, then $\sa(\Psi(\mu))=-1$, and
we obtain $$\begin{array}{rcl} \da_{\p}(\lambda) \da_{\p}(
\mu)a(\xi_{\lambda},\xi_{\mu})  & = & (-1)^{\frac{q^2-1}{8}}
\da_{\p}(\lambda)
\da_{\p}( \mu) \alpha_{\mu}^{\lambda} \\  & = & (-1)^{\frac{q^2-1}{8}}
\da_{\p}(\lambda) \da_{\p}( \mu) (-1)^{L(b)} \\ & = &
(-1)^{\frac{q^2-1}{8}}\da_{\p}(\Psi(\lambda)) \da_{\p}( \Psi(
\mu))(-1)^{L(\Psi(b))} \\ & = & (-1)^{\frac{q^2-1}{8}}
\da_{\p}(\Psi(\lambda)) \da_{\p}( \Psi( \mu))
\alpha_{\Psi(\mu)}^{\Psi(\lambda)}
\\ & = & \da_{\p}(\Psi(\lambda)) \da_{\p}( \Psi( \mu)) a (
\zeta_{\Psi(\lambda)}, \zeta_{\Psi(\mu)}) .\end{array}$$ 
If, on the other hand, $\sa(\mu)=-1$, then $\sa(\Psi(\mu))=1$, and we obtain
$$\begin{array}{rcl} \da_{\p}(\lambda) \da_{\p}(
\mu)a(\xi_{\lambda},\xi_{\mu}^{\pm})  & = & (-1)^{\frac{q^2-1}{8}}
\da_{\p}(\lambda) \da_{\p}( \mu)  \frac{1}{2} \alpha_{\mu}^{\lambda} \\ 
& = &
(-1)^{\frac{q^2-1}{8}} \da_{\p}(\lambda) \da_{\p}( \mu) \frac{1}{2} 2
(-1)^{L(b)} \\ & = & (-1)^{\frac{q^2-1}{8}}\da_{\p}(\Psi(\lambda)) \da_{\p}(
\Psi( \mu))(-1)^{L(\Psi(b))} \\ & = & (-1)^{\frac{q^2-1}{8}}
\da_{\p}(\Psi(\lambda)) \da_{\p}( \Psi( \mu))
\alpha_{\Psi(\mu)}^{\Psi(\lambda)}
\\ & = & \da_{\p}(\Psi(\lambda)) \da_{\p}( \Psi( \mu) a (
\zeta_{\Psi(\lambda)}, \zeta_{\Psi(\mu)}^{\pm}) .\end{array}$$

Assume now that $\lambda\in\mathcal D_n^-$. Then $\Psi(\la)\in\mathcal D_m^+$.
%Keeping the notation as before, we have $\la=\Psi(\nu)$, and $\nu \in
% {\cal D}_m^-$. 
Note that $\lambda$ has a part of length $q$, if and only if
$\Psi(\la)$ has one. If this is the case, then $\sa(\lambda \setminus \{q\})=-1$
and $\{ \mu \in M_q(\lambda) \, | \, \sa( \mu)=-1 \} = \{ \lambda \setminus
\{q\}\}$. Otherwise, $\{ \mu \in M_q(\lambda) \, | \, \sa( \mu)=-1 \}$ is
empty. By Theorem \ref{MNCabanes}, we have
% , for any sign $\eta$ and for
% any $h \in \tSym_{m-q}$, $$\cnu^{\eta}(xh)=  \sum_{\pi \in M_q(\nu)
% \atop \sa(\pi)=-1} \left( a(\cnu^{\eta},\cpi^+) \cpi^+
% (h)+a(\cnu^{\eta},\cpi^-) \cpi^- (g) \right) + \sum_{\pi \in M_q(\nu)
% \atop \sa(\pi)=1} a(\cnu^{\eta},\cpi) \cpi (h) .$$ If $\sa (\pi) = 1$,
$a(\cla^+, \cmu)= (-1)^{\frac{q^2-1}{8}} \alpha^{\lambda}_{\mu}$
whenever $\sa (\mu) = 1$, and 
$$a(\cla^+,
\cmu^+)= \frac{1}{2} (-1)^{\frac{q^2-1}{8}} ( \alpha^{\lambda}_{\mu} +
i^{\frac{q-1}{2}} \sqrt{q})\quad\textrm{and}\quad 
a(\cla^+,  \cmu^-)=  \frac{1}{2}
(-1)^{\frac{q^2-1}{8}} ( \alpha^{\lambda}_{\mu} - i^{\frac{q-1}{2}}
\sqrt{q}),$$
whenever $\sa (\mu) = -1$ (and $\mu = \lambda \setminus \{q\}$).
% then $a(\cnu^+,
%\cpi^+)= \frac{1}{2} (-1)^{\frac{q^2-1}{8}} ( \alpha^{\nu}_{\pi} +
%i^{\frac{q^2-1}{8}} \sqrt{q})$ and $a(\cnu^+,  \cpi^-)=  \frac{1}{2}
%(-1)^{\frac{q^2-1}{8}} ( \alpha^{\nu}_{\pi} - i^{\frac{q^2-1}{8}}
%\sqrt{q})$.

Furthermore, if $\sa(\mu)=1$, then $a(\cla^{-},\cmu)=a(\cla^{+},\cmu)$,
and, if $\sa(\mu)=-1$, then $a(\cla^{-},\cmu^+)=a(\cla^{+},\cmu^-)$ and
$a(\cla^{-},\cmu^-)=a(\cla^{+},\cmu^+)$.

As in the previous case, $\Psi$ gives a sign inversing bijection between
$M_q(\la)$ and $M_q(\Psi(\la))$. If $\mu \in M_q(\la)$ is obtained by removing
the $q$-bar $b$ from $\la$, then $\Psi(\mu) \in M_q(\Psi(\la))$ is
obtained by removing the $q$-bar $\Psi(b)$ from $\Psi(\la)$. 
% We have
% $$\alpha_{\mu}^{\lambda}= (-1)^{L(b)} 2^{m(b)} \qquad \mbox{and} \qquad
% \alpha_{\Psi(\mu)}^{\Psi(\lambda)}=(-1)^{L(\Psi(b))} 2^{m(\Psi(b))}, $$
% where, this time, 
Note that $\sigma(\lambda)=-1$ and $\sigma(\Psi(\la))=1$, and for
$\mu\in M_q(\lambda)$, we have $\sigma(\mu)=-\sigma(\Psi(\mu))$.
In particular, Equations~(\ref{eq:mb}) and~(\ref{eq:mbpsi})
give $m(b)=0$, and $m(\Psi(b))=1$ whenever $\sigma(\mu)=1$, and
$m(\Psi(b))=0$ otherwise.

If $\sigma(\mu)=1$, then $m(\Psi(b))=1$ and
$\sigma(\Psi(\mu))=-1$. Thus Theorem~\ref{MNCabanes},
Theorem~\ref{MNcoefAntilde}, and Equations~(\ref{eq:alphatilde})
and~(\ref{alphas}) give the result.

If $\sigma(\mu)=-1$, then $m(\Psi(b))=0$ and $\sigma(\Psi(\mu))=1$.
Thus, using Theorem~\ref{MNCabanes},
Theorem~\ref{MNcoefAntilde} and Equation~(\ref{alphas}),
we conclude with a computation similar to that at the end of
the proof of Proposition~\ref{MNcoef}.
%  Thus,
% this time, in Equation~(\ref{eq:alphatilde}), 
% $m(b)$ is always 0 (since $\sa(\la)=-1$), and, since
% $\sa(\Psi(\la))=\sa(\Psi(\la))=1$,
% $$m(\Psi(b))= \left\{ \begin{array}{rl} 1 & \mbox{if} \;  \sa(\Psi(\mu ))= -1 , \\ 0 & \mbox{otherwise}. \end{array} \right.
% $$
% We therefore always have $
% \alpha_{\Psi(\mu)}^{\Psi(\la)}=(-1)^{L(\Psi(b))}$, while
% $\alpha_{\Psi(\mu)}^{\Psi(\la)}= (-1)^{L(\Psi(b))} 2$ whenever
% $\sa(\Psi(\mu))=-1$ and $\alpha_{\Psi(\mu)}^{\Psi(\la)}=
% (-1)^{L(\Psi(b))}$ whenever $\sa(\Psi(\mu))=-1$.
% 
% Also, by the same argument as previously, we have
% $$\da_{\p}(\la) \da_{\p}( \mu)(-1)^{L(b)} = \da_{\p}(\Psi(\la))
% \da_{\p}( \Psi( \mu))(-1)^{L(\Psi(b))},$$
% and the result follows.
\end{proof}

%  
%  For $\sigma\in\sym_n$, we write $[\sigma]$ for the set of cycles (with
%  multiplicity) appearing in its decomposition in disjoint cycles. We
%  denote by $c_q$ the set of cycles whose length is divisible by $q$, and
%  define 
%  \begin{equation}
%  C_{q,n}=\{x\in \widetilde{\Sym}_n\, |\, [\theta(x)]\cap
%  c_q=\emptyset\}.
%  \label{eq:qreg}
%  \end{equation}
%  
%  \begin{lemma}
%  Let $\lambda\in \mathcal E_{n,w}$. Then $[\lambda]\cap c_q=\emptyset$ if
%  and only if $[\Psi(\lambda)]\cap c_q=\emptyset$.
%  \label{lem:regsing}
%  \end{lemma}
%  \begin{proof}
%  By~\cite[p. 27]{olsson}, $\lambda$ has no $q$-cycle if and only if
%  $\lambda^0=\emptyset$.
%  Moreover, the definition of $\Psi$ gives
%  $\lambda^{(\overline{q})}=\Psi(\lambda)^{(\overline{q})}$. The result
%  follows.
%  \end{proof}
%  
%  \begin{lemma}
%  Let $\pi\in\mathcal O_n$. Then
%  $\Cen_{\widetilde{\Sym}_n}(t_{\pi})=\theta^{-1}(\Cen_{\sym_n}(s_{\pi}))$.
%  \label{lem:cenOn}
%  \end{lemma}
%  
%  \begin{proof}
%  We have $\Cen_{\widetilde{\Sym}_n}(t_{\pi})\leq
%  \theta^{-1}(\Cen_{\sym_n}(s_{\pi}))$. Conversely,
%  let $x\in\theta^{-1}(\Cen_{\sym_n}(s_{\pi}))$. Then
%  $xt_{\pi}x^{-1}\in\{t_{\pi},zt_{\pi}\}$.
%  Let $d$ be the order of $s_{\pi}$. Then we have $t_{\pi}^d\in\{1,z\}$.
%  If $t_{\pi}^d=1$, then $t_{\pi}$ has order $d$ and $zt_{\pi}$ has order
%  $2d$, because $d$ is odd since $\pi\in\mathcal O_n$.
%  Otherwise, we have
%  $(zt_{\pi})^d=z^dt_{\pi}^d=z^2=1$.
%  It follows that $t_{\pi}$ and $zt_{\pi}$ have distinct order, and are
%  not conjugate. Thus, $x\in \Cen_{\widetilde{\Sym}_n}(t_{\pi})$.
%  \end{proof}

\begin{remark}
\label{rk:degeneree2}
Note that, with the notation of Remark~\ref{rk:degeneree} (in particular
we have $m\leq q+1$ and $\Psi(\mu)\in\{\emptyset,(1)\}$), if
$\Psi(\lambda)\neq \pi$ and if we set
$\zeta_{\Psi(\mu)}^+=\zeta_{\Psi(\mu)}^-=\varepsilon$, then Proposition~\ref{MNcoefcroise} still holds. 
\end{remark}

We now can state the main result of this section. Let $\gamma$ and
$\gamma'$ be two $p$-cores, and $w$ be a positive integer. Write
$E_{\gamma,w}$, $E_{\gamma',w}$ and $\Psi:E_{\gamma,w}\rightarrow
E_{\gamma',w}$ as in Lemma~\ref{bijections}, and set
$n=|\gamma|+pw$ and $m=|\gamma'|+pw$.
If $\sigma(\gamma)=\sigma(\gamma')$, then $B_{\gamma,w}$ and
$B_{\gamma',w}$
denote the $p$-blocks of $\bar p$-weight $w$ of $G=\tSym_{n}$ and
$G'=\tSym_m$ corresponding to $\gamma$ and $\gamma'$ respectively.
If $\sigma(\gamma)=-\sigma(\gamma')$, then $B_{\gamma,w}$ and
$B_{\gamma',w}$
denote the $p$-blocks of $\bar p$-weight $w$ of $G=\tAlt_{n}$ and
$G'=\tSym_m$ respectively.
We write
$\Irr(B_{\gamma})=\{X_{\lambda}^{\epsilon}\, |\, \lambda\in
E_{\gamma,w},\,\epsilon\in\{-1,1\}\}$ and 
$\Irr(B_{\gamma'})=\{Y_{\lambda}^{\epsilon}\, |\, \lambda\in
E_{\gamma',w},\,\epsilon\in\{-1,1\}\}$, with the convention that,
when $X_{\lambda}$ or $Y_{\lambda}$ are self-associate, we set
$X_{\lambda}^+=X_{\lambda}^-=X_{\lambda}$ and 
$Y_{\lambda}^+=Y_{\lambda}^-=Y_{\lambda}$.

\begin{theorem} Let $p$ be an odd prime.  
We keep the notation as above. Then the isometry
$I:\C\Irr(B_{\gamma,w})\rightarrow
\C\Irr(B_{\gamma',w})$ defined by
\begin{equation}
\label{eq:defisoTilde}
I\left(X_{\lambda}^{\epsilon\delta_{\bar p}(\lambda)}\right)
=\delta_{\bar
p}(\lambda)\delta_{\bar
p}(\Psi(\lambda))Y_{\Psi(\lambda)}^{\epsilon\delta_{\bar
p}(\Psi(\lambda))},
\end{equation}
where $\lambda\in E_{\gamma,w}$ and  $\epsilon\in\{-1,1\}$, is a Brou\'e perfect isometry.
%  perfect
%  et $\gamma$ and
%  $\gamma'$ be two $p$-cores such that $\sigma(\gamma)=\sigma(\gamma')$
%  and $w$ be a non-negative integer.  Write $n=|\gamma|+qw$ and
%  $n'=|\gamma'|+qw$. For $\lambda\in E_{\gamma,w}$, we set
%  $$\alpha(\lambda)=\delta_{\overline{q}}(\lambda)\delta_{\overline{q}}
%  (\Psi(\lambda)),$$
%  where $\Psi$ is the map defined in %Lemma~\ref{lem:iso}. 
%  Then the
%  isometry $I:\Z B_{\gamma,w}\rightarrow \Z B_{\gamma',w}$ defined by
%  $$I(\chi_{\lambda})=\alpha(\lambda)\Psi(\chi_{\lambda})\ \textrm{for
%  }\lambda\in \mathcal D_n^+\ \textrm{and}\quad
%  I(\chi_{\lambda}^{\pm})=\alpha(\lambda)\Psi(\chi_{\lambda}^{\pm})
%  \ \textrm{for }\lambda\in \mathcal D_n^-$$
%  is a generalized perfect isometry with respect to
%  $(B_{\gamma,w},C_{q,n})$ and $(B_{\gamma',w},C_{n',q})$. 
\label{theo:mainTilde}
\end{theorem}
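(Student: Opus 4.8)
The plan is to apply the machinery of Section~\ref{section:generalite}, exactly as in the proof of Theorem~\ref{theo:mainAn}, now using the MN-structures on $\tSym_n$ and $\tAlt_n$ established in Proposition~\ref{MNTilde} and the coefficient-comparison results Proposition~\ref{MNcoef} and Proposition~\ref{MNcoefcroise}. First I would fix the MN-structures: on $G$ and $G'$ take $S=S_{\tSym_n}$ (or $S_{\tAlt_n}$), $C=C_{\tSym_n}$ (or $C_{\tAlt_n}$), the set $\mathcal C$ of associated conjugacy classes, and $B=B_{\gamma,w}$, $B'=B_{\gamma',w}$ the relevant spin blocks; by Lemma~\ref{same-blocks} these are indeed unions of $\mathcal C$-blocks, and Proposition~\ref{MNTilde} gives all four axioms of Definition~\ref{defMN}. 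The index sets $\Lambda$, $\Lambda'$ are the sets of parameters for the classes of $S$, i.e. (split) partitions $p\cdot\beta$ all of whose parts are odd multiples of $p$, and I set $\Lambda_0$ (resp. $\Lambda_0'$) to consist of those $\lambda$ of size $\le pw$ — arguing, via the Morris/Cabanes recursion and the fact that a bar partition of $\overline p$-weight $w$ has exactly $w$ bars of length divisible by $p$, that $r^{\lambda}=0$ for $\lambda\notin\Lambda_0$ (a character $\cla^{\pm}$ can only be non-zero on an element whose type uses more $(p)$-bars than $\la$ possesses if $\la$ had $\overline p$-weight exceeding $w$, a contradiction). Then I must exhibit, for each $\lambda\in\Lambda_0$, an isometry $I_\lambda:\C\Irr(B_\lambda)\to\C\Irr(B'_{\sigma(\lambda)})$ of the form~(\ref{eq:defisoTilde}) (on the smaller groups $G_{x_\lambda}\cong\tSym_{n-p|\beta|}$ or $\tAlt_{n-p|\beta|}$, with the appropriate sign of $\overline p$-core, supplied by Lemma~\ref{bijections}), and check $I_\lambda\circ r^\lambda=r'^{\sigma(\lambda)}\circ I_{\{1\}}$.

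That intertwining relation is the technical heart, and it is where Propositions~\ref{MNcoef} and~\ref{MNcoefcroise} enter. Iterating the recursion (Theorem~\ref{MNCabanes} for the $\tSym$ side, Theorem~\ref{MNcoefAntilde} for the $\tAlt$ side) along the cycles $o_{\sigma_i}$ of $x_\lambda$, the map $r^{\widehat\beta}$ sends a spin character to a linear combination of spin characters with coefficients that are products $\prod_i a(\vartheta_{i-1},\vartheta_i)$ as in~(\ref{eq:coeffAitere}); writing each spin character of $G_{x_\lambda}$ in the normalized form $X^{\epsilon\delta_{\overline p}}$, and using the telescoping identity $\delta_{\overline p}(\vartheta_0)\delta_{\overline p}(\vartheta_m)=\prod_i\delta_{\overline p}(\vartheta_{i-1})\delta_{\overline p}(\vartheta_i)$ together with the per-step equalities of Propositions~\ref{MNcoef} (sign-preserving case, $\sigma(\gamma)=\sigma(\gamma')$) and~\ref{MNcoefcroise} (crossover case, $\sigma(\gamma)=-\sigma(\gamma')$), one gets $\delta_{\overline p}(\theta)\delta_{\overline p}(\vartheta)a(\theta,\vartheta)=\delta_{\overline p}(\Psi(\theta))\delta_{\overline p}(\Psi(\vartheta))a(\Psi(\theta),\Psi(\vartheta))$. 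This is precisely the computation in Equations~(\ref{eq:acommute})--(\ref{eq:commuteAn}) of the proof of Theorem~\ref{theo:mainAn}, and it yields $r^{\widehat\beta}\circ I=I_{\widehat\beta}\circ r^{\widehat\beta}$, i.e. hypothesis~(2.b) of Theorem~\ref{th:iso}. Hypothesis~(3) then follows automatically from Remark~\ref{rk:point3}, since each $G_{x_\lambda}$ itself carries an MN-structure of the same type.

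Having verified hypotheses (1), (2), (3) of Theorem~\ref{th:iso}, I invoke Corollary~\ref{cor:isoparfaitegene} to conclude that $I=I_{\{1\}}$ is a generalized perfect isometry, and Theorem~\ref{th:broue} (with $C,C'$ taken to contain the $p'$-elements — note $\tSym_{n,p'}\subseteq C_{\tSym_n}$, and the block-vanishing properties from Lemma~\ref{same-blocks} make the enlargement harmless) to conclude it is a Brou\'e perfect isometry; one also checks $I(\Z\Irr(B))=\Z\Irr(B')$ directly from~(\ref{eq:defisoTilde}), since the signs $\delta_{\overline p}(\lambda)\delta_{\overline p}(\Psi(\lambda))\in\{\pm1\}$ and $\Psi$ is a bijection. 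The one genuinely delicate point — as in Theorem~\ref{theo:mainAn} — is the case $|\beta|=w$, where the group $G_{x_\lambda}$ has $\overline p$-weight $0$: here $r^{\widehat\beta}$ lands in a space spanned by defect-zero spin characters (the characters $\zeta_{\gamma'}^{\pm}$ or $\xi_{\gamma'}^{\pm}$, or $1$), and when $\gamma'$ is a split bar partition one class of $S$ on one side corresponds to two on the other; one must then argue as around Equations~(\ref{eq:defIbetabizarre})--(\ref{eq:guguAn}), computing the relevant values of spin characters on products $\sigma'_\beta\sigma^{\pm}$ via the explicit formulas~(\ref{eq:valdmoinsSn}), (\ref{eq:defdiff}), (\ref{eq:valDnplus}) and Theorem~\ref{MNCabanes}, to define $I_\beta$ by hand and verify $I_\beta\circ(r^{\beta^+}+r^{\beta^-})=r^{\beta}\circ I$, and then handle the whole combinatorial object by an analogue of Remark~\ref{rk:adjoint}. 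I expect this $|\beta|=w$ bookkeeping, together with keeping the crossover signs $\delta_{\overline p}$ and the $\sigma(\gamma)\sigma(\gamma')$ twist consistent throughout the iteration, to be the main obstacle; everything else is a faithful transcription of the alternating-group argument.
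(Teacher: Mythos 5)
Your first two paragraphs are a faithful transcription of the paper's argument: the MN-structures from Proposition~\ref{MNTilde}, the cut-off set $\Lambda_0$, the telescoping intertwining via Propositions~\ref{MNcoef} and~\ref{MNcoefcroise}, Remark~\ref{rk:point3} for hypothesis (3), and the separate bookkeeping when $|\beta|=w$ and one class of $S$ on one side corresponds to two on the other. The gap is in your concluding step. You cannot invoke Theorem~\ref{th:broue}, because that theorem is stated (and its proof, which rests on $\Phi_{\phi}$ being projective characters evaluated at $p$-regular elements and on~\cite[2.21]{Navarro}) only when $C$ and $C'$ are \emph{exactly} the sets of $p'$-elements; here the MN-structure is forced to use the strictly larger sets $C_{\tSym_n}$, $C_{\tAlt_n}$ of elements with no cycle of length an odd multiple of $p$, which contain $p$-singular elements (split classes labelled by bar partitions having a part divisible by $2p$). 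Your parenthetical claim that this enlargement is ``harmless'' is precisely what must be proved: Corollary~\ref{cor:isoparfaitegene} only gives vanishing of $\widehat{I}(x,x')$ off $(C\times C')\cup(\overline{C}\times\overline{C}')$, which is weaker than Brou\'e's property (ii) (vanishing unless $x$ and $x'$ are both $p$-regular or both $p$-singular), and the dual basis attached to a $\Z$-basis of $\Z\Irr(B)^{\mathcal C}$ no longer consists of projective characters restricted to $p$-regular points, so Brou\'e's integrality property (i) does not follow from the argument of Theorem~\ref{th:broue}. This is exactly why the paper stresses that the generalized theory is ``crucially'' needed even though the final isometry is a Brou\'e one.

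The paper closes this gap with a substantial second half of the proof that your proposal omits. For property (ii): if $x=z^k t_{\pi}\in C_G$ is $p$-singular, then $\pi$ has a part divisible by $2p$, so either the class is non-split and every character of the block vanishes at $x$, or only $X_{\pi}^{\pm}$ survive; since $\Psi$ preserves the parts divisible by $p$, a $p$-regular $x'$ satisfies $Y_{\Psi(\pi)}^{+}(x')=Y_{\Psi(\pi)}^{-}(x')$, while $X_{\pi}^{+}(x)+X_{\pi}^{-}(x)=0$ by Equations~(\ref{eq:valdmoinsSn}) and~(\ref{eq:valDnplus}), forcing $\widehat{I}(x,x')=0$. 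For property (i): one restricts or induces the functions $\Phi_{\phi}$ between $\tSym_n$ and $\tAlt_n$ (using Corollary~\ref{blocs} and Clifford theory) to reduce to genuine projective character values whenever possible, and on the remaining split classes one computes $\widehat{I}(z^k t_{\pi},z^l t'_{\pi'})$ explicitly as a square root and uses the $p$-valuation identity of Equation~(\ref{eq:valuation}), coming from the common $\overline{p}$-quotient, to obtain $\widehat{I}(x,x')/|\Cen_G(x)|\in\mathcal R$ and likewise for $x'$. Without an argument of this kind, your proof establishes only a generalized perfect isometry with respect to ${\mathcal C}_{\tSym_n}$ and ${\mathcal C}_{\tAlt_n}$, not a Brou\'e perfect isometry.
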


\begin{proof}
% In order to prove the result, we will process as follows. We consider 
% the map $\widehat{I}$ corresponding to $I$ as in
% Equation~(\ref{eq:chapeau}), and we prove that Properties (i) and (ii)
% of Brou\'e isometry holds. To prove Property (i), we consider 
Consider  the map $\widehat{I}$ corresponding to $I$ as in
Equation~(\ref{eq:chapeau}). We will prove that $\widehat{I}$ satisfies
Properties (i) and (ii) of a Brou\'e isometry.

First, we use the MN-structures introduced in Proposition~\ref{MNTilde}
for $(C_G,B_{\gamma,w})$ and $(C_{G'},B_{\gamma',w)})$. 
Let $S_G$ and $S_{G'}$ be as in Equation~(\ref{eq:defStilde}).
Write $\Omega$ for the set of partitions $\pi$ of $i\leq n$ such that
$p$ divides each part of $\pi$. Note that $\pi\in\Omega$ parametrizes one or two $G$-classes of
elements of $S_G$ (always one class when $G=\tSym_n$, and two classes 
when $G=\tAlt_n$ and $\pi\in \mathcal O_n\cap \mathcal D_n$). In the
case where $\pi$ labels two classes, we denote the two parameters by
$\pi^{\pm}$. Let $\Lambda$ be the set of parameters obtained in this way.
 Then $\Lambda$ labels the set of $G$-classes of $S_G$.
%, that is, $\widehat{\pi}\in \Lambda$ satisfies
% $\widehat{\pi}\in\{\pi^+,\pi^-\}$ (respectively $\widehat{\pi}=\pi$)
% whenever $\pi\in\Omega$ parametrizes
% two classes of $S_G$ (respectively one class of $S_G$).
We will now define a precise set of representatives for these classes.
% of the $G$-classes labelled
% by $\Lambda$. 
% Recall that, for any partition $\pi$ of
% $n$, we have chosen a representative $s_{\pi}$ of the $\sym_n$-class
% corresponding to $\pi$. We now precise some choices. 
Let $\pi=(\pi_1,\ldots,\pi_k)\in \Omega$. 
Note that $\pi$ and $(\pi_1),\ldots,(\pi_k)$ can all be viewed as labels
of conjugacy classes of
$S_{\sym_{n}}$ (by
completing the partitions with parts of length $1$). %$\pi$ with $n-|\pi|$ parts of length $1$), and
%For $\pi=(\pi_1,\ldots,\pi_r)
%\in S_{\sym_n}$, 
% identifying the integer $\pi_i$ with the partition
%$(\pi_i,1^{n-|\pi_i|})$,

In particular, the element %Then with this identification, we assume that 
$s_{\pi}=s_{\pi_1}\cdots
s_{\pi_k}$ defined before Equation~(\ref{eq:suppSn}) is a representative
for the class of $S_{\sym_n}$ labeled by $\pi$, %. As above, we assume that the support of $s_{\pi_i}$ is
%decreasing. In particular, 
and $s_{\pi}\in\Sym_{|\pi|}$.
% By~\cite[III, p.\,172]{schur}, we in particular have
% \begin{equation}
% $t_{\pi}=t_{\pi_1}\cdots t_{\pi_r}$.
% \label{eq:prodcyleSn}
% \end{equation}
For all $1\leq i\leq k$, denote by $o_{\pi_i}$ the element of odd order 
such that $\theta(o_{\pi_i})=s_{\pi_i}$ (see Lemma~\ref{lem:ordre}) and
write $o_{\pi}=o_{\pi_1}\cdots o_{\pi_k}$. % for the representative of the
%class of labelled by $\pi$.
Furthermore, for $\pi\in \mathcal O_n\cap D_n$, we assume that the 
representatives of the two $\Alt_n$-classes labeled by $\pi$ are
$s_{\pi^{\pm}}= s_{\pi_1^{\pm}}\cdots s_{\pi_{k-1}}s_{\pi_k}$
 as in the proof of Theorem~\ref{theo:mainAn}. 
%So, with the
%notation of~\S\ref{section:classAnTilde} and by~\cite[III, p.\,172]{schur},
If $o_{\pi_1}^{\pm}$ denotes the elements of odd order satisfying
$\theta(o_{\pi_1}^{\pm})=s_{\pi_1^{\pm}}$ (see Lemma~\ref{lem:ordre}), 
then we set $o_{\pi^{\pm}}:=o_{\pi_1}^{\pm}\cdots o_{\pi_{k-1}}o_{\pi_k}$. 
% Write $t_{\widehat{\pi}}=t_{\pi}$, 
% $t_{\widehat{\pi}}=\tau_{\pi^+}$ or $t_{\widehat{\pi}}=\tau_{\pi^-}$
% whenever $\widehat{\pi}=\pi$, $\widehat{\pi}=\pi^+$ or
% $\widehat{\pi}=\pi^-$, respectively.
Therefore, if $G=\tSym_n$, then the set of $o_{\pi}$ for
$\pi\in\Omega$ is a set of representatives of the $G$-classes 
of $S_{G}$. If $G=\tAlt_n$, then the elements $o_{\pi}$ (for $\pi\in
\Omega$ and
$\pi\notin \mathcal O_n\cap \mathcal D_n$) and $o_{\pi^{\pm}}$ (for $\pi\in
\Omega\cap\mathcal O_n\cap \mathcal D_n$) form a system of
representatives
of the $\tAlt_n$-classes of $S_{\tAlt_n}$. 
Moreover, for any $\widehat{\pi}\in\Lambda$ (with
$\widehat{\pi}\in\{\pi^+,\pi^-\}$ if $\pi\in \mathcal O_n\cap \mathcal
D_n$ and $G=\tAlt_n$, and $\widehat{\pi}=\pi$ otherwise), we write
$G_{\widehat{\pi}}=G_{o_{\widehat{\pi}}}$ and
$r^{\widehat{\pi}}=r^{o_{\widehat{\pi}}}$, where $G_{o_{\widehat{\pi}}}$
and
$r^{o_{\widehat{\pi}}}:\C\Irr(B_{\gamma,w})\rightarrow\C\Irr(B(G_{\pi}))$
are defined in Proposition~\ref{MNTilde} (here $B(G_{\pi})$ is one $p$-block or two $p$-blocks of $G_{\pi}$). 
% Note that
% $r^{\pi}:\C\Irr(B_{\gamma,w})\rightarrow\C\Irr(B_{\gamma,w-|\pi|}(G_{\pi})$,
% where $B_{\gamma,w-|\pi|}(G_{\pi})$ is the $p$-block of
% $G_{\pi}$ labelled by $\gamma$ whenever $G=\tSym_n$ or $G=\tAlt_n$ and
% $G_{\pi}\neq \langle z\rangle$, and a union o

Now, we define $\Omega_0=\{\pi\in \Omega\,|\,\sum \pi_i\leq
pw\}$ and $\Lambda_0$ the set of parameters $\widehat{\pi}\in\Lambda$
 such that
$\pi\in\Omega_0$. Then $\Omega_0=\Lambda_0$ whenever $G=\tSym_n$ or
$G=\tAlt_n$ with $n\notin\{ pw, pw+1\}$.

Similarly, we define $\Omega'$, $\Lambda'$, $\Omega_0'$ and $\Lambda_0'$
for $G'$ and $S_{G'}$. Since $G'=\tSym_m$, we have $\Lambda'=\Omega'$ and
$\Lambda_0'=\Omega_0'=\Omega_0$. We write $o'_{\pi}$ for the
representatives of the $G'$-classes of $S_{G'}$ (as described above for
$G$) and, for $\pi\in\Omega'$, we define $G'_{\pi}$ and
${r'}^{\pi}:\C\Irr(B_{\gamma',w})\rightarrow\C\Irr(B(G'_{\pi}))$ as
above.

Using Theorems~\ref{MNCabanes} and~\ref{MNcoefAntilde}, we show that for
any $\pi\in\Omega\backslash\Omega_0$ or
$\pi\in\Omega'\backslash\Omega_0'$, one has $r^{\widehat \pi}=0$ and
${r'}^{\widehat \pi}=0$.

Now we suppose that $\Lambda_0=\Omega_0$. Let $\pi\in\Omega_0$. If 
$|\pi|<pw$, then $B(G_{\pi})$ and $B(G'_{\pi})$ are just one $p$-block
of $G_{\pi}$ and $G'_{\pi}$, respectively. If $|\pi|=pw$, then $B(G_{\pi})$ and
$B(G'_{\pi})$ are one $p$-block with defect zero whenever $G=\tSym_n$ and
$\sigma(\gamma)=1$ or $G=\tAlt_n$ and $\sigma(\gamma)=-1$, or 
are the union of two $p$-blocks with defect zero otherwise. We
define 
(and denote by the same symbol to simplify the notation)
$I:\Irr(B(G_{\pi}))\rightarrow\Irr(B(G'_{\pi}))$ by 
Equation~(\ref{eq:defisoTilde}). 

We assume that Convention~\ref{conv} holds and that moreover, if
$\pi=(\pi_1,\ldots,\pi_k)\in\mathcal P_n$ has an odd part divisible by $p$, 
then there is some $r\leq r'\leq k $ such that $\pi_j$ is divisible by $p$
for all $r'\leq j$, and every odd $\pi_j$ with $j<r$ is prime to $p$. 
So, we can use
Theorem~\ref{MNCabanes} and~\ref{MNcoefAntilde} iteratively (see also
Remark~\ref{rk:choix}).
Therefore, using
Propositions~\ref{MNcoef} and~\ref{MNcoefcroise}, we show as in the
proof of Theorem~\ref{theo:mainAn} (see
Equations~(\ref{eq:coeffAitere}), (\ref{eq:acommute})
and~(\ref{eq:commuteAn})), % and also Equation~(\ref{eq:signTilde})),
that 
\begin{equation}
\label{eq:commuteTilde}
I\circ r^{\pi}={r'}^{\pi}\circ I.
\end{equation}
Thus, Theorem~\ref{th:iso} holds (see Remark~\ref{rk:point3}).

Suppose, on the other hand, that $\Lambda_0\neq\Omega_0$. 
Then $G=\tAlt_n$,
$n\in\{pw,pw+1\}$, and $G'=\tSym_m$. In particular, $\sigma(\gamma)=1$.  Let
$\pi=(\pi_1,\ldots,\pi_l)\in \Omega_0$. If
$\pi\notin\mathcal O_n\cap \mathcal D_n$, then we are in the same
situation as above, and Equation~(\ref{eq:commuteTilde}) holds. Suppose instead
that $\pi\in \mathcal O_n\cap\mathcal D_n$. Then $\pi$ labels two
classes with representatives $o_{\pi^+}$ and $o_{\pi^-}$ of $S_{G}$, and
$G_{\pi^+}$ and $G_{\pi^-}$ are two copies of $\Z_2$, whose only spin
$p$-block has defect zero, and consists of the (only) non-trivial character.
Denote by $\{\epsilon_+\}$ and $\{\epsilon_-\}$ the spin $p$-blocks of
$G_{\pi^+}$ and $G_{\pi^-}$, respectively.

Now, even though $\sigma(\gamma)=1$, $\gamma$ labels just one $p$-block of
$G_{\pi^+}$ (and of $G_{\pi^-}$). Since
$\sigma(\gamma')=-\sigma(\gamma)=-1$, it follows that $\gamma'$
labels two $p$-blocks with defect zero of $G'_{\pi}$. In particular,
$\Irr(B(G'_{\pi}))$ is the union of the $p$-blocks $\{\xi_{\gamma'}^+\}$ and
$\{\xi_{\gamma'}^-\}$. 

We then define $I_{\pi}:\C\{\epsilon_+\}\oplus \C\{\epsilon_-\}\longrightarrow
\Irr(B(G'_{\pi}))$ by setting $I_{\pi}(\epsilon_+)=\xi_{\gamma'}^+$ and 
$I_{\pi}(\epsilon_-)=\xi_{\gamma'}^-$.
Let $\eta,\,\delta\in\{-1,1\}$ and $\lambda\in E_{\gamma,w}$. 
%Write
%$l=\ell(\pi)$.
We have
$r^{\pi^{\delta}}(\zeta_{\lambda}^{\eta})=a(\zeta_{\lambda}^{\eta},\epsilon_{\delta})\epsilon_{\delta}$,
and iterating Theorem~\ref{MNcoefAntilde}, we obtain
$$a(
\zeta_{\lambda}^{\eta},\epsilon_{\delta})=\sum_{i_1,\ldots,i_l}a(\zeta_{\lambda_{i_0}}^{\beta_{i_0}},\zeta_{\lambda_{i_1}}^{\beta_{i_1}})a(\zeta_{\lambda_{i_1}}^{\beta_{i_1}},\zeta_{\lambda_{i_2}}^{\beta_{i_2}})\cdots
a(\zeta_{\lambda_{i_{l-1}}}^{\beta_{i_{l-1}}},\zeta_{\lambda_{i_{l}}}^{\beta_{i_{l}}}),$$
where $\lambda_{i_0}=\lambda$, $\beta_{i_0}=\eta$,
$\zeta_{\lambda_{i_{l}}}^{\beta_{i_{l}}}=\epsilon_{\delta}$,
$\beta_{i_j}\in\{-1,1\}$ for all $1\leq j\leq l-1$, and $\zeta_{\lambda_{i_j}}^{\beta_j}$ is
obtained from  $\zeta_{\lambda_{i_{j-1}}}^{\beta_{j-1}}$ by removing a
$\pi_j$-bar from $\lambda_{i_{j-1}}$ for all $1\leq j\leq l$.
%$a(\zeta_{\lambda}^{\eta},\epsilon_{\delta})$ is as 
%in Equation~(\ref{eq:dec}).  In particular,
%$a(\zeta_{\lambda}^{\eta},\epsilon_{\delta})$ is as 
% setting $\vartheta_0=\zeta_{\lambda}^{\delta}$ and
%$\vartheta_m=\epsilon_{\delta}$, where $m=\ell(\pi)+1$, one has
%$$a(\zeta_{\lambda}^{\eta},\epsilon_{\delta})=\sum
%a(\vartheta_0,\vartheta_1)\cdots a(\vartheta_{m-1},\vartheta_m),$$
Similarly, %for $\lambda'\in E_{\gamma',w}$, 
we have
${r'}^{\pi}(\xi_{\Psi(\lambda)}^{\eta'})=a(\xi_{\Psi(\lambda)}^{\eta'},
\xi_{\gamma'}^+)\xi_{\gamma'}^+
+a(\xi_{\Psi(\lambda)}^{\eta'},\xi_{\gamma'}^-)\xi_{\gamma'}^-$ with
$$a(
\xi_{\Psi(\lambda)}^{\eta'},\xi_{\gamma'}^{\delta})=\sum_{i_1,\ldots,i_l}a(\xi_{\Psi(\lambda_{i_0})}^{\beta_{i_0}'},\xi_{\Psi(\lambda_{i_1})}^{\beta_{i_1}'})a(\xi_{\Psi(\lambda_{i_1})}^{\beta_{i_1}'},\xi_{\Psi(\lambda_{i_2})}^{\beta_{i_2}'})\cdots
a(\xi_{\Psi(\lambda_{i_{l-1}})}^{\beta_{i_{l-1}}'},\xi_{\Psi(\lambda_{i_{l}})}^{\beta_{i_{l}}'}),$$
where $\beta_{i_j}'=\delta_{\overline p}(\lambda_{i_j})\delta_{\overline
p}(\Psi(\lambda_{i_j}))\beta_{i_j}$ for all $0\leq j\leq l-1$,
$\eta'=\beta'_{i_0}$, and $\beta'_{i_l}=\delta$.
%$\lambda_{i_0}'=\lambda'$, $\beta_{i_0}'=\eta'$,
%$\xi_{\lambda_{i_{l+1}}'}^{\beta_{i_{l+1}}'}=\xi_{\gamma'}^{\delta}$,
%$\beta_{i_j}'\in\{-1,1\}$, and $\xi_{\lambda_{i_j}'}^{\beta_j'}$ is
%obtained from  $\xi_{\lambda_{i_{j-1}}'}^{\beta_{j-1}'}$ by removing a
%$\pi_j$-bar to $\lambda_{i_{j-1}}'$ for all $1\leq j\leq l$.

Write $\mathfrak f=(\pi_1)$ if $n=pw$ or $\mathfrak f=(\pi_1,1)$ if $n=pw+1$. 
Note that $\lambda_{i_{l-1}}$ is a partition of $|\mathfrak f|$, and if 
$\lambda_{i_{l-1}}\neq \mathfrak f$, then for
$1\leq j\leq l$,
Theorem~\ref{MNcoefcroise} and Remark~\ref{rk:degeneree2} give that
$$
a\left(\xi_{\Psi(\lambda_{i_{j-1})}}^{\beta_{i_{j-1}}'},
\xi_{\Psi(\lambda_{i_{j}})}^{\beta_{i_j}'}\right)=
\delta_{\overline p}(\lambda_{i_{j-1}})\delta_{\overline
p}(\Psi(\lambda_{i_{j-1}}))\delta_{\overline
p}(\lambda_{i_j})\delta_{\overline p}(\Psi(\lambda_{i_j}))
a\left(\zeta_{\lambda_{i_{j-1}}}^{\beta_{i_{j-1}}},
\zeta_{\lambda_{i_{j}}}^{\beta_{i_j}}\right),
$$
and it follows that
%  \begin{eqnarray*}
%  a\left(\xi_{\Psi(\lambda_{i_{j-1})}}^{\beta_{i_{j-1}}\delta_{\overline
%  p}(\Psi(\lambda_{i_{j-1}}))},
%  \xi_{\Psi(\lambda_{i_{j}})}^{\beta_{i_j}\delta_{\overline
%  p}(\Psi(\lambda_{i_j}))}\right)&=&\delta_{\overline p}(\lambda_{i_{j-1}})\delta_{\overline
%  p}(\Psi(\lambda_{i_{j-1}}))\delta_{\overline
%  p}(\lambda_{i_j})\delta_{\overline p}(\Psi(\lambda_{i_j}))\\
%  &&\cdot
%  a\left(\zeta_{\lambda_{i_{j-1}}}^{\beta_{i_{j-1}}\delta_{\overline
%  p}(\lambda_{i_{j-1}})},
%  \zeta_{\lambda_{i_{j}})}^{\beta_{i_j}\delta_{\overline
%  p}(\lambda_{i_j})}\right).
%  \end{eqnarray*}
%  
%  
%in Equation~(\ref{eq:coeffAitere}), and we deduce from 
%Theorem~\ref{MNcoefcroise} and
%Remark~\ref{rk:degeneree2},
%Suppose that $\lambda\neq\pi$. Then by Theorem~\ref{MNcoefcroise} and
%Remark~\ref{rk:degeneree2}, we deduce that
%that for $\lambda\neq \pi$, one has
\begin{equation}
\label{eq:preuve1}
a(\zeta_{\lambda}^{\eta},\epsilon_{\delta})=\delta_{\overline{p}}(\lambda)
\delta_{\overline{p}}(\Psi(\lambda))a(\xi_{\Psi(\lambda)}^{\eta\delta_{\overline{p}}(\lambda))\delta_{\overline{p}}(\Psi(\lambda))},\xi_{\gamma'}^{\delta}).
\end{equation}
In particular, one has
\begin{equation}
\label{preuve2}
\begin{split}
I_{\pi}\left((r^{\pi^+}+r^{\pi^-})(\zeta_{\lambda}^{\eta})\right)&=a(\zeta_{\lambda}^{\eta},\epsilon_{+})\xi_{\gamma'}^++a(\zeta_{\lambda}^{\eta},\epsilon_{-})
\xi_{\gamma'}^-\\
&=\delta_{\overline{p}}(\lambda)
\delta_{\overline{p}}(\Psi(\lambda)){r'}^{\pi}\left(\xi_{\Psi(\lambda)}^{\eta\delta_{\overline{p}}(\lambda)
\delta_{\overline{p}}(\Psi(\lambda))}\right)\\&=
{r'}^{\pi}\left(I(\zeta_{\lambda}^{\delta})\right).
\end{split}
\end{equation}
Furthermore, Equation~(\ref{eq:valdiff}), Theorem~\ref{MNCabanes} and a
computation similar to that in the proof of
Proposition~\ref{MNcoefcroise}
give
$$a(\zeta_{\mathfrak
f}^{\pm},\epsilon_{\delta})=\delta_{\overline{p}}(\mathfrak f)
\delta_{\overline{p}}(\Psi(\mathfrak f))a\left(\xi_{\Psi(\mathfrak
f)}^{\pm
\delta_{\overline{p}}(\mathfrak f)
\delta_{\overline{p}}(\Psi(\mathfrak f))},\xi_{\gamma'}^{\delta}\right).$$
So, if $\lambda_{i_{l-1}}=\mathfrak f$, then Equation~(\ref{eq:preuve1})
and thus Equation~(\ref{preuve2}) also hold. 
%where $f'$ is the bar partition obtained by adding $\pi$ to $\gamma'$. 
% using Theorem~\ref{relativesign}, we can prove as in
%Equation~(\ref{eq:signedeltap}) that  $\delta_{\overline{p}}(\pi)
%\delta_{\overline{p}}(\Psi(\pi))=1$.
%Write $a_(\zeta_{\pi}^{\eta},\epsilon_{\delta})$ as in
%Equation~(\ref{eq:coeffAitere}). En particular, $m=\ell(\pi)+1$.
%and $\vartheta_{m-1}$ is an irreducible character of i
%It follows that Equation~(\ref{preuve2}) also holds in this case.
In summary, we have proved that
%
%Now, using Theorems~\ref{relativesign}, \ref{MNCabanes}
%and~\ref{MNcoefAntilde}, we prove that
$$I_\pi\circ\left(r^{\pi^+}+r^{\pi^-}\right)={r'}^\pi\circ I.$$
Finally, by the argument of the proof of Theorem~\ref{theo:mainAn},
we obtain for $\widehat{I}$ a decomposition as in
Equation~(\ref{eq:guguAn}).\\

We now prove that $\widehat I$ satisfies property (ii) of a
Brou\'e isometry.
Assume that $x\in G$ is $p$-singular and $x'\in G'$ is 
$p$-regular.
If $x\notin C_{G}$, then $\widehat{I}(x,x')=0$ (see the proof of
Corollary~\ref{cor:isoparfaitegene}). Otherwise, $x\in C_{G}$, and
without loss of generality, we can assume that $x=z^{k}t_{\beta}$ for
some $\beta\in\mathcal P_n$ and $k\in\{0,1\}$. Note that $z^kt_{\beta}\in
C_{G}$ means that $\beta$ has at least one part of length divisible by
$2p$. In particular, $\beta\notin \mathcal O_n$. If $\beta\notin D_n^-$ (when
$G=\tSym_n$) or $\beta\notin D_n^+$ (when $G=\tAlt_n$), then
Propositions~\ref{prop:classschur} and~\ref{prop:splitAntilde} imply
that $X_{\lambda}^{\pm}(z^kt_{\beta})=0$ for all $\lambda\in E_{\gamma,w}$, and
$I(x,x')=0$ by Equation~(\ref{eq:Ichapeau}). Hence, we can suppose that
$\beta\in \mathcal D_n^-$ if $G=\tSym_n$, or that $\beta\in\mathcal D_n^+$ if
$G=\tAlt_n$. Therefore, $X_{\lambda}^{\pm}(z^k
t_{\beta})\neq 0$ if and only if $\lambda=\beta$.
Furthermore, if we write
$\beta^{(\overline{p})}=(\beta^{\,0},\ldots,\beta^{\,(p-1)/2})$ for the
$\overline{p}$-quotient of $\beta$, then the parts of $\beta$ divisible
by $p$ are the parts of
$p\cdot \beta^{\,0}$ (see~\cite[p. 27]{olsson}).
Hence, the definition of $\Psi$ gives
$\beta^{(\overline{p})}=\Psi(\beta)^{(\overline{p})}$,
and $\Psi(\beta)$ has non-trivial parts divisible by $p$. It follows that 
$Y_{\Psi(\lambda)}^+(x')=Y_{\Psi(\lambda)}^-(x')$, because $x'$ is
$p$-regular.
Using Equation~(\ref{eq:Ichapeau}), we obtain
$$\widehat{I}(x,x')=\left (
X_{\beta}^+(z^kt_{\beta})+X_{\beta}^-(z^kt_{\beta})\right)
Y_{\Psi(\beta)}^+(x)=0$$
by Equations~(\ref{eq:valdmoinsSn}) and~(\ref{eq:valDnplus}).
Note that we derive from Remark~\ref{rk:adjoint} and a similar
computation that, if $x$ is $p$-regular and $x'$ is $p$-singular, then
$\widehat{I}(x,x')=0$.

Finally, we show that $\widehat{I}$ satisfies property (i) of a Brou\'e
isometry. Note that the $\mathcal E$ have size $1$ or $2$, and all the assumptions of Theorem~\ref{th:broue} are
satisfied.  

First, we consider the case $G=\tSym_n$. Take $\Phi\in \Z
\mathfrak b_{\gamma,w}^{\vee}$, where $\mathfrak b_{\gamma,w}$ is a
$\Z$-basis of
$\Z\Irr(B_{\gamma,w})^{C_{\tSym_n}}$ as in Remark~\ref{rk:baseadaptee}. 
By Corollary~\ref{blocs} and
Proposition~\ref{prop:classschur}, for
$x\in\tAlt_n$, we have $\Phi(x)\neq 0$ only if $x$ is $p$-regular. Thus,
again by Corollary~\ref{blocs} (applied to $\tAlt_n$ with respect to
the set of $p$-regular elements),
$\Res_{\tAlt_n}^{\tSym_n}(\Phi)$ is a projective character of $\tAlt_n$.
Let $x$ be a $p$-regular element of $\tAlt_n$. In particular,
$x=x_{C_{\tSym_n}}$.
Since $\Phi(x)=\Res_{\tAlt_n}^{\tSym_n}(\Phi)(x)$, it follows that
$\Phi(x)$ is the value of some projective character of $\tAlt_n$.

Let $\pi\in\Omega_0$ and $\phi\in\Z\mathfrak b_{\pi}$, 
where $\mathfrak b_{\pi}$ is a $\Z$-basis of
$\Z\Irr(B_{\pi})^{C_{\tSym_n}\cap G_{\pi}}$ as in
Remark~\ref{rk:baseadaptee}.
Now, we apply the previous computations to $G_{\pi}$,
$G'_{\pi}$ and $I_{\pi}$. We conclude that the condition (2) of
Theorem~\ref{th:iso} holds for $I_{\pi}$. Hence, Remark~\ref{rk:point3} 
gives the condition (3) of
Theorem~\ref{th:iso} for $I_{\pi}$ and we deduce as in the proof of
Theorem~\ref{th:broue} that $J_{\pi}(\Z \mathfrak b_{\pi}^{\vee})=\Z{
\mathfrak{b}}_{\pi}'^{\vee}$. Hence, $J^{*-1}_{\pi}(\Z \mathfrak
b_{\pi})\subseteq\Z \mathfrak{b}_{\pi}'$. Since $\mathcal E$ have size
$1$ or $2$ and $p$ is odd, we have
$l'_{\pi}(J_{\pi}^{*-1}(\phi))(x')\in\mathcal R$ for all $x'\in\tSym_m$.
We conclude with the argument of the proof of Theorem~\ref{th:broue} that
$\widehat{I}(x,x')/|\Cen_G(x)|\in\mathcal R$. Similarly, because of
Remark~\ref{rk:adjoint}, if $x'\in\tAlt_m$, then
$\widehat{I}(x,x')/|\Cen_{G'}(x')|\in\mathcal R$.\medskip
% 
% 
% In particular, for $\pi\in\Omega_0$ and $\phi\in\Z\mathfrak b_{\pi}$
% (where $\mathfrak b_{\pi}$ is a $\Z$-basis of
% $\Z\Irr(B_{\pi})^{C_{\tSym_n}\cap G_{\pi}}$ as in
% Remark~\ref{rk:baseadaptee}),  $e_{\pi}(\Phi_{\phi})(x)$ is the
% value of some projective character of $G_{\pi}$. Furthermore, since 
% $\Z\mathfrak b'_{\pi}=\Z\Irr(B'_{\pi})^{C_{G'}\cap G'_{\pi}}$, it
% follows that $l'_{\pi}(J_{\pi}^{*-1}(\phi))(x')$ lie in
% $\mathcal R$ for all $x'\in\tSym_m$.
% We conclude with the argument of the proof of Theorem~\ref{th:broue} that
% $\widehat{I}(x,x')/|\Cen_G(x)|\in\mathcal R$. Similarly, because of
% Remark~\ref{rk:adjoint}, if $x'\in\tAlt_m$, then
% $\widehat{I}(x,x')/|\Cen_{G'}(x')|\in\mathcal R$.

Assume now that $x\notin \tAlt_n$. By Equation~(\ref{eq:Ichapeau}) and
Proposition~\ref{prop:classschur}, $\widehat{I}(x,x')\neq 0$ only if
$x=z^ut_{\beta}$ and $x'=z^vt'_{\Psi(\beta)}$ with $\beta\in\mathcal D_n^-$
and $u,\,v\in\{0,1\}$. % and write $x=z^kt_{\pi}$ with
% $\pi\in\mathcal D_n^-$ and $k\in\{0,1\}$.
% Then by Equation~(\ref{eq:Ichapeau}), $\widehat{I}(x,x')\neq 0$ if only if
% $x'=z^lt'_{\Psi(\pi)}$ for $l\in\{0,1\}$, and 
In this case,
Equation~(\ref{eq:valdmoins}) gives
\begin{equation}
\label{eq:vali}
\widehat{I}(z^ut_{\beta},z^{v}t'_{\beta'})=\pm  
i^{\frac{n+m-\ell(\beta)-\ell(\beta')-2}{2}}\sqrt{\beta_1\cdots\beta_{k}\beta'_1\cdots\beta'_{k'}},
\end{equation}
where $\beta=(\beta_1,\ldots,\beta_k)$ and
$\Psi(\beta)=(\beta'_1,\ldots,\beta'_{k'})$.
However,
% Let $\pi=(\pi_1,\ldots,\pi_r)\in\mathcal D_n^-$. We set
% $\operatorname{prod}(\pi)=\prod\pi_i$. If $q$ is an odd prime number,
we derive from the proof of \cite[Theorem 4.3]{olsson} that
$\nu_{p}(\beta_1\cdots\beta_k))=p^{|\beta^0|}
\nu_p(\operatorname{prod}(\beta^0))$,
where $\nu_p$ is the $p$-valuation,
$\beta^{(\overline{p})}=(\beta^0,\ldots,\beta^{(p-1)/2})$ is the
$\overline{p}$-quotient of $\beta$, and $\operatorname{prod}(\beta^0)$
is the product of the lengths of the parts of $\beta^0$.
Hence,
\begin{equation}
\label{eq:valuation}
\nu_p(\beta_1\cdots\beta_k))=\nu_p(\beta'_1\cdots\beta'_{k'}),
\end{equation}
Furthermore, 
$$|\Cen_{\widetilde{\Sym}_n}(z^u t_{\beta})|=2\prod_{i=1}^k
\beta_i\quad\textrm{and}\quad |\Cen_{\widetilde{\Sym}_m}(z^{v}
t'_{\beta'})|=2\prod_{i=1}^{k'}
\beta'_i,$$
because $\beta,\,\beta'\in\mathcal D^-$. By Equation~(\ref{eq:valuation}),
there are integers $a$ and $b$ prime to $p$ such that
% where $\pi=(\pi_1,\ldots,\pi_r)$. Denote by $p$ the $q$-valuation of 
% $\prod \pi_i$.
%We have seen in the proof of Theorem~\ref{th:maintilde} that
%$\widehat{I}(z^it_{\pi},z^{i'}t_{\pi'})\neq 0$ only if $\pi\in
%E_{\gamma,w}$ and $\pi'=\Psi( \pi)$. Write 
$\prod \beta_i=\nu_p(\beta_1\cdots\beta_k)a$ and $\prod
\beta'_i=\nu_p(\beta_1\cdots\beta_k)b$.
Therefore, Equation~(\ref{eq:vali}) implies that
$$\frac{\widehat{I}(z^ut_{\beta},z^{v}t'_{\beta'})}{|\Cen_{\widetilde{\Sym}_n}(t_{z^i\beta})|}=\pm
i^{\frac{n+m-\ell(\beta)-\ell(\beta')-2}{2}}\frac{\sqrt{ab}}{2a}.$$
Since $\pm
i^{\frac{n+m-\ell(\beta)-\ell(\beta')-2}{2}}\sqrt{ab}\in\mathcal R$ and $2a$ is
prime to $p$, we deduce that
$$\frac{\widehat{I}(z^ut_{\beta},z^{v}t'_{\beta'})}{|\Cen_{\widetilde{\Sym}_n}(z^ut_{\beta})|}\in\mathcal
R.$$ Similarly, we have
$\frac{\widehat{I}(z^ut_{\beta},z^{v}t'_{\beta'})}{|\Cen_{\widetilde{\Sym}_{m}}(z^{v}t_{\beta'})|}\in\mathcal
R$.

Assume now that $G=\tAlt_n$. Take $\Phi\in \Z \mathfrak b_{\gamma,w}^{\vee}$,
where $\mathfrak b_{\gamma,w}$ is a $\Z$-basis of
$\Z\Irr(B_{\gamma,w})^{C_{\tAlt_n}}$ as in Remark~\ref{rk:baseadaptee}.
By Corollary~\ref{blocs}, there are integers $a_{\lambda}$ (for
$\lambda\in E_{\gamma,w}$ with $\sigma(\lambda)=-1$) and
$a_{\lambda}^{\pm}$ (for
$\lambda\in E_{\gamma,w}$ with $\sigma(\lambda)=1$) such that
\begin{equation}
\label{eq:phicliff}
\Phi=\sum_{\sigma(\lambda)=-1}a_{\lambda}\zeta_{\lambda}+\sum_{\sigma(\lambda)=1}\left(a_{\lambda}^+\zeta_{\lambda}^++a_{\lambda}^-\zeta_{\lambda}^-\right),
\end{equation}
and Clifford theory gives
\begin{equation}
\label{eq:cliffPhi}
\Ind_{\tAlt_n}^{\tSym_n}(\Phi)
=\sum_{\sigma(\lambda)=-1}a_{\lambda}\left(\xi_{\lambda}^++\xi_{\lambda}^-\right)
+\sum_{\sigma(\lambda)=1}(a_{\lambda}^++a_{\lambda}^-)\xi_{\lambda}.
\end{equation}

Let $x$ be a $p$-regular element of $\tAlt_n$. Assume $x=z^kt_\beta$ with
$\beta\in \mathcal O_n$ and $\beta\notin
\mathcal D_n^+$. In particular, one has $x=x_{C_{\tAlt_n}}$, and for $\lambda\in \mathcal D_n^+$, we have
$$\xi_{\lambda}(x)=\zeta_{\lambda}^+(x)+\zeta_{\lambda}^-(x)=2\zeta_{\lambda}^+(x)=2\zeta_\lambda^-(x),$$
and it follows that
\begin{eqnarray*}
\Ind_{\tAlt_n}^{\tSym_n}(\Phi)(x)
&=&\sum_{\sigma(\lambda)=-1}a_{\lambda}\left(\xi_{\lambda}^+(x)+\xi_{\lambda}^-(x)\right)
+\sum_{\sigma(\lambda)=1}(a_{\lambda}^++a_{\lambda}^-)\xi_{\lambda}(x)\\
&=&2\left(
\sum_{\sigma(\lambda)=-1}a_{\lambda}\zeta_{\lambda}(x)+\sum_{\sigma(\lambda)=1}\left(a_{\lambda}^+\zeta_{\lambda}^+(x)+a_{\lambda}^-\zeta_{\lambda}^-(x)\right)\right)\\
&=&2\Phi(x).
\end{eqnarray*}
By Equation~(\ref{eq:cliffPhi}), Proposition~\ref{prop:classschur} and Corollary~\ref{blocs}, 
$\Ind_{\tAlt_n}^{\tSym_n}(\Phi)$ is a projective character of $\tSym_n$
and hence $\tAlt_n$.
Thus, $2\Phi(x)$ is the value of a projective character of $\tAlt_n$,
and we conclude as above, because $2$ is not divisible by $p$. 

Suppose now that $x=z^ut_\beta$ with $\beta\in\mathcal O_n\cap \mathcal
D_n^+$. By Lemma~\ref{lem:ordre}, we can assume that $x=z^{u'}o_{\beta}$
for some non-negative integer $u'$. Write
$H$ for the centralizer of $o_{\beta^{\pm}}$ in $\tAlt_n$.
Then $H=\cyc{z}\times \cyc{o_{\beta_1^{\pm}}}
\times\cdots \times \cyc{o_{\beta_k}}$
contains no elements whose cycle structure has even parts. In
particular, $\Res_{H}^{\tAlt_n}(\Phi)$ is a projective character of $H$.
Since $x\in H$, it follows that $\Phi(x)$ is the value of a projective
character of $H$, and we again conclude with the same argument as above.

Finally, it remains to show the property for $\beta\in\mathcal D_n^+$ and
$\beta\notin \mathcal O_n$. However, $\widehat{I}(z^{u'}o_{\beta},x')\neq 0$ if and only
if $x'=z^vo'_{\Psi(\beta)}$ for some non-negative integer $v$. In
particular, if $\beta':=\Psi(\beta)\in
\mathcal D_m^-$, then 
$$\widehat{I}(z^{u'}o_{\beta},z^vo'_{\beta'})=\pm
\sqrt{2}i^{\frac{n+m-k-k'-1}{2}}\sqrt{\beta_1\cdots\beta_k\,\beta'_1\cdots\beta'_{k'}}\
,$$
where $\beta=(\beta_1,\ldots,\beta_k)$ and
$\beta'=(\beta_1',\ldots,\beta_{k'}')$.
We conclude as above using Equation~(\ref{eq:valuation}).
\end{proof}

\begin{corollary}
\label{cor:brouetilde}
If $p$ is an odd prime, if $B_{\gamma,w}$ and $B_{\gamma',w}$ are $p$-blocks of $\tAlt_n$ and $\tAlt_m$ respectively, and if
$\sigma(\gamma)=\sigma(\gamma')$, then the isometry $I$ defined by
Equation~(\ref{eq:defisoTilde}) is a Brou\'e perfect isometry.
\end{corollary}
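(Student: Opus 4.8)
The plan is to reduce the statement to Theorem~\ref{theo:mainTilde} by a composition argument, introducing as an intermediate object a spin block of a double cover of a symmetric group whose $\bar p$-core has the \emph{opposite} sign.

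First I would fix a $\bar p$-core $\gamma''$ with $\sigma(\gamma'')=-\sigma(\gamma)=-\sigma(\gamma')$. Such a core always exists: the empty partition is a $\bar p$-core with $\sigma=1$, and the bar partition $(2)$ (whose bar lengths are $1$ and $2$) is a $\bar p$-core with $\sigma=-1$ for every odd prime $p$. Put $n=|\gamma|+pw$, $m=|\gamma'|+pw$, $n''=|\gamma''|+pw$, and let $\mathbf{B}$ be the spin $p$-block of $\tSym_{n''}$ with $\bar p$-core $\gamma''$ and $\bar p$-weight $w$. Since $\sigma(\gamma)=-\sigma(\gamma'')$, Theorem~\ref{theo:mainTilde} applied in the opposite-signs case to the pair $(\gamma,\gamma'')$ yields a Brou\'e perfect isometry $I_1\colon\C\Irr(B_{\gamma,w})\to\C\Irr(\mathbf{B})$ (here $B_{\gamma,w}$ is exactly the $p$-block of $\tAlt_n$ in the statement); likewise, applied to $(\gamma',\gamma'')$ it yields a Brou\'e perfect isometry $I_2\colon\C\Irr(B_{\gamma',w})\to\C\Irr(\mathbf{B})$, with $B_{\gamma',w}$ the $p$-block of $\tAlt_m$ in the statement. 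I would then set $I=I_2^{-1}\circ I_1$.

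The next step is to check that $I$ is a Brou\'e perfect isometry. The inverse $I_2^{-1}$ is one by the discussion in Remark~\ref{rk:adjoint}: $\widehat{I_2^{-1}}$ is, up to complex conjugation and interchange of the two coordinates, equal to $\widehat{I_2}$, and neither operation affects conditions (i) and (ii) of the definition of a Brou\'e isometry, since the sets of $p$-regular and $p$-singular elements are stable under $g\mapsto g^{-1}$. That $I$ sends $\Z\Irr(B_{\gamma,w})$ onto $\Z\Irr(B_{\gamma',w})$ and satisfies Equation~(\ref{eq:isoperf}) (hence, by Proposition~\ref{prop:equiviso}, is a generalized perfect isometry) is immediate from the corresponding properties of $I_1$ and $I_2$. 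The only remaining point is that condition~(i) is preserved under composition: $\widehat{I}$ is the convolution over $\tSym_{n''}$ of the perfect characters $\widehat{I_1}$ and $\widehat{I_2}$, which is again perfect by the general theory of perfect isometries in~\cite{Broue}.

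Finally I would identify $I$ with the map of Equation~(\ref{eq:defisoTilde}). Writing $\Psi_\gamma,\Psi_{\gamma'},\Psi_{\gamma''}$ for the bijections onto the set $Q_w$ of $\bar p$-quotients of size $w$ as in Lemma~\ref{bijections}, the bijection attached to $I_1$ is $\Psi_{\gamma''}^{-1}\circ\Psi_\gamma$, that attached to $I_2$ is $\Psi_{\gamma''}^{-1}\circ\Psi_{\gamma'}$, and hence the bijection attached to $I_2^{-1}\circ I_1$ is $\Psi_{\gamma'}^{-1}\circ\Psi_\gamma=\Psi$. Substituting the two instances of formula~(\ref{eq:defisoTilde}) (with the labellings of the spin characters of $\mathbf{B}$, $\tAlt_n$ and $\tAlt_m$ fixed as in \S\ref{section:classAnTilde}) into one another, and using that $\delta_{\bar p}(\nu)^2=1$ for every bar partition $\nu$, the signs carried by the intermediate characters of $\mathbf{B}$ cancel in pairs, leaving exactly
$$I\left(X_\lambda^{\epsilon\delta_{\bar p}(\lambda)}\right)=\delta_{\bar p}(\lambda)\,\delta_{\bar p}(\Psi(\lambda))\,Y_{\Psi(\lambda)}^{\epsilon\delta_{\bar p}(\Psi(\lambda))}$$
for all $\lambda\in E_{\gamma,w}$ and $\epsilon\in\{\pm1\}$, which is the isometry of Equation~(\ref{eq:defisoTilde}). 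I expect this last sign cancellation, carried out with due care for the self-associate conventions ($X_\lambda^+=X_\lambda^-$ precisely when $\lambda\in\mathcal D^-$ for $\tAlt$, and when $\lambda\in\mathcal D^+$ for $\mathbf{B}$), to be the only genuinely delicate bookkeeping; the integrality claim used in the composition step is standard.
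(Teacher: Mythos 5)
Your proposal is correct and follows essentially the same route as the paper: the paper's proof also picks an auxiliary $\bar p$-core $\widetilde{\gamma}$ with $\sigma(\widetilde{\gamma})=-\sigma(\gamma)$, applies Theorem~\ref{theo:mainTilde} twice to get crossover Brou\'e isometries $I_1$ and $I_2$ into the spin block $B_{\widetilde{\gamma},w}$ of the corresponding $\tSym$, and concludes via $I=I_2^{-1}\circ I_1$. Your additional checks (existence of an opposite-sign core, stability of Brou\'e isometries under inversion and composition, and the $\delta_{\bar p}^2=1$ sign cancellation identifying the composite with Equation~(\ref{eq:defisoTilde})) are exactly the routine verifications the paper leaves implicit.
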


\begin{proof}
Let $\widetilde{\gamma}$ be any $\bar{p}$-core
such that $\sigma(\widetilde{\gamma})=-\sigma(\gamma)$.
% and 
% $\sigma(\widetilde{\gamma'})=-\sigma(\gamma')$. In particular,
% $\sigma(\widetilde{\gamma})=\sigma(\widetilde{\gamma'})$.
Denote by $B_{\widetilde{\gamma},w}$ 
the $p$-block of $\tSym_{|\widetilde{\gamma}|+pw}$ corresponding to
$\widetilde{\gamma}$. Since
$\sigma(\gamma')=-\sigma(\widetilde{\gamma})$, by
% and $\tSym_m$ with $\bar{p}$-core labelled by $\gamma$ and $\gamma'$,
% respectively.
 Theorem~\ref{theo:mainTilde}, there are Brou\'e perfect
isometries
$I_1:\Irr(B_{\gamma,w})\rightarrow\Irr(B_{\widetilde{\gamma},w})$ and
$I_2:\Irr(B_{\gamma',w})\rightarrow\Irr(B_{\widetilde{\gamma},w})$,
defined by Equation~(\ref{eq:defisoTilde}). Furthermore, we have
$$I=I_2^{-1}\circ I_1,$$
which proves the result.
\end{proof}

\section{Some other examples}
\subsection{Notation}\label{subsec:not}
For any positive  integers $k$ and $l$, we denote by $\mathcal{MP}_{k,l}$
the set of $k$-tuples of partitions $(\mu_1,\ldots,\mu_k)$ such that
$\sum |\mu_i|=l$.

Let $H$ be a finite group and $w$ be a positive integer. We consider
the wreath product $G=H\wr\sym_w$, that is, the semidirect product
$G=H^w\rtimes\sym_w$ where
$\sym_w$ acts on $H^w$ by permutation. Write
$N=|\Irr(H)|$ and $\Irr(H)=\{\psi_i\,|\,1\leq i\leq N\}$, and denote by
$g_i$ ($1\leq i\leq N$) a system of representatives for the conjugacy
classes of $H$.

The irreducible characters of $G$ are parametrized by
$\mathcal{MP}_{N,w}$ as follows. For
$\mm=(\mu_1,\ldots,\mu_N)\in\mathcal{MP}_{N,w}$, consider the
irreducible character $\phi_{\mm}$ of $\Irr(H^w)$ given by
\begin{equation}
\label{eqphimu}
\phi_{\mm}=\prod_{i=1}^{N}\
\underbrace{\psi_i\otimes\ldots\otimes
\psi_i}_{|\mu_i|\textrm{ times}},
\end{equation}
which, by~\cite[p.154]{James-Kerber}, can be extended to an irreducible character
$\widehat{\phi}_{\mm}=\prod_{i=1}^N\widehat{\psi_i^{|\mu_i|}}$ of
its inertia subgroup $I_G(\phi_{\mm})=\prod_{i=1}^N H\wr\sym_{|\mu_i|}$.
The irreducible character of $G$ corresponding to $\mu$ is then given by
$$\carwr_{\mm}=\Ind_{I_G(\phi_{\mm})}^G\left(\prod_{i=1}^N\widehat{\psi_i^{|\mu_i|}}\otimes\carsym_{\mu_i}\right),$$
where $\carsym_{\mu_i}$ denotes the irreducible character of
$\sym_{|\mu_i|}$ corresponding to the partition $\mu_i$ of $|\mu_i|$.

Let $(h_1,\ldots,h_w;\sigma)\in G$ with
$h_1,\ldots,h_w\in H$ and $\sigma\in\sym_w$. For any $k$-cycle
$\kappa=(j,\kappa j,\ldots,\kappa^{k-1}j)$ in $\sigma$, we define the
cycle product $$g((h_1,\ldots,h_w;\sigma); \, \kappa)=h_jh_{\kappa^{-1}j}\cdots
h_{\kappa^{-(k-1)}j}.$$ If $\sigma$ has cycle structure $\pi$, then we
form the $N$-tuple of partitions $(\pi_1,\ldots,\pi_N)$ from $\pi$,
where any cycle $ \kappa$ in $\pi$ gives a cycle of the same length in
$\pi_i$ if $g((h_1,\ldots,h_w;\sigma); \,  \kappa)$ is conjugate to $g_i$ in $H$. 
The $N$-tuple 
\begin{equation}
\label{eq:strucyclcouronne}
\mathfrak{s}(h_1,\ldots,h_w;\sigma)=(\pi_1,\ldots,\pi_N)\in\mathcal{MP}_{N,w}
\end{equation}
describes the cycle structure of $(h_1,\ldots,h_w;\sigma)$, and two
elements of $G$ are conjugate if and only if they have the same cycle
structure (see~\cite[4.2.8]{James-Kerber}). In particular, the conjugacy classes of $G$ are labeled by
$\mathcal{MP}_{N,w}$.

\subsection{Isometries between symmetric groups and natural subgroups}
\label{subsec:sym}
Let $n$ be a positive integer and $p$ be a prime. We denote by
$\mathcal{P}_n$ the set of partitions of $n$. Write $\carsym_{\lambda}$
for the irreducible character of the symmetric group $\sym_n$
corresponding to the partition $\lambda\in\mathcal P_n$. Recall that to
every $\lambda\in\mathcal P_n$, we can associate its $p$-core
$\lambda_{(p)}$ and its $p$-quotient
$\lambda^{(p)}=(\lambda_1,\ldots,\lambda_p)$ 
(see for example~\cite[p.\,17]{olsson}). Moreover, two irreducible
characters $\carsym_{\lambda}$ and $\carsym_{\mu}$ lie in the same $p$-block if
and only if $\lambda$ and $\mu$ have the same $p$-core. For $B_{\gamma}$ the $p$-block of
$\sym_n$ corresponding to a fixed $p$-core $\gamma$, we define the
$p$-weight $w$ of $B_{\gamma}$ by setting $w=(n-|\gamma|)/p$. Then
$\Irr(B_{\gamma})$ is parametrized by $\mathcal{MP}_{p,w}$.
Now, we set $G_{p,w}=(\Z_p\rtimes\Z_{p-1})\wr \sym_w$. We recall that
$\Irr(\Z_p\rtimes\Z_{p-1})=\{\psi_1,\ldots,\psi_p\}$ with the following
convention. If $p$ is odd (respectively $p=2$), 
then put $p^*=(p+1)/2$ (respectively $p^*=2$). Then we can choose
the labeling such that $\psi_i(1)=1$ for $i\neq p^*$ and
$\psi_{p^*}(1)=p-1$. Fix now $\eta$ and $\omega$ generators 
of $\Z_{p-1}$ and $\Z_p$ respectively. Write $g_i=\eta^i$ for $1\leq i\leq
p-1$ and $g_p=\omega$. Then the elements $g_i\in\Z_p\rtimes\Z_{p-1}$
form a system of representatives for the conjugacy classes of $\Z_p\rtimes\Z_{p-1}$. As
explained in~\S\ref{subsec:not}, the irreducible characters and
conjugacy classes of $G_{p,w}$ are labeled by $\mathcal{MP}_{p,w}$. As
above, for $\mm\in\mathcal{MP}_{p,w}$, we
write $\carwr_{\mm}$ for the corresponding irreducible character of
$G_{p,w}$.

\begin{theorem}We keep the notation as above, and define
the linear map $I:\C\Irr(B_{\gamma})\rightarrow\C\Irr(G_{p,w})$ by
$$I(\chi_{\lambda})=(-1)^{|\lambda_{p^*}|}\delta_p(\lambda)\theta_{\widetilde{\lambda}^{(p)}},$$
where
$\widetilde{\lambda}^{(p)}$ is obtained from the $p$-quotient
$\lambda^{(p)}$ of $\lambda$ replacing $\lambda_{p^*}$ by its
conjugate, and $\delta_p(\lambda)$ is the $p$-sign of $\lambda$.
Then $I$ is a generalized
perfect isometry with respect to the $p$-regular elements of $\sym_n$
and the set $C'$ of elements of $G_{p,w}$ with cycle structure
$\pi=(\pi_1,\ldots,\pi_p)$ satisfying $\pi_p=\emptyset$.
\label{theo:BrGr}
\end{theorem}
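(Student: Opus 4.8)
The plan is to realise $I$ as the map $I_{\{1\}}$ produced by Theorem~\ref{th:iso}, applied to a suitable pair of MN-structures, and then to invoke Corollary~\ref{cor:isoparfaitegene}. First I would equip $\sym_n$ with the MN-structure relative to the set $\mathcal C$ of $p$-regular classes and the block $B_\gamma$: take $S\subseteq\sym_n$ to be the set of permutations all of whose cycles have length divisible by $p$; each $\sigma\in\sym_n$ factors uniquely as $\sigma=\sigma_S\sigma_C$ with $\sigma_S\in S$, $\sigma_C$ $p$-regular and of disjoint support, and one sets $G_{\sigma_S}=\sym_{\overline{J}}$ (the complement of the support of $\sigma_S$). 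The maps $r^{\sigma_S}$ are obtained by iterating the Murnaghan--Nakayama rule, removing one $p\beta_i$-hook at a time; Definition~\ref{defMN}(4) holds with $B_{\sigma_S}$ the union of $p$-blocks of $\sym_{\overline{J}}$ with $p$-core $\gamma$ (of weight $w-|\beta|$ when $\sigma_S$ has type $p\cdot\beta$), and since a partition with $p$-core $\gamma$ has exactly $w$ removable hooks of length divisible by $p$, one gets $r^{\sigma_S}=0$ as soon as $|\beta|>w$. Thus $\Lambda_0=\{p\cdot\beta:\ |\beta|\le w\}$. The same construction on each $\sym_{n-p|\beta|}$ shows $G_{\sigma_S}$ again carries such an MN-structure, so Hypothesis~(3) of Theorem~\ref{th:iso} will be automatic by Remark~\ref{rk:point3}.

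Next I would put an MN-structure on $G'=G_{p,w}$ relative to $\mathcal C'$ (the set of classes of $G_{p,w}$ contained in $C'$, i.e.\ with $\pi_p=\emptyset$) and $B'=\Irr(G_{p,w})$. Take $S'$ to be the set of elements all of whose cycle products are conjugate to $\omega$ (equivalently, cycle structure $(\emptyset,\dots,\emptyset,\pi_p)$). An element of $G_{p,w}$ factors uniquely as $x=x_Sx_C$ with $x_S\in S'$ and $x_C\in C'$ using disjoint sets of the $w$ coordinates, and $G'_{x_S}\cong G_{p,w-|\beta|}$ when $x_S$ has underlying $\sym_w$-type $\beta$. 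The maps $r'^{\beta}$ come from the Murnaghan--Nakayama rule for wreath products (\cite[Ch.~4]{James-Kerber}): using that the canonical extension $\widehat{\psi_i^{(k)}}$ evaluated on a $k$-cycle with product $c$ equals $\psi_i(c)$, removing a $k$-cycle with product $\omega$ from $\theta_{\mm}$ produces the signed sum, over $j$ and over $k$-hooks $h$ of $\mu_j$, of $\psi_j(\omega)(-1)^{L(h)}\theta_{\mm'}$, where $\psi_j(\omega)=1$ for $j\ne p^*$ and $\psi_{p^*}(\omega)=-1$. Conjugacy classes of $G_{p,w}$ do not split, so here $\Lambda'=\Lambda'_0=\{\beta:\ |\beta|\le w\}$ is in natural bijection, via $p\cdot\beta\mapsto\beta$, with $\Lambda_0$, sending $\{1\}$ to $\{1\}$; and again $G'_{x_S}$ carries the analogous MN-structure, so Hypothesis~(3) is automatic on the $G'$ side as well. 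This also shows $r^\lambda=r'^{\lambda'}=0$ for indices outside $\Lambda_0$, resp.\ $\Lambda'_0$, giving Hypothesis~(2a).

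It then remains to check Hypothesis~(2b): defining $I_\beta:\C\Irr(B_\gamma(\sym_{n-p|\beta|}))\to\C\Irr(G_{p,w-|\beta|})$ by the same formula as $I$ (so $I_{\{1\}}=I$), one must show $I_\beta\circ r^{p\cdot\beta}=r'^{\beta}\circ I_{\{1\}}$. Since both sides are built by iterating one-hook steps and $\delta_p$ is multiplicative along hook removals (\cite[Cor.~2.3]{morrisolsson}), it suffices to treat a single hook, exactly as in Equations~(\ref{eq:coeffAitere})--(\ref{eq:commuteAn}) of the proof of Theorem~\ref{theo:mainAn}. Under the canonical bijection $f$ of~\cite[Prop.~3.1]{morrisolsson} between hooks of $\lambda$ of length divisible by $p$ and hooks of the $p$-quotient $\lambda^{(p)}$, a $pk$-hook $c_{\mu}^{\lambda}$ corresponds to a $k$-hook in some component $\lambda^{(p)}_j$, and $(-1)^{L(c_{\mu}^{\lambda})}=(-1)^{L(f(c_{\mu}^{\lambda}))}\delta_p(\lambda,\mu)$ by~(\ref{eq:lienjambef}). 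Matching the coefficient of $\theta_{\widetilde{\mu}^{(p)}}$ on the two sides then reduces to a purely combinatorial sign identity: the factor $\delta_p(\lambda)$ in the definition of $I$ absorbs the discrepancy between $L(c_{\mu}^{\lambda})$ and $L(f(c_{\mu}^{\lambda}))$, while conjugating the $p^*$-component together with the global sign $(-1)^{|\lambda_{p^*}|}$ absorbs the extra sign $\psi_{p^*}(\omega)=-1$ and the passage from leg length to arm length ($L(h)+L(h^*)=k-1$) for hooks in the conjugated component.

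This sign bookkeeping --- together with the precise formulation of the iterated wreath-product Murnaghan--Nakayama rule and the identification of $G'_{x_S}$ --- is the main obstacle; everything else is formal. Once~(2b) is verified, all hypotheses of Theorem~\ref{th:iso} hold. The map $I$ is visibly an isometry carrying $\Z\Irr(B_\gamma)$ onto $\Z\Irr(G_{p,w})$, since $\lambda\mapsto\widetilde{\lambda}^{(p)}\mapsto\theta_{\widetilde{\lambda}^{(p)}}$ is a bijection $\Irr(B_\gamma)\to\Irr(G_{p,w})$ (composition of the $p$-core/$p$-quotient bijection with the involution conjugating one component) and the scalars $(-1)^{|\lambda_{p^*}|}\delta_p(\lambda)$ are signs, so $I$ is a signed permutation of orthonormal bases. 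Corollary~\ref{cor:isoparfaitegene} then yields that $I$ is a generalized perfect isometry with respect to $\mathcal C$ and $\mathcal C'$, which is the assertion of the theorem.
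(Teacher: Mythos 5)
Your proposal follows essentially the same route as the paper: it builds the two MN-structures (cycles of length divisible by $p$ in $\sym_n$, elements of cycle structure $(\emptyset,\ldots,\emptyset,\pi_p)$ in $G_{p,w}$), reduces the commutation $I_\beta\circ r^{p\cdot\beta}=r'^{\beta}\circ I$ to the single-hook sign identity via the bijection $f$ of Morris--Olsson and Equation~(\ref{eq:lienjambef}), and concludes with Corollary~\ref{cor:isoparfaitegene}; the sign bookkeeping you sketch (the factor $(-1)^{|\lambda_{p^*}|}$, the conjugated $p^*$-component and $\psi_{p^*}(\omega)=-1$) is exactly the content of Equation~(\ref{eq:MNwr}), which the paper imports from the cited reference rather than rederiving. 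The argument is correct and matches the paper's proof in all essentials.
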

\begin{proof}
Let $S$ be the set of elements
% We define the set
%$S$ of elements 
of $\sym_n$ with cycle decomposition
$\sigma_1\cdots\sigma_r$ (where we omit trivial cycles), such that $\sigma_i$ is a $q_ip$-cycle for
some positive integer $q_i$, and
let $C$ be the set of $p$-regular elements of $\sym_n$. The sets $S$ and $C$
are unions of $\sym_n$-conjugacy classes, and $1\in S$. Moreover, 
$\tau_1\cdots\tau_k$ is the cycle decomposition of $\tau\in C$ if and
only if $\tau_i$ has $p'$-length. Hence the cycle decomposition
with disjoint support in $\sym_n$ proves that (1), (2) and (3) 
of Definition~\ref{defMN} hold with $G_{\sigma_S}=\sym_{\overline{J}}$
whenever $\sigma=\sigma_S\sigma_C$ with $\sigma_S\in S$ and $\sigma_C\in
C$, and $J$ is the support of $\sigma_S$.
% and
% $\sum_{i=1}^r r_i\leq w$. Let $\sigma=\sigma_1\cdots\sigma_t\in \sym_n$.
Denote by $\Lambda$
the set of classes consisting of elements of $S$ and define
% Let $\Lambda$ be the
% set of classes of $ Assume that the labelling is chosen such that there is $1\leq s\leq t$
% with $\sigma_i$ is a $kp$-cycle for $1\leq i\leq s$ and $|\sigma_i|$ is
% prime to $p$ for $i\geq s$. If $\sum_{i=1}^s|\sigma_i|\leq pw$, then put
% $\sigma_S=\sigma_1\cdots\sigma_s$ and
% $\sigma_C=\sigma_{s+1}\cdots\sigma_t$ (with the convention that
% $\sigma_C=1$ if $s=t$). Otherwise, put $\sigma_S=1$ and
% $\sigma_C=\sigma$. Furthermore, if $\carsym_{\lambda}\in
% \Irr(B_{\gamma})$ and $\sigma\in\sym_n$ is a $p$-singular element 
% such that $\sigma_S=1$, then $\carsym_{\lambda}(\sigma)=0$
% by~\cite[2.4.7]{James-Kerber}. In particular, $\Irr(B_{\gamma})$ is a
% union of $C$-blocks. Define 
$$\Gamma_0=\bigcup_{b\leq w}\mathcal
P_b.$$
Write $\Lambda_0$ for the classes of $S$ parametrized by
$p\cdot\Gamma_0$. 
% Note that, if $\carsym_{\lambda}\in
% \Irr(B_{\gamma})$ and $\sigma\in\sym_n$ lies in a class of
% $\Lambda\backslash\Lambda_0$, then
% % is a $p$-singular element 
% % such that $\sigma_S=1$, then 
% $\carsym_{\lambda}(\sigma)=0$
%  by~\cite[2.4.7]{James-Kerber}.  
For each $\beta\in
\Gamma_0$, we choose a representative $s_{\beta}\in S$ 
in the class of $\Lambda_0$ labeled by $p\cdot
\beta$ with support in $\{n-p|\beta|+1,\ldots,n\}$. Then 
$G_{s_{\beta}}=\sym_{n-p|\beta|}\subseteq \Cen_{\sym_n}(s_{\beta})$. Denote by 
$\Irr(B_{\gamma}(\sym_{n-p|\beta|}))$ the set of irreducible characters
of $\sym_{n-p|\beta|}$ labeled by partitions with $p$-core $\gamma$,
and define
$r^{\beta}:\C\Irr(B_{\gamma})\rightarrow\C\Irr(B_{\gamma}(\sym_{n-p|\beta|}))$ 
by applying~\cite[2.4.7]{James-Kerber} to the cycles of $p\cdot\beta$.
Then $\sym_n$ has an MN-structure with respect to
$C$ and $B_{\gamma}$ in the sense of Definition~\ref{defMN}.

Now, write $S'$ (respectively $C'$) for the set of elements of $G_{p,w}$ with cycle
structure $(\pi_1,\ldots,\pi_p)\in\mathcal{MP}_{p,b}$ for some
$b\leq w$ (respectively$(\pi_1,\ldots,\pi_p)\in\mathcal{MP}_{p,w}$),  such that
$\pi_1=\cdots=\pi_{p-1}=\emptyset$ (respectively $\pi_p=\emptyset$). In
particular, the classes of $S'$ are also parametrized by $\Gamma_0$. Let
$s'_{\beta}\in S'$ be with cycle structure $(\emptyset,\ldots,\emptyset,\beta)$
for $\beta\in\Gamma_0$. Assume that the support of $s'_{\beta}$ is
$\{w-|\beta|+1,\ldots,w\}$. Then $G_{p,w-|\beta|}$ lies in
$\Cen_{G_{p,w}}(s)$, and we define
$r'^{\beta}:\C\Irr(G_{p,w})\rightarrow\C\Irr(G_{p,w-|\beta|})$ 
by applying~\cite[Theorem 4.4]{Pfeiffer} to the cycles of $\beta$. 
Then $G_{p,w}$ has an MN-structure
with respect to $C'$ and $\Irr(G_{p,w})$. 

Let $q=pa$. Define the set $M_a(\lambda^{(p)})$ of $p$-multipartitions
of $w-a$ obtained from $\lambda^{(p)}$ by removing an $a$-hook. Recall that
the canonical bijection $f$ (defined in~\cite[Proposition 3.1]{morrisolsson})
induces a bijection $M_q(\lambda)\rightarrow M_a(\lambda^{(p)}),\
\mu\mapsto\mu^{(p)}$. Write
$$\widetilde{\theta}_{\lambda^{(p)}}=(-1)^{|\lambda_{p^*}|}
\theta_{\widetilde{\lambda}^{(p)}},$$
and assume $\beta=(\beta_1)$. 
Then 
\begin{equation}
\label{eq:MNwr}
r^{\beta}\left(\widetilde{\theta}_{\lambda^{(p)}}\right)=\sum_{\mu\in
M_{p|\beta_1|}(\lambda)}{\alpha'}_{\mu}^{\lambda}\,\widetilde{\theta}_{\mu^{(p)}},
\end{equation}
where ${\alpha'}_{\mu}^{\lambda}=(-1)^{L(f(c_{\mu}^{\lambda}))}$. See the
proof of~\cite[Proposition 3.8]{JB} for more details.
For multiples $q_1,\ldots,q_k$ of $p$, define inductively
the set $M_{q_1,\ldots,q_k}(\lambda)$ of partitions $\mu$ of $n-\sum q_i$ such
that $\mu\in M_{q_k}(\nu)$ for some $\nu\in
M_{q_1,\ldots,q_{k-1}}(\lambda)$. 
Let $\beta=(\beta_1\geq\cdots\geq\beta_k)\in\Gamma_0$.
Applying recursively formula~(\ref{eq:MNwr}) to the
cycles of $\beta$, we obtain
\begin{equation}
r^{\beta}\left(\widetilde{\theta}_{\lambda^{(p)}}\right)=\sum_{\mu\in
M_{p|\beta_1|,\ldots,p|\beta_k|}(\lambda)}a'(\lambda,\mu)\,\widetilde{\theta}_{\mu^{(p)}}.
\label{eq:MNwritere}
\end{equation}
Similarly, the Murnaghan-Nakayama rule in $\sym_n$ gives
\begin{equation}
r^{\beta}\left(\carsym_{\lambda}\right)=\sum_{\mu\in
M_{p|\beta_1|,\ldots,p|\beta_k|}(\lambda)}a(\lambda,\mu)\,\carsym_{\mu}
\label{eq:MNSnitere}.
\end{equation}
Now, with the above notation, Equation~(\ref{eq:lienjambef}) gives
$\alpha_{\mu}^{\lambda}=\delta_p(\lambda)\delta_p(\mu){\alpha'}_\mu^\lambda$,
and by the same
argument as in the proof of Theorem~\ref{theo:mainAn}, we obtain
$$a(\lambda,\mu)=\delta_p(\lambda)\delta_p(\mu)a'(\lambda,\mu).$$
It follows that 
\begin{eqnarray*}
r^{\beta}\left(I(\carsym_{\lambda})\right)&=&\delta_p(\lambda)\sum_{\mu}a'(\lambda,\mu)\widetilde{\theta}_{\mu^{(p)}},\\
&=&\sum_{\mu}\delta_p(\lambda)\delta_p(\mu)a'(\lambda,\mu)\delta_p(\mu)\widetilde{\theta}_{\mu^{(p)}},\\
&=&\sum_{\mu}a(\lambda,\mu)I(\chi_{\mu}),\\
&=&I\left(r^{\beta}(\chi_{\lambda})\right).
\end{eqnarray*}
The result now follows from Corollary~\ref{cor:isoparfaitegene}.
% Moreover, using~\cite[Proposition 3.8]{JB},~\cite[Theorem
% 3.3]{olsson} and~\cite[Corollary 3.4]{morrisolsson}, we prove that the
% hypotheses of Theorem~\ref{th:iso} are satisfied.
% Hence, by Corollary~\ref{cor:isoparfaitegene} and
% Proposition~\ref{prop:equiviso}, if $x\in C$ and $y\notin C'$ or
% $x\notin C$ and $y\in C'$, then $\widehat{I}(x,x')=0$. However, for
% a $p$-singular element $x\in C$, we have $\chi_{\lambda}(x)=0$ for every
% $\lambda$ with $\lambda_{(p)}=\gamma$. Thus, for every $x'\notin C'$,
% Equation~(\ref{eq:Ichapeau}) gives $\widehat{I}(x,x')=0$. This proves
% that for $x$ any $p$-regular element of $\sym_n$ and $x'\notin C'$, or
% $x$ any $p$-singular element of $\sym_n$ and $x'\in C$, then
% $\widetilde{I}(x,x')=0$. The result follows from
% Proposition~\ref{prop:equiviso}.
\end{proof}

\begin{corollary}\label{theo:BroueSn}
Assume furthermore that $p>w$. Then the isometry defined
in Theorem~\ref{theo:BrGr} is a Brou\'e isometry. In particular,
Brou\'e's perfect isometry Conjecture holds for symmetric groups.
\end{corollary}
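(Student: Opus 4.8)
The plan is to leverage Theorem~\ref{theo:BrGr} together with the generalities in \S\ref{secbroue}, reducing the statement to a verification of the ``perfectness'' condition (i) of a Brou\'e isometry for the isometry $I$ already constructed. Since Theorem~\ref{theo:BrGr} tells us that $I$ is a generalized perfect isometry with respect to the $p$-regular elements of $\sym_n$ and the set $C'$ of elements of $G_{p,w}$ with cycle structure $\pi=(\pi_1,\ldots,\pi_p)$ satisfying $\pi_p=\emptyset$, the first thing I would check is that, under the hypothesis $p>w$, this set $C'$ coincides with the set of $p$-regular elements of $G_{p,w}$. Indeed, an element of $G_{p,w}=(\Z_p\rtimes\Z_{p-1})\wr\sym_w$ has $p$ dividing its order precisely when its cycle structure involves either a cycle of $\sym_w$-length divisible by $p$, or a cycle product lying in the class of $g_p=\omega$ (the generator of $\Z_p$). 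When $p>w$, no cycle of $\sym_w$ can have length divisible by $p$ (all cycle lengths are at most $w<p$), so the only source of $p$-singularity is the appearance of a part in $\pi_p$. Hence $C'$ is exactly the set of $p$-regular elements, and property (ii) of Proposition~\ref{prop:equiviso} — which $I$ satisfies by Theorem~\ref{theo:BrGr} — is precisely property (ii) of a Brou\'e isometry.

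Next I would establish property (i): for every $(x,x')\in\sym_n\times G_{p,w}$, the value $\widehat{I}(x,x')$ lies in $|\Cen_{\sym_n}(x)|\mathcal R\cap|\Cen_{G_{p,w}}(x')|\mathcal R$. The cleanest route is to re-run the argument of the proof of Theorem~\ref{th:broue}: using the MN-structures on $\sym_n$ (with respect to $C$ and $B_\gamma$) and on $G_{p,w}$ (with respect to $C'$ and $\Irr(G_{p,w})$) set up in the proof of Theorem~\ref{theo:BrGr}, and choosing $b_\lambda=\IBr_p(B_\lambda)$ on each side so that the $\Phi_\phi$ are projective characters, Equation~(\ref{eq:th}) expresses $\widehat{I}_{\{1\}}(x,x')/|\Cen(x)|$ as a sum of terms $\overline{\Phi_\phi(x_C)}/|\Cen(x_C)|_p$ times $l(J^{*-1}(\phi))(x')/|\Cen(x_C)|_{p'}$; each factor lies in $\mathcal R$ by~\cite[2.21]{Navarro} and because $1/|\Cen(x_C)|_{p'}\in\mathcal R$. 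The symmetric statement for $|\Cen_{G_{p,w}}(x')|$ follows from an analogue of Remark~\ref{rk:adjoint} applied to $I^{-1}$. The only genuinely new input is that on the $G_{p,w}$ side the role of ``$p$-regular elements'' is played by $C'$, which is legitimate precisely because of the identification $C'=\{p\text{-regular elements}\}$ established in the previous step; thus the basis $\IBr_p$ of $\Z\Irr(G_{p,w})^{C'}$ makes sense and Corollary~\ref{blocs} applies verbatim.

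Finally, for the assertion that Brou\'e's perfect isometry conjecture holds for symmetric groups, I would note that when $p>w$ the defect group of $B_\gamma$ is a Sylow $p$-subgroup of $\sym_{pw}$, which (again because $p>w$) is elementary abelian of rank $w$, hence abelian; and that $G_{p,w}=(\Z_p\rtimes\Z_{p-1})\wr\sym_w$ is, up to the structure on its $p'$-part, the normalizer of such a defect group in $\sym_n$ — more precisely $\operatorname{N}_{\sym_n}(P)$ for $P$ the defect group is, on the relevant block, Morita-theoretically governed by $G_{p,w}$, and in any case has the same character-theoretic data. So the Brou\'e perfect isometry between $B_\gamma$ and its Brauer correspondent is exactly the isometry $I$ of Theorem~\ref{theo:BrGr}, now known to be a Brou\'e isometry. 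I expect the main obstacle to be the bookkeeping in the first paragraph: one must be careful that ``$p$ divides the order of $(h_1,\ldots,h_w;\sigma)$'' is correctly translated into a condition on the cycle structure $\mathfrak{s}(h_1,\ldots,h_w;\sigma)=(\pi_1,\ldots,\pi_p)$, distinguishing the contribution of $\sigma\in\sym_w$ (controlled by $p>w$) from that of the cycle products landing in $\langle\omega\rangle=\Z_p$ (controlled by the $\pi_p$-part), and to confirm that no subtle interaction forces $p$-singularity outside these two cases.
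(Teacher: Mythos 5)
Your proposal is correct and follows essentially the same route as the paper: the paper's proof is simply an application of Theorem~\ref{th:broue}, whose only non-automatic hypothesis here is the identification, valid exactly because $p>w$, of the set $C'$ (cycle structures with $\pi_p=\emptyset$) with the $p$-regular elements of $G_{p,w}$ — the verification you carry out in your first paragraph, with the MN-structures and the commutation relations already supplied by the proof of Theorem~\ref{theo:BrGr}. Your closing remarks on the abelian defect group and on $G_{p,w}$ being (character-theoretically) the Brauer correspondent are likewise the implicit content behind the paper's final assertion about Brou\'e's conjecture for symmetric groups.
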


\begin{proof}
We apply Theorem~\ref{th:broue}. 
\end{proof}

\subsection{Osima's perfect isometry} 
\label{subsec:osima}
Using Theorem~\ref{th:iso}, we also can prove the following well-known
result (see \cite[Proposition 5.11]{KOR}).
\begin{theorem}
Let $n$ be an integer and $p\leq n$ be a prime. Let $B$ be a $p$-block
of $\sym_n$ labeled by the $p$-core $\gamma$. Assume that $B$ has
weight $w$.
Then the map defined by
$$I(\carsym_{\lambda})=\delta_p(\lambda)\theta_{\lambda^{(p)}}$$ 
between $B$ and $\Irr(\Z_p\wr\sym_w)$ 
induces a generalized perfect isometry
with respect to the $p$-regular elements of $\sym_n$ and the set of
elements $x\in\Z_p\wr\sym_w$ with cycle structure $g(x)$ satisfying
$g(x)_1=\emptyset$ (here, the first coordinate of $g(x)$ correspond to
the trivial class).
\label{theo:osima}
\end{theorem}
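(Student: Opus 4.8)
The plan is to apply the MN-structure machinery of Theorem~\ref{th:iso} and Corollary~\ref{cor:isoparfaitegene}, following the proof of Theorem~\ref{theo:BrGr} almost verbatim but with $\Z_p\wr\sym_w$ in place of $G_{p,w}$; in fact the argument simplifies here, because $\Z_p$ is abelian and all of its irreducible characters are linear, so the twist by $(-1)^{|\lambda_{p^*}|}$ and the conjugation of the component $\lambda_{p^*}$ that were needed in Theorem~\ref{theo:BrGr} disappear. First I would record that $I$ is genuinely an isometry (indeed unitary): the map $\lambda\mapsto\lambda^{(p)}$ restricts to a bijection from $\{\lambda\in\mathcal P_n\,|\,\lambda_{(p)}=\gamma\}$ onto $\mathcal{MP}_{p,w}$, and $\delta_p(\lambda)=\pm1$, so $I$ sends the orthonormal basis $\{\carsym_\lambda\}$ of $\C\Irr(B)$ to a $\pm1$-signed permutation of the orthonormal basis $\{\theta_{\mm}\}_{\mm\in\mathcal{MP}_{p,w}}$ of $\C\Irr(\Z_p\wr\sym_w)$. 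In particular $I(\Z\Irr(B))=\Z\Irr(\Z_p\wr\sym_w)$, which is half of what it means to be a generalized perfect isometry.

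Next I would set up the two MN-structures. On $\sym_n$ I take the one used in the proof of Theorem~\ref{theo:BrGr}: $S$ is the set of elements all of whose non-trivial cycles have length divisible by $p$, $C$ is the set of $p$-regular elements, the class labelled by $p\cdot\beta$ has a representative $s_\beta$ supported on $\{n-p|\beta|+1,\ldots,n\}$ with $G_{s_\beta}=\sym_{n-p|\beta|}$, and $r^\beta\colon\C\Irr(B)\to\C\Irr(B_\gamma(\sym_{n-p|\beta|}))$ is obtained by iterating the Murnaghan--Nakayama rule \cite[2.4.7]{James-Kerber} over the cycles of $p\cdot\beta$; the relevant label set is $\Lambda_0$, consisting of the $p\cdot\beta$ with $|\beta|\le w$, since $r^\beta=0$ when $|\beta|>w$ (every $\lambda$ with $\lambda_{(p)}=\gamma$ has $p$-weight exactly $w$, so more than $w$ $p$-hooks cannot be removed). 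On $G'=\Z_p\wr\sym_w$ I take $S'$ to be the set of elements all of whose cycles have trivial cycle product, so that the complementary ``core'' set is exactly $C'=\{x\,|\,g(x)_1=\emptyset\}$; for the class labelled by $\beta$ one gets $G'_{s'_\beta}=\Z_p\wr\sym_{w-|\beta|}$, and $r'^\beta\colon\C\Irr(\Z_p\wr\sym_w)\to\C\Irr(\Z_p\wr\sym_{w-|\beta|})$ is obtained by iterating the Murnaghan--Nakayama rule for wreath products \cite[Theorem 4.4]{Pfeiffer}. All classes of $S'$ are labelled by partitions of size at most $w$, so the bijection $\sigma\colon p\cdot\beta\mapsto\beta$ identifies $\Lambda_0$ with the whole $G'$-side label set, and $\sigma$ fixes $\{1\}$; this provides hypothesis (1) of Theorem~\ref{th:iso} and the combinatorial framework of hypothesis (2).

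The substance is then to prove $r'^\beta\circ I=I\circ r^\beta$ for every $\beta$ with $|\beta|\le w$, together with the analogous identities between the sub-blocks (with the map given by the same formula). Under the canonical bijection $f$ of Morris--Olsson, removing a $p\beta_i$-hook from $\lambda$ amounts to removing a $\beta_i$-hook from one component of $\lambda^{(p)}\in\mathcal{MP}_{p,w}$, and summing over all $p\beta_i$-hooks of $\lambda$ corresponds to summing over all components and all $\beta_i$-hooks therein. Since $\Z_p$ is abelian, $\psi_j(1)=1$ for every $j$, so the wreath-product rule \cite[Theorem 4.4]{Pfeiffer} gives $r'^\beta(\theta_{\lambda^{(p)}})=\sum_\mu{\alpha'}^\lambda_\mu\,\theta_{\mu^{(p)}}$, with ${\alpha'}^\lambda_\mu$ a product of signs $(-1)^{L(f(c))}$ over the removed hooks --- this is Equation~(\ref{eq:MNwr}) with the $(-1)^{|\lambda_{p^*}|}$-twist removed, cf.\ the proof of \cite[Proposition 3.8]{JB} --- while iterating Murnaghan--Nakayama in $\sym_n$ gives $r^\beta(\carsym_\lambda)=\sum_\mu a(\lambda,\mu)\,\carsym_\mu$. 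By Equation~(\ref{eq:lienjambef}) (\cite[Corollary 3.4]{morrisolsson}), $(-1)^{L(c)}=(-1)^{L(f(c))}\delta_p(\lambda,\mu)$, whence $a(\lambda,\mu)=\delta_p(\lambda)\delta_p(\mu){\alpha'}^\lambda_\mu$, and therefore $r'^\beta(I(\carsym_\lambda))=\delta_p(\lambda)\sum_\mu{\alpha'}^\lambda_\mu\,\theta_{\mu^{(p)}}=\sum_\mu a(\lambda,\mu)\,\delta_p(\mu)\theta_{\mu^{(p)}}=\sum_\mu a(\lambda,\mu)\,I(\carsym_\mu)=I(r^\beta(\carsym_\lambda))$, as required. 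Using Remark~\ref{rk:point3} (and the fact that $\sym_{n-p|\beta|}$ and $\Z_p\wr\sym_{w-|\beta|}$ carry MN-structures of the same kind) this establishes hypotheses (2) and (3) of Theorem~\ref{th:iso}, and Corollary~\ref{cor:isoparfaitegene} then yields that $I$ is a generalized perfect isometry with respect to the $p$-regular elements of $\sym_n$ and the set $C'$.

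The main obstacle is purely a bookkeeping one, already met in the proof of Theorem~\ref{theo:BrGr}: one must check that the leg-length conventions in Pfeiffer's wreath-product Murnaghan--Nakayama rule agree with those in Morris--Olsson's bijection $f$, so that $a(\lambda,\mu)=\delta_p(\lambda)\delta_p(\mu){\alpha'}^\lambda_\mu$ holds with no spurious sign, and one must dispose of the degenerate endpoint $|\beta|=w$, where the comparison is between the defect-zero block $\{\carsym_\gamma\}$ of $\sym_{|\gamma|}$ and $\Irr(\Z_p\wr\sym_0)$, both one-dimensional, so that the required commutation is trivial there.
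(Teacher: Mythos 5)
Your proposal is correct and follows essentially the same route as the paper, whose proof of Theorem~\ref{theo:osima} is simply declared to be analogous to that of Theorem~\ref{theo:BrGr}: you set up the same MN-structures (with $S'$ the elements of $\Z_p\wr\sym_w$ all of whose cycle products are trivial), use Pfeiffer's wreath-product Murnaghan--Nakayama rule together with Equation~(\ref{eq:lienjambef}) to get $a(\lambda,\mu)=\delta_p(\lambda)\delta_p(\mu){\alpha'}^{\lambda}_{\mu}$, deduce $r'^{\beta}\circ I=I_{\beta}\circ r^{\beta}$, and invoke Remark~\ref{rk:point3} and Corollary~\ref{cor:isoparfaitegene}. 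Your observation that the twist $(-1)^{|\lambda_{p^*}|}$ and the conjugation of the $p^*$-component are unnecessary here, because all characters of $\Z_p$ are linear and take value $1$ at the trivial cycle products occurring in $S'$, is exactly the simplification the paper has in mind.
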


\begin{proof}
The proof is analogue to that of Theorem~\ref{theo:BrGr}.
\end{proof}

\subsection{Isometries between blocks of wreath products}
\label{subsec:wreath}
In this section, we fix a positive integer $l$ and a prime number $p$
such that $p$ does not divide $l$, and we consider the groups
$G_n=\Z_l\wr\sym_n$, where $n$ is any positive integer. Write 
$\Z_{l}=\{\zeta_1,\zeta_2,\ldots,\zeta_{l}\}$ and
$\Irr(\Z_{l})=\{\psi_1,\ldots,\psi_{l}\}$.

Following~\cite[Theorem 1]{osimaII}, we recall that two irreducible
characters $\theta_{\mm}$ and $\theta_{\mm'}$ corresponding to
$\mm=(\mu_1,\ldots,\mu_{l})$ and $\mm'=(\mu'_1,\ldots,\mu'_l)$
of $G_n$ lie in the same
$p$-block $B$ if and only if, for every $1\leq i\leq l$, the partitions
$\mu_i$ and $\mu'_i$ have the same $p$-core $\gamma_i$
and same $p$-weight $b_i$. The
tuple $b=(b_1,\ldots,b_{l})$ (respectively
$\gamma=(\gamma_1,\ldots,\gamma_{l})$) is called the $p$-weight of
$B$ (respectively the $p$-core of $B$). We denote by $\mathcal E_{\gamma,b}$
the set of $l$-multipartitions $\mm=(\mu_1,\ldots,\mu_l)$ such that
$(\mu_i)_{(p)}=\gamma_i$ and the $p$-weight of $\mu_i$ is $b_i$.

\begin{theorem}
Let $n$ and $m$ be any two positive integer. As above, we write 
$\Irr(G_n)=\{ \theta_{\mm}; \, \mm \Vdash n\} $ and $\Irr(G_m)=\{\theta_{\mm'}; \, \mm \Vdash m\}$ for the sets of
irreducible characters of $G_n$ and $G_{m}$.
Let $B$ and $B'$ be two $p$-blocks of $G_n$ and $G_m$, with $p$-cores
$\gamma=(\gamma_1,\ldots,\gamma_{l})$ and
$\gamma'=(\gamma'_1,\ldots,\gamma'_l)$ respectively.
Assume that $B$ and $B'$ have the same $p$-weight $b=(b_1,\ldots,b_l)$. 
Define
$$I(\theta_{\mm})=\left(\prod_{i=1}^l
\delta_p(\mu_i)\delta_p(\Psi(\mu_i))\right)\theta_{\psi(\mm)},$$
where
$\Psi$ is the map defined before Lemma \ref{Psi},
%page~\pageref{defpsi}, 
 $\psi(\mm)=(\Psi(\mu_1),\ldots,\Psi(\mu_l))$, and $\delta_p(\mu_i)$
is the $p$-sign of $\mu_i$. Then $I$ induces a Brou\'e perfect 
isometry between $B$ and $B'$.
\label{theo:couronne}
\end{theorem}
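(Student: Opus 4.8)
The plan is to mimic the block-to-block arguments already carried out for alternating groups (see Theorems~\ref{theo:mainAn2} and~\ref{theo:mainAnpegal2}): one exhibits an MN-structure on each $G_n=\Z_l\wr\sym_n$ and then invokes Theorems~\ref{th:iso} and~\ref{th:broue}. First I would let $C$ be the set of $p'$-elements of $G_n$; since $p\nmid l$, an element $(h_1,\ldots,h_n;\sigma)$ is $p$-regular precisely when every cycle of $\sigma$ has length prime to $p$. Dually, let $S$ be the set of elements of $G_n$ all of whose nontrivial cycles have length divisible by $p$. The cycle decomposition in $\sym_n$ gives, for each $x\in G_n$, a unique factorisation $x=x_Sx_C=x_Cx_S$ into commuting elements $x_S\in S$, $x_C\in C$ of disjoint support, so Definition~\ref{defMN}(1) and~(2) hold; writing $J$ for the support of $x_S$ and $G_{x_S}=\Z_l\wr\sym_{\overline{J}}$ with $\overline{J}=\{1,\dots,n\}\setminus J$ gives Definition~\ref{defMN}(3). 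The $G_n$-classes of $S$ are labelled by the $l$-tuples of partitions $p\cdot\beta=(p\beta_1,\ldots,p\beta_l)$ with $p\sum_i|\beta_i|\le n$, and I would take $\Lambda_0$ to be the subset with $|\beta_i|\le b_i$ for all $i$. Crucially, $\Lambda_0$ depends only on the common $p$-weight $b=(b_1,\dots,b_l)$ of $B$ and $B'$, so $\Lambda_0=\Lambda_0'$, and no conjugacy class of $\Z_l\wr\sym_k$ ever splits on passage to the subgroups involved: this places us in the ``easy'' situation, entirely analogous to Theorem~\ref{theo:mainAnpegal2}.

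For Definition~\ref{defMN}(4) I would use the Murnaghan--Nakayama rule for wreath products, \cite[Theorem 4.4]{Pfeiffer} (as in \S\ref{subsec:sym}). Fixing a representative $s_\beta\in S$ of the class labelled by $p\cdot\beta$ and iterating the rule over its cycles defines $r^{\beta}\colon\C\Irr(B)\to\C\Irr(B(G_{x_S}))$ with $r^{\beta}(\theta_{\mm})(g)=\theta_{\mm}(s_\beta g)$, where $B(G_{x_S})$ is the $p$-block of $G_{n-p|\beta|}$ with $p$-core $\gamma$ and $p$-weight $(b_i-|\beta_i|)_i$; this is always a single block (of defect zero exactly when $|\beta_i|=b_i$ for every $i$, in which case it consists of the single character $\theta_{\gamma}$). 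Pulling off a $pk$-cycle whose cycle product is $\zeta_j$ produces a sum, over all components $i$ and over $pk$-hooks $c$ of $\mu_i$, of terms $\psi_i(\zeta_j)\,(-1)^{L(c)}\,\theta_{\mm'}$; here $\psi_i(\zeta_j)$ is a root of unity of order dividing $l$, hence a unit in $\mathcal R$, and via the bijection $f$ of \cite[Proposition 3.1]{morrisolsson} the hook removal corresponds to a $k$-hook removal in the $p$-quotient $\mu_i^{(p)}$. In particular $r^{\beta}=0$ whenever $|\beta_i|>b_i$ for some $i$, which is Theorem~\ref{th:iso}(2.a).

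It then remains to check $I\circ r^{\beta}=r'^{\beta}\circ I$ for $p\cdot\beta\in\Lambda_0$ (with $\sigma=\operatorname{id}$, since $\Lambda_0=\Lambda_0'$). Because $\Psi$ fixes $p$-quotients componentwise it is compatible with $pk$-hook removal in each component, and the factors $\psi_i(\zeta_j)$ are unchanged; by Equation~(\ref{eq:lienjambef}) applied in the relevant component, $(-1)^{L(c)}$ equals $\delta_p(\mu_i)\delta_p(\Psi(\mu_i))$ times a sign depending only on $p$-quotients and leg lengths there, hence preserved by $\Psi$. Writing the iterated coefficient $a(\mm,\mm')$ as a product over the successive one-hook removals $\mm=\mm_0\to\cdots\to\mm_t=\mm'$ and telescoping the quantities $\prod_i\delta_p((\mm_{s-1})_i)\delta_p((\mm_s)_i)$ (each step changes only one component, and the unchanged ones contribute $\delta_p^2=1$), exactly as in Equations~(\ref{eq:coeffAitere})--(\ref{eq:commuteAn}), yields the commutation. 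Since $\Lambda_0=\Lambda_0'$ contains only non-split types, Remark~\ref{rk:point3} supplies Theorem~\ref{th:iso}(3) for free, the defect-zero targets requiring no special treatment. As $I$ visibly permutes the orthonormal basis $\Irr(B)$ up to signs, it is an isometry with $I(\Z\Irr(B))=\Z\Irr(B')$; then Theorem~\ref{th:iso} gives the formula for $\widehat I$, Corollary~\ref{cor:isoparfaitegene} shows $I$ is a generalized perfect isometry, and Theorem~\ref{th:broue} (taking $b_\lambda=\IBr_p(B(G_{x_S}))$, using the analogue of Remark~\ref{rk:adjoint}, and the fact that $|\Z_l|$ is prime to $p$) upgrades it to a Broué perfect isometry.

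The main obstacle is bookkeeping rather than conceptual: one must state precisely the Murnaghan--Nakayama recursion for $\Z_l\wr\sym_n$ restricted to the pulling-off of $(p)$-cycles, confirm that the roots of unity $\psi_i(\zeta_j)$ (and the $p'$-denominators they introduce) cause no trouble in verifying property~(i) of a Broué isometry, identify the block $B(G_{x_S})$ and its weight and core correctly after a partial hook removal, and organise all the leg-length signs componentwise. Once the combinatorics is routed through the $p$-quotients $\mu_i^{(p)}$, the argument reduces to $l$ essentially independent copies of the symmetric-group computation already performed in the proof of Theorem~\ref{theo:mainAn}.
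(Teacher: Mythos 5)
There is a genuine gap, and it lies exactly where you yourself flag the Murnaghan--Nakayama rule for wreath products: when a $pk$-cycle with cycle product $\zeta_j$ is pulled off, Pfeiffer's formula (Equation~(\ref{eq:MNcour}) in the paper) produces a sum over \emph{all} components $s$, with coefficients $\psi_s(\zeta_j)$, of removals of $pk$-hooks from $\mu_s$ --- the hook is not removed from the component indexed by the cycle product. You state this correctly, but then draw two conclusions that contradict it and that your argument relies on. First, the codomain of $r^{\beta}$ is not ``the $p$-block of $G_{n-p|\beta|}$ with $p$-core $\gamma$ and $p$-weight $(b_i-|\beta_i|)_i$'': by Osima's block theorem for $\Z_l\wr\sym_k$ the blocks are determined by the full tuple of component cores \emph{and} component weights, and the image of $r^{\beta}$ spreads over the union of all blocks with core $\gamma$ and weights $(a_1,\ldots,a_l)$, $0\le a_i\le b_i$, with the correct total; this is exactly how the paper defines $B_{n-\sum|\pi_i|}$. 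Second, and more seriously, the vanishing claim ``$r^{\beta}=0$ whenever $|\beta_i|>b_i$ for some $i$'' is false, so your $\Lambda_0=\{\beta\,:\,|\beta_i|\le b_i\ \forall i\}$ does not satisfy hypothesis (2.a) of Theorem~\ref{th:iso}. A small counterexample: $p=3$, $l=2$, $B$ the block of $\Z_2\wr\sym_3$ with core $(\emptyset,\emptyset)$ and weight $(1,0)$, and $\pp=(\emptyset,(3))$ a $3$-cycle with nontrivial cycle product; then $\theta_{((3),\emptyset)}(t_{\pp})=\psi_1(\zeta_2)\ne 0$, although $|\beta_2|=1>b_2=0$. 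The only vanishing one can (and need) use is governed by the \emph{total} weight: $r^{\pp}=0$ once $\sum_i|\pi_i|$ exceeds $p\sum_i b_i$, which is how the paper chooses $\Lambda_0$.

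Once the bookkeeping is corrected --- classes of $S$ labelled by $l$-tuples of partitions with all parts divisible by $p$, $\Lambda_0$ cut out by the total weight, $r^{\pp}$ mapping into the union of blocks described above, and the isometries $I_{\pp}$ defined by the same sign rule on that whole union --- the remainder of your argument is sound and coincides with the paper's: the coefficients $\psi_s(\zeta_j)$ are untouched by $\Psi$, Equation~(\ref{eq:lienjambef}) applied in the relevant component turns each leg-length sign into the corresponding sign for $\Psi(\mm)$ up to $\delta_p(\mu_s)\delta_p(\nu)$, and telescoping these factors over the chain of removals (as in Equations~(\ref{eq:coeffAitere})--(\ref{eq:commuteAn}), and as carried out in the paper's Equation~(\ref{eq:calcour})) gives $I\circ r^{\pp}=r'^{\pp}\circ I$; Remark~\ref{rk:point3}, Corollary~\ref{cor:isoparfaitegene} and Theorem~\ref{th:broue} then finish the proof. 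Note also that there is no splitting phenomenon here, so the crutch of Theorem~\ref{theo:mainAnpegal2} is not needed once (2.a) is stated for the correct $\Lambda_0$.
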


\begin{proof}
First, we notice that $\psi(\mathcal E_{\gamma,b})=\mathcal
E_{\gamma',b}$. Let $g=(g_1,\ldots,g_n;\sigma)\in G_n$. Write
$\sigma=\sigma_S\sigma_C$, where all the cycles of $\sigma_S$ have
length divisible by $p$, and $\sigma_C$ is a $p$-regular element. 
% This
%decomposition is unique and induces a unique decomposition of $g$ as
%follows. 
Define $g_S=(g_{S,1},\ldots,g_{S,n};\sigma_S)$ (resp.
$g_C=(g_{C,1},\ldots,g_{C,n};\sigma_C)$) by setting $g_{S,i}=g_i$
(respectively
$g_{C,i}=g_i$) if $i$ lies in the support of $\sigma_S$ (respectively of
$\sigma_C$) and $g_{S,i}=1$ (respectively
$g_{C,i}=1$) otherwise. Since $\sigma_S$ and $\sigma_C$ have disjoint
supports, we have the unique decomposition
%  Let $\mathcal D_p$ denote the set of partitions all of whose parts are divisible by
%  $p$. Let $g=(g_1,\ldots,g_n;\sigma)\in G_n$ be such that $\mathfrak
%  s(g)=(\pi_1,\ldots,\pi_l)$. Then we can write uniquely 
%  \begin{equation}
%  \label{eq:prodcourel}
%  g=\prod_{i=1}^{l}
%  \prod_{j=1}^{\ell(\pi_i)}g_{ij},
%  \end{equation} 
%  where
%  $g_{ij}=(g_{ij,1},\ldots,g_{ij,n};\sigma_{ij})$ is such that
%  $g_{ij,k}=g_{\sigma_{ij}^{-1}(k)}$ and
%  $\prod_{k\in\operatorname{supp}(\sigma_{ij})}g_{ij,k}=\zeta_i$. 
%  % Note that
%  % $\mathfrak s(g_{ij})_k=\emptyset$ for $k\neq i$ and $\mathfrak
%  % s(g_{ij})_i=(\ell(\sigma_j))$. Furthermore, 
%  Since the elements $g_{ij}$ commute, if we define
%  $g_S$ (respectively $g_C$)
%  as the product of all $g_{ij}$ such that $\sigma_{ij}$ has order
%  divisible by $p$ (respectively is prime to $p$), then
$$g=g_Sg_C=g_Cg_S.$$
Denote by $S$ (respectively $C$) the set of elements $g=(t;\sigma)$ 
such that all the cycles of $\sigma$ have length divisible by $p$
(respectively prime to $p$). 
% appearing
% in this way. 
% We remark that $C$ is the set of $p$-regular elements of
% $G_n$. 
% For $g\in G_n$, write $\mathfrak s_p(g)$ for the $l$-multipartition in which
% the parts of $\mathfrak s_p(g)_i$ are the parts divisible by $p$ of
% $\mathfrak s(g)_i$ for all $1\leq i\leq l$.
% Let $\mathcal D_p$ denote the set of partitions all of whose parts are
% divisible by  $p$ and 
Let $\Lambda$ be the set of $l$-multipartitions
$(\pi_1,\ldots,\pi_{l})$ such that $\pi_i\in p\mathcal P$ and
$\sum|\pi_i|\leq n$. Let
$\pp=(\pi_1,\ldots,\pi_{l})\in \Lambda$. Denote by $I$ the set of
integers $1\leq i\leq l$ with $\pi_i\neq\emptyset$.
For $i\in I$, write
$u_{ij}=\sum_{k<i}|\pi_k|+\sum_{r<j}\pi_{i,r}$, where
$\pi_i=(\pi_{i,1},\ldots,\pi_{i,\ell(\pi_i)})$. Consider now
the $\pi_{i,j}$-cycle 
$$\sigma_{ij}=(n-u+u_{ij}+1, \, \ldots , \, n-u+u_{ij}+\pi_{i,j}),$$
where $u=\sum |\pi_i|$. For $1\leq k\leq n$, set $t_{ij,k}=1$ for $k\neq
n-u+u_{ij}+1$ and $t_{ij,n-u+u_{ij}+1}=\zeta_i$. Write
$t_{ij}=(t_{ij,1},\cdots,t_{ij,n};\sigma_{ij})$ and define
\begin{equation}
\label{eq:prodcourel}
t_{\pp}=\prod_{i\in I}\prod_{j=1}^{\ell(\pi_i)}t_{ij}.
\end{equation}
Then by \S\ref{subsec:not} and
Equation~(\ref{eq:strucyclcouronne}), the elements $t_{\pp}$ with
$\pi\in\Lambda$ form a set of representatives of the $G_n$-classes of $S$.
Write $G_{\pp}=G_{n-\sum|\pi_i|}$.
Note that the support of $\pp$ is $\{n-\sum|\pi_i|+1,\ldots,n\}$, and
$G_{\pp}\subseteq \Cen_{G_n}(t_{\pp})$. 
\begin{example}
For example, assume that $l=3$, $n=6$, and $p=2$. 
Write $\zeta_1=1$ and consider $\pp=(\emptyset,
(2),(2))\in\Lambda$. Then one has $u=4$, $I=\{2,3\}$, $u_{21}=0$,
$u_{31}=2$. So $\sigma_{21}=(3\ 4)$, $\sigma_{31}=(5\ 6)$ and
$$t_{21}=\left(1,1,\zeta_2,1,1,1;(3\ 4)\right)\quad\textrm{and}\quad
t_{31}=\left(1,1,1,1,\zeta_3,1;(5\ 6)\right).
$$
Finally, $t_{\pp}=t_{21}t_{31}=\left(1,1,\zeta_2,1,\zeta_3,1;(3\ 4)(5\
6)\right)$ is a representative for the class of $\Z_3\wr \sym_6$ labeled
by $\pp$. 
% With the notation of Equation~(\ref{eq:prodcourel}), we have
% $g_{1,1}=(1,1,1,1,1,1;(1))$, $g_{1,2}=(1,1,1,1,1,1;(2))$,
% $g_{2,1}=(1,1,\zeta_2,1,1,1;(3))$ and
% $g_{3,1}=(1,1,1,\zeta_3,1,1;(4,5,6))$. 
\end{example}
% we denote by $g_{\pp}\in G_n$
% a representative of the class labelled by $\pp$ with support
% With the notation of Equation~(\ref{eq:prodcourel}), we assume that
% $g_{ij}$ has support descreasing. In particular, the support of
% $g_{l\ell(\pi_{l})}$ is
% $\{n-\pi_{l,\ell(\pi_{l})-1}+1,n\}$.

Assume that $\pp=(k)\in\Lambda$ (so that, in particular, $k$ is divisible by
$p$).
Then, for all $x\in G_{\pp}$
%G_{n-\pi_{l,\ell(\pi_l)}}$ 
and $\mm\in\mathcal E_{\gamma,b}$,~\cite[Theorem 4.4]{Pfeiffer} gives 
\begin{equation}
\label{eq:MNcour}
\theta_{\mm}(t_{\pp}x)=\sum_{s=1}^{l}\psi_s(\zeta_{l})\sum_{\nu\in
M_{k}(\mu_s)}(-1)^{L\left(c_{\nu}^{\mu_s}\right)}
\theta_{\mm_s}(x),
\end{equation}
where the partitions in $\mm_s$ are the same as those in $\mm$, except the $s$-th one which is equal to $\nu$.
Applying iteratively this process to the cycles of $\pp$, we define
a linear map $r^{\pp}:\C\Irr(B)\rightarrow\C\Irr(B_{n-\sum|\pi_i|})$,
where $B_{n-\sum|\pi_i|}$ denotes the union of $p$-blocks of $G_{n-\sum|\pi_i|}$
with $p$-core $\gamma$ and $p$-weight $(a_1,\ldots,a_l)$ such that
$0\leq a_i\leq b_i$ and $\sum (b_i-a_i)=\sum |\pi_i|$.
In particular, we have $r^{\pp}(\theta_{\mm})(x)=\theta_{\mm}(t_{\pp}x)$
for all $x\in G_{n-\sum|\pi_i|}$. This defines an MN-structure for $G_n$ with
respect to $C$ and $B$.

Similarly, we define an MN-structure for $G_m$ with respect to $B'$ and
the set of $p$-regular elements of $G_m$.
Now, write $w=\sum b_k$, and denote by $\Lambda_0$ the set of
$\pp\in\Lambda$ such that $\sum|\pi_k|\leq w$. By~\cite[Theorem
4.4]{Pfeiffer}, we have
$r^{\pp}(\theta_{\mm})=r^{\pp}(\theta_{\Psi(\mm)})=0$ for every 
$\mm\in \mathcal E_{\gamma,b}$ and $\pp\in\Lambda\backslash\Lambda_0$.

Let $\pi\in \Lambda_0$ and $c$ be a part of $\pi_t$ of length $k$.
Then, by~\cite[Theorem 4.4]{Pfeiffer} (see also
Equation~(\ref{eq:MNcour})), we have
\begin{equation}
\begin{split}
r^{c}(I(\theta_{\mu}))&=
\prod_{i=1}^l\delta_p(\mu_i)\delta_p(\Psi(\mu_i))
\sum_{s=1}^{l}\psi_s(\zeta_{t})\sum_{\nu\in
M_{k}(\mu_s)}(-1)^{L\left(c_{\Psi(\nu)}^{\Psi(\mu_s)}\right)}
\theta_{\psi(\mm_s)}\\
&=\sum_{s=1}^l\psi_s(\zeta_t)\sum_{\nu\in
M_{k}(\mu_s)}(-1)^{L\left(c_{\Psi(\nu)}^{\Psi(\mu_s)}\right)}\delta_p(\mu_s)\delta_p(\Psi(\mu_s))\\
&\qquad\cdot\delta_p(\nu)\delta_p(\Psi(\nu))I(\theta_{\mm_s})\\
&=\sum_{s=1}^l\psi_s(\zeta_t)\sum_{\nu\in
M_{k}(\mu_s)}(-1)^{L\left(c_{\nu}^{\mu_s}\right)}I(\theta_{\mm_s})\\
&=I\left(r^{c}(\theta_{\mm_s})\right).
\end{split}
\label{eq:calcour}
\end{equation}
Using the argument of the proof of Theorem~\ref{theo:mainAn}
(see Equations~(\ref{eq:coeffAitere}),~(\ref{eq:acommute})
and~(\ref{eq:commuteAn})), we conclude that
$r^{\pp}(I(\theta_{\mm}))=I(r^{\pp}(\theta_{\mm}))$ for all
$\pp\in\Lambda_0$ and $\mm\in\mathcal E_{\gamma,b}$.
%
%$B$ and $C$.
%$g(x)=(g(x)_1,\ldots,g(x)_\ell)$ is the cycle structure of $x$.
%Analogously, we define the subsets $S'$ and $C'$ of $G_{w'}$.
%Now, thanks to~\cite[Theorem 4.4]{Pfeiffer} we can prove that $G_w$
%(respectively
%$G_{w'}$) has an MN-structure with respect to $C$ and $B$ (respectively $C'$
%and $B'$).
%Moreover, using~\cite[Corollary 3.4]{morrisolsson}, 

Hence, the hypotheses of
Theorem~\ref{th:broue} are satisfied, and the result holds.
\end{proof}

\begin{corollary}
Let $W_1$ and $W_2$ be Coxeter groups of type $B$. Assume that $p$ is
odd. Then two $p$-blocks of $W_1$ and $W_2$ with the same $p$-weight are
perfectly isometric (in the sense of Brou\'e).
\label{cor:broueBn}
\end{corollary}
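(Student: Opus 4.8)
The plan is to recognise the Coxeter groups of type $B$ as the wreath products studied in \S\ref{subsec:wreath} and then to quote Theorem~\ref{theo:couronne} directly. Recall that the finite Coxeter group of type $B_n$ (the hyperoctahedral group) is isomorphic to $\Z_2\wr\sym_n$. Hence, writing $W_1$ and $W_2$ for Coxeter groups of type $B$, I would fix positive integers $n$ and $m$ with $W_1\cong G_n=\Z_2\wr\sym_n$ and $W_2\cong G_m=\Z_2\wr\sym_m$, in the notation of \S\ref{subsec:wreath} applied with $l=2$.

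Since $p$ is odd, we have $p\nmid l=2$, so the standing hypothesis of \S\ref{subsec:wreath} is met for $G_n$ and $G_m$. Next I would recall, following~\cite{osimaII}, the description of the $p$-blocks of $G_n$: such a block $B$ is determined by its $p$-core $\gamma=(\gamma_1,\gamma_2)$ (a pair of $p$-cores) and its $p$-weight $b=(b_1,b_2)$, with $\Irr(B)$ indexed by $\mathcal E_{\gamma,b}$. In this picture, the statement that a $p$-block of $W_1$ and a $p$-block of $W_2$ have ``the same $p$-weight'' means exactly that their associated pairs $b=(b_1,b_2)$ coincide.

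So let $B$ be a $p$-block of $G_n$ with $p$-core $\gamma=(\gamma_1,\gamma_2)$ and $B'$ a $p$-block of $G_m$ with $p$-core $\gamma'=(\gamma'_1,\gamma'_2)$, both of $p$-weight $b=(b_1,b_2)$. Then Theorem~\ref{theo:couronne} applies verbatim and produces a Brou\'e perfect isometry $I\colon\C\Irr(B)\rightarrow\C\Irr(B')$, given on irreducible characters by $I(\theta_{\mm})=\bigl(\prod_{i=1}^{2}\delta_p(\mu_i)\delta_p(\Psi(\mu_i))\bigr)\theta_{\psi(\mm)}$; in particular $B$ and $B'$ are perfectly isometric in the sense of Brou\'e, which is the assertion. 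There is no real obstacle here: the only point needing care is the routine dictionary between the type-$B$ description and the wreath-product description (including matching up the two notions of $p$-weight of a block), together with the observation that it is precisely the hypothesis ``$p$ odd'' that places type $B$ inside the $p\nmid l$ framework covered by Theorem~\ref{theo:couronne}.
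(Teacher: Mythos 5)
Your proposal is correct and is exactly the paper's argument: the paper also proves this corollary by identifying the Coxeter group of type $B_n$ with $\Z_2\wr\sym_n$ and invoking Theorem~\ref{theo:couronne} (with $l=2$, which requires $p$ odd so that $p\nmid l$). Your added remarks on matching the block parameters $(\gamma,b)$ and the meaning of ``same $p$-weight'' are just the routine dictionary the paper leaves implicit.
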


\begin{proof}
This is a direct consequence of Theorem~\ref{theo:couronne}, noting
that a Coxeter group of type $B_n$ is isomorphic to $\Z_2\wr \sym_n$.
\end{proof}

\subsection{Isometries between blocks of Weyl groups of type $D$}
\label{subsec:typeD}

Let $n$ be a positive integer and let $W$ be a Weyl group of type $B_n$.
We keep the notation of~\S\ref{subsec:wreath}. Let $p$ be an odd prime
number. We consider the linear
character $\alpha=\theta_{(\emptyset,(n))}\in\Irr(W)$, and denote by $W'$
its kernel. Then $W'$ is a Weyl group of type $D_n$, and one has that $g\in W$ belongs to $ W'$ if
and only if its cycle structure $\mathfrak{s}(g)=(\pi_1,\pi_2)$ is such
that $\ell(\pi_2)$ is even.  Furthermore, the $W$-class of such an
element splits into two $W'$-classes if and only if $\pi_2=\emptyset$
and $\pi_1$ has only parts of even length (i.e. if $\pi_1=2\cdot\pi$ for
some partition $\pi$ of $n/2$); see~\cite[Proposition
25]{carterConjugaison}.  
We fix representatives
$t_{(2\cdot\pi,\emptyset)}^{\pm}$ for the $W'$-classes whose elements
have cycle structure $(2\cdot\pi,\emptyset)$ as follows. 
If $\pi=(\pi^1,\ldots,\pi^r)$ then write $u_i=\sum_{j<i}\pi^j$,
$\sigma_i=(u_i+1\cdots u_i+2\pi^i)$, and $t_i=( (1,\ldots,1);\sigma_i)$.
In particular, $t_1\in B_{2\pi^1}$.
% labels the split class (with respect
%to $D_{2\pi^1}$) of $B_{2\pi^1}$ parametrized by $(
%(2\pi^1),\emptyset)$. 
Let $\rho\in B_{2\pi^1}\backslash D_{2\pi^1}$. Set 
$t_1^+=t_1$ and $t_1^-=\rho t_1\rho^{-1}$. Then $t_1^+$ and $t_1^-$ are
representatives for the two split classes of $D_{2\pi^1}$ labeled by $(
(2\pi^1),\emptyset)$.
Now, define
\begin{equation}
\label{eq:repchoixdn}
t_{(2\cdot \pi,\emptyset)}^{\pm}=t_1^{\pm}t_2\cdots t_r.
\end{equation}
Since $\rho\in B_n\backslash D_n$, and $\rho$ commutes with $t_2,\ldots,\,t_r$
(because for $2\leq i\leq r$, the supports of $\rho$ and of $t_i$  are
disjoint), we deduce that
$t_{(2\cdot\pi,\emptyset)}^-=\rho t_{(2\cdot\pi,\emptyset)}^-\rho^{-1}$. Hence,
$t_{(2\cdot\pi,\emptyset)}^{\pm}$ are representatives of the two split
classes of $D_n$ labeled by $(2\cdot\pi,\emptyset)$.
%  
%  When each part
%  of $\pi$ is divisible by $p$, we assume that, in the writing of
%  Equation~(\ref{eq:prodcourel}), $t_{(2\cdot\pi,\emptyset)}^+$ and
%  $t_{(2\cdot\pi,\emptyset)}^-$ differ only on the first part.  Otherwise,
%  there is at least one part of $\pi$ which is prime to $p$, and we assume that the
%  two representatives differ only on the corresponding part in the
%  writing of Equation~(\ref{eq:prodcourel}). 

For every
$2$-multipartition $(\mu_1,\mu_2)$ of $n$, one has
$\alpha\otimes\theta_{(\mu_1,\mu_2)}=\theta_{(\mu_2,\mu_1)}$. By
Clifford theory, if $\mu_1\neq\mu_2$, then
$\chi_{\mu_1,\mu_2}=\Res^W_{W'}(\theta_{\mu_1,\mu_2})=
\Res^W_{W'}(\theta_{\mu_2,\mu_1})$ is irreducible. If $\mu=\mu_1=\mu_2$,
then $\Res_{W'}^W(\theta_{\mu,\mu})$ splits into two irreducible
characters $\chi_{\mu,\mu}^+$ and $\chi_{\mu,\mu}^-$ of $W'$, which
we can label so that (see~\cite[Theorem
5.1]{Pfeiffer})
\begin{equation}
\chi_{\mu,\mu}^{\epsilon}\left(
t_{(2\cdot\pi,\emptyset)}^{\delta}\right)=
\frac{1}{2}\left(\theta_{(\mu,\mu)}(t^{\delta}_{(2\cdot\pi,\emptyset)})+\epsilon\delta
2^{\ell(\pi)}\carsym_{\mu}(\pi)\right),
\label{eq:formulesplitDn}
\end{equation}
where $\delta,\epsilon\in\{-1,1\}$ and $\carsym_{\mu}$ is the character
of the symmetric group $\sym_{n/2}$ corresponding to $\mu$.

%Let $p$ be an odd prime. 
The $p$-blocks of $W'$ can be described as
follows. Let $B_{\gamma_1,\gamma_2}^{(b_1,b_2)}$ 
be a $p$-block of $W$ labeled by
the $p$-cores $\gamma_1$ and $\gamma_2$ and with $p$-weight
$(b_1,b_2)$; % be its
%$p$-weight; 
see~\S\ref{subsec:wreath}.  If $(b_1,b_2)\neq (0,0)$ or
$\gamma_1\neq\gamma_2$, then $B_{\gamma_1,\gamma_2}^{(b_1,b_2)}$ 
contains characters
that are not self-conjugate. By~\cite[Theorem 9.2]{Navarro},
$B_{\gamma_1,\gamma_2}^{(b_1,b_2)}$ covers a unique $p$-block
$b_{\gamma_1,\gamma_2}^{(b_1,b_2)}$ of $W'$. 
Furthermore, when $\gamma_1\neq\gamma_2$ or $b_1\neq b_2$,
$B_{\gamma_1,\gamma_2}^{(b_1,b_2)}$ and
$B_{\gamma_2,\gamma_1}^{(b_2,b_1)}$ contain no
self-conjugate character, and
$b_{\gamma_1,\gamma_2}^{(b_1,b_2)}=b_{\gamma_2,\gamma_1}^{(b_2,b_1)}$ 
consists of the
restrictions to $W'$ of the irreducible characters lying in
$B_{\gamma_1,\gamma_2}^{(b_1,b_2)}$ and
$B_{\gamma_2,\gamma_1}^{(b_2,b_1)}$.  If $(b_1,b_2)=(0,0)$, then
$B_{\gamma_1,\gamma_2}^{(0,0)}=\{\theta_{(\gamma_1,\gamma_2)}\}$ has defect
zero. If $\gamma:=\gamma_1=\gamma_2$, then
$b_{\gamma}^+=\{\chi_{\gamma,\gamma}^+\}$ and
$b_{\gamma}^-=\{\chi_{\gamma,\gamma}^-\}$ are two distinct $p$-blocks of
$W'$ with defect zero, except when $n=0$. In this last case, $W=W'=\{1\}$
and $\theta_{(\emptyset,\emptyset)}=\chi_{(\emptyset,\emptyset)}^+=
\chi_{(\emptyset,\emptyset)}^-=1_{\{1\}}$.

\begin{theorem}Assume $p$ is odd. Let $W_1'$ and $W_2'$ be Coxeter groups of
type $D$.
Let $b_{\gamma,\gamma}^{(b,b)}$ and $b_{\gamma',\gamma'}^{(b,b)}$ 
be $p$-blocks of $W_1'$
and $W_2'$ with the same $p$-weight $(b,b)$. Then the isometry defined by
$$I(\chi_{\mu_1,\mu_2})=\left(\prod_{i=1}^2\delta_p(\mu_i)\delta_p(\Psi(\mu_i))\right)\chi_{\Psi(\mu_1),\Psi(\mu_2)}\quad
\textrm{and}\quad
I(\chi_{\mu,\mu}^{\epsilon})=\chi_{\Psi(\mu),\Psi(\mu)}^{\epsilon\delta_p(\mu)\delta_p(\Psi(\mu))},$$
where the notation is as above, is a Brou\'e perfect isometry between
$b_{\gamma,\gamma}^{(b,b)}$ and $b_{\gamma',\gamma'}^{(b,b)}$.
\label{theo:broueDnconj}
\end{theorem}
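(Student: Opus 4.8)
The plan is to set up an MN-structure on each of the groups $W_1'$ and $W_2'$ of type $D$, with respect to the set of $p$-regular elements and the blocks $b_{\gamma,\gamma}$ and $b_{\gamma',\gamma'}$ respectively, and then invoke Theorems~\ref{th:iso} and~\ref{th:broue}. This will parallel closely the proof of Theorem~\ref{theo:mainAn}: the group of type $B$ plays the role of $\sym_n$, the group of type $D$ plays the role of $\Alt_n$, the $2$-multipartition $(\mu_1,\mu_2)$ plays the role of a partition, the involution $(\mu_1,\mu_2)\mapsto(\mu_2,\mu_1)$ plays the role of conjugation, and the bijection $\psi(\mm)=(\Psi(\mu_1),\Psi(\mu_2))$ and signs $\prod_i\delta_p(\mu_i)\delta_p(\Psi(\mu_i))$ come from Theorem~\ref{theo:couronne}. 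First I would let $S$ be the set of elements of $W_1'$ whose cycle structure $(\pi_1,\pi_2)$ has all parts divisible by $p$, with $\ell(\pi_2)$ even; note that when $\pi_2\ne\emptyset$ an even number of its parts forces a ``companion'' part, exactly as for even cycles in $\Alt_n$. For $x_S\in S$ of cycle type $(2\cdot\pi,\emptyset)$ with all parts of $\pi$ divisible by $p$ one gets two $W_1'$-classes; otherwise one class. The subgroup $G_{x_S}$ is again a Weyl group of type $D$ (or of type $B$ when only one part is removed), and the restriction maps $r^{x_S}$ are built by iterating the Murnaghan--Nakayama formula of~\cite[Theorem 4.4]{Pfeiffer} for type $B$ together with Clifford theory and formula~(\ref{eq:formulesplitDn}), exactly as Theorems~\ref{theo:MNAn} and~\ref{theo:MNAn2} were obtained from the Murnaghan--Nakayama rule for $\sym_n$.

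Next I would establish the key compatibility $I\circ r^{\widehat\pi}=r'^{\widehat\pi}\circ I$ (condition~(2.b) of Theorem~\ref{th:iso}), which reduces, via the iteration identity analogous to~(\ref{eq:coeffAitere}),~(\ref{eq:acommute}),~(\ref{eq:commuteAn}), to a one-step statement about the Murnaghan--Nakayama coefficients for type $D$. The non-split coefficients $\chi_{\mu_1,\mu_2}$ are handled directly by Theorem~\ref{theo:couronne} (restricting the type-$B$ isometry), since every non-self-conjugate character of $W'$ is the restriction of a character of $W$. For the self-conjugate characters $\chi_{\mu,\mu}^\epsilon$ one must, as in the $\lambda=\lambda^*$, $\mu=\mu_\lambda$ case of Theorem~\ref{theo:MNAn}, extract from~(\ref{eq:formulesplitDn}) the relation between $\chi_{\mu,\mu}^\epsilon$ and the symmetric-group character $\carsym_\mu$, and check that the sign $\delta_p(\mu)\delta_p(\Psi(\mu))$ in the definition of $I$ makes the $2^{\ell(\pi)}\carsym_\mu(\pi)$-term transform correctly. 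For this last step I would need an analogue of Lemma~\ref{lem:impair} for type-$D$ combinatorics: that $\delta_p$ (or the relevant relative $p$-sign) is unchanged under $p$-core replacement through $\Psi$, together with the fact that $2\cdot\pi$ being a bar-free configuration the $p$-signs of $\mu$ and of its image behave multiplicatively under the removal sequence. The relevant computation is essentially that at the end of the proof of Proposition~\ref{prop:coeffAn}, applied to the square-root term $\sqrt{\,\cdot\,}$ replaced here by $2^{\ell(\pi)}\carsym_\mu(\pi)$.

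With condition~(2) verified, condition~(3) follows automatically from Remark~\ref{rk:point3}, provided one checks that each $G_{x_S}$ carries an MN-structure of its own --- which it does, since $G_{x_S}$ is again a Weyl group of type $D$ (or $B$), and the construction is inductive. One must also separate the two regimes $|\Lambda_0|=|\Lambda_0'|$ and $|\Lambda_0|\ne|\Lambda_0'|$, exactly as in Theorem~\ref{theo:mainAn}: the inequality can only arise when $n\in\{pb,pb+1\}$ ... in fact when $n=2pb$ here ... and $\gamma$ is such that $b_{\gamma}(W')$ degenerates into two defect-zero blocks $b_\gamma^\pm=\{\chi_{\gamma,\gamma}^\pm\}$, while the other group has the block still attached; in that exceptional case one defines the extra isometry $I_\pi$ on the defect-zero pieces by $I_\pi(\epsilon_+)=\chi_{\gamma',\gamma'}^+$, $I_\pi(\epsilon_-)=\chi_{\gamma',\gamma'}^-$, exactly as~(\ref{eq:defIbetabizarre}), and verifies $I_\pi\circ(r^{\pi^+}+r^{\pi^-})=r'^\pi\circ I$ using~(\ref{eq:formulesplitDn}) and the value of $2^{\ell(\pi)}\carsym_{2\cdot\pi_0}(\pi)$, in complete analogy with Equations~(\ref{eq:rbeta})--(\ref{eq:com2}). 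Then one obtains a decomposition of $\widehat I$ as in~(\ref{eq:guguAn}) and concludes as in Theorem~\ref{th:broue}.

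The main obstacle I anticipate is the self-conjugate case: making the sign bookkeeping for $\chi_{\mu,\mu}^\epsilon$ consistent across the bijection $\Psi$, i.e.\ proving the type-$D$ analogue of Lemma~\ref{lem:impair} and of the last displayed computation in the proof of Proposition~\ref{prop:coeffAn}. Everything else (the non-split characters, the iteration identities, the MN-structure verification, the defect-zero exceptional case) is a routine transcription of the alternating-group argument via the dictionary $\sym_n\leftrightarrow W$, $\Alt_n\leftrightarrow W'$, but the interaction between the $2^{\ell(\pi)}$ factors in~(\ref{eq:formulesplitDn}), the $p$-signs $\delta_p$, and the $p$-core replacement $\Psi$ requires care --- one has to check that $\delta_p(2\cdot\pi_0)=\delta_p(\Psi(2\cdot\pi_0))$ by following a sequence of $p$-hook removals through the common $p$-quotient, as in~(\ref{eq:signedeltap}), and that the parity of $\ell(\pi)$ is preserved by $\Psi$ (which it is, since $\Psi$ preserves $p$-quotients and hence the number of $p$-hooks removed).
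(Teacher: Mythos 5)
Your proposal is correct and follows essentially the same route as the paper's proof: an MN-structure on each type-$D$ group obtained by intersecting the type-$B$ (wreath-product) structure of Theorem~\ref{theo:couronne} with $W'$, commutation of $I$ with the $r$-maps checked separately for the restricted characters $\chi_{\mu_1,\mu_2}$ (directly from the type-$B$ computation) and for the split characters $\chi^{\epsilon}_{\mu,\mu}$ via the difference character and Equation~(\ref{eq:formulesplitDn}), the exceptional defect-zero case (when $n=2pb$, so that full-support classes of type $(2\cdot\pi,\emptyset)$ split) handled exactly as in Theorem~\ref{theo:mainAn}, and the conclusion via Theorems~\ref{th:iso} and~\ref{th:broue}. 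Only minor adjustments are needed: a class of $S$ of type $(2\cdot\pi,\emptyset)$ splits only when $2|\pi|=n$ (fixed points give odd parts), the subgroup $G_{x_S}$ must always be taken of type $D$ (a type-$B$ subgroup would violate Definition~\ref{defMN}(3)), and no new ``type-$D$ analogue of Lemma~\ref{lem:impair}'' is required, since (\ref{eq:formulesplitDn}) reduces the split case to ordinary Murnaghan--Nakayama coefficients for $\sym_{n/2}$, where the sign identity coming from (\ref{eq:fcrochet}) and (\ref{eq:lienjambef}) already suffices.
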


\begin{proof}
Assume that $W_1'$ and $W_2'$ are of type $D_n$ and $D_m$, respectively.
We denote by $S$ and $C$ the intersections of $W_1'$ with the sets $S$
and $C$ defined in the proof of Theorem~\ref{theo:couronne}, and we
write $\Omega$ (respectively $\Omega_0$) for the set of bipartitions
$\pp=(\pi_1,\pi_2)$ with $\pi_1,\,\pi_2\in p\mathcal P $ and
$\ell(\pi_2)$ even, such that $|\pi_1|+|\pi_2|\leq n$ (respectively
$|\pi_1|+|\pi_2|\leq 2pb$). % Here $\mathcal D_p$ again denote the set of
% partitions all of whose parts are divisible by $p$. 
Denote by $\Lambda$ the $W'_1$-classes of elements of $S$. Note that
$\Omega$ is the set of cycle type of the classes in $\Lambda$.
Furthermore, we write $\Lambda_0$ for the set of classes in $\Lambda$
whose cycle type belong to $\Omega_0$.
%, respectively. 
When $n\neq 2pb$, the set $\Omega_0$ labels $\Lambda_0$. %of $W_1'$-classes of $S$.
Otherwise, there are in $\Omega_0$ elements $\pp$ that parametrize two
$W_1'$-classes denoted by $\pp^+$ and $\pp^-$. 
In this case, 
$\pp=(2\cdot\pi,\emptyset)\in\Omega_0$ for some partition $\pi$ of $n/2$,
%$2|\pi|=n$, 
and we denote by $t_{\pp}^+$ and $t_{\pp}^-$ representatives
for the split classes as in Equation~(\ref{eq:repchoixdn}). The two
corresponding classes are denoted by $\pp^+$ and $\pp^-$.
% labelled by $\pp^+$ and $\pp^-$, where we make  
% the same choice as above.
So, when $n=2pb$, the elements of $\Lambda_0$ are denoted by
$\widehat{\pp}$ with $\widehat{\pp}=\pp$ when $\pp\in\Omega_0$ labels one
class, and $\widehat{\pp}\in\{\pp^{+},\pp^-\}$ otherwise.
We also will write $t_{\pi^+}=t_{\pi}^+$ and $t_{\pi^-}=t_{\pi}^-$.
% where $\pp^{\delta}$ is the
% $W_1'$-class with representative $t_{\widehat{\pp}}=t_{\pp}^{\delta}$. 
Finally, for
$\pp\in\Omega_0$, we define $G_{t_{\widehat{\pp}}}=D_{n-|\pi_1|-|\pi_2|}$.

We then take $t_{\pp}$ as in Equation~(\ref{eq:prodcourel}) for a
representative of the class of $S$ labeled by $\pp\in\Omega_0$ 
whenever $\widehat\pp=\pp$.

Assume that $n$ is even. %Let $\pp=(2\cdot\pi,\emptyset)\in\Omega_0$ be
%such that $\pi\in\mathcal D_p$ is a partition of $n/2$.
%  and assume that
% $\pi=(\pi^1,\ldots,\pi^r)$ has parts arranged according to parity (as in
% Convention~\ref{conv}). 
For any partition $\mu$ of $n/2$, we write
$\Delta_{\mu}=\chi_{\mu,\mu}^+-\chi_{\mu,\mu}^-$.
% , and we set
% $\Delta_{\emptyset}=1$.
%Therefore, Equation~(\ref{eq:formulesplitDn}) gives
%Assume $r>1$. 
Let $1\leq k< n$, and $t=( (1,\ldots,1);\sigma)\in D_n$, where $\sigma=(n-k+1\cdots n)$.
We will prove that
\begin{equation}
\label{eq:deltaMn}
\Delta_{\mu}(tx)=2\sum_{\nu\in
M_{k}(\mu)}(-1)^{L(c_{\nu}^{\mu})}\Delta_{\nu}(x),
\end{equation}
for all $x\in D_{n-k}$. Note that $tx$ lies
in a split class of $D_n$ if and only if $x$ lies in a split class of
$D_{n-k}$. So, to prove Equation~(\ref{eq:deltaMn}), we
can assume that $x$ lies in a split class of $D_{n-k}$.
Suppose that $tx$ is $D_n$-conjugate by $g\in D_n$
to $ty$
with $y\in D_{n-k}$ (in particular, $y$ lies in a split
class of $D_{n-k}$). Then $x$ and $y$ have the same cycle
type, so they are $B_{n-k}$-conjugate, say by $g_0\in
B_{n-k}$. Furthermore, $g_0$ and $t$ commute
(because their have disjoint supports). It follows that
${}^{g_0}(t x)=ty$, and the set of elements
that conjugate $tx$ and $ty$ is
$g_0\Cen_{B_n}(tx)$.  
Furthermore, since $tx$ lies in a split class, one has
$\Cen_{B_n}(t x)=\Cen_{D_n}(t x)$. 
So, there is $h\in \Cen_{D_n}(t x)$ such that $g=g_0h$.
%and $g_0\in D_n$. 
This proves that $g_0\in D_{n-k}$.
Hence $x$ and $y$ are $D_{n-k}$-conjugate, and
Equation~(\ref{eq:deltaMn}) now follows from
Equation~(\ref{eq:formulesplitDn}).

Furthermore, assume that $k=n$. Then Equation~(\ref{eq:formulesplitDn})
gives 
$\Delta_{\mu}(t_{(k)}^{\delta})=\delta2\chi_{\mu}((k))$ for
$\delta\in\{+,-\}$. 
If $\mu$ is not a hook, then
$M_k(\mu)=\emptyset$. Otherwise, $M_k(\mu)=\{\emptyset\}$. 
Setting $\Delta_{\emptyset}=1$, and using the
Murnaghan-Nakayama rule for the symmetric group, we obtain
 \begin{equation}
 \label{eq:deltaMnempty}
 \Delta_{\mu}(t_{\sigma}^{\delta})=2\sum_{\nu\in
 M_k(\mu)}(-1)^{L(c_{\nu}^{\mu})}\Delta_{\nu}(1).
 \end{equation}

For $\pp\in\Omega_0$, we define
$r^{\widehat{\pp}}(\Delta_{\mu})(x)=\Delta_{\mu}(t_{\widehat{\pp}}x)$ for all
$x\in G_{t_{\widehat{\pp}}}$. Applying iteratively
Equation~(\ref{eq:deltaMn}) and Equation~(\ref{eq:deltaMnempty})
to the parts of $t_{\widehat\pp}$, we obtain 
\begin{equation}
\label{eq:rpiDelta}
r^{\widehat{\pp}}(\Delta_{\mu})=2^{\ell(\pi)}\sum_{\nu}a(\mu,\nu)\Delta_{\nu},
\end{equation} where the coefficients are those appearing in
Equation~(\ref{eq:MNSnitere}).

Now, for $\mm=(\mu_1,\mu_2)\in\mathcal E_{(\gamma,\gamma),(b,b)}$ with
$\mu_1\neq\mu_2$, we define $r^{\widehat{\pp}}(\chi_{\mu_1,\mu_2})$ to
be the restriction to $W_1'$ of $r^{\pp}(\theta_{(\mu_1,\mu_2)})$, where
$r^{\pp}$ is the map defined in the proof of
Theorem~\ref{theo:couronne}.  For $\mm=(\mu,\mu)\in\mathcal
E_{(\gamma,\gamma),(b,b)}$, define
\begin{equation}
r^{\widehat{\pp}}(\chi_{\mu,\mu}^{\epsilon})=\frac{1}{2}\left(\Res_{W_1'}^{W_1}(r^{\pp}(\theta_{(\mu,\mu)})+\epsilon
r^{\widehat{\pp}}(\Delta_{\mu})\right).
\label{eq:defepsilonDn}
\end{equation}
It is then straightforward to show that, if
$b_{\gamma,\gamma}(n-|\pi_1|-|\pi_2|)$ denotes the union of the $p$-blocks of
$G_{n-|\pi_1|-|\pi_2|}$ with $p$-core $(\gamma,\gamma)$ and $p$-weights
$(b_1,b_2)$ such that $0\leq b_i\leq b$ and $b_1+b_2=|\pi_1|+|\pi_2|$, 
then the map
$r^{\widehat{\pp}}:\C\Irr(b_{\gamma,\gamma}^{(b,b)})\rightarrow
\C\Irr(b_{\gamma,\gamma}(n-|\pi_1|-|\pi_2|))$ defines an MN-structure for $W_1'$ with
respect to the set of $p$-regular elements and
$b_{\gamma,\gamma}^{(b,b)}$.
Similarly, we define an MN-structure for $W_2'$ with respect to the set
of $p$-regular elements of $W_2'$ and
$b_{\gamma',\gamma'}^{(b,b)}$.
As we showed in the proof of Theorem~\ref{theo:couronne}, if
$\mu_1\neq\mu_2$ and $I$ is defined on
$\Irr(b_{\gamma,\gamma}(n-|\pi_1|-|\pi_2|))$ by the same formula, then we have 
\begin{equation}
I\left(r^{\widehat\pp}(\chi_{\mu_1,\mu_2})\right)=
r^{\widehat{\pp}}\left(I(\chi_{\mu_1,\mu_2})\right).
\label{eq:MNDndifferent}
\end{equation}
%Let now $\pp=(2\pi,\emptyset)$. 
For any $\mu\neq\emptyset$ with
$p$-core $\gamma$, one has
$$\Delta_{\mu}=\delta_p(\mu)\delta_p(\Psi(\mu))\left(\chi_{\mu,\mu}^{\delta_p(\mu)\delta_p(\Psi(\mu))}-\chi_{\mu,\mu}^{-\delta_p(\mu)\delta_p(\Psi(\mu))}\right).$$
In particular, 
\begin{equation}
I(\Delta_{\mu})=\delta_p(\mu)\delta_p(\Psi(\mu))\Delta_{\Psi(\mu)}.
\label{eq:IdeltaDn}
\end{equation}
Therefore, we deduce from the fact that
$I(\theta_{(\mu,\mu)})=\theta_{\Psi(\mu),\Psi(\mu)}$ and
Equations~(\ref{eq:defepsilonDn}), (\ref{eq:rpiDelta}), (\ref{eq:fcrochet})
and~(\ref{eq:lienjambef}) that
\begin{equation}
I\left(r^{\widehat{\pp}}(\chi_{\mu,\mu}^{\epsilon})\right)
=r^{\widehat{\pp}}\left(I(\chi_{\mu,\mu}^{\epsilon})\right).
\label{eq:MNDnmeme}
\end{equation}

Assume first that $|\Lambda_0|=|\Lambda'_0|$. Then
Equations~(\ref{eq:MNDndifferent}) and~(\ref{eq:MNDnmeme}) hold and we
derive from Theorem~\ref{th:broue} (see also the note in the proof of
Theorem~\ref{theo:mainAn}) that $I$ is a Brou\'e perfect
isometry.

Assume, on the other hand, that $|\Lambda_0|>|\Lambda_0'|$. In particular, $n$ is divisible
by $2p$, $\gamma=\emptyset$, and $\Lambda'_0=\Omega_0$. Let $\pp\in\Omega_0$ be such that 
$\widehat{\pp}=\pp$. If we define $I_{\pp}$ on $G_{t_{\widehat{\pp}}}$ 
in the same way as $I$, then by Equations~(\ref{eq:MNDndifferent})
and~(\ref{eq:MNDnmeme}), we have $I_{\widehat{\pp}}\circ
r^{\widehat{\pp}}=r^{\widehat{\pp}}\circ I$.
Let now $\pp=(2\cdot\pi,\emptyset)$ be such that $2|\pi|=n$. Then
$\widehat{\pp}\in\{\pp^+,\pp^-\}$, and $G_{t_{\pp}^+}$ and
$G_{t_{\pp}^-}$ are two copies of the trivial group. We set
$\Irr(G_{t_{\pp}^+})=\{1_{\pp^+}\}$ and
$\Irr(G_{t_{\pp}^-})=\{1_{\pp^-}\}$.
Furthermore,
$\Irr(b_{\gamma',\gamma'}(m-n))=\{\chi_{\gamma',\gamma'}^+,
\chi_{\gamma',\gamma'}^-\}$.
We define
$I_{\pp}:\C\Irr(G_{t_{\pp}^+})\oplus\C\Irr(G_{t_{\pp}^-})\rightarrow \C
\Irr(b_{\gamma',\gamma'}(m-n))$ by setting
$I_{\pp}(1_{\pp^\delta})=\chi_{\gamma',\gamma'}^\delta$. 
Note that
\begin{equation}
r^{\pp^{\delta}}(\chi_{\mu,\mu}^{\epsilon})=\chi_{\mu,\mu}^{\epsilon}(t_{\pp}^{\delta})1_{\pp^{\delta}}=\frac{1}{2}\left(
\theta_{(\mu,\mu)}(t_{\pp}^{\delta})+\epsilon\delta
2^{\ell(\pi)}a(\mu,\gamma)\right)1_{\pp^{\delta}}.
\label{eq:valDnpourri}
\end{equation}
Moreover, by Equation~(\ref{eq:rpiDelta}), one has
\begin{eqnarray*}
r^{\pp}\left(\delta_p(\mu)\delta_p(\Psi(\mu))\Delta_{\Psi(\mu)}\right)&=&2^{\ell(\pi)}\delta_p(\mu)\delta_p(\Psi(\mu))a(\Psi(\mu),\gamma')\Delta_{\gamma'}\\
&=&2^{\ell(\pi)}a(\mu,\gamma)\Delta_{\gamma'},
\end{eqnarray*}
because $\delta_p(\gamma)=\delta_p(\gamma')=1$. Write
$\epsilon_{\mu}=\delta_p(\mu)\delta_p(\Psi(\mu))$, and note that
$r^{\pp}(\theta_{\mu,\mu})=\theta_{\mu,\mu}(\pp)1_{\{1\}}$,
$I(r^{\pp}(\theta_{\mu,\mu}))=r^{\pp}(\theta_{\Psi(\mu),\Psi(\mu)})$,
and $r^{\pp}\circ\Res_{W_2'}^{W_2}=\Res_{W_2'}^{W_2}\circ r^{\pp}$. So we
obtain
\begin{eqnarray*}
r^{\pp}(I(\chi_{\mu,\mu}^{\epsilon}))&=&r^{\pp}\left(\chi_{\Psi(\mu),\Psi(\mu)}^{\epsilon_{\mu}\epsilon}\right)\\
&=&\frac{1}{2}\left(\Res_{W_2'}^{W_2}(r^{\pp}(\theta_{\Psi(\mu),\Psi(\mu)}))+\epsilon_{\mu}\epsilon
r^{\pp}(\Delta_{\Psi(\mu)})\right)\\
&=&\frac{1}{2}
\left(\theta_{(\mu,\mu)}(t_{\pp})+
\epsilon 2^{\ell(\pi)}a(\mu,\gamma)\right)\chi_{\gamma',\gamma'}^+
\\
&&\quad
+\frac{1}{2}\left(\theta_{(\mu,\mu)}(t_{\pp})-
\epsilon 2^{\ell(\pi)}a(\mu,\gamma)\right)\chi_{\gamma',\gamma'}^-\\
 &=&\chi_{\mu,\mu}^{\epsilon}(t_{\pp}^+)\chi_{\gamma',\gamma'}^+
+\chi_{\mu,\mu}^{\epsilon}(t_{\pp}^-)\chi_{\gamma',\gamma'}^-\\
&=&I_{\pp}\left(r^{\pp^+}(\chi_{\mu,\mu}^{\epsilon})+r^{\pp^-}(\chi_{\mu,\mu}^{\epsilon})\right).
\label{eq:valDmpourri}
\end{eqnarray*}
Now, assume that $\mu_1\neq\mu_2$. Note that
$r^{\pp^\pm}(\chi_{\mu_1,\mu_2})=\theta_{\mu_1,\mu_2}(\pp)1_{\pp^\pm}$.
Thus,%a similar computation gives that
%$r^{\pp}(I(\chi_{\mu_1,\mu_2}))=
\begin{eqnarray*}
I_{\pp}(r^{\pp^+}(\chi_{\mu_1,\mu_2})+r^{\pp^-}(\chi_{\mu_1,\mu_2}))&=&\theta_{\mu_1,\mu_2}(\pp)(\chi_{\gamma',\gamma'}^++\chi_{\gamma',\gamma'}^-)\\
&=&\theta_{\mu_1,\mu_2}(\pp)\Res_{W_2'}^{W_2}(\theta_{\gamma',\gamma'})\\
&=&\Res_{W_2'}^{W_2}(I(\theta_{\mu_1,\mu_2}(\pp)1_{\{1\}}))\\
&=&\Res_{W_2'}^{W_2}(I(r^{\pp}(\theta_{\mu_1,\mu_2})))\\
&=&\Res_{W_2'}^{W_2}(r^{\pp}(I(\theta_{\mu_1,\mu_2})))\\
&=&r^{\pp}(I(\chi_{\mu_1,\mu_2}))).
\end{eqnarray*}
Hence, we have $$r^{\pp}\circ
I=I_{\pp}\circ(r^{\pp^+}+r^{\pp^-}),$$
and we conclude as in the proof of Theorem~\ref{theo:mainAn}.
\end{proof}

\begin{theorem}
Assume $p$ is odd. Let $W_1'$ and $W_2'$ be Coxeter groups of type $D$.
Assume that $\gamma_1\neq\gamma_2$ and $\gamma'_1\neq\gamma'_2$, or
$(\gamma_1,\gamma_2)=(\gamma_1',\gamma_2')$ and $b_1\neq b_2$.  If the
$p$-blocks $b_{\gamma_1,\gamma_2}^{(b_1,b_2)}$ and
$b_{\gamma'_1,\gamma'_2}^{(b_1,b_2)}$ have the same $p$-weight
$(b_1,b_2)$, then they are perfectly isometric in the sense of Brou\'e.
\label{theo:broueDnpasconj}
\end{theorem}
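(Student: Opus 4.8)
The plan is to argue as in the proof of Theorem~\ref{theo:broueDnconj}, exploiting the fact that the hypotheses $\gamma_1\neq\gamma_2$ and $\gamma_1'\neq\gamma_2'$ make the situation strictly simpler: no self-conjugate character occurs, and, crucially, no split conjugacy class occurs in the relevant range. First I would reduce to $W_1'$ of type $D_n$ and $W_2'$ of type $D_m$, write $(b_1,b_2)$ for the common $p$-weight and $w=b_1+b_2$, so that $n=|\gamma_1|+|\gamma_2|+pw$ and $m=|\gamma_1'|+|\gamma_2'|+pw$; the case $w=0$ is trivial (the blocks have defect zero), so I assume $w>0$. By the description of the $p$-blocks of type $D$ recalled in~\S\ref{subsec:typeD}, since $\gamma_1\neq\gamma_2$ every irreducible character of $b_{\gamma_1,\gamma_2}$ has the form $\chi_{\mu_1,\mu_2}=\Res_{W_1'}^{W_1}(\theta_{(\mu_1,\mu_2)})$ with $\mm=(\mu_1,\mu_2)\in\mathcal E_{(\gamma_1,\gamma_2),(b_1,b_2)}$, this restriction being irreducible, and distinct $\mm$ (with the convention $(\mu_1)_{(p)}=\gamma_1$) give distinct characters. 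I then define
$$I(\chi_{\mu_1,\mu_2})=\left(\prod_{i=1}^2\delta_p(\mu_i)\delta_p(\Psi(\mu_i))\right)\chi_{\Psi(\mu_1),\Psi(\mu_2)},$$
with $\Psi$ as in Lemma~\ref{bijections}; since $\gamma_1'\neq\gamma_2'$ and $\Psi(\mu_i)$ has $p$-core $\gamma_i'$, this is a well-defined linear isometry $\C\Irr(b_{\gamma_1,\gamma_2})\to\C\Irr(b_{\gamma_1',\gamma_2'})$ carrying $\Z\Irr$ onto $\Z\Irr$.

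Next I would equip $W_1'$ with the MN-structure obtained by restricting to $W_1'$ the MN-structure on $W_1\cong\Z_2\wr\sym_n$ used in the proof of Theorem~\ref{theo:couronne}: $S$ is the set of elements of $W_1'$ all of whose signed cycles have length an odd multiple of $p$, $C$ is the set of $p$-regular elements, the $S$-classes being labelled by bipartitions $\pp=(\pi_1,\pi_2)$ with $\pi_1,\pi_2\in\mathcal D_p$ and $\ell(\pi_2)$ even, with $\Omega_0$ the subset on which the maps $r^{\pp}$ are nonzero (so $|\pi_1|+|\pi_2|\leq pw$). The key observation I would record is that no $W_1'$-class of $S$ parametrized by $\Omega_0$ splits: a split class would force the element to have no fixed point, i.e.\ $|\pi_1|+|\pi_2|=n$, whereas $|\pi_1|+|\pi_2|\leq pw<n$ because $\gamma_1\neq\gamma_2$ forces $|\gamma_1|+|\gamma_2|\geq1$. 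Hence $\Lambda_0=\Omega_0=\Lambda_0'$, each centralizer subgroup is a Weyl group of type $D_{n-|\pi_1|-|\pi_2|}$ whose relevant block again has unequal $p$-cores $(\gamma_1,\gamma_2)$, and for $\mm\in\mathcal E_{(\gamma_1,\gamma_2),(b_1,b_2)}$ I define $r^{\pp}(\chi_{\mu_1,\mu_2})$ to be the restriction to $W_1'$ of the map $r^{\pp}(\theta_{(\mu_1,\mu_2)})$ of Theorem~\ref{theo:couronne}. Using Clifford theory and~\cite[Theorem 4.4]{Pfeiffer} exactly as there, this restriction is a $\Z$-combination of the irreducible $\chi_{\nu_1,\nu_2}$ of the corresponding block of $D_{n-|\pi_1|-|\pi_2|}$, and Properties 1--4 of Definition~\ref{defMN} follow, Property 4 because $r^{\pp}(\chi_{\mu_1,\mu_2})(x)=\theta_{(\mu_1,\mu_2)}(g_{\pp}x)=\chi_{\mu_1,\mu_2}(g_{\pp}x)$ whenever $(t_{\pp},x)\in A$. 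The same recipe yields an MN-structure on $W_2'$.

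Finally I would check the commutation $I\circ r^{\pp}=r^{\pp}\circ I$ for all $\pp\in\Omega_0$. Since both sides are restrictions to the type-$D$ subgroups of the corresponding identity for $\Z_2\wr\sym_n$, this should follow verbatim from the computation in the proof of Theorem~\ref{theo:couronne} (Equation~(\ref{eq:calcour}) and its iteration via Equations~(\ref{eq:coeffAitere}), (\ref{eq:acommute}) and~(\ref{eq:commuteAn})), which rests on the identity $\alpha_\mu^\lambda=\delta_p(\lambda)\delta_p(\mu)\,{\alpha'}_\mu^\lambda$ from Equations~(\ref{eq:fcrochet}) and~(\ref{eq:lienjambef}) applied to each $p$-divisible hook removed from the components $\mu_1,\mu_2$, the extra signs $\delta_p$ telescoping as in Proposition~\ref{prop:coeffAn}; the absence of split classes is exactly what removes the last special case present in the proofs of Theorems~\ref{theo:mainAn} and~\ref{theo:broueDnconj}. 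By Remark~\ref{rk:point3}, hypothesis (3) of Theorem~\ref{th:iso} is then automatic, hypotheses (1) and (2) hold by the above, and Theorem~\ref{th:broue} gives that $I=I_{\{1\}}$ is a Brou\'e perfect isometry between $b_{\gamma_1,\gamma_2}$ and $b_{\gamma_1',\gamma_2'}$.

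The step I expect to be the main obstacle is the verification that this restricted MN-structure genuinely satisfies Definition~\ref{defMN}, that is, that Clifford theory between $W_1$ and $W_1'$ is compatible both with the wreath-product recursion~\cite[Theorem 4.4]{Pfeiffer} and with the passage to centralizers of type $D$ (in particular that the $r^{\pp}(\theta_{(\mu_1,\mu_2)})$ restrict to $\Z$-combinations of irreducible characters, which is where $\gamma_1\neq\gamma_2$ is used again). Once the no-split-class observation is secured, the remainder is routine bookkeeping entirely parallel to Theorems~\ref{theo:mainAn2} and~\ref{theo:couronne}.
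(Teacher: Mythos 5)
Your overall plan (rebuild the MN-machinery on the type $D$ groups, exactly as in Theorem~\ref{theo:broueDnconj}, and use the hypothesis $\gamma_1\neq\gamma_2$, $\gamma_1'\neq\gamma_2'$ to kill the delicate cases) is not the route the paper takes: the paper disposes of this statement in one line, by restricting the Brou\'e isometry of Corollary~\ref{cor:broueBn} between the covering type $B$ blocks. Since $\gamma_1\neq\gamma_2$, restriction is a value-preserving bijection $\Irr(B_{\gamma_1,\gamma_2})\to\Irr(b_{\gamma_1,\gamma_2})$, so the kernel $\widehat I$ of the type $B$ isometry literally restricts to $W_1'\times W_2'$; property (ii) is inherited because $p$-regularity is the same in $W$ and $W'$, and property (i) because $|\Cen_{W'}(x)|$ has index $1$ or $2$ in $|\Cen_{W}(x)|$ and $p$ is odd. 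Several ingredients of your write-up are correct and would be needed on your route: the description of $\Irr(b_{\gamma_1,\gamma_2})$, the definition of $I$, and the observation that no split classes occur among the relevant labels (indeed $|\pi_1|+|\pi_2|\le pw<n$ because $|\gamma_1|+|\gamma_2|\geq 1$), which is what removes the $\pp^{\pm}$ case of Theorem~\ref{theo:broueDnconj}.

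However, the step you yourself flag as the main obstacle is a genuine gap, and as written it fails. First, your structure is internally inconsistent: you take $S$ to consist of elements all of whose signed cycles have length an \emph{odd} multiple of $p$, while your labels $(\pi_1,\pi_2)$ and your $C$ (the $p$-regular elements) correspond to \emph{all} multiples of $p$, so an element containing a cycle of length divisible by $2p$ decomposes in neither way. More seriously, even with the intended choice ($S=$ elements of $W_1'$ whose nontrivial signed cycles all have length divisible by $p$, $C=$ $p$-regular elements of $W_1'$, $G_{x_S}$ the complementary type $D$ subgroup), Definition~\ref{defMN}(2) is not satisfied: an element of $W_1'$ having an odd number of negative cycles of length divisible by $p$ balanced by an odd number of negative $p'$-cycles (e.g.\ $p=3$, $n=4$, $\gamma_1=(1)$, $\gamma_2=\emptyset$, and $g=$ a negative $3$-cycle times a negative $1$-cycle) lies in $W_1'$, but neither its $p$-divisible-cycle part nor its $p$-regular part does, and $g$ is not the product of an element of $S$ with a commuting element of $G_{x_S}\cap C$, nor conjugate to one. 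The characters of $b_{\gamma_1,\gamma_2}$ do not vanish on such elements (in the example they are restrictions of characters inflated from $\sym_4$ and take the value of a $\sym_4$-character on a $3$-cycle), so the identity $l_G\circ d_G=\operatorname{id}$ of Lemma~\ref{applicationD}, on which Theorem~\ref{th:iso} rests, breaks down; hence Properties 1--4 of Definition~\ref{defMN} do not simply ``follow from Clifford theory'', and Theorems~\ref{th:iso} and~\ref{th:broue} cannot be invoked as you do. (The paper's proof of Theorem~\ref{theo:broueDnconj} is equally terse on this point, but for the present statement it deliberately bypasses the issue via the restriction argument.) To salvage your route you would have to enlarge $C$ to a suitable union of classes of $W_1'$ containing the problematic elements and then compare the resulting generalized blocks with the $p$-blocks, as the paper does for $\tAlt_n$ with $C_{\tAlt_n}$ strictly larger than the set of $p$-regular elements; or, more simply, use the restriction argument above.
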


\begin{proof}
The isometry is the restriction to
$\Irr(b_{\gamma_1,\gamma_2}^{(b_1,b_2)})$ of that
of Corollary~\ref{cor:broueBn}.
\end{proof}

\subsection{Isometries between alternating groups and natural subgroups}
%Brou\'e perfect isometry conjecture for alternating groups} 
\label{subsec:fh}

It would be interesting to give an analogue of Osima's perfect
isometry between $p$-blocks of the alternating groups and the
``alternating'' subgroup of $\Z_p\wr\sym_w$.
% of the wreath product appearing in Theorem~\ref{theo:osima}. 
But such perfect isometries do not exist, as we can show
in the following example. 

\begin{example}
\label{ex:osima}
Consider the principal $3$-block $b$ of $\Alt_6$. It contains
$6$ irreducible characters. Note that $b$ is covered by the principal
$3$-block $B$ of $\sym_6$ (which has $3$-weight $2$ and contains $9$
irreducible characters).
Let $G=\Z_3\wr \sym_2$. Then $G$ has $9$ irreducible characters and by  
Theorem~\ref{theo:osima}, $B$ and $G$ are perfectly isometric. Now,
viewing $G$ as a subgroup of $\sym_6$, we can restrict the sign character
$\varepsilon:\sym_6\rightarrow \{-1,1\}$ to a linear character (also
denoted by $\varepsilon$) of $G$, whose kernel is the base
group $H=\Z_3^2$ of $G$. 
Define the regular elements of $H$ to be the elements
with cycle structure $(\pi_1,\pi_2,\pi_3)$ and $\pi_1=\emptyset$. % There
% are $4$ such classes (note that the regular elements of $H$ are the
These elements are the products of $2$ disjoint $3$-cycles contained in $H$,
when $H$ is viewed as a subgroup of $\sym_6$, and there are $4$ such
elements. Now, a straightforward computation gives that $\langle
\operatorname{res}_{\operatorname{reg}}(\chi),\operatorname{res}_{\operatorname{reg}}(1_H)\rangle\in\{\-2/9,1/9,4/9\}$
for any $\chi\in\Irr(H)$. So, we conclude by Remark~\ref{rk:kor} that
$\Irr(H)$
% has $9$ irreducible
%characters that 
forms a reg-block, and since $\Irr(H)$ has $9$ elements, $b$ and $\Irr(H)$ are 
not perfectly isometric.
\end{example}

However, when we replace $\Z_p\wr\sym_w$ by $G_{p,w}$ (see
\S\ref{subsec:sym} for the notation), we can show that the $p$-blocks
of $\Alt_n$ are perfectly isometric with the ``alternating'' subgroup of
$G_{p,w}$.
%   in this section, we will give a perfect isometry between $p$-blocks
% of alternating groups, and subgroups of the wreath products considered
% in Section~\ref{subsec:sym}. 
In a way, we prove in this section an analogue of 
Osima's isometries for the alternating groups.

Throughout, we keep the notation of \S\ref{subsec:sym}, and view
$G_{p,w}$ as a subgroup of $\sym_{pw}$. Moreover, we assume that 
$p$ is odd, so that, in particular, $p^*=(p+1)/2$.
% and fix
% some $p$-block $B_{\gamma}$ with $p$-core $\gamma$ and $p$-weight $w<p$.
Furthermore, we view $H=\Z_p\rtimes\Z_{p-1}$ as the normalizer of some
Sylow $p$-subgroup of $\sym_p$, 
% (as
%the normalizer of some $p$-Sylow subgroup of $\sym_p$), 
and denote by
$\varepsilon_H$ the restriction of the sign character
$\varepsilon_{\sym_p}$ to $H$. 
Note that only the irreducible character of degree $p-1$ of $H$
is $\varepsilon_H$-stable. So we choose the labeling of 
$\Irr(H)=\{\psi_1,\ldots,\psi_p\}$
so that $\psi_1=\varepsilon_H$, $\psi_p=1_H$, and 
$\psi_i=\psi_{p+1-i}\otimes\varepsilon_H$ for
any $1\leq i\leq p$ (in particular, $\psi_{p^*}(1)=p-1$). Recall that
$\Irr(G_{p,w})$ is labeled by $\mathcal{MP}_{p,w}$, and, with the above
choices, for every $\mm=(\mu_1,\ldots,\mu_p)\in\mathcal{MP}_{p,w}$,
%we define $\mm^*=(\mu_p^*,\ldots,\mu_1^*)$ (where $\mu^*$ denotes
%the conjugate partition of $\mu$), 
 we have (see~\cite[Proposition 4D]{fongharris}) 
\begin{equation}
\label{eq:etoile}
\varepsilon\theta_{\mm}=\theta_{\mm^*},
\end{equation}
where $\varepsilon$ again denotes the restriction of the sign character of
$\sym_{pw}$ to $G_{w,p}$, and $\mm^*=(\mu_p^*,\ldots,\mu_1^*)$ is as in
Equation~(\ref{eq:conjquotient}).
Define the ``alternating'' subgroup of $G_{p,w}$ by setting
$$H_{p,w}=\ker(\varepsilon:G_{p,w}\rightarrow\{-1,1\}).$$

Consider  the set of partitions
$\mathcal E$ (respectively $\mathcal O\mathcal D$) all of whose parts have even length
(respectively whose parts are distinct and of odd length).
We recall that (see
for example~\cite[Lemma 4E]{fongharris}) the set
% that the splitting classes of
% $G_{p,w}$ with respect to $H_{p,w}$ are the conjugacy classes of
% $G_{p,w}$ whose the type is labelled by the set
\begin{equation}
\label{eq:defT}
\mathcal T=\{(\pi_1,\ldots,\pi_p)\in\mathcal
MP_{p,w}\,|\,\pi_{2i}=\emptyset,\,\pi_{2i+1}\in \mathcal E,
\pi_p\in\mathcal O\mathcal D\}
\end{equation}
labels the set of splitting
classes of $G_{w,p}$ with respect to $H_{w,p}$.
We will now give representatives for these classes. Let
$\pp=(\pi_1,\ldots,\pi_p)\in\mathcal
T$. For $1\leq i\leq p$, write
$\pi_i=(\pi_{i,1},\ldots,\pi_{i,\ell(\pi_i)})$, and assume that 
there is some integer $1\leq
r_i\leq \ell(\pi_i)$ such that $\pi_{i,j}$ is prime to $p$ for all
$j<r_i$ and $\pi_{i,j}$ is divisible by $p$ for $j\geq r_i$.
% partition
% $\pi_i=(\pi_{i,1},\ldots,\pi_{i,\ell(\pi_i)})$ satisfies Convention~\ref{conv}.
Let $t_{\pp}$ be the element of $G_{p,w}$ obtained in the same way as in
Equation~(\ref{eq:prodcourel}). Let $1\leq i\leq p$ be such that
$\pi_i\neq \emptyset$. Then with the notation of
Equation~(\ref{eq:prodcourel}), $t_{i1}\in G_{p,K}$, where 
$K$ is the support of $t_{i1}$ and $G_{p,K}=(\Z_p\rtimes\Z_{p-1})\wr
\sym_K$. In particular, viewed as an element of $G_{p,K}$,
the cyclic structure of $t_{i1}$ is
$(\emptyset,\ldots,\emptyset,(\pi_{i,1}),\emptyset,\ldots,\emptyset)$.
Since $\pp\in\mathcal T$, either $i<p$ is odd and $\pi_i\in\mathcal E$,
and so also $(\pi_{i,1})$, or $i=p$ and $(\pi_{i,1})\in
\mathcal{OD}_{\pi_{i,1}}$. Hence, $t_{i1}$ lies in a split class of
$G_{p,K}$. Let $\rho_K\in G_{p,K}\backslash H_{p,K}$. Set
$t_{i1}^+=t_{i1}$ and $t_{i1}^-=\rho_K t_{i1}^+\rho_{K}^{-1}$. 
% Suppose there is $1\leq s\leq p$ such that
% $\pi_s$ has a part prime to $p$, and 
Write $m$ for the minimum integer such that $\pi_m\neq \emptyset$. 
Using the notation of
Equation~(\ref{eq:prodcourel}), we define
\begin{equation}
\label{eq:labelclassHn}
r_{\pp}=\left(\prod_{i\neq m
}\prod_{j=1}^{\ell(\pi)}t_{ij}\right)\prod_{j\neq
1}t_{mj}\quad\textrm{and}\quad
t_{\pp}^{\pm}=r_{\pp}t_{m1}^{\pm}.
\end{equation}
% If for each $1\leq s\leq p$, all parts of $\pi_s$ are divisible by $p$,
% then we define $t_{\pp}^{\pm}$ by taking $m=\min I$ in
% Equation~(\ref{eq:labelclassHn}).
Since $\rho_K\notin H_{p,w}$ and the supports of $\rho_K$ and $r_{\pp}$
are disjoint, the elements $t_{\pp}^{+}$ and $t_{\pp}^-$ are representatives for the
two split classes of $H_{p,w}$ labeled by $\pp$.

Write $\mathcal S$ for the set of $\mm\in\mathcal{MP}_{p,w}$ such that
$\mm^*=\mm$.
Now, following~\cite{fongharris}, we define an explicit bijection 
$\mathbf{a}:\mathcal S\rightarrow \mathcal T$ as follows.
Let
$\mm=(\mu_1,\ldots,\mu_p)\in \mathcal
S$. Then $\mu_{p+1-i}^*=\mu_{i}$ for all $1\leq i\leq p$. In particular,
$\mu_{p^*}=\mu_{p^*}^*$. Write
$\mu_i=\prod_jj^{p_{ij}}$ for $1\leq i <p^*$, and
%, and
% denote by $a:\{\lambda=\lambda^*\}\rightarrow \mathcal O\mathcal
% D,\,\lambda\mapsto\overline{\lambda}$  the natural bijection (see
% \S\ref{subsec:notAn})  between the set of self-dual
% partitions and the set of splitting classes of $\sym_n$ with respect to
% $\Alt_n$. 
% Write $a(\mu_{p^*})=\overline{\mu}_{p^*}$ (see
% \S\ref{subsec:notAn}) the partition of
%$\mathcal D$ obtained from the set of diagonal hooks of $\mu_{p^*}$
% (note that $a:\{\mu=\mu^*\}\rightarrow \mathcal D$ is a
%bijection),
recall the definition of $\mathbf{a}(\mm):=(\pi_1,\ldots,\pi_p)\in\mathcal{T}$
from~\cite{fongharris}
by setting $\pi_p=a(\mu_{p^*})$, 
$\pi_{2i-1}=\prod_{j}(2j)^{p_{ij}}$, and $\pi_{2i}=\emptyset$, where $a$ is
the map defined in Equation~(\ref{eq:defa}).
% Therefore, 
% %$\mathbf a$ is then defined by 
% $\mathbf{a}(\mm):=(\pi_1,\ldots,\pi_p)\in \mathcal T$. 
Then $\mathbf{a}$ is
a bijection. % To see that, we construct its inverse map.
Indeed, if for $(\pi_1,\ldots,\pi_p)\in\mathcal T$, we define
$\mm=(\mu_1,\ldots,\mu_p)$ by setting
$\mu_{p^*}=a^{-1}(\pi_p)$, $\mu_i=\prod_j j^{p_{ij}}$ for  $1\leq i< p^*$, 
where $\pi_{2i-1}=\prod_j(2j)^{p_{ij}}$, and
$\mu_i=\mu_{p+1-i}^*$ for $p^*<i\leq p$, then the map
$(\pi_1,\ldots,\pi_p)\mapsto(\mu_1,\ldots,\mu_p)$ is the inverse map of
$\mathbf a$. 

\begin{lemma}
The conjugacy class of $G_{p,w}$ labeled by
$(\emptyset,\ldots,\emptyset,1^{w-k},\beta)\in\mathcal{MP}_{p,w}$ (for
$k\leq w$) lies in
$H_{p,w}$ if and only if $\beta$ has an even number of even parts.
\label{lem:classHnp}
\end{lemma}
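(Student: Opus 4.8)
The plan is to evaluate the linear character $\varepsilon$ on a well-chosen representative of the class labelled by $(\emptyset,\ldots,\emptyset,\beta)$ and to determine exactly when that value equals $1$.

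First I would recall that, by construction, $H=\Z_p\rtimes\Z_{p-1}$ is the normalizer in $\sym_p$ of a Sylow $p$-subgroup, so the chosen class representative $g_p=\omega$ of $H$ (the generator of the $\Z_p$ part) is a $p$-cycle of $\sym_p$; since $p$ is odd, $\operatorname{sgn}_{\sym_p}(g_p)=(-1)^{p-1}=1$. Next I would pick $g=(h_1,\ldots,h_w;\sigma)\in G_{p,w}$ in the class labelled by $(\emptyset,\ldots,\emptyset,\beta)$. By the description of cycle structures in~\S\ref{subsec:not}, this means precisely that $\sigma\in\sym_w$ has cycle type $\beta$, and that for every cycle $\kappa$ of $\sigma$ the cycle product $g(g;\kappa):=g((h_1,\ldots,h_w;\sigma);\kappa)$ is $H$-conjugate to $g_p$, hence $\sym_p$-conjugate to a $p$-cycle.

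Then I would invoke the sign formula for the imprimitive embedding $G_{p,w}=H\wr\sym_w\hookrightarrow\sym_{pw}$. Writing $g$ as the product of its base part $(h_1,\ldots,h_w;1)$ and the block permutation $(1,\ldots,1;\sigma)$, and observing that a $k$-cycle of $\sym_w$ permutes the $w$ blocks of size $p$ and so acts on the $pw$ points as a product of $p$ disjoint $k$-cycles, one obtains
$$\varepsilon(g)=\Bigl(\prod_{i=1}^w\operatorname{sgn}_{\sym_p}(h_i)\Bigr)\cdot\operatorname{sgn}_{\sym_w}(\sigma)^p=\operatorname{sgn}_{\sym_w}(\sigma)^p\prod_{\kappa}\operatorname{sgn}_{\sym_p}\bigl(g(g;\kappa)\bigr),$$
where for the second equality one groups the factors $h_i$ according to the cycles $\kappa$ of $\sigma$ and uses that $\operatorname{sgn}_{\sym_p}$ is a homomorphism, so $\prod_{i\in\kappa}\operatorname{sgn}_{\sym_p}(h_i)=\operatorname{sgn}_{\sym_p}(g(g;\kappa))$. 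Since each cycle product is conjugate to a $p$-cycle, every factor $\operatorname{sgn}_{\sym_p}(g(g;\kappa))$ equals $1$, and since $p$ is odd, $\operatorname{sgn}_{\sym_w}(\sigma)^p=\operatorname{sgn}_{\sym_w}(\sigma)$. Hence $\varepsilon(g)=\operatorname{sgn}_{\sym_w}(\sigma)=\prod_i(-1)^{\beta_i-1}=(-1)^{e(\beta)}$, where $e(\beta)$ is the number of even parts of $\beta$. Therefore $g$ lies in $H_{p,w}=\ker\varepsilon$ exactly when $e(\beta)$ is even, which is the assertion.

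I expect the only point needing a little care to be the reduction $\prod_i\operatorname{sgn}_{\sym_p}(h_i)=\prod_\kappa\operatorname{sgn}_{\sym_p}(g(g;\kappa))$: the cycle product $g(g;\kappa)$ is a reversed cyclic rearrangement of $\prod_{i\in\kappa}h_i$, so it need not be $H$-conjugate to that product, but it has the same sign because the sign of a product depends only on the multiset of its factors. Everything else is routine bookkeeping with the wreath-product sign formula.
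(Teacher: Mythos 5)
Your proof is correct and follows essentially the same route as the paper: the paper evaluates $\varepsilon$ on a class representative using the sign formula $\varepsilon(h_1,\ldots,h_w;\sigma)=\varepsilon(\sigma)\prod_i\varepsilon_H(h_i)$ (cited from Fong--Harris, Eq.\ (4.1)) and the fact that $\varepsilon_H(\omega)=1$, which is exactly your observation that each cycle product is conjugate to a $p$-cycle of sign $+1$ since $p$ is odd. The only differences are cosmetic: you re-derive the wreath-product sign formula from the imprimitive embedding instead of citing it, and you work with an arbitrary element of the class rather than the paper's explicit representative, which is fine because $\varepsilon$ is a class function.
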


\begin{proof}
Because of~\cite[Equation (4.1)]{fongharris}, for every
$( h_1,\ldots,h_w;\sigma)\in G_{p,w}$ with $h_i\in H$ and
$\sigma\in\sym_w$, we have
\begin{equation}
\label{eq:signature}
\varepsilon(
h_1,\ldots,h_w;\sigma)=\varepsilon(\sigma)\prod_{i=1}^w\varepsilon_{H}(h_i).
\end{equation}
% Take $\sigma$ for representative for the class of $\sym_w$ labelled by $\beta$ the
Write $\beta=(\beta_1,\ldots,\beta_r)$, and set $
\sigma_{\beta}=\sigma_1\cdots\sigma_k$, where $\sigma_i$ is a
cycle of length $|\beta_i|$. 
%In particular, the element $\sigma_{\beta}$
%is a representative of the conjugacy class of $\sym_w$ labelled by
%$\beta$. 
Let $\{j_1,\ldots,j_{|\beta_i|}\}$ be the
support of $\sigma_i$. Define $h_{j_1}=\omega$ (the element $\omega$ is
as in \S\ref{subsec:sym}, \emph{i.e.} a generator of the Sylow $p$-subgroup 
of the base group of the wreath product $G_{p,w}$) and $h_{j_l}=1$ for
$2\leq l\leq |\beta_i|$. If $l$ doesn't belong to the support of any $\beta_i$
then put $h_l=1$. Thus, the element
$x_{\beta}=(h_1,\ldots,h_w;\sigma_{\beta})$ is a representative for the
class of $G_{p,w}$ labeled by $(\emptyset,\ldots,\emptyset,\beta)$. By
Equation~(\ref{eq:signature}),
$\varepsilon(h_1,\ldots,h_w;\sigma_{\beta})=1$ if and only if
$\varepsilon(\sigma_{\beta})=1$ (because $\varepsilon_H(\omega)=1$), as
required.
\end{proof}

By Equation~(\ref{eq:etoile}) and Clifford Theory,  
if $\mm\notin \mathcal S$, then the restriction
$\vartheta_{\mm}=\Res_{H_{p,w}}^{G_{p,w}}(\theta_{\mm})=\Res_{H_{p,w}}^{G_{p,w}}(\theta_{\mm^*})$
is irreducible. Otherwise, the restriction of 
$\theta_{\mm}$ splits into a sum of two irreducible characters of
$H_{p,w}$, denoted $\vartheta_{\mm}^{+}$ and $\vartheta_{\mm}^{-}$. In
the last case, such a $\theta_{\mm}$ is called a split irreducible
character of $G_{p,w}$.

Let $\mm=(\mu_1,\ldots,\mu_p)\in \mathcal S$.  In order to distinguish
$\vartheta_{\mm}^{+}$ and $\vartheta_{\mm}^{-}$, we need to introduce
some notation.  We associate to $\mm$ two multipartitions
$\mm'\in\mathcal{MP}_{p,w-|\mu_{p^*}|}$ and
$\mm''\in\mathcal{MP}_{p,|\mu_{p^*}|}$ by setting
$$\mm'=(\mu_1,\ldots,\mu_{(p-1)/2},\emptyset,\mu_{(p+3)/2},\ldots,\mu_p)\
\textrm{
and }
\mm''=(\emptyset,\ldots,\emptyset,\mu_{p^*},\emptyset,\ldots,\emptyset).$$

Moreover, to $\mm'$ and $\mm''$, we associate subgroups as follows. 
% Write $e_1=0$ and $e_i=|\mu_{i-1}|+e_{i-1}$ for every $2\leq i\leq p$,
% and define $E_i=\{e_i+1,\ldots,e_{i+1}\}$. In particular,
% $|E_i|=|\mu_i|$ for any $1\leq i\leq p$.
% Write
% $E_{\mm'}=E\backslash E_{p^*}$ and
% $E_{\mm''}=E_{p^*}$, and define $G_{\mm'}=H\wr\sym(E_{\mm'})$ and 
% $G_{\mm''}=H\wr\sym(E_{\mm''})$. Note that $\mm'$ and $\mm''$ are
% self-conjugate. 
Write $E_{\mm'}=\{1,\ldots,n-|\mu_{p^*}|\}$ and
$E_{\mm''}=\{n-|\mu_{p^*}|+1,\ldots,n\}$, and define $G_{\mm'}=H\wr\sym(E_{\mm'})$ and 
$G_{\mm''}=H\wr\sym(E_{\mm''})$. Note that $\mm'$ and $\mm''$ are
self-conjugate.

In particular, by \S\ref{subsec:not}, $\mm'$ and
$\mm''$ label split irreducible characters $\theta_{\mm'}$ and
$\theta_{\mm''}$ of $G_{\mm'}$ and $G_{\mm''}$ respectively. 

Since $\mm''$ is self-conjugate, $\mathbf a(\mm'')$ is a splitting class
of $G_{\mm''}$, and thus labels two classes $\mathbf a(\mm'')^{\pm}$ of
$H_{\mm''}=\ker(\varepsilon_{G_{\mm''}})$.
Now, we make the same choices for 
the labeling for the irreducible characters $\vartheta_{\mm''}^{\pm}$ 
and for the classes $\mathbf{a}(\mm'')^{\pm}$ of $H_{\mm''}$
as in~\cite[Proposition 4F]{fongharris}, so that yields
\begin{equation}
\left(\vartheta_{\mm''}^+-\vartheta_{\mm''}^-\right)(g)=
\left\{
\begin{array}{l}
\epsilon
(\sqrt{\epsilon_pp})^d\sqrt{\epsilon_{\mu_{p^*}}\operatorname{ph}(\mu_{p^*})}\quad\textrm{if
}g\in \mathbf{a}(\mm'')^{\epsilon},\\
%\\
0\quad \textrm{otherwise},
\end{array}
\right.
\label{eq:fong4F}
\end{equation}
where $\epsilon\in\{\pm 1\}$, $\epsilon_p=(-1)^{(p-1)/2}$,  $d$ is the
number of parts of $a(\mu_{p^*})$,
$\epsilon_{\mu_{p^*}}=(-1)^{(|\mu_{p^*}|-d)/2}$, and
$\operatorname{ph}(\mu_{p^*})$ denotes the product of the lengths of the parts
of $a(\mu_{p^*})$. 

Furthermore, fix any labeling for the irreducible characters
$\vartheta_{\mm'}^{\pm}$ of $H_{\mm'}=\ker(\varepsilon_{G_{\mm'}})$.
%(which automatically gives a labelling for the 
%classes $\mathbf{a}(\mm')^{\pm}$). 
Labelings for $\mm'$ and $\mm''$
being fixed as above, we can assume that the characters
$\vartheta_{\mm}^{\pm}$ are parametrized
as in~\cite[Proposition 4H(ii)]{fongharris}, and we always make this choice
in the following. We can now show the following crucial result.

\begin{lemma}Let $c$ be a cycle of odd length $k\leq w$.
Let $x=(t;\sigma)\in G_{p,w}$ have cycle structure
$(\emptyset,\ldots,\emptyset,1^{w-k},(k))$, and be such that $\sigma=(w-k+1,\ldots,w)$. 
Let $\mm=(\mu_1,\ldots,\mu_p)\in\mathcal{MP}_{p,w}$ be such that $\mm=\mm^*$.
If $c$ is a cycle of $a(\mu_{p^*})$, then
for any $g\in H_{p,w-k}$, we have
$$\left(\vartheta_{\mm}^+-\vartheta_{\mm}^-\right)(xg)=\sqrt{(-1)^{(pk-1)/2}pk}\
\left(\vartheta_{\mm_c}^+-\vartheta_{\mm_c}^-\right)(g),$$
where $(\mm_c)_i = \mu_i$ if $i\neq p^*$, and $(\mm_c)_{p^*}$ is
obtained from $\mu_{p^*}$ by removing the diagonal hook of length $k$.
\label{mn-courAn}
\end{lemma}

\begin{proof}
By Lemma~\ref{lem:classHnp}, one has $x\in H_{p,w}$. Furthermore,
$\vartheta_{\mm_c}^{\pm}$ are
irreducible characters of $H_{p,w-k}$.
Write $\mm'$ and $\mm''$ for the multipartitions associated to $\mm$ as
above. By construction, we have $\mm'_c=\mm'$, and $\mm''_c$ is obtained
from $\mm''$ by removing the diagonal hook of length $k$ (this is
possible because $c$ is a cycle of $a(\mu_{p^*})$) at the
$p^*$-coordinate. 

Let $g\in H_{p,w-k}$. Then by \cite[(i) of Proposition 4H]{fongharris},
either $\left(\vartheta_{\mm}^+-\vartheta_{\mm}^-\right)(xg)=0=
\left(\vartheta_{\mm_c}^+-\vartheta_{\mm_c}^-\right)(g)$ (and the claim
is true), or there are $y\in H_{\mm'_c}$ and $z\in
H_{\mm''_c}$ such that $g=yz=zy$ and $\mathfrak s(y)_p=\emptyset$. Since
$H_{\mm''_c}\subseteq H_{\mm''}$, the elements $x$ and $z$ lie in 
$ H_{\mm''}$. On the other hand, $x$ commutes with $z$ and with $y$ (because
these elements have disjoint supports).
% We say that an element $h\in H_{p,w}$ is regular if its cycle
% structure $\mathfrak s(h)$ satisfies $\mathfrak s(h)_p=\emptyset$. 
% By construction, 
% there are $y\in H_{\mm'}$ and $z\in H_{\mm''}$ such
%that $g=yz=zy$. Note that $\mathfrak s(y)_p=\emptyset$ and 
%$zx=xz$. 
Hence, \cite[Proposition
4H]{fongharris} implies that 
\begin{equation}
\label{eq:fong4H}
\left(\vartheta_{\mm}^+-\vartheta_{\mm}^-\right)(xg)=(\vartheta_{\mm'}^+-\vartheta_{\mm'}^-)(y)\,(\vartheta_{\mm''}^+-\vartheta_{\mm''}^-)(xz).
\end{equation}
First, suppose that $xz\in\mathbf{a}(\mm'')^\epsilon$. Without loss of
generality, in the writing of $t_{\mathbf{a}(\mm'')}$ as in
Equation~(\ref{eq:labelclassHn}), we can assume that $x=t_{p\ell(\pi_p)}$.
Hence,
$t_{\mathbf{a}(\mm'')}^{\epsilon}=xt_{\mathbf{a}(\mm''_c)}^{\epsilon}$.
A similar argument to that after Equation~(\ref{eq:deltaMn}) shows
that $xz\in \mathbf{a}(\mm'')^{\epsilon}$ if and only if
$z\in\mathbf{a}(\mm''_c)^{\epsilon}$.
% 
% Moreover, by Equation~(\ref{eq:fong4F}), if
% $xz\notin\mathbf{a}(\mm'')^\pm$, then
% $(\vartheta_{\mm''}^+-\vartheta_{\mm''}^-)(xz)=0$. Otherwise,
% $xz\in\mathbf{a}(\mm'')^\epsilon$. 
% Now, since $\mm_c'=\mm'$, 
Note that  $H_{\mm_c'}=H_{\mm'}$, 
$\vartheta_{\mm_c'}^+=\vartheta_{\mm'}^+$ and
$\vartheta_{\mm_c'}^-=\vartheta_{\mm'}^-$ (because $\mm_c'=\mm$).
Let $d$ be the number of parts of $a(\mu_{p^*})$. Then
$a( (\mm_c)_{p^*})$ has $(d-1)$ parts. Moreover,
one has
%\begin{equation}
% \label{eq:fh}
$\epsilon_{\mu_{p^*}}=(-1)^{(k-1)/2}\epsilon_{(\mm_c)_{p^*}}$ and
$\operatorname{ph}(\mu_{p^*})=k\operatorname{ph}( (\mm_c)_{p^*} )$, so
that
%, and
Equations~(\ref{eq:fong4F}) and~(\ref{eq:fong4H}) %and~(\ref{eq:fh}) 
give
\begin{eqnarray*}
\left(\vartheta_{\mm}^+-\vartheta_{\mm}^-\right)(gx)&=&
\left(\vartheta_{\mm'}^+-\vartheta_{\mm'}^-\right)(y)\,\epsilon(\sqrt{\epsilon_pp})^d\sqrt{\epsilon_{\mu_{p^*}}\operatorname{ph}(\mu_{p^*})}\\
&=&
\left(\vartheta_{\mm_c'}^+-\vartheta_{\mm_c'}^-\right)(y)\,\sqrt{\epsilon_pp
k(-1)^{(k-1)/2}
}\epsilon(\sqrt{\epsilon_pp})^{d-1}\\
&&\cdot\sqrt{\epsilon_{(\mm_c)_{p^*}}\operatorname{ph}(
(\mm_c)_{p^*})}\\
&=&\sqrt{\epsilon_pp k(-1)^{(k-1)/2}
}
\left(\vartheta_{\mm_c'}^+-\vartheta_{\mm_c'}^-\right)(y)\,\left(\vartheta_{\mm_c''}^+-\vartheta_{\mm_c''}^-\right)(z)\\
&=&\sqrt{\epsilon_p pk(-1)^{(k-1)/2}
}
\left(\vartheta_{\mm_c}^+-\vartheta_{\mm_c}^-\right)(g).
\end{eqnarray*}
Furthermore, 
$$%(-1)^{(q-1)/2}=
(-1)^{(pk-1)/2}=\left((-1)^{(p-1)/2}\right)^{k}\,(-1)^{(k-1)/2}=\epsilon_p(-1)^{(k-1)/2},$$
because $k$ is odd. The result follows.

Now, if $xz\notin\mathbf{a}(\mm'')^\pm$, then
$z\notin\mathbf{a}(\mm_c'')^{\pm}$. We then have  $(\vartheta_{\mm''}^+-\vartheta_{\mm''}^-)(xz)=0 =
(\vartheta_{\mm_c''}^+-\vartheta_{\mm_c''}^-)(z)=0$ by Equation~(\ref{eq:fong4F}),
% $(\vartheta_{\mm''}^+-\vartheta_{\mm''}^-)(xz)=0 =
%(\vartheta_{\mm_c''}^+-\vartheta_{\mm_c''}^-)(z)=0$. Hence, by
and 
Equation~(\ref{eq:fong4H}) gives
$$(\vartheta_{\mm}^+-\vartheta_{\mm}^-)(gx)=0=(\vartheta_{\mm_c}^+-\vartheta_{\mm_c}^-)(g).$$
This proves the result.
\end{proof}

For $\lambda\neq\lambda^*$ and $\mm\neq\mm^*$, we write
$\caralt_{\lambda}^+=\caralt_{\lambda}^-=\caralt_{\lambda}$ and
$\vartheta_{\mm}^+=\vartheta_{\mm}^-=\vartheta_{\mm}$. 
Furthermore, an element $h\in H_{p,w}$ is said regular if its cycle
structure $\mathfrak s(h)$ satisfies $\mathfrak s(h)_p=\emptyset$. 

\begin{theorem}Let $p$ be an odd prime number.
Let $\gamma$ be a self-conjugate $p$-core of $\sym_n$ of
$p$-weight $w>0$. Denote by $b_{\gamma}$ the corresponding $p$-block of
$\Alt_n$. Then 
the linear map $I:\C\Irr(b_{\gamma})\rightarrow \C\Irr(H_{p,w})$ defined,
for $\epsilon\in\{\pm 1\}$ and $\lambda$ with $p$-core $\gamma$, by
$$I(\caralt_{\lambda}^{\epsilon})=
(-1)^{|\lambda_{p^*}|}\delta_p(\lambda)
\vartheta_{\widetilde{\lambda}^{(p)}}^{\epsilon \delta_p(\lambda)},$$
where the notation is as in Theorem~\ref{theo:BrGr},
is a generalized perfect isometry with respect to the $p$-regular
elements of $\Alt_n$ and the regular elements of $H_{p,w}$ (defined as above).
\label{theo:fh}
\end{theorem}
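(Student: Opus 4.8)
The plan is to realise $I$ as the map $I_{\{1\}}$ attached to a pair of compatible MN-structures (Definition~\ref{defMN}) and then conclude by Corollary~\ref{cor:isoparfaitegene}, following the scheme of the proofs of Theorems~\ref{theo:mainAn} and~\ref{theo:BrGr}. After checking that $I$ is a well-defined isometry (for which one notes, as in the discussion preceding Theorem~\ref{theo:BrGr}, that $\lambda=\lambda^*$ holds exactly when $\widetilde{\lambda}^{(p)}$ is self-conjugate in the sense of the map $\mathbf a$, so that the split/non-split dichotomies match up), I would on the $\Alt_n$-side take the MN-structure already built in the proof of Theorem~\ref{theo:mainAn} for $b_{\gamma}$: $S$ is the set of elements whose nontrivial cycles all have length divisible by $p$, $C$ is the set of $p$-regular elements, $G_{\sigma_S}=\Alt_{n-p|\beta|}$ when $\sigma_S$ has type $p\cdot\beta$, and $r^{\widehat{\beta}}$ is obtained by iterating Theorems~\ref{theo:MNAn} and~\ref{theo:MNAn2}; as there, $r^{\widehat{\beta}}=0$ once $|\beta|>w$, so the index set $\Lambda_0$ consists of the types $p\cdot\beta$ with $|\beta|\le w$.

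On the $H_{p,w}$-side, viewing $H_{p,w}\le G_{p,w}\le\sym_{pw}$, I would take $S'$ to be the set of elements of $H_{p,w}$ whose cycle structure has the form $(\emptyset,\ldots,\emptyset,\pi)$ with $\pi$ in the last ($\omega$-)coordinate and $|\pi|\le w$, the ``regular'' elements being those with $\mathfrak s(h)_p=\emptyset$; the subgroup attached to a type $\pi$ is $H_{p,w-|\pi|}$, the ``alternating'' subgroup of $G_{p,w-|\pi|}$. The restriction map ${r'}^{\widehat{\pi}}$ is built by removing the cycles of $\pi$ one at a time: on the part of the block where the character is non-split, or on the sums $\vartheta_{\mm}^{+}+\vartheta_{\mm}^{-}$, one uses Pfeiffer's recursion~\cite[Theorem 4.4]{Pfeiffer} in the last coordinate exactly as in the proof of Theorem~\ref{theo:BrGr}; on the difference characters $\vartheta_{\mm}^{+}-\vartheta_{\mm}^{-}$ one uses Lemma~\ref{mn-courAn}. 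Using Lemma~\ref{lem:classHnp} and the description~(\ref{eq:defT}) of the splitting classes, one checks that the type $p\cdot\beta$ labels two $\Alt_n$-classes of $S$ precisely when $\beta$ has distinct odd parts, which is precisely when $\pi=\beta$ labels two $H_{p,w}$-classes of $S'$; so $\Lambda_0$ and $\Lambda_0'$ are identified compatibly, except possibly at the bottom level $|\beta|=w$.

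The core of the proof is the intertwining identity ${r'}^{\widehat{\beta}}\circ I=I\circ r^{\widehat{\beta}}$ for $p\cdot\beta\in\Lambda_0$. I would first treat a single cycle of length $pq$, comparing the coefficients in Theorem~\ref{theo:MNAn} (twisted by the factor $(-1)^{|\lambda_{p^*}|}\delta_p(\lambda)$ built into $I$) with the coefficients produced by~\cite[Theorem 4.4]{Pfeiffer} and Lemma~\ref{mn-courAn}. The non-square-root part is handled as in the proof of Theorem~\ref{theo:BrGr} (see also \cite[Proposition 3.8]{JB}): the bijection $f$ of~\cite[Proposition 3.1]{morrisolsson} identifies the leg length of a $p$-hook of $\lambda$ with that of the corresponding hook of $\widetilde{\lambda}^{(p)}$ up to the $p$-sign (Equation~(\ref{eq:lienjambef})), which absorbs the $\delta_p$-factors. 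The new point is that the term $\frac{1}{2}\sqrt{(-1)^{(q-1)/2}q}$ in the last case of Theorem~\ref{theo:MNAn} matches, via Lemma~\ref{mn-courAn} and the parity identity $(-1)^{(q-1)/2}=\epsilon_p(-1)^{(|c|-1)/2}$, the square-root term coming from $\vartheta_{\mm_c}^{+}-\vartheta_{\mm_c}^{-}$. Iterating over the cycles of $\beta$, using the telescoping of the $\delta_p$-signs as in Equations~(\ref{eq:coeffAitere})--(\ref{eq:commuteAn}), gives the identity for all of $\Lambda_0$. In the remaining case $|\gamma|\ge 3$, where the bottom of the $\Alt_n$-tower is a union of two defect-zero blocks $\{\rho_\gamma^{+}\},\{\rho_\gamma^{-}\}$ while the bottom of the $H_{p,w}$-tower is the trivial group, I would handle the types $p\cdot\beta$ with $|\beta|=w$ separately, defining $I_\beta$ by hand on the two split classes $\beta^{\pm}$ and verifying $I_\beta\circ(r^{\beta^{+}}+r^{\beta^{-}})={r'}^{\beta}\circ I$ from the values in Equation~(\ref{eq:fong4F}) and the difference-character formulas, exactly mirroring Equations~(\ref{eq:defIbetabizarre})--(\ref{eq:com2}) and the decomposition~(\ref{eq:guguAn}) in the proof of Theorem~\ref{theo:mainAn}. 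Once the intertwining holds, Remark~\ref{rk:point3} supplies hypothesis (3) of Theorem~\ref{th:iso}, and Corollary~\ref{cor:isoparfaitegene} shows that $I=I_{\{1\}}$ is a generalized perfect isometry.

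The step I expect to be the main obstacle is the sign bookkeeping at the bottom level when $|\gamma|\ge 3$: reconciling the two split characters $\rho_\gamma^{\pm}$ of $\Alt_{|\gamma|}$ with the single character of $H_{p,0}$, and keeping the signs in Equations~(\ref{eq:Anchoix}), (\ref{eq:fong4F}) and Lemma~\ref{mn-courAn} mutually consistent through the iteration. The rest is a faithful transcription of the arguments already carried out for $\sym_n$, $\Alt_n$ and $G_{p,w}$.
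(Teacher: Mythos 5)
Your overall scheme (MN-structures on both sides, Lemma~\ref{mn-courAn} for the difference characters, then Theorem~\ref{th:iso} and Corollary~\ref{cor:isoparfaitegene}) is the paper's scheme, but there is a genuine gap in the step you call the core of the proof. The intertwining relation you assert, ${r'}^{\widehat{\beta}}\circ I=I\circ r^{\widehat{\beta}}$ with the \emph{same} formula for $I$ at every level, is false. The point is the sign in front of the square-root term: on the $\Alt_n$ side the split--split coefficient of Theorem~\ref{theo:MNAn} is $\tfrac12\bigl(\alpha_{\mu_\lambda}^{\lambda}+\epsilon\eta\sqrt{(-1)^{(q-1)/2}q}\bigr)$, whereas on the $H_{p,w}$ side the corresponding coefficient carries a \emph{minus} sign, $\tfrac12\bigl({\alpha'}_{\mu_\lambda}^{\lambda}-\eta\epsilon\sqrt{(-1)^{(q-1)/2}q}\bigr)$. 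The reason is exactly the twist $(-1)^{|\lambda_{p^*}|}$ that you build into $I$: removing an odd-length diagonal hook from $\mu_{p^*}$ changes the parity of $|\mu_{p^*}|$, so Lemma~\ref{mn-courAn} yields, for the twisted difference characters, $\bigl(\widetilde{\vartheta}^+_{\lambda^{(p)}}-\widetilde{\vartheta}^-_{\lambda^{(p)}}\bigr)(s'g)=\bigl(\widetilde{\vartheta}^-_{\lambda^{(p)}\setminus\{c_1\}}-\widetilde{\vartheta}^+_{\lambda^{(p)}\setminus\{c_1\}}\bigr)(g)$ (this is Equation~(\ref{eq:valdiftilde}) in the paper), i.e.\ an extra sign flip at each removal of an odd cycle. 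Consequently the lower-level isometries cannot be ``$I$ by the same formula'': the paper must define
$I_{\widehat{\beta}}(\caralt_{\mu}^{\eta})=(-1)^{|\mu_{p^*}|}\delta_p(\mu)\vartheta_{\widetilde{\mu}^{(p)}}^{\eta\delta_p(\mu)(-1)^{\ell(\beta)}}$,
with the $\pm$-label twisted by $(-1)^{\ell(\beta)}$, and it is precisely this alternating twist that turns the minus into a plus in the telescoping computation. Your parity identity $(-1)^{(q-1)/2}=\epsilon_p(-1)^{(|c|-1)/2}$ handles the radicand, not this label flip; without the flip the coefficient comparison fails already for a single odd cycle, so the argument as written does not go through. (This is permitted by Theorem~\ref{th:iso}, whose hypothesis (2.b) only requires \emph{some} isometries $I_{\lambda}$ at the lower levels, not ones given by the top formula.)

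Two further remarks on the route. First, the paper only runs the direct MN-structure argument in the case $n=pw$ (principal block of $\Alt_{pw}$), where the split classes match level by level on both sides; the general self-conjugate $\gamma$ is then handled by the composition $I=I_{pw}\circ I_n$ with the Brou\'e isometry $I_n$ of Theorem~\ref{theo:mainAn}, verified by a short $\delta_p$-computation. Your plan to run the direct argument for general $\gamma$ is in principle viable but forces you to redo the mismatch analysis; note, however, that when $|\gamma|\geq 3$ the split classes at $|\beta|=w$ live on the $H_{p,w}$ side (the type $p\cdot\beta$ in $\Alt_n$ then has at least three fixed points, hence labels a single class), while the two defect-zero characters $\rho_{\gamma}^{\pm}$ sit at the bottom of the $\Alt_n$ tower; so your displayed identity $I_{\beta}\circ(r^{\beta^{+}}+r^{\beta^{-}})={r'}^{\beta}\circ I$ has the superscripts $\pm$ attached to the wrong group, and the correct mirroring of Equations~(\ref{eq:defIbetabizarre})--(\ref{eq:com2}) goes through $I^{-1}$ via Remark~\ref{rk:adjoint}. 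Second, the statement only claims a generalized perfect isometry, so Corollary~\ref{cor:isoparfaitegene} indeed suffices once the (correctly twisted) intertwining is in place; the integrality conditions of Theorem~\ref{th:broue} are only needed for Corollary~\ref{cor:fh}.
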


\begin{proof}
First, we consider the case $n=pw$. In this case, one has
$\gamma=\emptyset$.
Let $S$ and $C$ be the sets that define an MN-structure for the
principal $p$-block of $\Alt_{pw}$ with respect to the set of
$p$-regular elements of $\Alt_{pw}$. We denote by $\Omega_0$ and
$\Lambda_0$ the corresponding sets of partitions (see the proof
of Theorem~\ref{theo:mainAn}). 
% Let $S'$ be the set of elements lying in some $H_{p,i}$ for $i\leq w$
% whose cycle structure  $(\pi_1,\ldots,\pi_p)$ is
% such that $\pi_i=\emptyset$ for all $1\leq j\leq p-1$. Let $C'$ be the
% set of elements lying in $H_{p,w}$ 
Write $S'$ and $C'$ as in the proof of Theorem~\ref{theo:BrGr} (but
for elements of $H_{p,w}$). %, and denote by $\Omega'_0$ the set of cycle
Then $\Lambda_0$ labels the $H_{p,w}$ classes of $S'$ by
$p\cdot\widehat{\beta}\in\Lambda_0\mapsto t_{\widehat{\beta}}$, where by
Equation~(\ref{eq:labelclassHn}), $
t_{\beta^{\pm}}= t_{(\emptyset,\ldots,\emptyset,\beta)}^{\pm}$ and 
$t_{\beta}=t_{(\emptyset,\ldots,1^{w-|\beta|},\beta)}$.
Hence, if we set $H_{\beta}=H_{p,w-|\beta|}$, then
$H_{{\beta}}$ satisfies Definition~\ref{defMN}(3). 

Now, for every partition $\lambda$ of $pw$ with trivial $p$-core, and
any $\epsilon\in\{-1,1\}$, we define
$$\widetilde{\vartheta}_{\lambda^{(p)}}^{\epsilon}=(-1)^{|\lambda_{p^*}|}\vartheta_{\widetilde{\lambda}^{(p)}}^{\epsilon}.$$

Let $p\cdot \beta\in\Omega_0$ be such that $\beta=(\beta_1,\ldots,\beta_k)$.
% and 
%$\underline{\beta}=(\emptyset,\ldots,\emptyset,\widehat{\beta})\in
%\Lambda'_0$. 
Assume that $\beta_k$ is odd. 	Then 
$t_{\widehat{\beta}_k}\in H_{p,w}$, and by Lemma~\ref{mn-courAn},
$(\widetilde{\vartheta}_{\lambda^{(p)}}^{+}-\widetilde{\vartheta}_{\lambda^{(p)}}^{-})(t_{\widehat{\beta}_k}g)=0$
for $g\in H_{p,w-|\beta_1|}$,
except when $(\lambda^{(p)})_{p^*}$ contains a diagonal hook $c_k$ of length
$|\beta_k|$. In this case, we have. %As in the proof of Theorem~\ref{theo:mainAn}, we write
%$\beta_0$ for the partition
%
%when $\widehat{\beta}=\beta^{\pm}$ and 
%$\lambda=p\cdot\beta_0$ is the self-conjugate partition such that
%$\overline{\beta}_0=\beta$. In this last case, recall that the
%$p$-quotient of $\lambda$ is
%$(\emptyset,\ldots,\emptyset,\beta_0,\emptyset,\ldots,\emptyset)$, so
% that $\widetilde{\lambda}^{(p)}=\lambda^{(p)}$. Moreover, if $c_1$ denotes
% the diagonal hook of $\beta_0$ of length $\beta_1$, then
% Lemma~\ref{mn-courAn} gives
\begin{equation}
\begin{split}
\left(\widetilde{\vartheta}_{\lambda^{(p)}}^+-\widetilde{\vartheta}_{\lambda^{(p)}}^-\right)(t_{\widehat{\beta}_k}g)&=(-1)^{|\beta_0|}
\sqrt{(-1)^{(p\beta_k-1)/2}p\beta_k}\left(\vartheta_{\lambda^{(p)}}^+-\vartheta_{\lambda^{(p)}}^-\right)(s'_{\widehat{\beta}_1}g)\\
&=-(-1)^{|\beta_0\backslash\{c_k\}|}\sqrt{(-1)^{(p\beta_k-1)/2}p\beta_k}\left(
\vartheta_{\lambda^{(p)}\backslash\{c_k\}}^+-\vartheta_{\lambda^{(p)}\backslash\{c_k\}}^-\right)(g)\\
&=\sqrt{(-1)^{(p\beta_k-1)/2}p\beta_k}\left(\widetilde{\vartheta}_{\lambda^{(p)}\backslash\{c_k\}}^--
\widetilde{\vartheta}_{\lambda^{(p)}\backslash\{c_k\}}^+\right)(g),
\end{split}
\label{eq:valdiftilde}
\end{equation}
where
$\lambda^{(p)}\backslash\{c_k\}$ is the multipartition with the same
parts as $\lambda^{(p)}$, except the $p^*$-part which is obtained from
$(\lambda^{(p)})_{p^*}$ by removing the diagonal hook of length
$\beta_k$.
% =(\emptyset,\ldots,\emptyset,\beta_0\backslash\{c_1\},\emptyset,\ldots,\emptyset)$.
Therefore, Equations~(\ref{eq:MNwr}), (\ref{eq:valdiftilde}) and Clifford theory give, for 
$\epsilon\in\{\pm 1\}$ and $g\in H_{n-|\beta_k|}$,

\begin{eqnarray*}
\widetilde{\vartheta}_{\lambda^{(p)}}^{\epsilon}(t_{\widehat{\beta}_k}
g)&=&\sum_{\mu\in M'_{\beta_k}(\lambda)\atop
\mu\neq\mu^*}b(\widetilde{\vartheta}_{\lambda^{(p)}}^{\epsilon},\widetilde{\vartheta}_{\mu^{(p)}})\,\widetilde{\vartheta}_{\mu^{(p)}}(g)
+\sum_{\mu\in M_{\beta_k}(\lambda)\atop \mu=\mu^*}\left(
b(\widetilde{\vartheta}_{\lambda^{(p)}}^{\epsilon},\widetilde{\vartheta}_{\mu^{(p)}}^+)\,\widetilde{\vartheta}_{\mu^{(p)}}^+(g)+\right.\\
&&
\left. b(\widetilde{\vartheta}_{\lambda^{(p)}}^{\epsilon},\widetilde{\vartheta}_{\mu^{(p)}}^-)\,\widetilde{\vartheta}_{\mu^{(p)}}^-(g)\right)
,\end{eqnarray*}
where $M_{\beta_k}(\lambda)$ and $M'_{\beta_k}(\lambda)$ are
defined as in~\S\ref{subsec:BroueAn}, and
the complex numbers 
$b(\widetilde{\vartheta}_{\lambda^{(p)}}^{\epsilon},\widetilde{\vartheta}_{\mu^{(p)}}^{\eta})$
satisfy the following:
\begin{enumerate}
\item[--] If $\mu^*\neq \mu$ and $\mu^*\in M_{\beta_1}(\lambda)$, then
$b(\widetilde{\vartheta}_{\lambda^{(p)}}^{\epsilon},\widetilde{\vartheta}_{\mu^{(p)}})=
\alpha(\lambda)({\alpha'}_{\mu}^{\lambda}+{\alpha'}_{\mu^*}^{\lambda})$
(see Equation~(\ref{eq:MNwr}) for the definition of
${\alpha'}_{\mu}^{\lambda}$).
\item[--] If $\mu^*\neq \mu$ and $\mu^*\notin M_{\beta_1}(\lambda)$, then
$b(\widetilde{\vartheta}_{\lambda^{(p)}}^{\epsilon},\widetilde{\vartheta}_{\mu^{(p)}})=
\alpha(\lambda){\alpha'}_{\mu}^{\lambda}$.
\item[--] If $\mu^*=\mu$ and $\mu\neq\mu_{\lambda}$, then
$b(\widetilde{\vartheta}_{\lambda^{(p)}}^{\epsilon},\widetilde{\vartheta}_{\mu^{(p)}}^{\eta})=
\alpha(\lambda){\alpha'}_{\mu}^{\lambda}$.
\item[--] If $\mu^*=\mu$ and $\mu=\mu_{\lambda}$, %(this occurs if
%and only if $\widehat{\beta}=\beta^{\pm}$ and $\lambda=p\cdot\beta_0$,
%and in this case, $\mu_{\lambda}^{(p)}=\lambda^{(p)}\backslash\{c_1\}$ with the
%notation as above),
then
% if $\lambda=\lambda^*$ and $s'_{\widehat{\beta}_1} g$ has cycle
% structure $\mathbf{a}(\lambda^{(p)})$), then
$b(\widetilde{\vartheta}_{\lambda^{(p)}}^{\epsilon},\widetilde{\vartheta}_{\lambda^{(p)}\backslash\{c_k\}}^{\eta})=\frac{1}{2}\left(
{\alpha'}_{\mu_{\lambda}}^{\lambda}-\eta\epsilon
\sqrt{(-1)^{(q-1)/2}q}\right)$,
where $q=p\beta_k$.
\end{enumerate}

Note that, as in the proof of Theorem~\ref{theo:BrGr}, we use that $f$
induces a bijection between $M_{p\beta_k}(\lambda)$ and
$M_{\beta_k}(\lambda^{(p)})$.

Assume now that $\beta_k$ and $\beta_{k-1}$ are even. Let $\mu\in
M_{\beta_k,\beta_{k-1}}(\lambda)$. We denote by 
$b(\widetilde{\vartheta}_{\lambda^{(p)}}^{\epsilon},\widetilde{\vartheta}_{\mu^{(p)}}^{\eta})$
the hermitian product of the class function $x\rightarrow
\widetilde{\vartheta}_{\lambda^{(p)}}^{\epsilon}(t_{\beta_k}t_{\beta_{k-1}}x)$
with
$\widetilde{\vartheta}_{\mu^{(p)}}^{\eta}\in\Z\Irr(H_{p,w-\beta_k-\beta_{k-1}})$.
Then, applying
Equation~(\ref{eq:MNwr}) twice and Clifford theory,
we obtain an analogue of Theorem~\ref{theo:MNAn2}. For $\mu\in
M_{\beta_k,\beta_{k-1}}(\lambda)$ or $\mu\in
M'_{\beta_k,\beta_{k-1}}(\lambda)$, 
the coefficient
$b(\widetilde{\vartheta}_{\lambda^{(p)}}^{\epsilon},\widetilde{\vartheta}_{\mu^{(p)}}^{\eta})$
is obtained from $a(\caralt_{\lambda}^{\epsilon},\caralt_{\mu}^{\eta})$ by
replacing $(-1)^{L(c_{\mu}^{\nu})}$ and $(-1)^{L(c_{\nu}^{\lambda})}$ by
$(-1)^{L(f(c_{\mu}^{\nu}))}$ and $(-1)^{L(f(c_{\nu}^{\lambda}))}$ respectively.

Now, as in the proof of Theorem~\ref{theo:mainAn},
if we suppose that $\beta$ is labeled such that there is some integer
$r$ with $\beta_i$ even for $i\leq r$ and $\beta_i$ odd for $i>r$,
% 
% such that $\beta_i$ is odd for $1\leq i\leq k$ and $\beta_i$ is even for
% $k+1\leq i\leq r$ (so that $r-k$ is even), 
then, applying
iteratively the above process, as in the proof of Theorem~\ref{theo:mainAn}, and using the fact that the
$\widetilde{\vartheta}_{\lambda^{(p)}}^{\epsilon}$'s give a basis of
$\C\Irr(H_{p,w})$, we can define a linear map 
$r^{\widehat{\beta}}:\C
H_{p,w}\rightarrow \C H_{p,w-|\beta|}$ such that
$r^{\widehat{\beta}}(\chi)(x)=\chi(t_{\widehat{\beta}}x)$ for all
$\chi\in\C\Irr(H_{p,w})$ and $x\in
H_{p,w-|\beta|}$.
In particular, $\Irr(H_{p,w})$ has an MN-structure in the sense of
Definition~\ref{defMN} with respect to $C'$.

Let $p\cdot \beta\in \Omega_0$. We define 
$I_{\widehat{\beta}}:\C \Irr(b_{\gamma}(n-p|\beta|))\rightarrow \C
H_{p,w-|\beta|}$, where $b_{\gamma}(n-p|\beta|)$ is defined in
\S\ref{subsec:pblockAn}, by setting
$$I_{\widehat{\beta}}(\caralt_{\mu}^{\eta})=(-1)^{|\mu_{p^*}|}\delta_p(\mu)
\widetilde{\vartheta}_{\widetilde{\mu}^{(p)}}^{\eta\delta_p(\mu)(-1)^{\ell(\beta)}},$$
where $\eta\in\{\pm 1\}$ and $\mu$ is a partition of $p(w-|\beta|)$ with
$p$-core $\gamma$.
% and $\ell_0(\beta)$ is the number of non-trivial
% parts of $\beta$.
Note that $I_{\{1\}}=I$.

Write
$b(\widetilde{\vartheta}_{\lambda^{(p)}}^{\epsilon},\widetilde{\vartheta}_{\mu^{(p)}}^{\eta})=\langle
r^{\widehat{\beta}}(\widetilde{\vartheta}_{\lambda^{(p)}}^{\epsilon}),\widetilde{\vartheta}_{\mu^{(p)}}^{\eta}\rangle_{H_{p,w-|\beta|}}$.
If either $\widehat{\beta}=\beta$ or
$\widehat{\beta}=\beta^{\pm}$ and $\lambda\neq \kappa$ (where $\kappa$
is the partition defined in the proof of Theorem~\ref{theo:mainAn}), then a
straightforward computation (see the proofs of Theorem~\ref{theo:mainAn}
and of Theorem~\ref{theo:BrGr}) gives 
$$a\left(\caralt_{\lambda}^{\epsilon},\caralt_{\mu}^{\eta}\right)=b\left(
I\left(\widetilde{\vartheta}_{\lambda^{(p)}}^{\epsilon}\right),I_{\beta}
\left(\widetilde{\vartheta}_{\mu^{(p)}}^{\eta}\right)\right).$$ 

Hence, the only case to consider is 
$\beta=(\beta_1,\ldots,\beta_k)\in \mathcal{OD}_w$ and
$\lambda=\kappa$. Write $(h_1,\ldots,h_k)$ for the diagonal
hooks of $\kappa$ and assume that the hook length of $h_i$ is $p\beta_i$.
Furthermore, define $\beta(0)=\{1\}$ and
$\beta(i)=\{\beta_1,\ldots,\beta_i\}$ for $1\leq i\leq k$ (in particular,
$p\cdot\beta(i)\in\Omega_0$). Note that
$\ell(\beta(i))=i$.

Let $1\leq i\leq k$. Write
$\nu=\kappa\backslash\{h_1,\ldots,h_{i-1}\}$ and
$\mu=\kappa\backslash\{h_1,\ldots,h_{i}\}$. Therefore, if we set
$q=ph_i$, then we have
\begin{align*}
\begin{split}
b\left(I_{\widehat{\beta}(i-1)}(\rho_{\nu}^{\epsilon}),
I_{\widehat{\beta}(i)}(\rho_{\mu}^{\eta})\right)&=
\delta_p(\nu)\delta_p(\mu)b\left(\widetilde{\vartheta}_{\nu^{(p)}
}^{\epsilon\delta_p(\nu)(-1)^{i-1}},
\widetilde{\vartheta}_{\mu^{(p)}
}^{\eta\delta_p(\mu)(-1)^{i}}\right),\\
&=\delta_p(\nu)\delta_p(\mu)\left({\alpha'}_{\mu}^{\nu}-\epsilon\eta\delta_p(\nu)\delta_p(\mu)(-1)^{2i-1}\sqrt{(-1)^{(q-1)/2}q}\right),\\
&=\left(\alpha_{\mu}^{\nu}+\epsilon\eta\sqrt{(-1)^{(q-1)/2}q}\right),\\
&=a\left(\caralt_{\nu}^{\epsilon},\caralt_{\mu}^{\eta}\right).
\end{split}
\end{align*}
Thus, using an argument similar to Equations~(\ref{eq:coeffAitere})
and~(\ref{eq:acommute}), we conclude that
$b(I(\caralt_{\kappa}^{\epsilon}),I_{\widehat\beta}(\caralt_{\mu}^{\eta}))=a(\caralt_{\kappa}^{\epsilon},\caralt_{\mu}^{\eta})$.
It follows that
$$r^{\widehat{\beta}}\circ I= I_{\widehat\beta}\circ
r^{\widehat{\beta}}$$ for every $p\cdot\widehat{\beta}\in \Lambda_0$,
and Corollary~\ref{cor:isoparfaitegene} gives the result.

Now we return to the general case, that is, $\gamma$ is any self-conjugate
$p$-core of $n$ with $p$-weight $w$. Let $b'$ be the principal $p$-block
of $\Alt_{pw}$. We consider
$I_n:\C\Irr(b_{\gamma})\rightarrow \C\Irr(b')$ the perfect isometry
obtained in Theorem~\ref{theo:mainAn} and
$I_{pw}:\C\Irr(b')\rightarrow\C\Irr(H_{p,w})$ the perfect isometry
obtained in the first part of the proof.
Then $I_{pw}\circ I_n:\C\Irr(b_{\gamma})\rightarrow\C\Irr(H_{p,w})$
is a perfect isometry. In order to prove the result, it is sufficient to
show that $I=I_{pw}\circ I_n$.
Let $\lambda\in\mathcal P_n$ be such that $\lambda^{(p)}=\gamma$ and
$\epsilon\in\{\pm 1\}$. Then using that the $p$-quotient of
$\Psi(\lambda)$ is $\lambda^{(p)}$, we derive that
\begin{eqnarray*}
I_{pw}\circ I_n(\caralt_{\lambda}^{\epsilon})&=&
I_{pw}\left(\delta_p(\lambda)\delta_p(\Psi(\lambda))\rho_{\Psi(\lambda)}^{\epsilon\delta_p(\lambda)\delta_p(\Psi(\lambda))}\right),\\
&=&\delta_p(\lambda)\delta_p(\Psi(\lambda))\delta_p(\Psi(\lambda))
\widetilde{\vartheta}_{\lambda^{(p)}}^{\varepsilon\delta_p(\lambda)\delta_p(\Psi(\lambda))\delta_p(\Psi(\lambda))},\\
&=&\delta_p(\lambda)\widetilde{\vartheta}_{\lambda^{(p)}}^{\varepsilon\delta_p(\lambda)},\\
&=&I(\rho_{\lambda}^{\epsilon}),
\end{eqnarray*}
as required.
\end{proof}

\begin{corollary}\label{cor:fh}
With the assumptions of Theorem~\ref{theo:fh}, and if furthermore $w<p$, then 
$I$ is a Brou\'e perfect isometry.
\end{corollary}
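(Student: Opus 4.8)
The plan is to deduce the corollary from Theorem~\ref{th:broue}, in exactly the way Corollary~\ref{theo:BroueSn} is deduced from Theorem~\ref{theo:BrGr} for symmetric groups. Theorem~\ref{theo:fh} already provides, through the MN-structures of its proof together with Corollary~\ref{cor:isoparfaitegene}, that $I$ is a generalized perfect isometry; what remains is to meet the extra hypothesis of Theorem~\ref{th:broue} that the two ``good'' sets of elements be precisely the sets of $p'$-elements. For $\Alt_n$ this holds by construction, so the only genuine point is that, when $w<p$, the set of regular elements of $H_{p,w}$ (those whose cycle structure has empty $p$-th component, as in \S\ref{subsec:fh}) coincides with the set of $p'$-elements of $H_{p,w}$.

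First I would prove this coincidence. Since $w<p$, the group $\sym_w$ has order prime to $p$, so for any $x=(h_1,\ldots,h_w;\sigma)\in G_{p,w}$ the $p$-part of $x$ lies in the base group $(\Z_p\rtimes\Z_{p-1})^w$; in fact it is the $p$-part of $x^{|\sigma|}$, whose coordinates are, up to conjugacy and repetition, the cycle products of $x$. An element of $\Z_p\rtimes\Z_{p-1}$ has nontrivial $p$-part if and only if it is a nontrivial element of the normal subgroup $\Z_p$, i.e. if and only if it lies in the conjugacy class labelled $g_p=\omega$. Hence $x$ is $p$-regular if and only if no cycle product of $x$ lies in that class, i.e. if and only if $\mathfrak{s}(x)_p=\emptyset$, which is precisely the defining condition of the regular elements of $H_{p,w}$. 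The same reasoning applies to each local group $H_{p,w-|\beta|}$, since $w-|\beta|\le w<p$; thus in the MN-structure built on $H_{p,w}$ in the proof of Theorem~\ref{theo:fh}, the set $\mathcal{C}'$ is at every level the set of $p'$-elements.

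Next, for $n=pw$: the proof of Theorem~\ref{theo:fh} constructs MN-structures on $\Alt_{pw}$ (relative to its $p'$-elements) and on $H_{p,w}$ (relative, by the previous paragraph, to its $p'$-elements), and verifies hypotheses (1), (2) and (3) of Theorem~\ref{th:iso} for $I=I_{\{1\}}$. Since $C$ and $C'$ are now the sets of $p'$-elements of the two groups, Theorem~\ref{th:broue} applies directly and shows that $I$ is a Brou\'e isometry in this case. For general $n$, I would use the factorisation $I=I_{pw}\circ I_n$ obtained at the end of the proof of Theorem~\ref{theo:fh}, where $I_n$ is the isometry of Theorem~\ref{theo:mainAn} (a Brou\'e perfect isometry, with no restriction on $w$) and $I_{pw}$ is the Brou\'e isometry just obtained. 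A composite of Brou\'e perfect isometries is again one: property (ii) passes to the composite because the associated generalized perfect isometries compose (it amounts to each restriction map $\res_{C}$ commuting with the appropriate factor, cf. Proposition~\ref{prop:equiviso}), and property (i) passes to the composite by the standard fact that the product of two perfect characters is perfect (\cite{Broue}). Hence $I$ is a Brou\'e perfect isometry.

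The argument is essentially bookkeeping; the one place where the hypothesis $w<p$ is actually used, and the only step requiring a small amount of care, is the identification in the second paragraph of the regular elements of $H_{p,w}$ with its $p'$-elements. Everything else is a direct appeal to Theorem~\ref{th:broue} and to the closure of Brou\'e isometries under composition (the latter already used in the proof of Corollary~\ref{cor:brouetilde}).
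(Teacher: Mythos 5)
Your argument is correct and is essentially the proof the paper intends: as with Corollary~\ref{theo:BroueSn}, the whole content is that the hypothesis $w<p$ makes the regular elements of $H_{p,w}$ (and of every local subgroup $H_{p,w-|\beta|}$) coincide with its $p'$-elements, so that Theorem~\ref{th:broue} applies to the MN-structures constructed in the proof of Theorem~\ref{theo:fh}, the general case then following from the factorisation $I=I_{pw}\circ I_n$ and the stability of Brou\'e perfect isometries under composition, already used for Corollary~\ref{cor:brouetilde}. Your identification of the $p$-singular elements of $H_{p,w}$ is sound (only note that the $p$-part of $x$ is a power of the $p$-part of $x^{|\sigma|}$ rather than that element itself, which does not affect the criterion you deduce).
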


{\bf Acknowledgements.} The authors wish to thank the referee for a very
careful and precise reading of several earlier versions of this manuscript.
They are grateful for the number, quality and helpfulness of comments
and suggestions received.

 \bibliographystyle{abbrv}
\bibliography{references}

\end{document}